   \def\MR#1{}
\theoremstyle{plain}
\newtheorem{thm}{Theorem}[section]
\newtheorem{prop}[thm]{Proposition}
\newtheorem{lem}[thm]{Lemma}
\newtheorem{claim}[thm]{Claim}
\newtheorem*{thm*}{Theorem}
\newtheorem{mainthm}{Theorem}
\newtheorem{maincor}{Corollary}
\theoremstyle{definition}
\newtheorem{defi}[thm]{Definition}
\newtheorem{dsc}[thm]{}
\theoremstyle{remark}
\newtheorem{rem}[thm]{Remark}
\newtheorem{ques}[thm]{Question}
\newtheorem{ex}[thm]{Example}
\numberwithin{equation}{section}
\newcommand{\Z}{\mathbb{Z}}
\newcommand{\Q}{\mathbb{Q}}
\newcommand{\R}{\mathbb{R}}
\newcommand{\C}{\mathbb{C}}
\renewcommand{\P}{\mathbb{P}}
\newcommand{\F}{\mathbb{F}}
\newcommand{\T}{\mathbb{T}}
\renewcommand{\a}{\alpha}
\renewcommand{\b}{\beta}
\renewcommand{\d}{\delta}
\newcommand{\D}{\Delta}
\newcommand{\e}{\varepsilon}
\newcommand{\E}{\mathbb{E}}
\renewcommand{\F}{\mathbb{F}}
\newcommand{\g}{\gamma}
\newcommand{\G}{\Gamma}
\renewcommand{\L}{\Lambda}
\newcommand{\s}{\sigma}
\renewcommand{\t}{\tau}
\newcommand{\vp}{\varphi}
\newcommand{\ds}{\displaystyle}
\newcommand{\mc}{\mathcal}
\newcommand{\ms}{\mathscr}
\newcommand{\mb}{\mathbb}
\newcommand{\emp}{\varnothing}
\newcommand{\ol}{\overline}
\newcommand{\wt}{\widetilde}
\newcommand{\ra}{\Rightarrow}
\newcommand{\hra}{\hookrightarrow}
\newcommand{\epm}{\twoheadrightarrow}
\newcommand{\dra}{\dashrightarrow}
\DeclareMathOperator{\rk}{rk}
\DeclareMathOperator{\id}{id}
\DeclareMathOperator{\Sym}{\mathrm{Sym}}
\DeclareMathOperator{\Spec}{\mathrm{Spec}}
\DeclareMathOperator{\Proj}{\mathrm{Proj}}
\DeclareMathOperator{\Exc}{\mathrm{Exc}}
\DeclareMathOperator{\Pic}{\mathrm{Pic}}
\DeclareMathOperator{\red}{\mathrm{red}}
\DeclareMathOperator{\NE}{\mathrm{NE}}
\DeclareMathOperator{\Bl}{\mathrm{Bl}}
\DeclareMathOperator{\codim}{\mathrm{codim}}
\DeclareMathOperator{\Sing}{\mathrm{Sing}}
\DeclareMathOperator{\Bs}{\mathrm{Bs}}
\DeclareMathOperator{\pr}{pr}
\DeclareMathOperator{\Hom}{Hom}
\DeclareMathOperator{\Ker}{Ker}
\DeclareMathOperator{\Cok}{Cok}
\DeclareMathOperator{\Supp}{Supp}
\DeclareMathOperator{\Ext}{Ext}
\DeclareMathOperator{\hd}{hd}
\DeclareMathOperator{\Tr}{Tr}
\DeclareMathOperator{\Eu}{Eu}
\DeclareMathOperator{\lgth}{length}
\DeclareMathOperator{\Hilb}{Hilb}
\DeclareMathOperator{\Jac}{Jac}
\title[{\tiny Relative linear extensions of sextic del Pezzo fibrations}]{Relative linear extensions of sextic del Pezzo fibrations over curves}
\author[T. FUKUOKA]{Takeru Fukuoka}
\date{\today}
\address{Graduate School of Mathematical Sciences\\The University of Tokyo\\3-8-1 Komaba\\Meguro-ku, Tokyo 153-8914, Japan}
\email{tfukuoka@ms.u-tokyo.ac.jp}
\subjclass[2010]{14E25, 14E30.}
\begin{document}
\maketitle
\begin{abstract}
In this paper, we study a sextic del Pezzo fibration over a curve comprehensively. 
We obtain certain formulae of several basic invariants of such a fibration. 
We also establish the embedding theorem of such a fibration which asserts that every such a fibration is a relative linear section of a Mori fiber space with the general fiber $(\mathbb{P}^{1})^{3}$ and that with the general fiber $(\mathbb{P}^{2})^{2}$. 
As an application of this embedding theorem, we classify singular fibers of such a fibrations and answer a question of T.~Fujita about existence of non-normal fibers. 

\end{abstract}
\tableofcontents

\section{Introduction}
\subsection{Motivations}
A smooth del Pezzo surface $S$ of degree $d$ is defined to be a smooth projective surface whose anti-canonical divisor $-K_{S}$ is ample with $(-K_{S})^{2}=d$. 
It is a famous result that for any integer $d \in \{1,\ldots,9\}$, there exists a certain variety $V_{d}$ such that every del Pezzo surface $S$ of degree $d$ is an (weighted) complete intersection of $V_{d}$. 
For example, when $d=3$ (resp. $d=4$), we take $V_{3}=\P^{3}$ (resp. $V_{4}=\P^{4}$) and every del Pezzo surface of degree $3$ (resp. $4$) is a cubic hypersurface on $\P^{3}$ (resp. a complete intersection of two quadrics on $\P^{4}$). 
When $d=6$, we can take not only $V_{6}=(\P^{1})^{3}$ but also $V_{6}=(\P^{2})^{2}$. 
Then every del Pezzo surface of degree $6$ is a hyperplane section of $(\P^{1})^{3}$ and also a codimension $2$ linear section of $(\P^{2})^{2}$ with respect to the Segre embeddings. 
These descriptions are classic and useful to study del Pezzo surfaces. 

In this paper, we mainly discuss about how to relativize these description for del Pezzo fibrations. 
A relativization of those embeddings is important to study del Pezzo fibrations and actually have been used in several researches (e.g. \cite{Takeuchi09}). 
As we will see in the next subsection, relativizations of these descriptions for del Pezzo fibrations of degree $d$ was established when $d \neq 6$. 
One of the main results of this paper is to give a relativization when $d=6$. 

\subsection{Known results}
In this paper, we employ the following definition for del Pezzo fibrations as in the context of Mori theory. 
Let $X$ be a smooth projective $3$-fold whose canonical divisor $K_{X}$ is not nef. 
By virtue of Mori theory, $X$ has an extremal contraction $\vp \colon X \to C$, which is a surjective morphism onto a normal projective variety $C$ with connected fibers satisfying that $\rho(X/C)=1$ and $-K_{X}$ is $\vp$-ample. 
When $\dim C=1$, we call $\vp$ a \emph{del Pezzo fibration}, which is one of the final outputs of the minimal model program.  
In this case, a general $\vp$-fiber $F$ is a del Pezzo surface. 
Then the \emph{degree} of a del Pezzo fibration $\vp \colon X \to C$ is defined to be $(-K_{F})^{2}$. 

Let $\vp \colon X \to C$ be a del Pezzo fibration of degree $d$. 
In the paper \cite{Mori82}, Mori proved that $1 \leq d \leq 9$ and $d \neq 7$. 
Moreover, he proved that if $d=9$ then $\vp$ is $\P^{2}$-bundle, and if $d=8$ then there exists an embedding of $X$ into a $\P^{3}$-bundle over $C$ containing $X$ as a quadric fibration \cite[Theorem~(3.5)]{Mori82}. 
When $d=1$ or $2$, Fujita proved that there exists an weighted projective space bundle containing $X$ as a relative weighted hypersurface \cite[(4.1),(4.2)]{Fujita90}. 

Now we assume that $3 \leq d \leq 6$. 
Then $\vp \colon X\to C$ has a natural embedding into the $\P^{d}$-bundle $p \to \P_{C}(\vp_{\ast}\mc O(-K_{X})) \to C$. 
D'Souza \cite[(2.2.1) and (2.3.1)]{DSouza88} and Fujita \cite[(4.3) and (4.4)]{Fujita90} proved that if $d=3$ or $4$, then $X$ is a relative complete intersection in $\P_{C}(\vp_{\ast}\mc O(-K_{X}))$.  
More precisely, when $d=4$ for example, they proved that there is a rank $2$ vector bundle $\mc E$ on $C$ such that $X$ is the zero scheme of a global section of $\mc O_{\P_{C}(\vp_{\ast}\mc O(-K_{X}))}(2) \otimes p^{\ast}\mc E$. 
When $d =5$ or $6$, $\P_{C}(\vp_{\ast}\mc O(-K_{X}))$ does not contain $X$ as a relative complete intersection and hence it seems to be difficult to treat such an $X$ as a submanifold of $\P_{C}(\vp_{\ast}\mc O(-K_{X}))$. 
When $d=5$ and $C=\P^{1}$, however, K.~Takeuchi claimed that $X$ is relatively defined in $\P_{\P^{1}}(\vp_{\ast}\mc O(-K_{X}))$ by the Pfaffian of the $4 \times 4$ diagonal minors of a $5 \times 5$ skew-symmetric matrix \cite[Theorem~(3.3)~(v)]{Takeuchi09}. 

\subsection{Main Results}

In this paper, we mainly treat a \emph{sextic} del Pezzo fibration $\vp \colon X \to C$, i.e., that of degree $6$.

\subsubsection{Associated coverings}

For every sextic del Pezzo fibration $\vp \colon X \to C$, we will define smooth projective curves $B,T$ with a double covering structure $\vp_{B} \colon B \to C$ and a triple covering structure $\vp_{T} \colon T \to C$ respectively associated to $\vp$. 
These coverings $\vp_{B}$ and $\vp_{T}$ are deeply related to the relative Hilbert scheme of twisted cubics and conics respectively (see Lemma~\ref{lem-indep}). 
In particular, when all $\vp$-fiber are normal, the coverings $B$ and $T$ coincide with the coverings $\mathscr{Z}_{3}$ and $\mathscr{Z}_{2}$ over $C$ that are defined by Kuznetsov \cite{Kuznetsov17} (see Proposition~\ref{prop-Kuz}). 
We refer to Definition~\ref{defi-BT} for the precise definition. 

\subsubsection{Formulae for invariants $(-K_{X})^{3}$ and $h^{1,2}(X)$}
For a sextic del Pezzo fibration $X \to C$, the associated coverings are closely related to the invariants $(-K_{X})^{3}$ and $h^{1,2}(X)$. 

\begin{mainthm}\label{mainthm-inv}
Let $\vp \colon X \to C$ be a sextic del Pezzo fibration. 
Let $\vp_{B} \colon B \to C$ be the double covering and $\vp_{T} \colon T \to C$ the triple covering associated to $\vp$ (see Definition~\ref{defi-BT}). 
Then the following assertions hold.  

\begin{enumerate}
\item $\mc J(X) \times \Jac(C)$ is isomorphic to $\Jac(B) \times \Jac(T)$ as complex tori, where $\mc J(X)$ is the intermediate Jacobian of $X$. Moreover, if $C=\P^{1}$, then 
$\mc J(X)$ is isomorphic to  $\Jac(B) \times \Jac(T)$ as principally polarized abelian varieties. 
\item It holds that $(-K_{X})^{3}=22-(6g(B)+4g(T)+12g(C))$. 
\end{enumerate}
\end{mainthm}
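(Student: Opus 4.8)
The plan is to extract both invariants from the local system $R^{2}\vp_{\ast}\Q$ on $C$, whose structure is governed by the coverings $\vp_{B},\vp_{T}$, and to use the embedding theorem to control the Hodge bundle $\vp_{\ast}\mc O_{X}(-K_{X})$. For \emph{Part (1)}, note first that all fibres of $\vp$ are rational surfaces (this holds for the degenerate fibres as well, by the classification of singular fibres), so $R^{1}\vp_{\ast}\Q=R^{3}\vp_{\ast}\Q=0$ and the Leray spectral sequence degenerates to $H^{3}(X,\Q)\cong H^{1}(C,R^{2}\vp_{\ast}\Q)$ as polarized Hodge structures (the polarization being the cup product on $C$ paired with the intersection form on $R^{2}\vp_{\ast}\Q$). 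The relatively ample class $-K_{X}$ gives a constant subsheaf $\Q_{C}\hookrightarrow R^{2}\vp_{\ast}\Q$ with nondegenerate intersection form (self-intersection $6$), so $R^{2}\vp_{\ast}\Q=\Q_{C}\oplus\mc P$ orthogonally. Over the smooth locus $C^{\circ}$ the monodromy of $\mc P$ lies in $W(A_{2}\oplus A_{1})=S_{3}\times\Z/2$, hence is finite; since $\rho(X/C)=1$ it has no invariants, and the isotypic decomposition splits $\mc P|_{C^{\circ}}$ orthogonally as $\mc P_{2}\oplus\mc P_{1}$ corresponding to $A_{2}\otimes\Q$ and $A_{1}\otimes\Q$. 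By the definition of $T$ and $B$ these are the extensions over the singular fibres of the minus-summands $(\vp_{T\ast}\Q)^{-}$ and $(\vp_{B\ast}\Q)^{-}$. Taking $H^{1}(C,-)$ — and checking against the classification that the extra stalks of $R^{2}\vp_{\ast}\Q$ at singular fibres only contribute to $H^{0}$ — yields an orthogonal decomposition of weight-$3$ polarized Hodge structures
\[
H^{3}(X,\Q)\ \cong\ H^{1}(C,\Q)\ \oplus\ H^{1}(T,\Q)^{-}\ \oplus\ H^{1}(B,\Q)^{-}.
\]
Passing to complex tori gives $\mc J(X)\cong\Jac(C)\times P(T/C)\times P(B/C)$ with $P(\bullet/C)$ the Prym parts, and since $\Jac(B)\cong\Jac(C)\times P(B/C)$ and $\Jac(T)\cong\Jac(C)\times P(T/C)$, this is exactly $\mc J(X)\times\Jac(C)\cong\Jac(B)\times\Jac(T)$.

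When $C=\P^{1}$ the first summand vanishes, so $\mc J(X)\cong\Jac(T)\times\Jac(B)$ already as complex tori. To see that this respects the principal polarizations, observe that the cup-product polarization on $H^{3}(X,\Z)$ is unimodular by Poincar\'e duality on the $3$-fold $X$, and is block-diagonal for the decomposition above (because $-K_{F}$, $A_{2}$ and $A_{1}$ are mutually orthogonal inside $H^{2}(F)$); hence it restricts to a principal polarization on each block, which one then matches with the principal polarization of $\Jac(T)$, resp.\ $\Jac(B)$, by comparing the $A_{2}$-form (resp.\ $A_{1}$-form) with the trace pairing on $(\vp_{T\ast}\Z)^{-}$ (resp.\ $(\vp_{B\ast}\Z)^{-}$), with the sign fixed by positivity.

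For \emph{Part (2)}, put $\mc E:=\vp_{\ast}\mc O_{X}(-K_{X})$, a rank-$7$ bundle with $R^{>0}\vp_{\ast}\mc O_{X}(-K_{X})=0$. As the fibres are rational surfaces, $\vp_{\ast}\mc O_{X}=\mc O_{C}$, $R^{>0}\vp_{\ast}\mc O_{X}=0$, $q(X)=g(C)$ and $p_{g}(X)=h^{0,2}(X)=0$, whence $\chi(\mc O_{X})=1-g(C)$. Combining Riemann--Roch for $-K_{X}$ on $X$, which gives $\chi(X,-K_{X})=\tfrac12(-K_{X})^{3}+3\chi(\mc O_{X})$, with Riemann--Roch for $\mc E$ on $C$ and the equality $\chi(X,-K_{X})=\chi(C,\mc E)$, one obtains
\[
(-K_{X})^{3}\ =\ 2\deg\mc E\ +\ 8\bigl(1-g(C)\bigr),
\]
so it remains to prove $\deg\mc E=7-3g(B)-2g(T)-2g(C)$. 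I would read this off from the embedding theorem: $-K_{X}$ restricts the relative Segre polarization, so in the $(\P^{2})^{2}$-realization $\mc E$ is the quotient of a naturally associated rank-$9$ bundle on $C$ (built via $\vp_{B}$ from a rank-$3$ bundle on $B$) by the rank-$2$ subbundle of linear forms cutting out $X$, which is in turn built via $\vp_{T}$ from a line bundle on $T$. Computing first Chern classes by Grothendieck--Riemann--Roch along the finite maps $\vp_{B},\vp_{T}$ and pinning down the remaining line-bundle twists by the constraint $\mc O_{\P_{C}(\mc E)}(1)|_{X}=-K_{X}$ then gives the claimed degree. (As a check, this is consistent with $\eu(X)=6\,\eu(C)+\sum_{c}(\eu(F_{c})-6)$ once the Hodge numbers from Part~(1) are substituted.)

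The step I expect to be the main obstacle is the end of Part~(1): showing that the cup-product polarization on $\mc J(X)$ matches the product of the principal polarizations on $\Jac(B)\times\Jac(T)$, which hinges on tracking the integral lattices — in particular the index-$2$ and index-$3$ discrepancies between $\vp_{B\ast}\Z$, $\vp_{T\ast}\Z$ and $\Z\oplus(\vp_{\bullet\ast}\Z)^{-}$ — and on identifying the root-lattice forms with the trace pairings of the coverings; handling a base curve of positive genus (via the global invariant cycle theorem and a careful treatment of the monodromy invariants) is a further complication. By comparison, the Leray/monodromy input for the complex-torus statement and the Chern-class bookkeeping in Part~(2) are routine once the roles of $B$ and $T$ are fixed.
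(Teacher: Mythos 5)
Your route is genuinely different from the paper's. The paper proves both parts through the relative double projection of Proposition~\ref{prop-2ray68}: $\Bl_{C_{0}}X$ is a flop (or the identity) away from $\Bl_{T}Q$ for a quadric fibration $q\colon Q\to C$, so part~(1) follows from the blow-up formula for intermediate Jacobians, Koll\'ar's theorem that flops preserve $\mc J$ together with its polarization, and Lemma~\ref{lem-89}~(2) ($\mc J(Q)\simeq \Jac(B)$); part~(2) is two lines of arithmetic equating the expressions for $(-K_{Q})^{3}$ in Lemma~\ref{lem-89}~(3) and Proposition~\ref{prop-2ray68}~(4). Your Leray/monodromy approach to the complex-torus statement in (1) is a plausible alternative, but as written it has gaps. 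First, you need $R^{2}\vp_{\ast}\Q\cong\Q_{C}\oplus(\vp_{T\ast}\Q)^{-}\oplus(\vp_{B\ast}\Q)^{-}$ as sheaves on all of $C$, i.e.\ at the stalks over the discriminant. There the stalk $H^{2}(X_{t},\Q)$ is \emph{smaller} than the generic rank (e.g.\ rank $2$ for a fiber of type (n2) versus rank $4$ generically), not larger, so the issue is not ``extra stalks contributing to $H^{0}$'' but a local-invariant-cycle statement that $H^{2}(X_{t},\Q)\to H^{2}(X_{s},\Q)^{T}$ is an isomorphism for these possibly non-normal degenerations; you neither state nor verify this, and likewise $R^{1}\vp_{\ast}\Q=R^{3}\vp_{\ast}\Q=0$ needs a short conductor-square argument for the fibers of type (n2) and (n4), since rationality of a singular surface does not by itself kill $H^{1}$ and $H^{3}$. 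Second --- and this is the step that fails as written --- the inference ``unimodular and block-diagonal for the decomposition, hence principal on each block'' is invalid when the decomposition is only orthogonal over $\Q$: inside $\Z^{2}$ the primitive orthogonal sublattices $\Z(e_{1}+e_{2})$ and $\Z(e_{1}-e_{2})$ each have discriminant $2$. Here $\Z K\oplus A_{2}\oplus A_{1}$ has index $6$ in $H^{2}(S,\Z)$, so producing an \emph{integral} orthogonal splitting of $H^{3}(X,\Z)$ matching $H^{1}(B,\Z)\oplus H^{1}(T,\Z)$ is exactly the hard point, and you only gesture at it. The paper sidesteps all of this by quoting Clemens--Griffiths and Koll\'ar.

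For part~(2), your reduction $(-K_{X})^{3}=2\deg\vp_{\ast}\mc O_{X}(-K_{X})+8(1-g(C))$ is correct, but the remaining equality $\deg\vp_{\ast}\mc O_{X}(-K_{X})=7-3g(B)-2g(T)-2g(C)$ carries the entire content of the statement and is not proved. The description of $\vp_{Z\ast}\mc O_{Z}(H_{Z})$ as a rank-$9$ bundle obtained via $\vp_{B}$ from a rank-$3$ bundle on $B$ is established nowhere (in the paper $Z$ is built from $\P_{Q}(\mc F)$ by a flop and a blow-down, not as a fiber product of $\P^{2}$-bundles over $B$), and the ``remaining line-bundle twists'' you defer are precisely where $g(B)$ and $g(T)$ would enter. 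As it stands, part~(2) is an unexecuted plan resting on the full strength of Theorems~\ref{mainthm-P13} and \ref{mainthm-P22}, whereas the paper derives it directly from the double-projection formulas without either embedding theorem.
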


This theorem shows that the invariants $(-K_{X})^{3}$ and $h^{1,2}(X)$ can be interpreted by using the genera of two curves $B$ and $T$. 

\subsubsection{Relative linear extensions}

Let us recall that a smooth sextic del Pezzo surface is a hyperplane section of $(\P^{1})^{3}$ and also a codimension $2$ linear section of $(\P^{2})^{2}$ under the Segre embeddings. 
In the following two theorems, we relativize these embeddings for every sextic del Pezzo fibration.

\begin{mainthm}\label{mainthm-P13}
Let $\vp \colon X \to C$ be a sextic del Pezzo fibration and $\vp_{B} \colon B \to C$ the double covering associated to $\vp$. 
Set $\mc L:=\Cok(\mc O_{C} \to {\vp_{B}}_{\ast}\mc O_{B}) \otimes \mc O(-K_{C})$. 
Then there exists a smooth projective 4-fold $Y$, an extremal contraction $\vp_{Y} \colon Y \to C$ and a divisor $H_{Y}$ on $Y$ satisfying the following conditions. 
\begin{enumerate}
\item Every smooth fiber of $\vp_{Y}$ is isomorphic to $(\P^{1})^{3}$. 
\item $\mc O_{Y}(K_{Y}+2H_{Y})=\vp_{Y}^{\ast}\mc L$ holds. 
\item $Y$ contains $X$ as a member of $|\mc O(H_{Y}) \otimes \vp_{Y}^{\ast}\mc L^{-1}|$. 
\end{enumerate}
\end{mainthm}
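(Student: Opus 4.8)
The plan is to realize $Y$ fiberwise as a $(\P^1)^3$-bundle (in the étale-local sense) twisted by the double cover $B\to C$, and then to locate $X$ inside it via the relative anticanonical embedding. First I would recall the classical picture over a point: a smooth sextic del Pezzo surface $S$ sits in $(\P^1)^3$ as a member of $|\mc O(1,1,1)|$, and the threefold $(\P^1)^3$ is intrinsically attached to $S$ because the three conic bundle structures on $S$ are permuted by a group acting through the double cover recording the hyperelliptic-type involution; equivalently, $(\P^1)^3\cong\P_{S}(\mc N)$ for a suitable rank-$2$ bundle built from the pencils of conics. The point is that the ``three rulings'' come in a $\Z/2$-twisted package precisely governed by the étale double cover. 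So over $C$, the natural candidate is to build $Y$ as a relative version of this construction, using $\vp_B\colon B\to C$ to twist: concretely, one expects $Y$ to be a $(\P^1)^2$-bundle over a $\P^1$-bundle over $B$, or dually, to arise from the relative Hilbert scheme of twisted cubics (which is exactly what $B$ parametrizes over $C$, by the discussion preceding Lemma~\ref{lem-indep}).

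The key steps, in order, are as follows. (1) Use the associated double cover $\vp_B\colon B\to C$ and the structure theory of the relative Hilbert scheme of conics/twisted cubics to produce a rank-$2$ vector bundle (or $\P^1$-bundle) $\mc W$ on $B$ together with a descent datum that lets one form $Y$ as a smooth projective $4$-fold with a morphism $\vp_Y\colon Y\to C$ whose general fiber is $(\P^1)^3$. (2) Verify that $\vp_Y$ is an extremal contraction: since $\rho(Y/C)$ should be the rank of the twisted $\Z^3$-lattice fixed by the monodromy of $B\to C$, which is $1$ when $B\to C$ is connected, one gets $\rho(Y/C)=1$ and $-K_Y$ $\vp_Y$-ample from the fiberwise Fano property. (3) Identify the relative $\mc O(1,1,1)$-type divisor $H_Y$ and compute $K_Y+2H_Y$: fiberwise $K_{(\P^1)^3}=\mc O(-2,-2,-2)$, so $K_Y+2H_Y$ is $\vp_Y$-trivial, hence pulled back from $C$; pinning down that it equals $\vp_Y^\ast\mc L$ with $\mc L=\Cok(\mc O_C\to{\vp_B}_\ast\mc O_B)\otimes\mc O(-K_C)$ is a determinant/discriminant computation tracing through how $B$ enters the construction. (4) Finally, embed $X\hookrightarrow Y$: the relative anticanonical bundle $\vp_\ast\mc O(-K_X)$ is rank $7$, and one shows $X$ is cut out in $Y$ as a member of the twisted linear system $|\mc O(H_Y)\otimes\vp_Y^\ast\mc L^{-1}|$ by matching the relative anticanonical embeddings of $X$ and of $Y$ — over a general point this is the statement that $S$ is a hyperplane section of $(\P^1)^3$, and one propagates it over all of $C$ using that both sides are computed by relative $\mathrm{Proj}$ of compatible graded algebras.

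The main obstacle I expect is step (1): constructing $Y$ globally (not just over a dense open subset of $C$) as a \emph{smooth} $4$-fold with the correct special fibers, and showing $\vp_Y$ is genuinely a Mori extremal contraction rather than merely a fibration. The subtlety is that the twisting by $B$ interacts with the degenerate fibers of $\vp$, and one must check that the $(\P^1)^3$-bundle structure degenerates in a controlled (smooth total space) way over the branch locus of $\vp_B$ and over points where $\vp$-fibers are singular; this likely requires the singular-fiber analysis that the paper develops as an application, or at least a local model at each bad fiber. A secondary difficulty is the precise bookkeeping in step (3) to get exactly $\mc L$ (as opposed to $\mc L$ up to a line bundle from $C$), which forces one to normalize the construction of $H_Y$ carefully, e.g.\ by demanding $\vp_{Y\ast}\mc O_Y(H_Y)$ have a prescribed determinant.
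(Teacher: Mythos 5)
Your plan to build $Y$ as a twisted form of $(\P^{1})^{3}$ governed by the double cover $\vp_{B}\colon B\to C$ rests on a misidentification of which covering controls the three rulings. The three projections $(\P^{1})^{3}\to\P^{1}$ restrict on a sextic del Pezzo surface $S$ to its three conic fibrations, i.e.\ to the three components of the Hilbert scheme of conics; over $C$ this is the \emph{triple} cover $\vp_{T}\colon T\to C$ (Lemma~\ref{lem-indep}), not $\vp_{B}$. This is confirmed by Table~1: the fiber $Y_{t}$ is $(\P^{1})^{3}$, $\P^{1}\times\Q^{2}_{0}$ or $\P^{1,1,1}$ according to $\#(T_{t})_{\red}=3,2,1$, independently of $B_{t}$. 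Your own extremality check in step (2) then fails as written: a $\Z/2$-monodromy permuting three rulings fixes at least a rank-$2$ sublattice of $\Z^{3}$, which would give $\rho(Y/C)\geq 2$. (Also, the auxiliary claim $(\P^{1})^{3}\cong\P_{S}(\mc N)$ for a rank-$2$ bundle on $S$ cannot hold: the left side has Picard rank $3$, the right side $\rho(S)+1=5$.) The double cover $B$ enters Theorem~\ref{mainthm-P13} only through the line bundle $\mc L$, and the reason is indirect: $B$ is the discriminant cover of an auxiliary quadric fibration, and the identity $\omega_{B}=\vp_{B}^{\ast}\mc O_{C}(\d)$ comes from the discriminant computation (\ref{eq-2adetF}); without that intermediary your step (3) has no way to produce $\mc L$ on the nose.

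The obstacle you flag as "main" — constructing $Y$ globally as a smooth $4$-fold with controlled singular fibers over the bad locus — is not a residual difficulty but the entire content of the proof, and étale descent cannot supply it: over branch points of $\vp_{T}$ the fibers $\P^{1}\times\Q^{2}_{0}$ and $\P^{1,1,1}$ are genuinely singular while the total space must stay smooth, so no gluing of local $(\P^{1})^{3}$-bundles can produce $Y$. The paper instead works birationally: choose a $\vp$-section $C_{0}$, perform the relative double projection (Proposition~\ref{prop-2ray68}) to obtain a quadric fibration $q\colon Q\to C$ with a smooth trisection $T\subset Q$, embed $Q$ in the $\P^{3}$-bundle $\F=\P_{C}(q_{\ast}\mc O_{Q}(E_{Q}))$, and then run the $2$-ray game of $\Bl_{T}\F$ over $C$ (blow-up, family of Atiyah flops, contraction of a $\P^{2}$-bundle to a section) to manufacture $Y$ with smooth total space (Theorem~\ref{thm-constP13}); $X$ is recovered as the proper transform of $Q$ by uniqueness of the flop, which replaces your step (4). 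Your step (4) as stated is also circular: matching relative anticanonical algebras presupposes that the half-anticanonical system of the not-yet-constructed $Y$ restricts to $|{-K_{X_{t}}}|$ on every fiber, including the non-normal ones, which is exactly what needs to be proved.
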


\begin{mainthm}\label{mainthm-P22}
Let $\vp \colon X \to C$ be a sextic del Pezzo fibration and $\vp_{T} \colon T \to C$ the triple covering associated to $\vp$. 
Set $\mc G:=\Cok (\mc O_{C} \to {\vp_{T}}_{\ast}\mc O_{T}) \otimes \mc O(-K_{C})$. 
Then there exists a smooth projective 5-fold $Z$,  an extremal contraction $\vp_{Z} \colon Z \to C$ and a divisor $H_{Z}$ on $Z$ satisfying the following conditions. 
\begin{enumerate}
\item Every smooth fiber of $\vp_{Z}$ is isomorphic to $(\P^{2})^{2}$. 
\item $\mc O_{Z}(K_{Z}+3H_{Z})=\vp_{Z}^{\ast}\det \mc G$ holds. 
\item There exists a section $s \in H^{0}(Z,\mc O_{Z}(H_{Z}) \otimes \vp_{Z}^{\ast}\mc G^{\vee})$ such that $X$ is isomorphic to the zero scheme of $s$. 
\end{enumerate}
\end{mainthm}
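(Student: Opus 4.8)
The plan, parallel to that for Theorem~\ref{mainthm-P13}, is to globalise over $C$ the classical description of a smooth sextic del Pezzo surface $S$ as a codimension-$2$ linear section of the Segre embedding of $\P^{2}\times\P^{2}$: the two contractions $S\to\P^{2}$ of three pairwise disjoint $(-1)$-curves, interchanged by a quadratic Cremona transformation, embed $S$ into $\P^{2}\times\P^{2}$ as the scheme-theoretic intersection of two members of $|\mc O(1,1)|$. By Lemma~\ref{lem-indep} the double covering $\vp_{B}\colon B\to C$ of Definition~\ref{defi-BT} is the covering parametrising the two components of the relative Hilbert scheme of twisted cubics in the $\vp$-fibres; since $\vp$ is a del Pezzo fibration, $\rho(X/C)=1$ forces the monodromy of $\vp$ to act through the Cremona involution, so $B$ is connected and, over $B$, the twisted-cubic class $H_{1}$ and its associated contraction become globally defined.

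First I would construct the ambient $5$-fold. Over the open locus $C^{\circ}\subset C$ where $\vp$ is smooth, $\vp_{B}$ is étale and the relative linear system $|H_{1}|$ on $X_{B^{\circ}}:=X\times_{C}B^{\circ}$ gives a morphism over $B^{\circ}$ onto a $\P^{2}$-bundle $q^{\circ}\colon Q^{\circ}\to B^{\circ}$ with a divisor restricting to $\mc O_{\P^{2}}(1)$ on fibres; extending this (see the last paragraph) one gets a $\P^{2}$-bundle $q\colon Q=\P_{B}(\mc V)\to B$ and such a divisor $M$. Let $\sigma$ be the involution of $B$ over $C$, set $Z_{B}:=Q\times_{B}\sigma^{*}Q$ with the involution over $\sigma$ interchanging the two factors, and put $Z:=Z_{B}/(\Z/2)$, with induced $\vp_{Z}\colon Z\to C$ (equivalently $Z$ is the Weil restriction of $Q/B$ along $\vp_{B}$). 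Then $Z$ is a smooth projective $5$-fold, $\vp_{Z}$ is flat, and for $c\in C^{\circ}$ with $\vp_{B}^{-1}(c)=\{b_{1},b_{2}\}$ one has $Z_{c}\cong Q_{b_{1}}\times Q_{b_{2}}\cong(\P^{2})^{2}$; since $B$ is connected the monodromy of $\vp_{Z}$ interchanges the two $\P^{2}$-factors, so $\rho(Z/C)=1$. This gives (1). Let $H_{Z}$ be the descent to $Z$ of $\pr_{1}^{*}M+\pr_{2}^{*}\sigma^{*}M$; it is $\vp_{Z}$-ample and restricts to $\mc O(1,1)$ on every $Z_{c}$, and once we know (below) that $K_{Z}+3H_{Z}$ is a pullback from $C$ it follows that $-K_{Z}$ is $\vp_{Z}$-ample, so $\vp_{Z}$ is the required extremal contraction.

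Next, the pair $X_{B}\to Q$ and its $\sigma$-conjugate $X_{B}\to\sigma^{*}Q$ assemble into a $B$-morphism $X_{B}\to Z_{B}$ which on each fibre is the embedding $S\hookrightarrow\P^{2}\times\P^{2}$, and which descends to a closed immersion $\iota\colon X\hookrightarrow Z$; twisting $H_{Z}$ by a line bundle from $C$ we normalise so that $\iota^{*}\mc O_{Z}(H_{Z})=\mc O_{X}(-K_{X})$. Pushing the resulting sequence $0\to\mc I_{\iota(X)}\otimes\mc O_{Z}(H_{Z})\to\mc O_{Z}(H_{Z})\to\iota_{*}\mc O_{X}(-K_{X})\to0$ along $\vp_{Z}$ and using that $-K_{X}$ is $\vp$-ample (so $R^{1}\vp_{*}\mc O_{X}(-K_{X})=0$ and base change holds, $H^{0}(\P^{2}\times\P^{2},\mc I_{S_{c}}(1,1))$ being $2$-dimensional by a Koszul computation) shows that $\mc N:=(\vp_{Z})_{*}\bigl(\mc I_{\iota(X)}\otimes\mc O_{Z}(H_{Z})\bigr)$ is locally free of rank $2$ and that the tautological section $s\in H^{0}\bigl(Z,\mc O_{Z}(H_{Z})\otimes\vp_{Z}^{*}\mc N^{\vee}\bigr)$ has zero scheme $\iota(X)$; this is (3) with $\mc G:=\mc N^{\vee}$. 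Then (2) follows from adjunction for the codimension-$2$ complete intersection $X\subset Z$: $K_{Z}+3H_{Z}$ restricts to $0$ on each fibre $(\P^{2})^{2}$, hence equals $\vp_{Z}^{*}\mc D$ for some $\mc D\in\Pic(C)$, and comparing with $K_{X}$ via the normalisation forces $\mc D=\det\mc G$. Finally, to identify $\mc G$ with $\Cok(\mc O_{C}\to(\vp_{T})_{*}\mc O_{T})\otimes\mc O(-K_{C})$ one uses the triple covering: the members of the pencil $\P(\mc G^{\vee}_{c})$ of $\mc O(1,1)$-forms cutting out $S_{c}$ that have rank $\le 2$ are precisely the three pencils of conics on $S_{c}$, hence form the fibre $T_{c}$ (Lemma~\ref{lem-indep}); this exhibits $T$ as a degree-$3$ subscheme of the $\P^{1}$-bundle $\P_{C}(\mc G^{\vee})\to C$, and pushing down its Koszul resolution writes $(\vp_{T})_{*}\mc O_{T}$ as $\mc O_{C}$ plus a twist of $\mc G$, the twist coming out — after tracking the relative dualising sheaves involved — to be $\mc O(-K_{C})$.

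The main obstacle is the behaviour over the finitely many points $c_{0}\in C\setminus C^{\circ}$ where $\vp$ is not smooth. There $S_{c_{0}}$ is a singular, possibly non-normal, surface, so $h^{0}(S_{c_{0}},-K)$, the relative Hilbert scheme of twisted cubics, and the ranks of the direct images above can a priori jump; moreover one cannot appeal to any classification of singular fibres, since that classification is itself an application of this theorem. Instead one must argue directly from the smoothness of the total space $X$ (hence of $X_{B}$) and the $\vp$-ampleness of $-K_{X}$: these force the vanishing of the pertinent first direct images and hence base change, which keeps $q\colon Q\to B$ a $\P^{2}$-bundle across the bad points, keeps $\mc N$ (and thus $\mc G$) locally free of rank $2$ (reflexivity on the smooth curve $C$ then suffices), keeps $\vp_{Z}$ flat with general fibre $(\P^{2})^{2}$ and $\rho(Z/C)=1$, and keeps $\iota(X)$ equal to the scheme-theoretic zero locus of $s$ — in particular a Cartier divisor with no embedded components. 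Carrying out this local analysis at each $c_{0}$, and so verifying that neither the ambient $(\P^{2})^{2}$-fibration nor the embedding of $X$ degenerates there, is the technical heart of the proof.
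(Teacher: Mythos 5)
Your route is genuinely different from the paper's: the paper runs the relative double projection from a $\vp$-section (Proposition~\ref{prop-2ray68}) to produce a quadric fibration $q\colon Q\to C$ with a trisection $T$, builds a rank-$3$ bundle $\mc F$ on $Q$ with $\mc F|_{Q_{t}}\simeq \ms S_{Q_{t}}\oplus\mc O_{Q_{t}}(1)$ for \emph{every} $t$ by a relative Hartshorne--Serre-type extension (Theorem~\ref{thm-univext} plus Proposition~\ref{prop-Spi2}), and obtains $Z$ from $\P_{Q}(\mc F)$ by a family of Atiyah flops and a blow-down of a $\P^{3}$-bundle (Theorem~\ref{thm-constP22}). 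However, your construction of the ambient $5$-fold has a genuine gap at the branch points of $\vp_{B}\colon B\to C$, and such points always exist when $C\simeq\P^{1}$ (by Lemma~\ref{lem-89}~(1) the branch locus is exactly the locus of singular quadric fibres, which is nonempty by the invariant cycle theorem). The involution on $Z_{B}=Q\times_{B}\sigma^{*}Q$ covering $\sigma$ has fixed locus the diagonal $\P^{2}\subset Q_{b}\times Q_{b}$ over each ramification point $b$; this has codimension $3$ in the $5$-fold $Z_{B}$, and the involution acts by $-1$ on a $3$-dimensional summand of the normal space (one direction from $T_{b}B$, two from the antidiagonal). Hence $Z=Z_{B}/(\Z/2)$ acquires a surface of $\tfrac{1}{2}(1,1,1)$-singularities and is not smooth, and its fibre over a branch point is a symmetric-square-type quotient of $\P^{2}\times\P^{2}$ rather than the degeneration that actually occurs there (by Theorem~\ref{thm-singfibP22} that fibre must be $\P^{2,2}$, a cubic in $\P(1^{3},2^{3})$). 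The Weil-restriction reformulation fares no better, since restriction along the non-reduced fibre $\Spec\C[\e]/(\e^{2})$ destroys properness. So your closing paragraph aims at the wrong target: the task at the bad points is not to \emph{verify} that the ambient fibration does not degenerate --- degeneration of $Z_{t}$ is forced at every branch point of $\vp_{B}$ --- but to \emph{build} the correct smooth degeneration, which is precisely what the paper's bundle-theoretic construction supplies uniformly (Proposition~\ref{prop-Spi2} characterizes $\ms S_{Q_{t}}\oplus\mc O_{Q_{t}}(1)$ on singular quadrics as well).

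A secondary point: your identification of $\mc G$ via the rank-drop locus in the pencil of $(1,1)$-forms is a correct heuristic on smooth fibres, but it needs a separate argument at non-normal fibres, where the three conic pencils are not honest pencils. In the paper this identification is free from the construction: $\mc G=R^{1}q_{\ast}\mc I_{T}\otimes\omega_{C}^{-1}$, and the long exact sequence for $0\to\mc I_{T}\to\mc O_{Q}\to\mc O_{T}\to 0$ gives $R^{1}q_{\ast}\mc I_{T}=\Cok(\mc O_{C}\to{\vp_{T}}_{\ast}\mc O_{T})$ directly.
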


\begin{rem}
Note that the sheaf $\mc L$ (resp. $\mc G$) in Theorem~\ref{mainthm-P13} (resp. Theorem~\ref{mainthm-P22}) is invertible (resp. locally free of rank $2$). 
\end{rem}

One of the most different points from the case where the degree is not $6$ is that $\vp_{Y}$ and $\vp_{Z}$ in Theorems~\ref{mainthm-P13} and \ref{mainthm-P22} may have singular fibers, and must have when $C=\P^{1}$ by the invariant cycle theorem (cf. \cite[II, Theorem~4.18]{VoisinBook}). 
We will classify singular fibers of $\vp_{Y}$ and $\vp_{Z}$ in Theorem~\ref{mainthm-singfib}. 
Moreover, as an application of Theorems~\ref{mainthm-P13} and \ref{mainthm-P22}, we will classify singular fibers of sextic del Pezzo fibrations in Subsections~1.4 and 1.5.

\subsubsection{Classification of singular fibers of sextic del Pezzo fibrations}

Let us recall Fujita's result about singular fibers of del Pezzo fibrations \cite{Fujita90}. 
\begin{thm}[{\cite[(4,10)]{Fujita90}}]
Let $\vp \colon X \to C$ be a del Pezzo fibration. 
If $\vp$ is not of degree $6$, then every fiber of $\vp$ is normal. 
\end{thm}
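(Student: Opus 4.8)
The plan is to argue by contradiction: assume $\varphi\colon X\to C$ has a non-normal fiber $F=\varphi^{-1}(c)$ with $d:=(-K_F)^{2}\neq 6$, and derive that $X$ must be singular. First I would record the structure of $F$. Since $C$ is a smooth curve and $X$ is smooth, $F$ is a Cartier divisor on $X$ with $N_{F/X}=\varphi^{*}\mathcal O_{C}(c)\otimes\mathcal O_{F}\cong\mathcal O_{F}$; hence $F$ is Gorenstein, and adjunction gives $\omega_{F}\cong\omega_{X}\otimes\mathcal O_{F}$, so $-K_{F}$ is ample because $-K_{X}$ is $\varphi$-ample. Flatness together with generic smoothness makes the general fiber a smooth del Pezzo surface of degree $d$, so $\chi(\mathcal O_{F})=1$; with $H^{0}(F,\omega_{F})=0$ and $F$ connected this yields $h^{1}(\mathcal O_{F})=h^{2}(\mathcal O_{F})=0$. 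Also $F$ is reduced (a del Pezzo fibration has no multiple fibers, e.g.\ because $X$ is smooth and $\rho(X/C)=1$). Thus $F$ is a reduced, non-normal, Gorenstein del Pezzo surface of degree $d$ with $h^{i}(\mathcal O_{F})=0$ for $i>0$; its non-normal locus $D$ is a curve along which $F$ has multiplicity $m\ge 2$ (generically $m=2$, the worst transverse type being a node or a pinch point).

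Next I would clear away the extreme degrees. If $d\in\{8,9\}$, Mori's structure theorem \cite[Theorem~(3.5)]{Mori82} says that $\varphi$ is a $\P^{2}$-bundle or that $X$ embeds in a $\P^{3}$-bundle over $C$ as a quadric fibration, so all fibers are normal. If $d\in\{1,2\}$, by \cite[(4.1),(4.2)]{Fujita90} $X$ is a relative weighted hypersurface in a weighted projective space bundle over $C$ whose fibers are weighted hypersurfaces with only isolated singularities, hence normal. So assume $3\le d\le 5$; then $-K_{F}$ is very ample and the relative anticanonical map embeds $X$ into the $\P^{d}$-bundle $P:=\P_{C}(\varphi_{*}\mathcal O_{X}(-K_{X}))\to C$, carrying $F$ to an anticanonically embedded, non-normal del Pezzo surface in the fibre $\P^{d}$ over $c$.

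The crux is a local computation along $D\subset F\subset\P^{d}$. Over a small neighborhood $0\in\Delta\cong(C,c)$, write $X=V(I)$ for the relative ideal $I\subset\mathcal O_{P}$: for $d=3$, $I$ is generated by one relative cubic form; for $d=4$, by two relative quadratic forms (\cite[(2.2.1),(2.3.1)]{DSouza88}, \cite[(4.3),(4.4)]{Fujita90}); for $d=5$, by the $4\times 4$ Pfaffians of a relative $5\times 5$ skew-symmetric matrix of linear forms (\cite[Theorem~(3.3)]{Takeuchi09}). The restriction $I|_{t=0}$ is the ideal of $F$, and its generators vanish to order $\ge 2$ along $D$ because $\mathrm{mult}_{D}F\ge 2$; differentiating these generators with respect to the fibre coordinates and to the base parameter $t$, one finds that $X$ is smooth at a point $q\in D$ exactly when the first-order part of the family restricted to $D$ is nonzero at $q$ (together with an open condition on the fibre coordinates that fails only on a proper closed subset of $D$). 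Concretely, for $d=3$ this first-order part is $G_{1}|_{D}\in H^{0}(D,\mathcal O_{D}(3))$; for $d=4$ a short case analysis reduces it to a section of $\mathcal O_{D}(2)$; and for $d=5$ the analogous analysis of the Jacobian of the five Pfaffians along $D$ leads to a section of a line bundle of positive degree on $D$. In every case this is a section of a line bundle of degree $\ge 2\cdot\deg_{\P^{d}}D>0$ on the curve $D$, so it vanishes somewhere, and $X$ is singular there — the desired contradiction.

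I expect the main obstacle to be the degree $5$ case, where — since the quintic del Pezzo surface is not a complete intersection in $\P^{5}$ — the clean "single equation'' analysis must be replaced by controlling the Jacobian of the five Pfaffians along $D$ and checking that the effective twist stays positive; and, more conceptually, identifying the line bundle on $D$ whose nonvanishing section is obstructed and computing its degree from the equations. This is also exactly where the hypothesis $d\neq 6$ is used: for $d=6$ the relative anticanonical image of $X$ in $\P_{C}(\varphi_{*}\mathcal O_{X}(-K_{X}))$ is no longer a relative complete intersection (a smooth sextic del Pezzo surface is cut out by quadrics in $\P^{6}$ but is not a complete intersection), the conormal sheaf along $D$ picks up a negative twist from the extra syzygies, the relevant degree on $D$ drops to a non-positive value, and a nowhere-vanishing first-order smoothing becomes possible — which is precisely the mechanism producing the non-normal sextic fibers constructed later in this paper. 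A more laborious alternative, avoiding the embedding, would be to classify all reduced Gorenstein del Pezzo surfaces of degree $\le 5$ with $h^{1}(\mathcal O)=0$ via the conductor formula $K_{\overline F}+\overline D=\nu^{*}K_{F}$ (which forces $\overline F\in\{\P^{2},\ \text{Hirzebruch surfaces},\ \text{generalized cones}\}$ and $\overline D$ a union of rational curves) and then check directly that none of them deforms to a smooth surface with smooth total space; the bookkeeping there, especially at the pinch points, is the real work.
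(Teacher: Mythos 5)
First, a point of comparison: the paper does not prove this statement at all --- it is quoted as a known result from \cite[(4.10)]{Fujita90} --- so there is no in-paper argument to measure yours against. On its own terms, your strategy is the right one: dispose of $d\ge 7$ by Mori's structure theorem, and for the remaining degrees use the relative models of $X$ in a projective bundle to show that a fiber $F$ singular along a curve $D$ forces $X$ itself to be singular at a zero of a section of a positive-degree line bundle on $D$. Your reduction of normality to smoothness of the total space, the reducedness of fibers via the section, and the $d=3$ computation (the first-order term $G_{1}|_{D}\in H^{0}(D,\mathcal O_{D}(3))$ must vanish somewhere since $\deg_{\P^{3}}D>0$) are all correct and complete.

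There are, however, two genuine gaps. For $d\in\{1,2\}$ you dismiss the case by asserting that the fibers of the weighted hypersurface model have ``only isolated singularities, hence normal''; but the $0$-dimensionality of the singular locus of the fiber is exactly what has to be proved, so as written this step is circular (it can be repaired by running the same first-order computation in the weighted model, after checking that $D$ avoids the singular locus of the weighted projective space, but that is not in the proposal). More seriously, for $d=5$ your only structural input is Takeuchi's Pfaffian description, which --- as this very paper emphasizes --- is merely \emph{claimed} in \cite{Takeuchi09}, without proof, and only for $C=\P^{1}$; a proof of the general statement cannot rest on it. Even granting it, the Jacobian analysis of the five Pfaffians along $D$ is not a complete-intersection computation, you have not identified the line bundle on $D$ whose section is obstructed nor computed its degree, and you yourself flag this as ``the real work.'' (The $d=4$ case is also only sketched --- one must separate the points of $D$ where the fiber Jacobian of the two quadrics has rank $0$, where $X$ is automatically singular, from those of rank $1$, where the obstruction is a section of $\mathcal O_{D}(2)$ twisted by the pencil map --- but that part is routine.) So the proposal is a correct plan with a complete execution only for $d\notin\{1,2,4,5\}$; the degree $5$ case in particular is not reduced to anything actually established.
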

However, singular fibers of sextic del Pezzo fibrations was yet to be classified. 
Indeed, Fujita proposed the following question. 
\begin{ques}[{\cite[(3,7)]{Fujita90}}]\label{ques-nonnormal}
Does there exist del Pezzo fibrations of degree $6$ containing non-normal fibers?
\end{ques}

Another main result of this paper is a classification of singular fibers of sextic del Pezzo fibrations $\vp \colon X \to C$. 
For the proof, we will use the embeddings $X \hra Y$ and $X \hra Z$ as in Theorems~\ref{mainthm-P13} and \ref{mainthm-P22}. 
In summary, we will show the following theorem.

\begin{mainthm}\label{mainthm-singfib}
Let $\vp \colon X \to C$ be a sextic del Pezzo fibration. 
Let $\vp_{B} \colon B \to C$ and $\vp_{T} \colon T \to C$ be the coverings associated to $\vp$. 
Let  $X \hra Y$ and $X \hra Z$ be the embeddings as in Theorems~\ref{mainthm-P13} and \ref{mainthm-P22} respectively. 
For $t \in C$, we set $B_{t}:=\vp_{B}^{-1}(t)$ and $T_{t}:=\vp_{T}^{-1}(t)$. 

Then for every $t \in C$, the numbers $(\#(B_{t})_{\red},\#(T_{t})_{\red})$ determines the isomorphism classes of $Y_{t}$, $Z_{t}$ and the possibilities of those of $X_{t}$ as in Table~1. 

\begin{table}[h]\label{table-main}
\begin{tabular}{|c|c||c|c|c|}
\hline 
 $\#(B_{t})_{\red}$ & $\#(T_{t})_{\red}$  &$X_{t}$& $Y_{t}$ &$Z_{t}$ \\ 
\hline \hline 
$2$ & $3$  & (2,3)&$(\P^{1})^{3}$& $(\P^{2})^{2}$ \\ 
\hline
$2$ & $2$ &(2,2)&$\P^{1} \times \Q^{2}_{0}$&$(\P^{2})^{2}$ \\
\hline
$2$ & $1$ & (2,1)&$\P^{1,1,1}$& $(\P^{2})^{2}$ \\
\hline
$1$ & $3$ &(1,3)&$(\P^{1})^{3}$& $\P^{2,2}$\\
\hline
$1$ & $2$&(1,2) or (n2)&$\P^{1} \times \Q^{2}_{0}$& $\P^{2,2}$\\
\hline
$1$ & $1$ &(1,1) or (n4)& $\P^{1,1,1}$&$\P^{2,2}$ \\
\hline
\end{tabular}
\caption{The singular fibers of $\vp$, $\vp_{Y}$ and $\vp_{Z}$}
\end{table}

For the definitions of ($i$,$j$), (n2) and (n4), we refer to Theorem~\ref{thm-GordP6}. 
$\Q^{2}_{0}$ denotes the quadric cone. 
For the definition of $\P^{1,1,1}$ (resp. $\P^{2,2}$), we refer to Definition~\ref{defi-111} (resp. Definition~\ref{defi-22}).

In particular, if $X_{t}$ is normal, then the isomorphism class of $X_{t}$ is determined by the pair $(\#(B_{t})_{\red},\#(T_{t})_{\red})$ and the number of lines in $X_{t}$ is equals to $\#(B_{t})_{\red} \times \#(T_{t})_{\red}$. 

\end{mainthm}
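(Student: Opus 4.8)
The plan is to push the fibrewise study of $\vp$ through the two embeddings $X\hookrightarrow Y$ and $X\hookrightarrow Z$ of Theorems~\ref{mainthm-P13} and~\ref{mainthm-P22}, and then to read off the answer by comparing with the classification of Theorem~\ref{thm-GordP6}. First I would settle the columns $Y_t$ and $Z_t$. The constructions of $Y$ and $Z$ in the proofs of Theorems~\ref{mainthm-P13} and~\ref{mainthm-P22} are local over $C$ --- concretely, $Y$ is a relative Weil restriction along $\vp_T$ of a $\P^1$-bundle on $T$, and $Z$ a relative Weil restriction along $\vp_B$ of a $\P^2$-bundle on $B$ --- so on examining them one finds that $Y_t$ is determined by the completed local structure of $\vp_T$ over $t$, and $Z_t$ by that of $\vp_B$ over $t$. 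Since $\vp_T$ (resp. $\vp_B$) is a finite morphism of smooth curves of degree $3$ (resp. $2$) and we work in characteristic $0$, this local structure is encoded by the ramification profile over $t$: $T\times_C\Spec k[[u]]$ is $(\Spec k[[s]])^{\sqcup 3}$, or $\Spec k[[s]]\sqcup\Spec k[[s]]$ with $u\mapsto s^{2}$ on one factor, or $\Spec k[[s]]$ with $u\mapsto s^{3}$, exactly according as $\#(T_t)_{\red}$ is $3$, $2$, $1$; likewise $\#(B_t)_{\red}\in\{2,1\}$ separates the unramified from the ramified local form of $\vp_B$. Computing the Weil restriction in each case, and identifying the degenerate outputs with $\Q^2_0$ and with the varieties of Definitions~\ref{defi-111} and~\ref{defi-22}, produces the fourth and fifth columns of Table~1.

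Next I would determine the third column. The fibre $X_t=\vp^{-1}(t)$ coincides with the divisor $X|_{Y_t}$ on $Y_t$; since $X$ is smooth, $X_t$ is a Cartier divisor in the smooth threefold $X$, hence connected, Gorenstein, with only hypersurface singularities, and $-K_{X_t}=-K_X|_{X_t}$ is ample with $(-K_{X_t})^2=6$. Thus $X_t$ is one of the surfaces of Theorem~\ref{thm-GordP6}. By Theorem~\ref{mainthm-P13}(3) it is moreover a member of $|\mc O_{Y_t}(H_Y|_{Y_t})|$ on the threefold $Y_t$ found above, and by Theorem~\ref{mainthm-P22}(3) it is simultaneously a codimension-$2$ linear section of the fourfold $Z_t$ with respect to $H_Z|_{Z_t}$. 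For each of the three possible $Y_t$ I would list which surfaces of Theorem~\ref{thm-GordP6} arise as such a hyperplane section --- discarding members of the system incompatible with $X$ being smooth --- do the same for each of the two possible $Z_t$ with the codimension-$2$ sections, and intersect the two lists; the result is the third column. That in rows $5$ and $6$ this intersection contains two isomorphism types, both of which are actually realized --- so that non-normal fibres do occur and Question~\ref{ques-nonnormal} has an affirmative answer --- I would confirm by writing down explicit sextic del Pezzo fibrations exhibiting each.

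For the two concluding assertions: in every row of Table~1 the surface labelled $(\#(B_t)_{\red},\#(T_t)_{\red})$ is the unique normal entry of the third column, so a normal $X_t$ is determined by the pair. The number of lines on the surface $(i,j)$ equals $ij$; this may be read off from the explicit description of $(i,j)$ in Theorem~\ref{thm-GordP6}, or obtained more structurally by showing that the relative Hilbert scheme of lines $\ms{L}\to C$ of $X/C$ is finite of degree $6$ and --- using the hexagon of $(-1)$-curves on a sextic del Pezzo surface and its relation to $\vp_B$ and $\vp_T$ (cf. Lemma~\ref{lem-indep}) --- that $\ms{L}\cong B\times_C T$ over $C$; then whenever $X_t$ is normal the fibre $\ms{L}_t$ is reduced, so $X_t$ contains exactly $\#(B_t\times_C T_t)_{\red}=\#(B_t)_{\red}\cdot\#(T_t)_{\red}$ lines.

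The step I expect to be the crux is the determination of the third column. The two embeddings contribute, separately, the data of the \emph{a priori} independent coverings $\vp_B$ and $\vp_T$, and pinning down the isomorphism type of $X_t$ requires genuinely combining both ambient presentations of $X_t$ --- together with the global smoothness of $X$ --- rather than merely restricting $\vp$ over $t$ or using a single embedding; the delicate points are controlling the non-general members of $|\mc O_{Y_t}(H_Y|_{Y_t})|$ and of the codimension-$2$ sections of $Z_t$, and verifying that precisely the two possibilities survive in the last two rows.
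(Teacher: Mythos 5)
Your determination of the fourth and fifth columns rests on the assertion that $Y$ is the relative Weil restriction along $\vp_{T}$ of a $\P^{1}$-bundle on $T$ and that $Z$ is the relative Weil restriction along $\vp_{B}$ of a $\P^{2}$-bundle on $B$. That is not how $Y$ and $Z$ are built (Theorems~\ref{thm-constP13} and \ref{thm-constP22} obtain them from a $\P^{3}$-bundle, resp.\ from $\P_{Q}(\mc F)$, by blow-up, flop and blow-down), and the assertion cannot be repaired: Weil restriction along a finite locally free morphism takes smooth schemes to smooth schemes, so every fiber of such a construction would be smooth, whereas Table~1 requires the \emph{singular} fibers $\P^{1}\times\Q^{2}_{0}$, $\P^{1,1,1}$ and $\P^{2,2}$ over the branch points (over a ramification point of $\vp_{T}$ the Weil restriction of a $\P^{1}$-bundle is in fact not even proper). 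Hence ``computing the Weil restriction in each case'' cannot produce the $Y_{t}$- and $Z_{t}$-columns. The paper obtains the $Z_{t}$-column by reducing to Fujita's classification of degenerate fibers of $(\P^{2})^{2}$-fibrations (\cite[(4.6)]{Fujita90}, via Theorem~\ref{thm-singfibP22}: $Z_{t}$ is singular iff $Q_{t}$ is, iff $\#(B_{t})_{\red}=1$), and the $Y_{t}$-column by an explicit birational analysis of $\Bl_{\Sigma}\P^{3}$ for each of the three isomorphism types of the length-$3$ scheme $\Sigma=T_{t}$ (Proposition~\ref{prop-singfibP13}); this occupies most of Section~6 and there is no shortcut through a fibrewise functorial construction. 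Note also that reducing $Y_{t}$ to the \emph{abstract} scheme $T_{t}$ already uses a global input, namely that $T_{t}\subset\F_{t}=\P^{3}$ is never colinear (Claim~\ref{claim-noncoli}), which comes from the nefness of $-K_{\Bl_{T}Q}$ over $C$.

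Your treatment of the third column --- intersecting the list of prime hyperplane sections of $Y_{t}$ through the marked smooth point with the list of codimension-$2$ linear sections of $Z_{t}$ --- does match the paper's strategy (Lemma~\ref{lem-linsecP22} shows a Du Val codimension-$2$ section of $(\P^{2})^{2}$, resp.\ $\P^{2,2}$, has first index $2$, resp.\ $1$; Proposition~\ref{prop-hypsecP13} classifies the hyperplane sections of $Y_{t}$, including the non-normal ones, and pins down the second index), and you rightly flag it as the delicate step; the line count $\#(B_{t})_{\red}\times\#(T_{t})_{\red}$ is then read off from Table~2 once the type is known. But all of this sits downstream of the $Y_{t}$/$Z_{t}$ columns, so as written the argument does not close.
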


As applications of Theorems~\ref{mainthm-inv}~and~\ref{mainthm-singfib}, we have the following properties of a sextic del Pezzo fibration $\vp \colon X \to C$. 

\setcounter{mainthm}{5}
\begin{maincor}\label{maincor-inv}
Let $\vp \colon X \to C$ be a sextic del Pezzo fibration. 
\begin{enumerate}
\item It holds that $(-K_{X})^{3} \leq 22$ and $(-K_{X})^{3} \neq 20$. 
\item If $(-K_{X})^{3}>0$, then $C \simeq \P^{1}$. In particular, $X$ is rational since $\vp$ admits a section (cf. \cite[Theorem~4.2]{Manin66} and \cite{Swinnerton-Dyer72}). 
\item It holds that $(-K_{X/C})^{3} \leq 0$ and the equality holds if and only if $\vp$ is a smooth morphism. 
\end{enumerate}
\end{maincor}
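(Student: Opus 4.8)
The plan is to deduce all three assertions from the genus formula $(-K_X)^3 = 22 - (6g(B)+4g(T)+12g(C))$ of Theorem~\ref{mainthm-inv}(2), combined with the Riemann--Hurwitz formula for the two associated coverings and with the fibrewise classification of Theorem~\ref{mainthm-singfib}. As a preliminary, recall that $\vp_B\colon B\to C$ and $\vp_T\colon T\to C$ are coverings of (connected) smooth projective curves of degrees $2$ and $3$; so Riemann--Hurwitz gives
\[
g(B) = 2g(C) - 1 + \tfrac{1}{2}\deg R_B \ge 2g(C) - 1, \qquad g(T) = 3g(C) - 2 + \tfrac{1}{2}\deg R_T \ge 3g(C) - 2,
\]
where $R_B, R_T$ are the ramification divisors and equality in either bound holds exactly when the corresponding covering is étale. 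I would also record the elementary fact that a degree-$2$ (resp.\ degree-$3$) covering of smooth curves is étale over a point $t$ precisely when $\#(B_t)_{\red}=2$ (resp.\ $\#(T_t)_{\red}=3$).

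Part (1) is then immediate: non-negativity of $g(B), g(T), g(C)$ gives $(-K_X)^3\le 22$, and $(-K_X)^3 = 20$ would force $6g(B)+4g(T)+12g(C)=2$, which is impossible since $6g(B)$ and $12g(C)$ are non-negative multiples of $6$ (hence $0$) while $4g(T)\ne 2$. For part (2), if $(-K_X)^3>0$ then $12g(C)\le 6g(B)+4g(T)+12g(C)<22$, so $g(C)\le 1$; and if $g(C)=1$ the Riemann--Hurwitz bounds give $g(B)\ge 1$ and $g(T)\ge 1$, whence $6g(B)+4g(T)+12g(C)\ge 22$, a contradiction. Hence $g(C)=0$, i.e.\ $C\simeq\P^{1}$, and the rationality of $X$ then follows as asserted, since $\vp$ admits a section (the generic fibre is a degree-$6$ del Pezzo surface over $\C(t)$, which thereby acquires a rational point and is rational) by the cited results of Manin and Swinnerton-Dyer.

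For part (3), write $-K_{X/C} = -K_X + \vp^{\ast}K_C$. Since $(\vp^{\ast}K_C)^2 = 0$ and, by adjunction, $(-K_X)^2\cdot F = (-K_F)^2 = 6$ for a general fibre $F$, expanding the cube gives $(-K_{X/C})^3 = (-K_X)^3 + 3(-K_X)^2\cdot\vp^{\ast}K_C = (-K_X)^3 + 18(2g(C)-2)$; substituting Theorem~\ref{mainthm-inv}(2) yields $(-K_{X/C})^3 = -14 - 6g(B) - 4g(T) + 24g(C)$. Feeding in the two Riemann--Hurwitz bounds gives $(-K_{X/C})^3 \le -14 + 6 + 8 = 0$, with equality if and only if $g(B) = 2g(C)-1$ and $g(T) = 3g(C)-2$, i.e.\ if and only if both $\vp_B$ and $\vp_T$ are étale, i.e.\ (by the preliminary remark) if and only if $(\#(B_t)_{\red},\#(T_t)_{\red}) = (2,3)$ for every $t\in C$. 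By Theorem~\ref{mainthm-singfib}, together with the fact from Theorem~\ref{thm-GordP6} that the fibre type $(2,3)$ is exactly the smooth one, this holds if and only if every fibre of $\vp$ is smooth, i.e.\ $\vp$ is a smooth morphism.

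This corollary is essentially a formal consequence of Theorems~\ref{mainthm-inv} and~\ref{mainthm-singfib}; the only step that is not pure arithmetic is the last equivalence of part (3), where one must pass from the ramification behaviour of $\vp_B, \vp_T$ to the smoothness of the $\vp$-fibres via the fibre classification. A minor point to verify at the outset is that $B$ and $T$ are connected (which is built into Definition~\ref{defi-BT}), since the characterisation of étaleness through the Riemann--Hurwitz equalities relies on it.
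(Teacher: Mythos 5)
Your proposal is correct and follows essentially the same route as the paper: parts (1) and (2) are the same arithmetic the paper leaves implicit ("follow immediately" from the genus formula), and for part (3) your computation $(-K_{X/C})^{3}=24g(C)-(6g(B)+4g(T)+14)$ together with the Riemann--Hurwitz bounds is exactly the paper's identity $(-K_{X/C})^{3}=-(3\deg R_{B}+2\deg R_{T})\leq 0$, with the equality case settled via Theorem~\ref{mainthm-singfib} in both arguments. Your flagging of the connectedness of $B$ and $T$ (needed for the Hurwitz estimates) is a sensible precaution that the paper does not spell out.
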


\subsubsection{Existence of non-normal fibers}

We will give an answer to Question~\ref{ques-nonnormal} by presenting sextic del Pezzo fibrations with non-normal fibers. 
More precisely, as a consequence of Examples \ref{ex-(2,j)}, \ref{ex-(1,j)} and \ref{ex-nonnormal}, we will show the following theorem. 
\begin{mainthm}\label{mainthm-ex}
Let $X_{0}$ be a sextic Gorenstein del Pezzo surface, which is possibly non-normal. 
Suppose that $X_{0}$ is not a cone over an irreducible curve of arithmetic genus $1$. 
Then there exists a sextic del Pezzo fibration $\vp \colon X \to C$ containing $X_{0}$. 
\end{mainthm}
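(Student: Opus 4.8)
The plan is to realize the prescribed Gorenstein del Pezzo surface $X_{0}$ as a fiber of a family built from the ambient fibrations $Y$ and $Z$ appearing in Theorems~\ref{mainthm-P13} and \ref{mainthm-P22}. The basic strategy is a deformation/extension argument: we start with the simplest sextic del Pezzo fibration over $C=\P^{1}$, namely a product $X' = S_{6} \times \P^{1} \to \P^{1}$ with $S_{6}$ a smooth sextic del Pezzo surface, together with its associated trivial coverings $B = \P^{1} \sqcup \P^{1}$ and $T = \P^{1} \sqcup \P^{1} \sqcup \P^{1}$; then we degenerate one fiber to the desired $X_{0}$ by a one-parameter deformation inside the appropriate ambient Mori fiber space, checking that smoothness of the total space and the extremal contraction property are preserved.

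First I would split the argument according to which ambient model is most convenient for the given $X_{0}$. By the classification of Gorenstein sextic del Pezzo surfaces (the ($i$,$j$), (n2), (n4) list referenced in Theorem~\ref{thm-GordP6}), and by Theorem~\ref{mainthm-singfib}, every $X_{0}$ that is \emph{not} a cone over a genus-$1$ curve arises as a hyperplane section of one of $(\P^{1})^{3}$, $\P^{1}\times\Q^{2}_{0}$, $\P^{1,1,1}$ (via $Y$), or as a codimension-$2$ linear section of one of $(\P^{2})^{2}$, $\P^{2,2}$ (via $Z$). So it suffices to produce, for each such $X_{0}$, a smooth $4$-fold $Y \to \P^{1}$ (or $5$-fold $Z \to \P^{1}$) with the stated singular fibers, carrying a divisor $H_{Y}$ (or a rank-$2$ bundle) whose general section cuts out a sextic del Pezzo fibration whose fiber over $0$ is exactly $X_{0}$. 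Concretely, for the $Y$-case I would take $Y = \P_{\P^{1}}(\mc F)$ for a suitable rank-$4$ bundle $\mc F$ — or more precisely a flat family of the threefolds $(\P^{1})^{3}$, $\P^{1}\times\Q^{2}_{0}$, $\P^{1,1,1}$ degenerating at $t=0$ — and then choose the extending divisor in $|\mc O(H_{Y})\otimes\vp_{Y}^{\ast}\mc L^{-1}|$ so that its restriction to $Y_{0}$ is the given hyperplane section $X_{0}$; a Bertini-type argument makes the total space $X$ smooth away from a controlled locus, and a direct local computation handles smoothness along $X_{0}$.

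The key steps, in order, are: (1) invoke Theorem~\ref{thm-GordP6} to list the $X_{0}$'s and, for each, record its realization inside one of the five threefold/surface models; (2) construct the relevant ambient fibration $Y \to \P^{1}$ or $Z \to \P^{1}$ with the prescribed fiber at $0$ — this is the content of Examples~\ref{ex-(2,j)}, \ref{ex-(1,j)}, \ref{ex-nonnormal} — explicitly as a projective bundle twisted so that the degeneration matches; (3) choose the linear-section data ($H_{Y}$-member, resp. section of $\mc O(H_{Z})\otimes\vp_{Z}^{\ast}\mc G^{\vee}$) restricting to $X_{0}$ over $0$ and general elsewhere; (4) verify that $X$ is smooth, that $\vp = \vp_{Y}|_{X}$ (resp. $\vp_{Z}|_{X}$) is a del Pezzo fibration of degree $6$, i.e.\ $-K_{X}$ is $\vp$-ample with $\rho(X/\P^{1})=1$; and (5) confirm $X_{0}$ is indeed the fiber over $0$. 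The adjunction formulae in Theorems~\ref{mainthm-P13}(2) and \ref{mainthm-P22}(2) give $-K_{X}$ on the nose, and $\rho(X/C)=1$ follows because a general fiber is a smooth sextic del Pezzo surface and the fibration admits a section.

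The main obstacle I expect is step (4) near the special fiber: ensuring that the total $3$-fold $X$ is \emph{smooth} even though the ambient fiber $Y_{0}$ (or $Z_{0}$) is singular — e.g.\ $\P^{1}\times\Q^{2}_{0}$ has a singular locus, and $X_{0}$ is required to pass through or near it — and simultaneously that $X_{0}$ is the scheme-theoretic fiber, not a larger non-reduced scheme. The excluded case (cone over a genus-$1$ curve) is precisely the one where this fails: such a surface has a too-deep singularity and cannot be smoothed transversally inside any of these ambient models, which is why the hypothesis is needed. I would handle the permissible cases by explicit local coordinates on each of the five ambient models near the bad locus, writing $X_{0}$ as $\{f_{0}=0\}$ and the family as $\{f_{0}+t g=0\}$, and checking the Jacobian criterion for the total space by a direct computation; the genus-$1$-cone exclusion then emerges transparently as the unique obstruction to this local smoothing.
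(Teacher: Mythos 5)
Your overall target is right --- by the classification (Theorem~\ref{thm-GordP6} together with Remark~\ref{rem-nocone}) it suffices to exhibit one fiber of each type $(i,j)$, (n2), (n4) --- but the route you propose has two genuine gaps. First, $\rho(X/C)=1$ does \emph{not} follow from ``a general fiber is a smooth sextic del Pezzo surface and the fibration admits a section'': your own starting example $S_{6}\times\P^{1}\to\P^{1}$ has $\rho(X/C)=4$ and is therefore not a sextic del Pezzo fibration in the sense of this paper (it is not an extremal contraction). Establishing that the constructed $3$-fold really is an extremal contraction is a substantive step, not a formality. Second, the crux of your step (4) --- smoothness of the total space $X$ --- is exactly where Bertini gives you nothing: you must force the member of $|H_{Y}\otimes\vp_{Y}^{\ast}\mc L^{-1}|$ to cut the \emph{prescribed} singular surface $X_{0}$ out of the \emph{singular} ambient fiber $Y_{0}$, so genericity is unavailable at the points that matter, and the promised ``local Jacobian computation'' is the entire content of the proof, not a routine verification. (A smaller error: $Y$ is not a $\P^{3}$-bundle $\P_{\P^{1}}(\mc F)$; the moderate $(\P^{1})^{3}$-fibration is produced from a $\P^{3}$-bundle by a blow-up, flop and blow-down, so you would in any case be reduced to choosing the input data $(\F,T)$.)

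The paper sidesteps both problems by building $X$ from the quadric side rather than cutting it out of $Y$ or $Z$. One fixes a twisted cubic $\ol{T}$ on a smooth hyperplane section $\Q^{2}$ of a smooth quadric $\Q^{3}\subset\P^{4}$, chooses a smooth conic $C\subset\Q^{3}$ disjoint from $\ol{T}$, and blows up $C$ to get a quadric fibration $q\colon Q=\Bl_{C}\Q^{3}\to\P^{1}$ carrying the trisection $T=\t_{\ast}^{-1}\ol{T}$. The inverse of the relative double projection (\cite[Proposition~3.5]{Fukuoka17}) then returns a \emph{smooth} $3$-fold $X$ with an honest sextic del Pezzo fibration structure --- smoothness and $\rho(X/C)=1$ come for free from the birational machinery --- and Theorem~\ref{thm-singfibdP6} converts the easily controlled local data $(\#(B_{t})_{\red},\#(T_{t})_{\red},\,\Sing Q_{t}\cap T_{t})$ into the type of the fiber $X_{t}$. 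The whole construction then reduces to elementary projective geometry: choosing the conic $C$ (equivalently, the two tangency points $v_{1},v_{2}$ of Claim~\ref{claim-tang}) so that a prescribed hyperplane section of $\Q^{3}$ meets $\ol{T}$ in the desired configuration. Finally, note that the excluded cone case is ruled out not by a smoothing obstruction but by an embedding obstruction: its vertex has $6$-dimensional Zariski tangent space, so it cannot lie on any smooth $3$-fold at all.
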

In particular, Theorem~\ref{mainthm-ex} gives an affirmative answer for Question~\ref{ques-nonnormal}. 

\subsubsection{Relative double projection}
Our proof of the main results are based on the \emph{relative double projection} from a section of $\vp$. 
This is a relativization of the \emph{double projection} from a general point $x$ of a sextic del Pezzo surface $S$, which is given as follows. 
Under the embedding $S \hra \P^{6}$ given by the anti-canonical system, we consider the projection $\P^{6} \dra \P^{3}$ from the tangent plane $\mathbb{T}_{x}S=\P^{2} \subset \P^{6}$ of $S$ at $x$. 
The proper image of $S$ under this birational map is a smooth quadric surface $\Q^{2}$ and 
the map $S \dra \Q^{2}$ is birational. 
This birational map is what is called the \emph{double projection} from the point $x$ on $S$. 
In Proposition~\ref{prop-2ray68}, we will make a relativization of this birational map $S \dra \Q^{2}$ for a sextic del Pezzo fibration. 

\subsection{Organization of this paper}
We organize this paper as follows. 

In Section~2, we will establish a relativization of the double projection from a point on a sextic del Pezzo surface (=Proposition~\ref{prop-2ray68}). 

In Section~3, we will collect some preliminary results for quadric fibrations to define the associated coverings and prove Theorem~\ref{mainthm-inv}. 
Furthermore, we will see the following two statements: a characterization of a certain nef vector bundle of rank $3$ on a quadric surface (=Proposition~\ref{prop-Spi2}), and a variant of the Hartshorne-Serre correspondence on a family of surfaces with a multi-section (=Theorem~\ref{thm-univext}). 
These two statements will be necessary for our proving Theorem~\ref{mainthm-P22}. 
As Theorem~\ref{thm-univext} is formulated in a slightly general form, we will postpone its proof to Appendix~\ref{app-relext}.

In Section~4 and 5, we will prove Theorems~\ref{mainthm-P13} and \ref{mainthm-P22} respectively. 

In Section~6, using Theorems~\ref{mainthm-P13} and \ref{mainthm-P22}, we will classify the singular fibers of the sextic del Pezzo fibrations and prove Theorem~\ref{mainthm-singfib} and Corollary~\ref{maincor-inv}. 

In Section~7, we will prove Theorem~\ref{mainthm-ex} by using results in Section~6 and giving explicit examples. 

In Section~8, we will mention some recent related works. 
Since 2016, families of sextic del Pezzo surfaces have been studied more actively in different contexts from this paper (cf. \cite{AHTVA16,Kuznetsov17}). 
We will compare some results to our main results. 

\subsection{Notation and Conventions}\label{NC}
Throughout of this paper, we work over the complex number field $\C$. 
We basically adopt the terminology of \cite{Hartshorne77,KM98}. 

(1) We use the following notation for a surjective morphism $f \colon X \to Y$ among varieties $X$ and $Y$. 
\begin{itemize}
\item If $t \in Y$ is a point, then $X_{t}$ stands for the scheme-theoretic fiber $f^{-1}(t)$ of $t$ under the morphism $f$. 
\item For Cartier divisors $D$ and $D'$ on $X$, $D \sim_{Y} D'$ or $D \sim_{f} D'$ means that there exists a Cartier divisor $A$ on $Y$ such that $D \sim D' + f^{\ast}A$. 
\end{itemize}


(2) For a product $X_{1} \times X_{2}$, $\pr_{i}$ denotes the $i$-th projection. 

(3) For a closed subscheme $Y \subset X$, the ideal sheaf of $Y$ is denoted by $\mc I_{Y/X}$ or $\mc I_{Y}$. 
The normal sheaf is denoted by $\mc N_{Y/X}$. 

(4) For a locally free sheaf $\mc E$ on $X$, we set $\P_{X}(\mc E)=\Proj_{X} \Sym \mc E$. 
We say that $\mc E$ is nef if $\mc O_{\P(\mc E)}(1)$ is nef. 

(5) For $n \in \Z_{\geq 0}$, the Hirzebruch surface is defined to be $\F_{n}:=\P_{\P^{1}}(\mc O \oplus \mc O(n))$. $h$ denotes a tautological divisor, $f$ a fiber of $\F_{n} \to \P^{1}$ and $C_{0} \in |h-nf|$ the negative section of $\F_{n}$. 


(6) For a smooth projective curve $C$, $g(C)$ denotes the genus of $C$ and $\Jac(C)$ the Jacobian of $C$. 

(7) For a smooth projective 3-fold $X$, $\mc J(X)$ denotes the intermediate Jacobian of $X$. When $h^{1,0}(X)=0$, we regard $\mc J(X)$ as a principally polarized abelian variety as in \cite{CG72}. 

(8) $\Q^{n}$ denotes the non-singular quadric hypersurface in $\P^{n+1}$. 

(9) $\Q^{2}_{0}$ denotes a singular quadric in $\P^{3}$ with an ordinary double point. 

(10) On $\P^{m} \times \P^{n}$, $\mc O(a,b)$ is defined to be $\pr_{1}^{\ast}\mc O_{\P^{m}}(a) \otimes \pr_{2}^{\ast}\mc O_{\P^{n}}(b)$. 

(11) For an irreducible and reduced quadric surface $Q \subset \P^{3}$, $\mc O_{Q}(1)$ denotes the very ample line bundle with respect to the embedding. 
Moreover, under a fixed linear embedding $Q \hra \Q^{3}$, we define
\[\ms S_{Q}:=\ms S_{\Q^{3}}|_{Q} \]
where $\ms S_{\Q^{3}}$ is the globally generated spinor bundle on $\Q^{3}$ (see \cite{Kapranov88}). 
In this paper, we follow Kapranov's definition for the spinor bundle, which is the dual of what was defined by Ottaviani in \cite{Ottaviani88}. 

(12) We say that $\vp \colon X \to C$ is a \emph{del Pezzo fibration} if $\vp$ is an extremal contraction from a non-singular projective 3-fold $X$ onto a smooth projective curve $C$, i.e., $-K_{X}$ is $\vp$-ample and $\rho(X/C)=1$, as in Subsection~1.1. 

(13) We say that $q \colon Q \to C$ is a \emph{quadric fibration} if $q$ is a del Pezzo fibration of degree $8$. In particular, we assume that $Q$ is a smooth projective $3$-fold and $\rho(Q/C)=1$. The general fiber of $q$ is actually isomorphic to $\Q^{2}$ by the following theorem, which we will use throughout this paper: 

\begin{thm}[\cite{Manin66,Mori82}]
For a del Pezzo fibration $\vp \colon X \to C$, the following assertions hold.
\begin{enumerate}
\item $\vp$ has a section. 
\item If $\vp$ is of degree $9$, then $\vp$ is a $\P^{2}$-bundle. 
\item If $\vp$ is of degree $8$, then every $\vp$-fiber is an irreducible and reduced quadric surface in $\P^{3}$. 
Moreover, there exists an embedding into a $\P^{3}$-bundle $f \colon \F \to C$ with a tautological divisor $H$ such that $Q \in |2H+f^{\ast}L|$ for some $L \in \Pic(C)$. 
\item There exists an exact sequence $0 \to \Pic(C) \overset{\vp^{\ast}}{\to}\Pic(X) \overset{{(-.l)}}{\to} \Z \to 0$, where $l$ is a line in general $\vp$-fiber. 
\end{enumerate}
\end{thm}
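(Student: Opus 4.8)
The plan is to treat the four assertions in turn, using (1) as the foundation, since (2)--(4) all exploit the resulting section together with the hypotheses that $-K_X$ is $\vp$-ample and $\rho(X/C)=1$. For (1) I would pass to the generic fibre $X_\eta$, a smooth del Pezzo surface over the function field $K=\C(C)$. As $\C$ is algebraically closed, $K$ is a $C_1$ field by Tsen's theorem, and a del Pezzo surface over a $C_1$ field carries a $K$-rational point (Manin's theorem; alternatively $X_\eta$ is rationally connected, so a section exists by Graber--Harris--Starr). A $K$-point is the same datum as a rational section $C \dra X$, and since $C$ is a smooth curve and $\vp$ is proper, the valuative criterion extends it to a genuine morphism $s\colon C\to X$ with $\vp\circ s=\id_C$.

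For (2) and (3) I would produce a relatively very ample line bundle of the correct fibre degree and thereby realise $X$ inside a projective bundle. In degree $9$ the general fibre is a Severi--Brauer surface over $K$ which, having a $K$-point by (1) (indeed $\Br(\C(C))=0$), is $\P^2_K$; hence $\Pic(X_\eta)\cong\Z$, and, $X$ being smooth, the ample generator extends to $H\in\Pic(X)$ with $H|_{X_\eta}=\mc O(1)$. Cohomology and base change then show $\mc E:=\vp_\ast\mc O_X(H)$ is locally free of rank $3$ and that $\vp^\ast\mc E\to\mc O_X(H)$ is surjective, giving a $C$-morphism $X\to\P_C(\mc E)$ which is the identity on each fibre, hence an isomorphism. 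In degree $8$ the general fibre is a smooth degree-$8$ del Pezzo surface, namely $\Q^2$ or $\F_1$; the case $\F_1$ is excluded because its unique $(-1)$-curve sweeps out a monodromy-invariant divisor forcing $\rho(X/C)\ge 2$. Thus the general fibre is $\Q^2$, and the Galois-invariant class $\mc O_{\Q^2}(1)=-\tfrac12 K_{X_\eta}$ descends to $H\in\Pic(X)$; now $\mc E:=\vp_\ast\mc O_X(H)$ is locally free of rank $4$, producing an embedding $X\hra\F:=\P_C(\mc E)$ over $C$ whose fibres are quadrics in $\P^3$, and comparison of divisor classes fibrewise gives $X\in|2H_\F+f^\ast L|$.

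The main obstacle lies in controlling the special fibres: one must rule out degenerate quadrics (reducible unions of planes, or double planes) in (3) and non-$\P^2$ degenerations in (2). Here the smoothness of the total space $X$ together with $\rho(X/C)=1$ is decisive. A non-reduced fibre would render $X$ singular or non-reduced along it, while a reducible fibre would contribute a divisor class not proportional to $H$ modulo $\vp^\ast\Pic(C)$, contradicting relative Picard number one. Moreover $\vp$ is flat by miracle flatness ($X$ is Cohen--Macaulay with pure $2$-dimensional fibres over a smooth curve), so the Hilbert polynomial is constant, forcing each fibre to be the full $\P^2$, respectively an irreducible reduced quadric in $\P^3$.

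For (4), the section $s$ from (1) splits $\vp^\ast$, so $\vp^\ast\colon\Pic(C)\to\Pic(X)$ is injective, and since $l$ is contained in a fibre, $\vp^\ast A\cdot l=A\cdot\vp_\ast l=0$; hence $\vp^\ast\Pic(C)$ lies in the kernel of $D\mapsto D\cdot l$. The hypothesis $\rho(X/C)=1$ forces $\Pic(X)/\vp^\ast\Pic(C)$ to have rank one, and using that the fibres of $\vp$ are reduced (so that a vertical class lying in $\vp^\ast\Pic(C)\otimes\Q$ is an honest pullback) this quotient is torsion-free, hence $\cong\Z$. Pairing against a line $l$ of minimal degree in a general fibre, chosen so that the primitive generator pairs to $1$, realises the isomorphism $\Pic(X)/\vp^\ast\Pic(C)\xrightarrow{\sim}\Z$, which simultaneously gives exactness in the middle and surjectivity.
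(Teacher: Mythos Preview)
Your outline fills in arguments where the paper simply cites Manin and Mori, so there is no detailed proof in the paper to compare against. The strategy is sound, but as written there is a circularity and one incorrect general claim.

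The incorrect claim is in (3): ``a non-reduced fibre would render $X$ singular or non-reduced along it'' is false for fibrations in general---smooth elliptic surfaces over curves carry multiple fibres. It \emph{does} hold in the specific setting of a relative quadric $X\subset\P_C(\mc E)$, since a double-plane or plane-pair fibre forces a singularity of the total space by a direct Jacobian computation, but you do not carry this out, and your phrasing suggests a general principle that does not exist.

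The circularity is this: your proof of (4) invokes ``the fibres of $\vp$ are reduced,'' yet (4) is asserted for every degree and you have only argued reducedness for degrees $8$ and $9$. Conversely, your claim that a reducible fibre ``contradicts relative Picard number one'' is essentially an appeal to (4): any fibre component $D_i$ has $D_i\cdot l=0$, so numerically $D_i\equiv c_iF$ with $c_i\in\Q$, and $\rho(X/C)=1$ alone does not exclude this.

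Both issues dissolve if you use the section from (1) \emph{first} to prove that every fibre is irreducible and reduced. The section gives $s(C)\cdot F=1$, so the fibre class $F$ is primitive in $\NS(X)$; hence $\ker(\,{-}\cdot l)\cap\NS(X)=\Z F$, each component $D_i$ of a fibre satisfies $D_i\equiv c_iF$ with $c_i\in\Z_{>0}$ (positivity by pairing with $A^2$ for $A$ ample on $X$), and $\sum m_ic_i=1$ forces a single reduced component. With this in hand the kernel of $\Pic(X)\to\Pic(X_\eta)$ is exactly $\vp^\ast\Pic(C)$, so the quotient embeds in the torsion-free group $\Pic(X_\eta)$ and (4) follows; the base-change steps in (2) and (3) then go through since every fibre is an integral Gorenstein del Pezzo surface.
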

\begin{proof}
(1) is proved by \cite[Theorem~4.2]{Manin66} and \cite{Swinnerton-Dyer72}. 
(2) and (3) follow from \cite[Theorem~(3.5)]{Mori82}. 
(4) follows from \cite[Theorems~(3.2) and (3.5)]{Mori82}. 
\end{proof}

\subsection{Acknowledgement}
The author wishes to express his deepest gratitude to Professor Hiromichi Takagi, his supervisor, for his valuable comments, suggestions and encouragement. 
The author is grateful to Professor Asher Auel for variable comments and introducing the papers, especially \cite{ABB14} and \cite{LT16}.  
The author is also grateful to Doctor Akihiro Kanemitsu for discussions on nef vector bundles and variable suggestions. 
The author is also grateful to Doctor Naoki Koseki for introducing the paper \cite{BPS80}. 
The author is also grateful to Professor Shigeru Mukai for variable comments on Lemma~\ref{lem-indep}. 
The author is also grateful to Doctor Masaru Nagaoka for variable discussions on non-normal del Pezzo surfaces, especially Example \ref{ex-nonnormal}. 
The author is also grateful to Professor Yuji Odaka for variable comments on Corollary~\ref{maincor-inv}~(3). 
During this work, the author also benefited from discussions with many people. 
Here the author shows my gratitude to Doctors Sho Ejiri, Makoto Enokizono, Wahei Hara and Yosuke Matsuzawa for discussions. 
This work was supported by the Program for Leading Graduate Schools, MEXT, Japan.
This work was also partially supported by JSPS's Research Fellowship for Young Scientists (JSPS KAKENHI Grant Number 18J12949). 

\section{Relative double projections}

For the proof of Theorems \ref{mainthm-P13} and \ref{mainthm-P22}, the relative double projection from a section of a sextic del Pezzo fibration plays an important role. 
We devote this section to prove Proposition~\ref{prop-2ray68} that establishes this technique. 

\subsection{Double projection from a smooth point on a smooth sextic del Pezzo surface}\label{subsec-doubleprojdP6}
First of all, we review the double projection from a general point on a smooth sextic del Pezzo surface $S$ precisely. 
Recall that $S$ is a linear section of $(\P^{1})^{3}$ and hence $S$ has three different conic bundle structures onto $\P^{1}$. 
If we take a general point $x \in S$, then the three fibers $G_{1},G_{2}$ and $G_{3}$ passing through $x$ are smooth rational curves with self-intersection $0$. 
On the blow-up $\Bl_{x}S$ of $S$ at $x$, the proper transform $G_{i}'$ of $G_{i}$ is a $(-1)$-curve for each $i$. 
Blowing $G_{1}',G_{2}'$ and $G_{3}'$ down to points $y_{1}$, $y_{2}$ and $y_{3}$ respectively, we obtain a smooth quadric surface $\Q^{2}$. 
The extracted curve $E$ by the blow-up $\mu \colon \Bl_{x}S \to S$ is transformed into a conic on $\Q^{2}$ passing through the three points $y_{1},y_{2},y_{3}$. 
In particular, $y_{1},y_{2},y_{3}$ are not colinear. 
This construction is summarized as the following diagram:
\begin{align}\label{dia-68gen}
\xymatrix{
&\ar[ld]_{\mu}\Bl_{x}S \ar@{=}[r] & \Bl_{y_{1},y_{2},y_{3}}\Q^{2} \ar[rd]^{\s} & \\
S&&&\Q^{2}.
}
\end{align}
Noting that $E=\s^{\ast}\s_{\ast}E-(G_{1}'+G_{2}'+G_{3}')$ and $G_{1}'+G_{2}'+G_{3}' \sim \mu^{\ast}(-K_{S})-3E$, 
we obtain $\s^{\ast}\s_{\ast}E = E+G_{1}'+G_{2}'+G_{3}' \sim \mu^{\ast}(-K_{S})-2E$. 
Therefore, the birational map $S \dra \Q^{2} \subset \P^{3}$ is the double projection from $x$. 

\subsection{Relativizations of double projections}

The following proposition is a method of relativizing the double projection of a sextic del Pezzo surface. 
The proof will be done by Takeuchi's 2-ray game argument. 

\begin{prop}\label{prop-2ray68}
Let $\vp \colon X \to C$ be a sextic del Pezzo fibration and $C_{0}$ a $\vp$-section. 
Let $\mu \colon \wt{X} =\Bl_{C_{0}}X \to X$ be the blow-up of $X$ along $C_{0}$ and $E=\Exc(\mu)$ and $\wt{\vp}:=\vp \circ \mu$.
Then the following assertions hold. 
\begin{enumerate}
\item $\mc O(-K_{\wt{X}})$ is $\vp$-globally generated. Moreover, if $\psi_{X} \colon \wt{X} \to W$ denotes the first part of the Stein factorization of the induced morphism $X \to \P_{C}(\vp_{\ast}\mc O(-K_{\wt{X}}))$ over $C$, then $\psi_{X}$ is an isomorphism or a flopping contraction. 
\item 
When $\psi_{X}$ is an isomorphism, let $\chi \colon \wt{X} \to \wt{Q}$ denote the identity map. 
Then there exists the unique contraction $\s \colon \wt{Q} \to Q$ of the another $K_{\wt{Q}}$-negative ray over $C$. 
When $\psi_{X}$ is a flopping contraction, let $\chi \colon \wt{X} \dra \wt{Q}$ denote the flop of $\psi_{X}$. 
Then there exists the contraction $\s \colon \wt{Q} \to Q$ of the $K_{\wt{Q}}$-negative ray over $C$. 

In both cases, we set the morphisms as in the following commutative diagram:
\begin{align}
\xymatrix{
&E\ar[ld]&\ar[ld]_{\mu} \ar@{}[l]|{\subset}\wt{X} \ar@{-->}[rr]^{\chi} \ar[rd]^{\psi_{X}} \ar[dd]_{\wt{\vp}}&&\wt{Q} \ar[rd]^{\s} \ar[ld]_{\psi_{Q}} \ar[dd]^{\wt{q}} \ar@{}[r]|{\supset}&G \ar[rd]&  \\
C&\ar@{}[l]|{\subset}X\ar[rd]_{\vp}&&W\ar[rd] \ar[ld]&&Q\ar[ld]^{q} \ar@{}[r]|{\supset} & T\\
&&C \ar@{=}[rr]&&C,&&
}\label{dia-2ray68}
\end{align}
\item The following assertions hold. 
\begin{enumerate}
\item $\s$ is the blow-up along a non-singular curve $T \subset Q$. 
\item $\deg (q|_{T})=3$ and $q$ is a quadric fibration. 
\end{enumerate}
Moreover, if we set $G = \Exc(\s)$, then there exists a divisor $\a$ on C such that 
\[G \sim -K_{\wt{Q}}-2E_{\wt{Q}}+\wt{q}^{\ast}\a.\]
\item The following equalities hold. 
\begin{align*}
(-K_{Q})^{3}&=\frac{1}{3}(4(-K_{X})^{3}+16g(T)-48g(C)+32), \\
-K_{Q}.T&=\frac{1}{6}((-K_{X})^{3}+22g(T)-54g(C)+6(-K_{X}.C_{0})+32) \text{ and } \\
\deg \a&=\frac{1}{6}(-(-K_{X})^{3}+2g(T)-18g(C)+6(-K_{X}.C_{0})+16). 
\end{align*}
\item The proper transform $E_{Q} \subset Q$ of $E=\Exc(\mu)$ contains $T$. 
Moreover, it holds that $-K_{Q}=2E_{Q}-q^{\ast}\a$.
\end{enumerate}
\end{prop}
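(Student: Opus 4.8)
The plan is to run Takeuchi's 2-ray game on $\wt X = \Bl_{C_0} X$ over $C$, mimicking the fiberwise picture of Subsection~\ref{subsec-doubleprojdP6}. First I would establish part (1). Since $-K_X$ is $\vp$-ample and $C_0$ is a smooth $\vp$-section, one has $-K_{\wt X} = \mu^\ast(-K_X) - 2E$; fiberwise over a general $t \in C$ this restricts to $\mu^\ast(-K_{S_t}) - 2E_t$, which is the linear system computing the double projection from a point and is globally generated with a smooth quadric surface as image. I would upgrade this to relative global generation of $\mc O(-K_{\wt X})$ over $C$ by a cohomology-and-base-change argument: show $R^1\wt\vp_\ast \mc O(-K_{\wt X})=0$ (using Kawamata--Viehweg-type vanishing on fibers, since $-K_{S_t}-2(\text{pt})$ has the right positivity on the blown-up del Pezzo surface) so that formation of $\wt\vp_\ast\mc O(-K_{\wt X})$ commutes with base change and surjects onto the fiberwise sections. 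The induced rational map over $C$ then contracts only curves that are $K_{\wt X}$-trivial (the flopped curves in special fibers), so the first part $\psi_X$ of the Stein factorization is either an isomorphism or a small (flopping) contraction; crepancy follows because $-K_{\wt X}$ is $\psi_X$-trivial by construction.

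For part (2) I would observe that $\wt X$ (resp. its flop $\wt Q$) has exactly two extremal rays over $C$ up to the flop, by the cone theorem together with $\rho(\wt X/C)=\rho(X/C)+1=2$: one ray is contracted by $\mu$ (the $E$-ray), the other by $\psi_X$. After performing the $D$-flop $\chi$ along $\psi_X$ (standard existence for threefold flops, e.g. via \cite{KM98}), the variety $\wt Q$ again has Picard rank $2$ over $C$, one of whose rays is contracted by $\psi_Q$; the \emph{other} ray $\s \colon \wt Q \to Q$ is then the relativization of the divisorial contraction $\Bl_{y_1,y_2,y_3}\Q^2 \to \Q^2$ from the fiberwise diagram~\eqref{dia-68gen}. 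That $\s$ exists as a $K$-negative contraction over $C$ follows since $\wt Q$ is a smooth (Mori flops preserve smoothness here) Fano-type threefold over $C$ and $\rho(\wt Q/C)=2$.

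For part (3), I would analyze $\s$ fiberwise. Over general $t$, $\s_t$ is the blow-down of three disjoint $(-1)$-curves on $\Bl_{y_1,y_2,y_3}\Q^2$, hence $\Exc(\s)=G$ is a prime divisor and $\s$ is the blow-up of a curve $T \subset Q$; smoothness of $T$ and of $Q$, and the claim that $q\colon Q\to C$ is a quadric fibration with $\deg(q|_T)=3$, come from reading off the general fiber ($Q_t \cong \Q^2$, $T_t$ = three points $y_1,y_2,y_3$) and then checking no fiber degenerates worse — i.e. $Q$ is smooth and $q$ has $\rho(Q/C)=1$, using that $\wt Q$ is smooth and $\s$ is a smooth blow-up. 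The linear equivalence $G \sim -K_{\wt Q} - 2E_{\wt Q} + \wt q^\ast\a$ is forced: restricting both sides to a general fiber must reproduce the identity $\s^\ast\s_\ast E \sim \mu^\ast(-K_S)-2E$ from Subsection~\ref{subsec-doubleprojdP6} (here $E_{\wt Q}$ is the proper transform of $E$ and $\s_\ast E$ a conic through $y_1,y_2,y_3$), so the two divisors differ by a pullback from $C$, which is by definition $\wt q^\ast\a$ for some $\a \in \Div(C)$.

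Parts (4) and (5) are then bookkeeping. For (4) I would compute the three quantities by intersection theory on $\wt Q$: use $K_{\wt Q}=\s^\ast K_Q + G$, the blow-up formulas $G^2 \cdot (\cdot) $, $G^3$, $G\cdot\s^\ast(-)$ on a blow-up of a smooth curve, the double-point/genus formula relating $G^3$ to $\deg\mc N_{T/Q}$ and hence to $-K_Q\cdot T$ and $g(T)$, and then transport everything back across the flop $\chi$ and the blow-up $\mu$ (which preserve $(-K)^3$ suitably: $(-K_{\wt X})^3 = (-K_{\wt Q})^3$ since flops preserve $(-K)^3$, and $(-K_{\wt X})^3 = (-K_X)^3 - 2(-K_X\cdot C_0) + 2 - 2g(C) -\dots$ via the blow-up formula along the curve $C_0$ of genus $g(C)$). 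Solving the resulting linear system in $(-K_Q)^3$, $-K_Q\cdot T$, $\deg\a$ gives the stated formulas. For (5), $E_Q \supset T$ because fiberwise the conic $\s_\ast E$ passes through $y_1,y_2,y_3$; and $-K_Q = 2E_Q - q^\ast\a$ is obtained by pushing the relation $G \sim -K_{\wt Q}-2E_{\wt Q}+\wt q^\ast\a$ forward along $\s$ (note $\s_\ast G = 0$, $\s_\ast E_{\wt Q}=E_Q$, $\s_\ast(-K_{\wt Q}) = -K_Q$).

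The main obstacle I expect is part (1): proving that $\mc O(-K_{\wt X})$ is genuinely $\vp$-globally generated (not merely on the general fiber) and that $\psi_X$ is at worst a flopping contraction — this requires controlling the special fibers $S_t$, which may be singular or non-normal sextic del Pezzo surfaces, and ensuring the relevant vanishing $R^1\wt\vp_\ast\mc O(-K_{\wt X})=0$ still holds there so that base change applies; the subtlety is that on a bad fiber the "double projection" linear system could fail to separate points or have base locus along $C_0 \cap S_t$, and ruling this out (or showing it only produces flopping curves, never a divisorial or more serious degeneration) is the technical heart of the argument.
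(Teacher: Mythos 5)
Your overall strategy (Takeuchi's $2$-ray game over $C$, relativizing the fiberwise double projection, then bookkeeping for (4) and (5)) is the same as the paper's, and your treatment of (2), (4) and (5) matches it in substance. But there is a genuine gap, and you have in fact located it yourself without filling it: nothing in your argument rules out that $\psi_{X}$ contracts a \emph{divisor}. Saying that $\psi_{X}$ contracts only $K_{\wt{X}}$-trivial curves is automatic from the definition of $\psi_{X}$ and does not distinguish a small contraction from a crepant divisorial one. The paper closes this by a lattice computation: since $\Pic(\wt{X})=\Z[-K_{\wt{X}}]\oplus\Z[E]\oplus\wt{\vp}^{\ast}\Pic(C)$, a hypothetical exceptional divisor satisfies $G\equiv_{C}x(-K_{\wt{X}})+yE$ with $x,y\in\Z$; on a general fiber $G$ restricts to a disjoint union of $n$ $(-2)$-curves with $n\in\{1,2\}$ (this bound is itself a separate claim, proved via a Cauchy--Schwarz estimate on the classes of $(-2)$-curves in $\Bl_{x}S_{t}$), giving $5x+y=0$ and $5x^{2}+2xy-y^{2}=-2n$, which has no integral solutions. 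Without some argument of this kind, part (1) is not proved.

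The same omission recurs in part (3), where you read the answer off the fiberwise picture. The relative extremal contraction $\s$ need not restrict on a general fiber to the contraction of the three conics through $x$: the general fiber $\wt{Q}_{t}$ has Picard rank $5$ and many birational contractions, and a priori $\s$ could even be of fiber type ($\dim Q=2$). The paper excludes $\dim Q=2$ and then determines $m=\deg(q|_{T})$ by the same integrality device: writing $G\equiv_{C}x(-K_{\wt{Q}})+yE_{\wt{Q}}$ forces $5x+y=m$ and $5x^{2}+2xy-y^{2}=-m$, whence $m\in\{1,3\}$, and $m=1$ is excluded because it would force $G=E_{\wt{Q}}$, contradicting the choice of rays. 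Only then does one know that the general-fiber picture is the double projection, that $q$ is a quadric fibration, and that $(x,y)=(1,-2)$, i.e.\ $G\sim -K_{\wt{Q}}-2E_{\wt{Q}}+\wt{q}^{\ast}\a$. Your derivations of (4) and (5) are fine once (3) is in place, but as written both (1) and (3) rest on assertions rather than proofs.
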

\begin{proof}
(1) 
Fix a point $t \in C$. 
Then $\wt{X}_{t}=\wt{\vp}^{-1}(t)$ is the blow-up of $X_{t}=\vp^{-1}(t)$ at a smooth point. 
For each $t \in C$, $-K_{X_{t}}$ is very ample and hence $-K_{\wt{X}_{t}}$ is globally generated and big. 
The proof of (1) is completed by showing that $\psi_{X}$ does not contract a prime divisor. 

\begin{claim}\label{claim-(-2)}
If $X_{t}$ is smooth, then the number of $(-2)$-curves on $\wt{X}_{t}$ is at most $2$. 
Moreover, if $(-2)$-curves exist, then all of $(-2)$-curves are disjoint. 
\end{claim}
\begin{proof}
Let $\mu_{t} \colon \wt{X}_{t} \to X_{t}$ denote the blow-up and $E_{t}=\Exc(\mu_{t})$. 
Let $\e \colon X_{t} \to \P^{2}$ be the blow-up at three points, $h$ the pull-back of a line by $\e$ and $e_{1},e_{2},e_{3}$ the exceptional divisors of $\e$. 
Let $C$ be a $(-2)$-curve on $\wt{X}_{t}$. 
Then there exists $a \in \Z_{\geq 0}$, $b_{j} \in \Z$ and $m \in \Z_{> 0}$ such that 
$\mu_{t\ast}C \sim ah-\sum_{j=1}^{3} b_{j}e_{j}$ and $\mu_{t}^{\ast}\mu_{t\ast}C=C+mE_{t}$. 
Since $0=E_{t}.(\mu_{t}^{\ast}\mu_{t\ast}C)=E_{t}.C-m$, 
it holds that $(\mu_{t\ast}C)^{2}=\mu_{t}^{\ast}\mu_{t\ast}C.C=C^{2}+mE_{t}.C=m^{2}-2$ and $-K_{X_{t}}.(\mu_{t\ast}C)=(-K_{\wt{X}_{t}}+E_{t}).C=E_{t}.C=m$. 
Hence we obtain equalities $m=3a-\sum_{j=1}^{3} b_{j}$ and $m^{2}-2=a^{2}-\sum_{j=1}^{3} b_{j}^{2}$. The Cauchy-Schwartz inequality 
$3\sum_{j=1}^{3} b_{j}^{2} \geq  \left( \sum_{j=1}^{3} b_{j} \right)^{2}$ 
implies that 
$3(a^{2}-m^{2}+2) \geq (3a-m)^{2}$, which implies $m=1$. 
Therefore, every $(-2)$-curve on $X_{t}$ is the proper transform of a $(-1)$-curve in $X_{t}$ passing through $C_{0} \cap X_{t}$, which implies the assertion. 
\end{proof}

Assume that $\psi_{X}$ contracts a divisor $G$. 
Since $\Pic(\wt{X})=\Z[-K_{\wt{X}}] \oplus \Z[E] \oplus \wt{\vp}^{\ast}\Pic(C)$, 
there exists $x,y \in \Z$ such that $G \equiv_{C} x(-K_{\wt{X}})+yE$. 
Note that $\psi|_{\wt{X}_{t}} \colon \wt{X}_{t} \to \t(\wt{X}_{t})$ is the contraction of the $(-2)$-curves for general $t$. 
Let $n$ be the number of the $(-2)$-curves on $\wt{X}_{t}$. 
Then Claim~\ref{claim-(-2)} implies that $0=-K_{\wt{X}_{t}}G|_{\wt{X}_{t}}=5x+y$, $-2n=(G|_{\wt{X}_{t}})^{2}=5x^{2}+2xy-y^{2}$ and $n \in \{1,2\}$. 
These equalities gives $x=\pm \sqrt{\frac{n}{15}} \text{ and } y=\mp \sqrt{\frac{5n}{3}}$, which is a contradiction. 
We complete the proof of (1). 

(2) 
Note that $\rho(\wt{Q})=3$. Since $R^{1}\wt{q}_{\ast}\mc O_{\wt{Q}}=0$ and $\wt{Q}$ and $C$ are smooth, we have $N_{1}(\wt{Q}/C)=2$ by \cite[Corollary~(12.1.5)]{KM92}. 
Hence $\ol{\NE}(\wt{Q}/C) \subset N_{1}(\wt{Q}/C)=\R^{2}$ is spanned by two rays. 
If $\chi$ is identity, then $-K_{\wt{Q}}$ is ample over $C$ and the assertion holds by the relative contraction theorem. 
If $\chi$ is a flop, then $\psi_{Q}$ is the contraction of an extremal ray, say $R_{1}$. 
If $R_{2}$ denotes the another ray, then we have $K_{\wt{Q}}.R_{2}<0$ since a general $\wt{q}$-fiber is a del Pezzo surface. 
We complete the proof of (2). 

(3) 
Since $\rho(\wt{Q})=3$, it holds that $\rho(Q)=2$ and $\dim Q \geq 2$. 
If $\dim Q=2$, then $Q$ is a $\P^{1}$-bundle over $C$. 
Let $s \subset Q$ be a section of $Q \to C$ and set $G:=q^{\ast}s$. 
Then there exists $x,y \in \Z$ such that $G\equiv_{C}x(-K_{\wt{Q}})+yE_{\wt{Q}}$. 
Then for a general $\wt{q}$-fiber $F_{\wt{Q}}$, we have $G^{2}F_{\wt{Q}}=0$ and $-K_{\wt{Q}}GF_{\wt{Q}}=2$, which implies $5x^{2}+2xy-y^{2}=0$ and $5x+y=2$. 
Solving the above equalities, we get $x=\pm \frac{6+\sqrt{6}}{15}$ and $y=\mp \sqrt{\frac{2}{3}}$, which contradicts $x,y \in \Z$. 

Therefore, we obtain $\dim Q=3$ and hence $\s$ is divisorial. 
Set $G=\Exc(\s)$. Since every $\wt{\vp}$-fiber is integral and $\chi$ is isomorphic in codimension $1$, every $\wt{q}$-fiber is integral. 
Hence $T:=\s(G)$ is not contained in any $q$-fiber. 
By \cite[Theorem~(3.3)]{Mori82}, $T$ is a non-singular curve, $Q$ is smooth and $\s$ is the blow-up of $Q$ along $T$. 
Set $m:=\deg(q|_{T})$. 
Then there exists $x,y \in \Z$ and $\a \in \Pic(C)$ such that 
$G \sim x(-K_{\wt{Q}})+yE_{\wt{Q}}+\wt{q}^{\ast}\a$. 
For a general $\wt{q}$-fiber $F_{\wt{Q}}$, $\s|_{F_{\wt{Q}}}$ is a contraction of disjoint finitely many $(-1)$-curves. 
Hence we have $G^{2}.F=-m$ and $-K_{\wt{Q}}.F_{\wt{Q}}.G=m$, which implies $5x^{2}+2xy-y^{2}=-m$ and $5x+y=m$. 
Hence we obtain $x=\frac{\pm \sqrt{6m^{2}+30m}+6m}{30}$ and $y = \mp \sqrt{\frac{m(m+5)}{6}}$, which implies $m \in \{1,3\}$. 

When $m=1$, we have $(x,y)=(0,1)$ since $x,y \in \Z$. 
Then $G|_{F_{\wt{Q}}} \equiv E|_{F_{\wt{Q}}}$ holds for general $F_{\wt{Q}}$ and hence we obtain $G=E_{\wt{Q}}$. 
If $\chi$ is an identity, then it is a contradiction since $\mu$ is the contraction of another ray. 
If $\chi$ is a flop, then $E$ is ample over $W$ and hence $-E$ is ample over $W$. However, $G$ is ample over $W$, which is a contradiction. 

Therefore, we obtain $m=3$ and hence $q$ is a quadric fibration. 
In this case, we have $(x,y)=(1,-2)$ since $x,y \in \Z$. 

(4) 
The assertion directly follows from solving the following equations:
\begin{align*}
(-K_{\wt{Q}})^{3}
=&(-K_{Q})^{3}-2(-K_{Q}.T)+2g(T)-2 \\
=&(-K_{X})^{3}-2(-K_{X}.C_{0})+2g(C)-2,   \\
 (-K_{\wt{Q}})^{2}.G
=&-K_{Q}.T+2-2g(T)
 \\
=&((-K_{X})^{3}-2(-K_{X}.C_{0})+2g(C)-2) \\
&-2(-K_{X}.C_{0}+2-2g(C)) + 5\deg \a, \text{ and } \\
(-K_{\wt{Q}})G^{2}
=&2g(T)-2  \\
=&((-K_{X})^{3}-2(-K_{X}.C_{0}) +2g(C)-2)\\
&+4(2g(C)-2) -4(-K_{X}.C_{0}+2-2g(C))+6\deg \a. 
\end{align*}

(5)
By (3), we obtain $-K_{Q} \sim 2E_{Q}-q^{\ast}\a$. 
For a general point $t \in C$, $E_{Q} \cap Q_{t}$ is a hyperplane section of $Q_{t}$. 
Since $E_{\wt{Q}} \cap \wt{Q}_{t}$ is a $(-1)$-curve in the $\wt{q}$-fiber $\wt{Q}_{t}$, $E_{Q} \cap Q_{t}$ passes through the three points $T \cap Q_{t}$. Thus $E_{Q}$ contains $T$.  
\end{proof}

\begin{defi}\label{defi-68}
For a sextic del Pezzo fibration $\vp \colon X \to C$ with a section $C_{0}$, 
the pair $(q \colon Q \to C,T)$ as in Proposition~\ref{prop-2ray68} is called the \emph{relative double projection} of the pair $(\vp \colon X \to C,C_{0})$. 
\end{defi}

\section{Preliminaries}

\subsection{The double covers associated to quadric fibrations}

In this section, we confirm some basic properties of a quadric fibration $q \colon Q \to C$. 
We refer to Subsection~\ref{NC}~(13) for the definition of a quadric fibration in this paper. 
The following lemma is well-known for experts and the author will give a proof of it for readers' convenience. 

\begin{lem}[{cf. \cite[Proposition~1.16]{ABB14} and \cite[Section~3.1]{LT16}}]\label{lem-89}
Let $q \colon Q \to C$ be a quadric fibration and $s$ a $q$-section. 
Let $f \colon \wt{Q}=\Bl_{s}Q \to Q$ be the blow-up of $Q$ along $s$. 
Then there exists a divisorial contraction $g \colon \wt{Q} \to P$ over $C$ such that $p \colon P \to C$ is a $\P^{2}$-bundle and $g$ is the blow-up along a smooth curve $B \subset P$. 
\[\xymatrix{
&\Bl_{s}Q=\wt{Q}=\Bl_{B}P \ar[rd]^{g} \ar[ld]_{f}&  \\
Q\ar[rd]_{q}&&P \ar[ld]^{p} \\
&C&
}\label{dia-89}
\]
Moreover, the morphism $q_{B}:=p|_{B} \colon B \to C$ is a finite morphism of degree $2$ with the following conditions:
\begin{enumerate}
\item The branched locus of $q_{B}$ coincides with the closed set 
\[\Sigma:=\{t \in C \mid Q_{t}=q^{-1}(t) \text{ is singular } \} \]
with the reduced induced closed subscheme structure. 
\item $\mc J(Q)$ and $\Jac(B)$ are isomorphic as complex tori. 
If $C=\P^{1}$, then these are isomorphic as principally polarized abelian varieties. 
\item It holds that $(-K_{Q})^{3}=40-(8g(B)+32g(C))$. 
\end{enumerate}
\end{lem}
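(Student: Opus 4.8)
The proof will run Takeuchi's $2$-ray game from the blow-up $f \colon \wt{Q} := \Bl_s Q \to Q$, in complete parallel with the proof of Proposition~\ref{prop-2ray68}. Since $R^1 \wt q_\ast \mc O_{\wt Q} = 0$ and $\wt Q$, $C$ are smooth, $N_1(\wt Q/C)$ is $2$-dimensional by \cite[Corollary~(12.1.5)]{KM92}, so $\ol{\NE}(\wt Q/C)$ has exactly two extremal rays: the ray $R_f$ contracted by $f$, and a second ray $R_g$, which is $K_{\wt Q}$-negative since a general $\wt q$-fiber $\wt Q_t \cong \Bl_{\mathrm{pt}}(\P^1 \times \P^1)$ is a del Pezzo surface of degree $7$. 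On $\wt Q_t$ there are exactly three $(-1)$-curves: the exceptional one (spanning $R_f$) and the strict transforms of the two rulings of $Q_t$ through $s \cap Q_t$; because $\rho(\wt Q/C) = 2$ while $\rho(\wt Q_t) = 3$, the monodromy of the family interchanges the latter two, so they become numerically equivalent on $\wt Q$ and both span $R_g$.

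I would then contract $R_g$ to get $g \colon \wt Q \to P$ over $C$. A numerical computation on $\wt Q_t$ of the same shape as in Proposition~\ref{prop-2ray68}~(3) rules out $\dim P = 2$, so $g$ is divisorial; $\Exc(g)$ meets a general fiber in the two $(-1)$-curves above, hence dominates $C$, so $B := g(\Exc g)$ is a curve dominating $C$, and by Mori's classification of divisorial extremal contractions of smooth $3$-folds \cite[Theorem~(3.3)]{Mori82}, $g$ is the blow-up of the smooth $3$-fold $P$ along the smooth curve $B$. Consequently $\rho(P/C) = 1$ and every $p$-fiber is $\P^2$, so $p \colon P \to C$ is a del Pezzo fibration of degree $9$ and hence a $\P^2$-bundle by \cite[Theorem~(3.5)]{Mori82}; also $q_B = p|_B$ is finite of degree $2$, a general $p$-fiber meeting $B$ in the two points blown down from $\wt Q_t$.

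For (1): since $Q$ is smooth, $s \cap Q_t$ is a smooth point of $Q_t$, so the scheme $\wt Q_t = \Bl_{s \cap Q_t} Q_t$ is singular if and only if $Q_t$ is; on the other hand, since $g$ is the blow-up of the smooth $P$ along the smooth $B$, the scheme $\wt Q_t$ (which is also the pullback $g^\ast P_t$) is singular if and only if $B$ is tangent to the smooth divisor $P_t$, i.e.\ $q_B$ is ramified over $t$. Hence $\Sigma$ and the branch locus of $q_B$ agree as sets, and both are reduced, which gives (1). For (3): the embedding $Q \in |2H + f^\ast L| \subset \P_C(\mc F)$ of \cite[Theorems~(3.2) and (3.5)]{Mori82} gives, by adjunction, $-K_Q = (2H - f^\ast(\det \mc F + K_C + L))|_Q$, and a direct intersection-theoretic computation on $\P_C(\mc F)$ yields $(-K_Q)^3 = 48(1-g(C)) - 8\deg\mc F - 16\deg L$; the quadratic form defining $Q$ is a section of $\Sym^2 \mc F \otimes L$, so its discriminant, which cuts out $\Sigma$, has degree $2\deg\mc F + 4\deg L$, and Riemann--Hurwitz for the double cover $q_B$ (legitimate by (1)) gives $g(B) = 2g(C) - 1 + \deg \mc F + 2\deg L$; substituting yields $(-K_Q)^3 = 40 - (8g(B) + 32g(C))$.

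For (2): applying the blow-up formula $\mc J(\Bl_\Gamma V) \cong \mc J(V) \times \Jac(\Gamma)$ to $\wt Q = \Bl_s Q$ and to $\wt Q = \Bl_B P$, and using $\mc J(P) \cong \Jac(C)$ for the $\P^2$-bundle $P \to C$, one gets $\mc J(Q) \times \Jac(C) \cong \Jac(B) \times \Jac(C)$; cancelling $\Jac(C)$ --- legitimate because polarizable rational Hodge structures form a semisimple category, the integral statement being recovered from the fact that the lattices involved are the full $H^3(\cdot,\Z)$ --- gives $\mc J(Q) \cong \Jac(B)$ as complex tori. When $C = \P^1$ the two auxiliary copies of $\Jac(C)$ and $\mc J(P)$ vanish, so $\mc J(Q) = \mc J(\wt Q) \cong \Jac(B)$, and this identification respects the principal polarizations by \cite{CG72}. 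The step I expect to be the main obstacle is the heart of the $2$-ray game: establishing that $g$ really is the blow-up of a \emph{smooth} $3$-fold along a \emph{smooth} curve (in particular controlling $\wt Q_t$ over the branch points of $q_B$, where it acquires an ordinary double point) and that $\deg q_B = 2$; once this structural output is secured, (1)--(3) are essentially bookkeeping.
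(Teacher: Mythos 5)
Your construction of $g$ is a legitimate alternative to the paper's: you run the $2$-ray game abstractly (cone theorem, numerical exclusion of $\dim P=2$, Mori's classification of divisorial contractions, then Mori's degree-$9$ theorem to get the $\P^{2}$-bundle a posteriori), whereas the paper proceeds in the opposite direction, exhibiting the contraction explicitly as the morphism defined by the relative linear system $|f^{\ast}H_{Q}-\Exc(f)|$, checking base-point-freeness and $h^{0}=3$ on \emph{every} fiber, so that $P=\P_{C}(\wt{q}_{\ast}\mc O(f^{\ast}H_{Q}-\Exc(f)))$ is a $\P^{2}$-bundle from the outset and the fiberwise description (blow-up of $\P^{2}$ at a reduced, resp.\ non-reduced, length-$2$ scheme according to whether $Q_{t}$ is smooth or a cone) comes for free. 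Your route is fine and your proofs of (1) and (3) then essentially coincide with the paper's; the one small omission in (3) is that to equate $\#\Sigma$ with $\deg((\det\mc F)^{\otimes 2}\otimes\mc O(4L))$ you must know the discriminant divisor is \emph{reduced}, which the paper deduces from the smoothness of the total space $Q$.

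The genuine gap is in (2). From $\mc J(Q)\times\Jac(C)\simeq\mc J(\wt Q)\simeq\Jac(B)\times\Jac(C)$ you cannot simply ``cancel $\Jac(C)$'': cancellation of direct factors fails in general for complex tori and even for abelian varieties (Krull--Schmidt fails there), and your appeal to semisimplicity of polarizable \emph{rational} Hodge structures only yields $\mc J(Q)$ isogenous to $\Jac(B)$; the remark that ``the lattices involved are the full $H^{3}(\cdot,\Z)$'' does not upgrade an isogeny to an isomorphism. The paper closes exactly this gap by showing that the two complements are literally the \emph{same} sub-Hodge structure of $H^{3}(\wt Q,\Z)$: the summand $i_{!}(f|_{E})^{\ast}H^{1}(s,\Z)$ coming from the blow-up $\wt Q=\Bl_{s}Q$ is identified, via a commutative diagram using that $E=\Exc(f)$ maps isomorphically onto a sub-$\P^{1}$-bundle $E_{P}\subset P$ and Leray--Hirsch for $P\to C$, with the summand $g^{\ast}H^{3}(P,\Z)$ coming from $\wt Q=\Bl_{B}P$. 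The two quotients $H^{3}(Q,\Z)$ and $H^{1}(B,\Z)$ are then isomorphic as integral Hodge structures with no cancellation needed. You should replace the cancellation step by this (or an equivalent) identification of the two summands inside $H^{3}(\wt Q,\Z)$.
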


\begin{proof}
Let $H_{Q}$ be a $q$-ample divisor $H_{Q}$ with $K_{Q}+2H_{Q} \sim_{C} 0$. 
For every point $t \in C$, 
$s_{t} \in Q_{t}$ is a smooth point of $Q_{t}$ since $s$ is a $q$-section. 
Then $|(f^{\ast}H_{Q}-\Exc(f))|_{\wt{Q}_{t}}|$ is base point free and $h^{0}(\mc O_{\wt{Q}_{t}}(f^{\ast}H_{Q}-\Exc(f)))=3$ for every point $t$.
Hence $-K_{\wt{Q}}$ is ample over $C$ and $|f^{\ast}H_{Q}-\Exc(f)|$ gives a morphism $g \colon \wt{Q} \to P$ over $C$, where $P$ is a $\P^{2}$-bundle over $C$. 
Since $-K_{\wt{Q}}$ is ample over $C$, $g$ is the contraction of an extremal ray. 
For each point $t \in C$, we can see the following. 
\begin{itemize}
\item[(a)] When $Q_{t}$ is smooth, $g|_{\wt{Q}_{t}}$ is the blow-up of $\P^{2}$ at reduced two points. 
\item[(b)] When $Q_{t}$ is the quadric cone, $g|_{\wt{Q}_{t}}$ is the blow-up of $\P^{2}$ at a non-reduced $0$-dimensional closed subscheme of length $2$. 
\end{itemize}
Then \cite[Theorem~(3.3)]{Mori82} implies that $g$ is the blow-up of $P$ along a smooth curve $B$. By (a) and (b), $p|_{B} \colon B \to C$ is a double covering. 

(1) This assertion follows from (a) and (b). 

(2) Let $E$ denote the $f$-exceptional divisor and $i \colon E \hra \wt{Q}$ be the closed immersion. Then the homomorphism
\[H^{3}(Q,\Z) \oplus H^{1}(s,\Z) \ni (\a,\b) \mapsto f^{\ast}\a + i_{!}(f|_{E})^{\ast}\b \in H^{3}(\wt{Q},\Z) \]
attains the isomorphism of the Hodge structures (cf. \cite[I, Theorem~7.31]{VoisinBook}). 
Let $E_{P}=g(E)$ and $k \colon E_{P} \hra P$ the inclusion. 
Then $E \to E_{P}$ is isomorphism and $E_{P} \subset P$ is a sub $\P^{1}$-bundle.  The Leray-Hirsch theorem implies that 
\[\begin{array}{ccccc}
H^{1}(C,\Z) & \longrightarrow &H^{1}(E_{P},\Z) & \longrightarrow & H^{3}(P,\Z)\\
\rotatebox{90}{$\in$} & & \rotatebox{90}{$\in$} & & \rotatebox{90}{$\in$} \\
\a & \longmapsto & (p|_{E_{P}})^{\ast}\a & \longmapsto & k_{!}(p|_{E_{P}})^{\ast}\a=p^{\ast}\a . [E_{P}]
\end{array}\]
attains isomorphisms of the Hodge structures.
Since a diagram 
\[\xymatrix{
H^{1}(s,\Z) \ar[r]^{(f|_{E})^{\ast}} \ar@{=}[d]& H^{1}(E,\Z) \ar[r]^{i_{!}} & H^{3}(\wt{Q},\Z) \\
H^{1}(C,\Z) \ar[r]_{(p|_{E_{P}})^{\ast}}&  H^{1}(E_{P},\Z) \ar[r]_{k_{!}} & H^{3}(P,\Z) \ar[u]^{g^{\ast}}
}\]
is commutative, the cokernel of $i_{!}(f|_{E})^{\ast} \colon H^{1}(s,\Z) \to H^{3}(\wt{Q},\Z)$ is isomorphic to that of $g^{\ast}k_{!}(p|_{E_{P}})^{\ast} \colon H^{1}(C,\Z) \to H^{3}(\wt{Q},\Z)$. 
Thus we obtain the isomorphism of Hodge structures $H^{1}(B,\Z) \simeq H^{3}(Q,\Z)$. 
If $C=\P^{1}$, then the last assertion follows immediately from \cite[Lemma~3.11]{CG72}.

(3) Set $\mc F:=q_{\ast}\mc O(H_{Q})$ and take a divisor $\a$ on $C$ such that $-K_{Q}=2H_{Q}-q^{\ast}\a$. 
Now $Q$ is a member of $|2\xi+\pi^{\ast}L|$ in $\P_{C}(\mc F)$, where $\pi \colon \P_{C}(\mc F) \to C$ is the projection, $\xi$ is a tautological divisor of $\pi$ and $L$ is a some divisor on $C$. 
By the adjunction formula, we have 
\begin{align}\label{eq-aL}
\mc O_{C}(L)=\mc O_{C}(\a-K_{C}) \otimes \det \mc F^{-1}. 
\end{align}
Take the section $u \in H^{0}(C,\Sym^{2}\mc F \otimes \mc O_{C}(L))$ that corresponds to the member $Q \in |2\xi+\pi^{\ast}L|$. 
Let $\bigcup_{\lambda} U_{\lambda}$ be a open covering of $C$ and take a trivialization $\mc F|_{U_{\lambda}}=\bigoplus_{i=1}^{4} \mc O_{U_{\lambda}} \cdot e_{\lambda}^{i}$ for all $\lambda$. 
Then there exists a symmetric matrix $(u_{\lambda}^{ij})$ such that $u_{\lambda}=\sum s_{\lambda}^{ij}e_{\lambda}^{i}e_{\lambda}^{j}$. 
By gluing the sections $\det (u_{\lambda}^{ij})_{i,j}$ on $U_{\lambda}$, we obtain the global section $\tilde{u}$ of $(\det \mc F)^{\otimes 2} \otimes \mc O(4L)$. 
Note that $\Sigma=\{t \in C \mid q^{-1}(t) \text{ is singular}\}$ is the set-theoretic zero set of $\tilde{u}$. 
Since $Q$ is non-singular, the zero scheme of $\tilde{u}$ is reduced. 
Thus we obtain $\mc O_{C}(\Sigma) = (\det \mc F)^{\otimes 2}\otimes \mc O(4L)$. 
Since the double covering $B \to C$ is branched along $\Sigma$, 
we obtain 
\begin{align}\label{eq-2adetF}
\omega_{B}=q_{B}^{\ast}(\omega_{C} \otimes \mc O_{C}(\det \mc F+2L))=q_{B}^{\ast}( \mc O(2\a) \otimes \omega_{C}^{-1} \otimes \det \mc F^{-1})
\end{align}
by the Hurwitz formula and (\ref{eq-aL}). 
On the other hand, it holds that $-K_{Q} \sim 2\xi|_{Q}-q^{\ast}(K_{C}+\det \mc F-\a)$ by the adjunction formula. 
Hence we have $(-K_{Q})^{3} = (2\xi-\pi^{\ast}(K_{C}+\det \mc F-\a))^{3}(2\xi+L) =24(2-2g(C))-(8 \deg \mc F-16 \deg \a) =40-(32g(C)+8g(B))$, which is the assertion. 
\end{proof}

\begin{defi}\label{defi-doublecov}
Let $q \colon Q \to C$ be a quadric fibration. 
Take a $q$-section $C_{0}$. 
Then we obtain the double covering $q_{B} \colon B \to C$ as in Lemma~\ref{lem-89}. 
Since Lemma~\ref{lem-89}~(1), this double covering is independent of the choice of $C_{0}$. 
We call this $q_{B} \colon B \to C$ \emph{the double covering associated to $q$}. 
\end{defi}

\begin{rem}\label{rem-quadfib}
(i) Our construction of the $\P^{2}$-bundle $P$ with the bisection $B$ is essentially the same as in \cite[Theorem~(2.7.3)]{DSouza88} and \cite[Proposition~1.16]{ABB14}. 

(ii) Let $\Hilb_{t+1}(Q/C) \to C$ denote the relative Hilbert scheme of lines and $\Hilb_{t+1}(Q/C) \to B' \to C$ the Stein factorization. 
Then $\Hilb_{t+1}(Q/C)$ and $B'$ are smooth by \cite[Corollary~2.3]{Kuznetsov14}. 
On the other hand, in the diagram (\ref{dia-89}), the $\P^{1}$-bundle $\Exc(g) \to B$ is a family of lines of $Q \to C$. 
By the universal property, there is a natural morphism $B \to B'$, which is bijective by Lemma~\ref{lem-89}~(1) and hence isomorphic by Zariski main theorem. 
Hence $\Exc(g) \to B$ coincides with the relative Hilbert scheme of lines of $Q \to C$. 
From this viewpoint, Lemma~\ref{lem-89}~(2) was also proved by \cite[Section~3.1]{LT16}. 

(iii) Assuming $\rho(Q)=2$ is essential for Lemma~\ref{lem-89}~(3). 
For instance, if we consider $Q=\P^{1} \times \P^{1} \times \P^{1}$ as a family of quadric surface over $\P^{1}$, then $(-K_{Q})^{3}=48$, 
which violates $(-K_{Q})^{3} \leq 40$. 
\end{rem}

\begin{lem}\label{lem-indep}
Let $\vp \colon X \to C$ be a sextic del Pezzo fibration and $C_{0}$ a $\vp$-section. 
Let $(q \colon Q \to C,T)$ be the relative double projection as in Definition~\ref{defi-68}. 
Let $\vp_{B} \colon B \to C$ be the double covering associated to $q$ and $\vp_{T}:=q|_{T}$. 

Then $\vp_{B}$ (resp. $\vp_{T}$) is isomorphic over the base $C$ to the normalization of the finite part of the Stein factorization of the relative Hilbert scheme of twisted cubics (resp. conics) $\Hilb_{3t+1}(X/C) \to C$ (resp. $\Hilb_{2t+1}(X/C) \to C$). 
In particular, $\vp_{B}$ and $\vp_{T}$ are independent of the choice of $\vp$-sections. 
\end{lem}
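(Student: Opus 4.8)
The strategy is to exhibit natural ``universal'' families of the relevant curves over $B$ and over $T$, use the universal property of the relative Hilbert schemes to get morphisms into the relevant Stein factorizations, and then show these morphisms are isomorphisms (after normalization) by checking they are finite and birational. The key point throughout is that everything has already been set up on the side of the relative double projection $(q\colon Q\to C,T)$ in Proposition~\ref{prop-2ray68}, so what is left is a dictionary between curves on $X$ and curves on $Q$ via the birational map $\chi$ and the blow-ups $\mu,\s$.

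First, for $\vp_{T}=q|_{T}$: I would track the fibers of $\s\colon\wt Q\to Q$ over $T$, namely the $\s$-exceptional rulings $G\to T$. For $t\in C$ general, $G\cap\wt Q_{t}$ consists of the finitely many $(-1)$-curves blown down by $\s|_{\wt Q_{t}}$, and their images under $\mu\circ\chi^{-1}$ in $X_{t}$ are precisely the three conics $G_{1},G_{2},G_{3}$ through $C_{0}\cap X_{t}$ appearing in the classical picture of Subsection~\ref{subsec-doubleprojdP6}; more intrinsically, $\mu_{\ast}\chi^{\ast}(\text{ruling})$ is a conic on $X_{t}\subset\P^{6}$. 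This gives a flat family of conics in $X/C$ parametrized by $T$, hence a $C$-morphism $T\to\Hilb_{2t+1}(X/C)$, which factors through the finite part of the Stein factorization since $q|_{T}$ is finite. Conversely a general conic in a general fiber $X_{t}$ is one of these three, so the induced map from $T$ (which is smooth by Proposition~\ref{prop-2ray68}~(3)) to the normalization of that finite part is finite and generically bijective, hence an isomorphism by Zariski's main theorem; the argument is the same in spirit as Remark~\ref{rem-quadfib}~(ii).

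Second, for $\vp_{B}\colon B\to C$: here $B$ is the double cover associated to the quadric fibration $q$, and by Lemma~\ref{lem-89} (applied to $q$ with section, using Remark~\ref{rem-quadfib}~(ii)) $B$ carries the relative Hilbert scheme of lines $\Exc(g)\to B$ of $Q/C$. A line $\ell$ in a general fiber $Q_{t}\cong\Q^{2}$ not passing through the three points $T\cap Q_{t}$ pulls back under $\s$ to a curve that is mapped by $\chi^{-1}$ and then by $\mu$ to a twisted cubic in $X_{t}\subset\P^{6}$: indeed, on the del Pezzo surface side, under the double projection $S\dra\Q^{2}$ the lines of $\Q^{2}$ correspond to the conics of the pencils, and the anti-canonical degree computation $-K_{X_{t}}.(\text{proper transform of }\ell)=3$ identifies the image as a twisted cubic (one checks $\chi^{*}$ of a line class, pushed to $X_{t}$, has the right intersection numbers with $-K_{X_{t}}$ and $C_{0}$). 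This yields a flat family of twisted cubics over an open subset of $B$, extending to all of $B$ by properness of the Hilbert scheme and smoothness of $B$, hence a $C$-morphism $B\to\Hilb_{3t+1}(X/C)$ landing in the finite part of the Stein factorization because $\vp_{B}$ is finite. Generically two-to-one onto $C$ on both sides, it induces a finite birational morphism from $B$ to the normalization of that finite part, hence an isomorphism.

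\textbf{Main obstacle.} The delicate step is the second one: making the correspondence ``line in $Q_{t}$ $\leftrightarrow$ twisted cubic in $X_{t}$'' precise enough to conclude that $B$ (rather than some other double cover) is exactly the normalized finite part of $\Hilb_{3t+1}(X/C)$, and in particular that the map is \emph{birational} and not of higher degree. One must rule out that a general twisted cubic in a general fiber fails to arise this way, and check that distinct lines in $Q_{t}$ give distinct twisted cubics in $X_{t}$. This reduces to the classical geometry of the smooth sextic del Pezzo surface --- its $H^{0}(\mc O_{S}(-K_{S}))$-embedding, the structure of $\Hilb_{3t+1}$ and $\Hilb_{2t+1}$ for a single $S$, and the explicit behaviour of these under the double projection diagram~\eqref{dia-68gen} --- together with the flatness/base-change bookkeeping needed to promote the fiberwise statement to a statement of families over $C$. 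The count $\deg(q|_{T})=3$ from Proposition~\ref{prop-2ray68} and the degree-$2$ statement for $\vp_{B}$ from Lemma~\ref{lem-89} are exactly what pin down the degrees on the two sides, so once the families are constructed the identification is forced.
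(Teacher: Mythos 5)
Your proposal matches the paper's proof in all essentials: the paper likewise uses the exceptional $\P^{1}$-bundle $G\to T$ as the family of conics and the relative Hilbert scheme of lines of $Q/C$ (pulled back to $\wt{X}$) as the family of twisted cubics, then invokes the universal property of the Hilbert scheme and the Stein factorization, working over the open locus where $\chi$ is an isomorphism and the fibrations are isotrivial. The only cosmetic difference is that for $B$ the paper maps the entire $\P^{1}$-bundle $\Hilb_{t+1}(Q_{U}/U)$ into $\Hilb_{3t+1}(X_{U}/U)$ and lets the Stein factorization contract its fibers, rather than choosing a (rational) section over $B$ as you do.
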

\begin{proof}
We fix a section $C_{0}$ and take the diagram (\ref{dia-2ray68}) as in Proposition~\ref{prop-2ray68}. 
Let $U \subset C$ be an open set such that $X_{t}$ is smooth and $-K_{\wt{X}_{t}}$ is ample for every $t \in U$. 
By Proposition~\ref{prop-2ray68}~(1), $U$ is not empty and the birational map $\chi \colon \wt{X} \dra \wt{Q}$ is isomorphic over $U$. 
Set $X_{U}:=\vp^{-1}(U)$, $Q_{U}:=q^{-1}(U)$, $\wt{X}_{U}:=\wt{\vp}^{-1}(U) \simeq \wt{q}^{-1}(U)$, $G_{U}:=\wt{q}^{-1}(U) \cap G$ and $T_{U}=\vp_{T}^{-1}(U)$. 
We set the morphisms as in the following diagram:
\begin{align}
\xymatrix{
&\ar[ld]\wt{X}_{U} \ar@{}[r]|{\supset} \ar[rd] \ar[dd]_{\wt{\vp}_{U}}&G_{U} \ar[rd]\\
X_{U}\ar[rd]_{\vp_{U}}&&Q_{U} \ar@{}[r]|{\supset} \ar[ld]^{q_{U}}& T_{U}  \\
&U&
}
\label{dia-2ray68U}
\end{align}
Note that $\wt{\vp}_{U},\vp_{U}$ and $q_{U}$ are isotrivial. 

First, we show the assertion for $T$. 
Composing the morphisms in the diagram (\ref{dia-2ray68U}), we have a morphism 
$e_{U} \colon G_{U} \to X_{U}$ over $U$. 
Then we can regard the $\P^{1}$-bundle $G_{U} \to T_{U}$ as a family of conics in the fibers of $X_{U} \to U$. 
By the universal property of Hilbert schemes, there is a natural morphism $T_{U} \to \Hilb_{2t+1}(X_{U}/U)$ over $U$. 
Since $\vp_{U}$ is isotrivial, the morphism $\Hilb_{2t+1}(X_{U}/U) \to U$ factors a triple cover $T'_{U}$ over $U$ as the Stein factorization. 
Hence $T_{U} \to \Hilb_{2t+1}(X_{U}/U)$ is a section of $\Hilb_{2t+1}(X_{U}/U) \to T'_{U}$, which implies that $T_{U} \simeq T'_{U}$. 
Hence, when $T' \to C$ denotes the finite part of Stein factorization of $\Hilb_{2t+1}(X/C) \to C$, the normalization of $T'$ is isomorphic to $T$ over $C$. 

Next, we show the assertion for $B$. 
As we have seen in Remark~\ref{rem-quadfib}~(ii), if we set $B_{U}=\vp_{B}^{-1}(U)$, then $\Hilb_{t+1}(Q_{U}/U) \to U$ factors through $B_{U}$ such that $\Hilb_{t+1}(Q_{U}/U) \to B_{U}$ is a $\P^{1}$-bundle and $B_{U} \to U$ is an \'etale double covering. 
Let $R_{U} \to \Hilb_{t+1}(Q_{U}/U)$ be the universal family of the relative Hilbert scheme of lines. 
Set $R'_{U}:=\wt{X}_{U} \times_{Q_{U}} R_{U}$. 
The flat family $R'_{U} \to \Hilb_{t+1}(Q_{U}/U)$ parametrizes twisted cubics on $X_{U}$ over $U$ by the evaluation $R'_{U} \to X_{U}$. 
Hence the universal property gives a morphism $\Hilb_{t+1}(Q_{U}/U) \to \Hilb_{3t+1}(X_{U}/U)$ over $U$. 
Let $B'_{U} \to U$ be the finite part of the Stein factorization of $\Hilb_{3t+1}(X_{U}/U) \to U$. 
Then we get a morphism $B_{U} \to B_{U}'$. 
Thanks to isotriviality, we can check that $B_{U}'$ is an \'etale double covering over $U$ and $B_{U} \to B_{U}'$ is bijective. Hence $B_{U} \to B_{U}'$ is an isomorphism. 
Thus $B$ is the normalization of the finite part of the Stein factorization $\Hilb_{3t+1}(X/C) \to C$. 
We complete the proof. 
\end{proof}

\begin{defi}\label{defi-BT}
Let $\vp \colon X \to C$ be a sextic del Pezzo fibration. 
We define $\vp_{B} \colon B \to C$ and $\vp_{T} \colon T \to C$ as in the settings of Lemma~\ref{lem-indep}. 
We call $\vp_{B}$ (resp. $\vp_{T}$) the \emph{associated double} (resp. \emph{triple}) \emph{covering} to $\vp$. 
\end{defi}

\subsection{Proof of Theorem~\ref{mainthm-inv}}
We use the same notation as in Proposition~\ref{prop-2ray68}. 

(1) As in Proposition~\ref{prop-2ray68}~(2), $\wt{X}$ is isomorphic to $\wt{Q}$ or its flop. 
By \cite[Corollary~4.12]{Kollar89}, $\mc J(\wt{X})$ is isomorphic to $\mc J(\wt{Q})$ as complex tori. 
Moreover, when $C \simeq \P^{1}$, the isomorphism $\mc J(\wt{X}) \simeq \mc J(\wt{Q})$ preserves the polarizations by \cite[Remark~4.13~(1)]{Kollar89}. 
Therefore, the assertion follows from Lemma~\ref{lem-89}~(2) and \cite[Lemma~3.11]{CG72}. 

(2) The assertion from a direct calculation using the formulae in Lemma~\ref{lem-89}~(3) and Proposition~\ref{prop-2ray68}~(4). 

We complete the proof of Theorem~\ref{mainthm-inv}. \qed

\subsection{Characterizations of some nef bundles on quadric surfaces}
In this subsection, we will obtain numerical characterizations of some nef vector bundles on an irreducible and reduced quadric surface. 
We will use these results for proving Theorem~\ref{mainthm-P22}. 

Until the end of this subsection, we work over the following setting:
\begin{itemize}
\item $Q$ denotes an irreducible and reduced quadric surface in $\P^{3}$. We refer the definitions of $\mc O_{Q}(1)$ and $\ms S_{Q}$ to Subsection~\ref{NC}~(11). 
\item When $Q$ is smooth, we set $\pi \colon \F:=\P_{\P^{1}}(\mc O(1) \oplus \mc O(1)) \to \P^{1}$ and let $\s \colon \F \to Q$ be the identity. 
\item When $Q$ is singular, we set $\pi \colon \F:=\P_{\P^{1}}(\mc O \oplus \mc O(2)) \to \P^{1}$ and let $\s \colon \F \to Q$ be the contraction of the $(-2)$-curve. 
\item Let $h$ (resp. $f$) be a tautological divisor (resp. a fiber) of the $\P^{1}$-bundle $\F \to \P^{1}$. 
Note that $\s^{\ast}\mc O_{Q}(1)=\mc O_{\F}(h)$ for each case. 
\end{itemize}

\begin{rem}
$\ms S_{Q}$ is a globally generated vector bundle of rank $2$ with $\det \ms S_{Q}=\mc O_{Q}(1)$ and $c_{2}(\ms S_{Q})=1$. When $Q$ is smooth, it is known that $\ms S_{Q} \simeq \mc O_{Q}(h-f) \oplus \mc O_{Q}(f)$ (cf. \cite{SW90,Ohno14}). 
\end{rem}
The aim is to give the following characterization of the bundle $\ms S_{Q} \oplus \mc O_{Q}(1)$. 
\begin{prop}\label{prop-Spi2}
Let $\mc F_{Q}$ be a nef vector bundle on $Q$ of rank $3$ such that $\det \mc F_{Q}=\mc O_{Q}(2)$, $c_{2}(\mc F)=3$ and $h^{0}(\mc F_{Q}^{\vee})=0$. 
Then $\mc F_{Q}$ is isomorphic to $\ms S_{Q} \oplus \mc O_{Q}(1)$. 
\end{prop}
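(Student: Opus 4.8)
The plan is to analyze the bundle $\mc F_Q$ restricted to the rulings of $Q$ and use the nefness together with the numerical hypotheses to force the splitting. First I would work on the Hirzebruch surface $\F$ and consider $\mc G:=\s^{\ast}\mc F_Q$, a nef rank $3$ bundle with $\det \mc G=\mc O_\F(2h)$ and $\s_*\mc G=\mc F_Q$ (when $Q$ is singular one must check $\mc G$ is pulled back from $Q$, i.e.\ $\mc G|_{C_0}$ is trivial, which follows from nefness and $\det\mc G.C_0=0$). Restricting to a fiber $f\simeq\P^1$, a nef bundle on $\P^1$ with determinant $\mc O(2)$ splits as one of $\mc O(2)\oplus\mc O\oplus\mc O$, $\mc O(1)\oplus\mc O(1)\oplus\mc O$, or $\mc O(1)\oplus\mc O(1)\oplus\mc O(1)$ is impossible by degree, so the generic splitting type is $(2,0,0)$ or $(1,1,0)$. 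The subbundle generated by sections of maximal degree on each fiber globalizes (semicontinuity) to a sub-line-bundle or rank $2$ subsheaf of $\mc F_Q$, and tracking $c_1,c_2$ pins down the candidates.

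The key step is to produce a sub-line bundle $\mc O_Q(1)\hra \mc F_Q$ as a direct summand. I would argue that the generic splitting type on rulings must be $(1,1,0)$: the $(2,0,0)$ case would force, after relativizing over both rulings, a destabilizing sub-line bundle of class $\mc O_Q(1,0)$ or similar, and then $c_2(\mc F_Q)=3$ together with nefness of the quotient would give a contradiction (the quotient rank $2$ bundle would have $c_1=\mc O_Q(1,2)$ or $\mc O_Q(0,2)$ and $c_2$ too small or negative). Once the splitting type is $(1,1,0)$ on the generic ruling in \emph{both} families, the relative evaluation of sections along the two rulings produces a rank $2$ subbundle $\mc E\subset\mc F_Q$ with $\det\mc E=\mc O_Q(1)$; nefness forces $\mc E$ globally generated, and $h^0(\mc F_Q^\vee)=0$ together with the cohomology of $\mc O_Q(-1)$ and of the quotient line bundle splits off the complement $\mc O_Q(1)$ as a direct summand. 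Finally I identify $\mc E$ with $\ms S_Q$: a rank $2$ globally generated bundle on $Q$ with $\det\mc E=\mc O_Q(1)$ and $c_2(\mc E)=c_2(\mc F_Q)-1=2$... here one must be careful, since $c_2(\ms S_Q)=1$; so actually the correct bookkeeping is $c_2(\mc F_Q)=c_2(\mc E)+c_1(\mc E).c_1(\mc O_Q(1))=c_2(\mc E)+2$, giving $c_2(\mc E)=1$, and then $\mc E\simeq\ms S_Q$ follows from the classification of such bundles (the extension $0\to\mc O_Q\to\mc E\to\mc I_Z(1)\to0$ with $Z$ a single point, cf.\ the Hartshorne--Serre correspondence, which forces $\mc E\simeq\ms S_Q$ after checking the point $Z$ cannot be a base point).

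The splitting off of $\mc O_Q(1)$ requires $\Ext^1_Q(\mc O_Q(1),\mc E)=H^1(Q,\mc E(-1))=0$, which I would verify from the known structure of $\mc E$ (either $\ms S_Q$ directly, or its defining extension), and similarly the inclusion $\mc O_Q(1)\hra\mc F_Q$ comes from a section of $\mc F_Q(-1)$ whose existence is forced by the generic splitting type and a semicontinuity argument; one has to rule out that the image degenerates along a curve, which is where nefness of $\mc F_Q$ (so the quotient has no negative-degree pieces on any ruling) is used again.

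\textbf{Main obstacle.} The hard part will be controlling the behaviour of $\mc F_Q$ over the \emph{singular} quadric cone uniformly with the smooth case: on $\F=\F_2$ one has the negative section $C_0$, and one must ensure all the sub- and quotient bundles produced on $\F$ descend to $Q$ (equivalently are trivial on $C_0$), which is not automatic and needs the nefness hypothesis applied carefully to $\mc O_{\P(\mc G)}(1)$ along curves lying over $C_0$. The second delicate point is the final identification step: showing that a globally generated rank $2$ bundle with $\det=\mc O_Q(1)$, $c_2=1$ is exactly $\ms S_Q$ and not some other sheaf — this is classical on smooth $Q$ (it is $\mc O_Q(h-f)\oplus\mc O_Q(f)$, which indeed is $\ms S_Q$) but on the cone one must invoke the definition $\ms S_Q=\ms S_{\Q^3}|_Q$ and check the extension class matches, i.e.\ that the length-$2$ or length-$1$ subscheme $Z$ appearing is forced into the unique position compatible with the ruling structure.
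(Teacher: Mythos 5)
Your overall skeleton agrees with the paper's (pass to the Hirzebruch resolution, locate an $\mc O_{Q}(1)$ piece, identify the rank-$2$ complement by its Chern classes $c_{1}=\mc O_{Q}(1)$, $c_{2}=1$, then split via an $\Ext^{1}$ vanishing), and your corrected Chern-class bookkeeping $c_{2}(\mc F_{Q})=c_{2}(\mc E)+2$ is right. But the central mechanism you propose for producing the splitting has a genuine gap. You manufacture the rank-$2$ subbundle (equivalently the $\mc O_{Q}(1)$ piece) by comparing generic splitting types along the \emph{two} rulings and taking a relative evaluation in both families; on the quadric cone there is only one ruling, and on $\F_{2}$ the class $h-f$ is not a second fibration, so this construction simply has no analogue in the singular case. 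You flag this as the ``main obstacle'' but do not resolve it, and it is exactly where the real work lies. The paper's substitute is a uniform cohomological argument: Riemann--Roch gives $\chi(\mc F(-h))=1$, Serre duality gives $h^{2}(\mc F(-h))=h^{0}(\mc F^{\vee}(-h))=0$ (using only the splitting type on $|f|$-fibers), hence $h^{0}(\mc F(-h))>0$ and one gets an injection $\mc O(h)\to\mc F$ with no case distinction. Note also that the direction is reversed relative to yours: the paper takes $\mc O(h)$ as the \emph{sub}-bundle and the rank-$2$ sheaf as the quotient.

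A second gap: the cokernel of the resulting map $\mc O(h)\to\mc F$ (or of whatever inclusion your evaluation maps produce) need not be locally free, and you treat it as a subbundle/quotient bundle throughout. The paper handles this by showing the cokernel $\mc E$ is torsion free, passing to $\mc E^{\vee\vee}$, and using an Euler-characteristic count to force $c_{2}(\mc E^{\vee\vee})\in\{0,1\}$; the hypothesis $h^{0}(\mc F_{Q}^{\vee})=0$ is then used in a load-bearing way to exclude $c_{2}(\mc E^{\vee\vee})=0$ (which would give $\mc E^{\vee\vee}\simeq\mc O(h)\oplus\mc O$ and hence a generically surjective map $\mc F\to\mc O$). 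In your sketch that hypothesis only appears decoratively in the final splitting step, so this dichotomy is not addressed. Finally, for the identification of the rank-$2$ piece with $\ms S_{Q}$ on the cone, your Hartshorne--Serre route (a section vanishing at one point) is workable but delicate at the vertex; the paper instead exhibits the pullback to $\F_{2}$ as the unique non-split extension $0\to\mc O(h-f)\to\wt{\mc E}\to\mc O(f)\to 0$ and uses $\Ext^{1}(\mc O(f),\mc O(h-f))=\C$, which sidesteps any discussion of where the zero scheme sits.
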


First, we confirm the following lemma. 
\begin{lem}\label{lem-Spi0}
Let $\mc E$ be a rank $2$ nef vector bundle on $Q$ with $\det \mc E \simeq \mc O_{Q}(1)$. 
\begin{enumerate}
\item If $c_{2}(\mc E)=0$, then $\mc E \simeq \mc O_{Q}(1) \oplus \mc O_{Q}$. 
\item If $c_{2}(\mc E)=1$, then $\mc E \simeq \ms S_{Q}$. 
\end{enumerate}
\end{lem}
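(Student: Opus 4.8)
The plan is to analyze a rank $2$ nef bundle $\mc E$ on $Q$ with $\det\mc E\simeq\mc O_Q(1)$ by pulling it back to the Hirzebruch surface $\F$ via $\s$ and using the classification of nef (indeed globally generated) bundles on ruled surfaces. First I would record the numerical constraints: on $\F$ we have $\s^{\ast}\det\mc E=\s^{\ast}\mc O_Q(1)=\mc O_\F(h)$, and $c_2(\s^{\ast}\mc E)=\deg(\s^{\ast})\cdot c_2(\mc E)$, which equals $c_2(\mc E)$ since $\s$ is birational (in the singular case one must check that the $(-2)$-curve contributes nothing, i.e. $\mc E$ is locally free near the singular point and the pullback is an isomorphism there). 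So on $\F$ we get a nef rank $2$ bundle $\wt{\mc E}:=\s^{\ast}\mc E$ with $\det\wt{\mc E}=\mc O(h)$ and $c_2(\wt{\mc E})\in\{0,1\}$.

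Next I would restrict $\wt{\mc E}$ to a fiber $f\simeq\P^1$ of $\pi$. Since $\wt{\mc E}$ is nef and $\det\wt{\mc E}\cdot f = h\cdot f = 1$, the splitting type on a general (hence every, by semicontinuity and nefness) fiber is $\mc O\oplus\mc O(1)$. Thus $\pi_{\ast}\wt{\mc E}$ is a rank $2$ bundle on $\P^1$ and there is a natural surjection-type exact sequence; more concretely, the relative $\mathrm{Hom}$ from $\mc O_\F$ into $\wt{\mc E}$ produces a sub-line-bundle of the form $\pi^{\ast}\mc O(a)$ with quotient $\mc O(h)\otimes\pi^{\ast}\mc O(-a)$. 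One then pins down $a$ using nefness: the sub-line-bundle $\pi^{\ast}\mc O(a)$ of a nef bundle forces $a\geq 0$, and computing $c_2$ of the extension $0\to\pi^{\ast}\mc O(a)\to\wt{\mc E}\to\mc O(h-af)\otim'\ldots$ — more carefully, writing $\wt{\mc E}$ as an extension of $\mc O(h+bf)$ by $\mc O(-bf)$ or similar and matching $c_2(\wt{\mc E})=bh\cdot(h-\text{stuff})$ — gives $c_2$ in terms of $b$ and the self-intersection $h^2$ (which is $2$ in both the $\F_0$ and $\F_2$ cases since $h^2=2$ here). I expect this bookkeeping to show: $c_2=0$ forces the extension to split as $\mc O_\F(h)\oplus\mc O_\F$, and $c_2=1$ forces $\wt{\mc E}\simeq\mc O_\F(h-f)\oplus\mc O_\F(f)$ (the splitting following because $\mathrm{Ext}^1$ between the relevant line bundles vanishes, or because a global section forces a splitting).

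Having identified $\wt{\mc E}$ on $\F$, I would descend back to $Q$. In the smooth case $\s$ is the identity and we are done immediately: case (1) gives $\mc O_Q(1)\oplus\mc O_Q$ and case (2) gives $\mc O_Q(h-f)\oplus\mc O_Q(f)\simeq\ms S_Q$ by the remark preceding the lemma. In the singular case I would argue that $\mc E=\s_{\ast}\wt{\mc E}$ (push-forward, using $R^1\s_{\ast}=0$ and that $\wt{\mc E}$ is trivial along the $(-2)$-curve when its summands restrict trivially there), and then in case (1) $\mc E\simeq\s_{\ast}\mc O_\F(h)\oplus\s_{\ast}\mc O_\F\simeq\mc O_Q(1)\oplus\mc O_Q$, while in case (2) $\mc E\simeq\ms S_Q$ by the definition $\ms S_Q:=\ms S_{\Q^3}|_Q$ together with a direct check that $\s_{\ast}(\mc O_\F(h-f)\oplus\mc O_\F(f))$ reproduces the spinor bundle's numerics ($\det=\mc O_Q(1)$, $c_2=1$, globally generated) and that such a bundle on $Q$ is unique. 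The main obstacle I anticipate is the singular case: I must verify that nefness of $\mc E$ on the (non-normal-free but normal) cone $Q=\Q^2_0$ really does control the behavior of $\wt{\mc E}$ on the $(-2)$-curve $C_0$ — i.e. that the summands of $\wt{\mc E}$ cannot have negative degree on $C_0$ — and that the isomorphism descends despite $\s$ not being an isomorphism; this is where the hypothesis "$\mc E$ locally free on $Q$" is doing real work and needs to be used carefully.
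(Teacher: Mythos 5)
Your overall strategy (pull back to $\F$ via $\s$, restrict to fibers of $\pi$ to get splitting type $\mc O\oplus\mc O(1)$, extract a sub-line-bundle, and match Chern classes) is essentially the paper's, and it goes through for part (1) and for part (2) when $Q$ is smooth. But there is a genuine error in the singular case of part (2), precisely at the point you flagged as the anticipated obstacle. On $\F_2$ one does obtain an exact sequence $0\to\mc O(h-f)\to\wt{\mc E}\to\mc O(f)\to 0$, but this sequence does \emph{not} split: $\Ext^1(\mc O(f),\mc O(h-f))=H^1(\F_2,\mc O(h-2f))=\C\neq 0$, and more to the point a splitting is impossible because $\wt{\mc E}=\s^{\ast}\mc E$ is nef while $\mc O(h-f)$ is not nef on $\F_2$ (it has degree $-1$ on the $(-2)$-curve $C_0\in|h-2f|$). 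So your expected conclusion $\wt{\mc E}\simeq\mc O(h-f)\oplus\mc O(f)$ is false there, and the proposed descent would fail anyway: $\s_{\ast}\mc O_{\F_2}(h-f)$ is a rank-one reflexive sheaf that is not invertible at the vertex, so $\s_{\ast}$ of that direct sum is not locally free and cannot be $\ms S_Q$.

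The paper's resolution is the opposite of splitting: since the sequence cannot split, $\wt{\mc E}$ is \emph{the} non-trivial extension, which is unique up to isomorphism because $\Ext^1(\mc O(f),\mc O(h-f))$ is one-dimensional; and $\s^{\ast}\ms S_Q$ sits in the same non-split extension by the identical argument (it is nef with $\det=\mc O(h)$ and $c_2=1$), whence $\s^{\ast}\mc E\simeq\s^{\ast}\ms S_Q$ and then $\mc E\simeq\ms S_Q$ by applying $\s_{\ast}$ (using $\s_{\ast}\mc O_{\F}=\mc O_Q$ and the projection formula for the locally free sheaves $\mc E$ and $\ms S_Q$). One further step your sketch omits: before producing the section of $\wt{\mc E}(-h+f)$ by Riemann--Roch and Serre duality, the paper first shows $h^0(\wt{\mc E}(-h))=0$ (else $\wt{\mc E}\simeq\mc O\oplus\mc O(h)$, contradicting $c_2=1$); this is what guarantees that the resulting injection $\mc O(h-f)\to\wt{\mc E}$ is non-zero on every fiber, so that its cokernel is a line bundle pulled back from $\P^1$ and the exact sequence above exists at all.
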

\begin{proof}
The assertion was already proved in \cite{SW90,Ohno14} if $Q$ is non-singular. 
We attain a proof even if $Q$ is singular. 

(1) Suppose that $c_{2}(\mc E)=0$. 
The Hirzebruch-Riemann-Roch theorem and the Leray spectral sequence implies $\chi(\mc E(-1))=\chi(\s^{\ast}\mc E(-h))=1$. 
Note that $h^{2}(Q,\mc E(-1))=h^{0}(\mc E^{\vee}(-1))=h^{0}(\mc E(-2)) \leq h^{0}(\mc E(-1))$ by the Serre duality. 
Thus we have $h^{0}(\mc E(-1))>0$ and hence an injection $\mc O(1) \to \mc E$. 
By the same arguments as in \cite[Proposition~5.2]{Ohno14}, we have $\mc E=\mc O \oplus \mc O_{Q}(1)$. 

(2) Suppose that $c_{2}(\mc E)=1$. 
Set $\wt{\mc E}:=\s^{\ast}\mc E$. 
Then it holds that $\det \wt{\mc E} = \mc O(h)$ and $c_{2}(\wt{\mc E})=1$. 

\begin{claim}\label{claim1-Spi1}
$h^{0}(\wt{\mc E}(-h))=0$.
\end{claim}

\begin{proof}
If $h^{0}(\wt{\mc E}(-h))>0$, then we have $\wt{\mc E} \simeq \mc O \oplus \mc O(h)$ by the same argument as in the proof of Lemma~\ref{lem-Spi0}~(1). 
However, it contradicts $c_{2}(\wt{\mc E})=1$.
\end{proof}

We have $\chi(\wt{\mc E}(-h+f))=1$ by the Hirzebruch-Riemann-Roch theorem and $h^{2}(\wt{\mc E}(-h+f))=h^{0}(\wt{\mc E}^{\vee}(-h-f))=h^{0}(\wt{\mc E}(-2h-f))$ by the Serre duality. 
Since $\wt{\mc E}$ is nef and $c_{1}(\wt{\mc E})=h$, we have $\wt{\mc E}|_{l}=\mc O_{\P^{1}} \oplus \mc O_{\P^{1}}(1)$ for every $l \in |f|$ and hence $h^{0}(\wt{\mc E}(-2h-f))=0$. 
Thus we have $h^{0}(\wt{\mc E}(-h+f))>0$. 
Hence there is an injection $\a \colon \mc O(h-f) \to \wt{\mc E}$. 
If there exists a fiber $l \in |f|$ such that $\a|_{l}=0$, then we obtain a non-zero map $\mc O(h) \to \wt{\mc E}$, which contradicts Claim~\ref{claim1-Spi1}. 
Therefore, for every $l \in |f|$, $\a|_{l}$ is non-zero map 
from $\mc O(h-f)|_{l} \simeq \mc O_{\P^{1}}(1)$ to $\wt{\mc E}|_{l} \simeq \mc O_{\P^{1}} \oplus \mc O_{\P^{1}}(1)$. 
Then $\Cok \a|_{l}=\mc O_{l}$ and hence $\Cok \a$ is the pull-back of a line bundle on $\P^{1}$. 
Since $\det \wt{\mc E}=\mc O(h)$, we obtain an exact sequence
\begin{align}
\label{eq1-Spi1}0 \to \mc O(h-f) \to \wt{\mc E} \to \mc O(f) \to 0. 
\end{align}
If $Q$ is smooth, then this sequence splits and we obtain $\wt{\mc E} \simeq \mc O(h-f) \oplus \mc O(f) \simeq \ms S_{Q}$. 
If $Q$ is singular, then this sequence does not splits since $\wt{\mc E}$ is nef and $\mc O(h-f)$ is not nef. 
The vector bundles fitting into the exact sequence (\ref{eq1-Spi1}) which does not split are unique up to isomorphism since 
$\Ext^{1}(\mc O(f),\mc O(h-f))=H^{1}(\F_{2},\mc O(h-2f))=\C$. 
By exactly the same argument, $\s^{\ast}\ms S_{Q}$ also fits into the exact sequence (\ref{eq1-Spi1}). 
Hence we obtain $\s^{\ast}\ms S_{Q} \simeq \s^{\ast}\mc E$ and hence $\ms S_{Q} \simeq \mc E$. 
\end{proof}

\begin{lem}\label{lem1-Spi2}
Let $\mc F$ be a rank $3$ nef vector bundle on $Q$ with $\det \mc F=\mc O_{\F}(2h)$ and $c_{2}(\mc F)=3$. 
Then the following assertions follow.
\begin{enumerate}
\item $h^{0}(\mc F(-2h+af))=0$ for any $a \in \Z$. 
\item $\mc F|_{l} \simeq \mc O \oplus \mc O(1)^{2}$ for a general member $l \in |f|$. 
\item $h^{0}(\mc F(-h-f))=0$. 
\item $h^{0}(\mc F(-h))>0$. 
\end{enumerate}
\end{lem}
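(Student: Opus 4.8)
The plan is to pull everything back along $\sigma\colon\F\to Q$. Setting $\wt{\mc F}:=\sigma^{\ast}\mc F$, one has $H^{0}(Q,\mc F\otimes\mc O_{Q}(D))\cong H^{0}(\F,\wt{\mc F}\otimes\mc O_{\F}(\sigma^{\ast}D))$ since $\sigma_{\ast}\mc O_{\F}=\mc O_{Q}$, and $\wt{\mc F}$ is again nef with $\det\wt{\mc F}=\mc O_{\F}(2h)$ and $c_{2}(\wt{\mc F})=3$; moreover in both cases $-K_{\F}=2h$, $h^{2}=2$ and $h\cdot f=1$. So it is enough to prove the four statements for $\wt{\mc F}$ on $\F$. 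Throughout I would use that the restriction of a nef bundle to a curve is nef, that a nef rank $2$ bundle $\mc E$ on a smooth surface has $c_{1}(\mc E)$ nef and $0\le c_{2}(\mc E)\le c_{1}(\mc E)^{2}$, and that if $\mc E$ is nef and $\mc E\twoheadrightarrow\mc Q$ with $\mc Q$ torsion-free of rank $r$ then $\det\mc Q$ is nef (because $\wedge^{r}\mc E\twoheadrightarrow\wedge^{r}\mc Q\subseteq\det\mc Q$).

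For (1), I would suppose $H^{0}(\wt{\mc F}(-2h+af))\neq0$ for some $a$ and saturate the resulting nonzero map $\mc O(2h-af)\to\wt{\mc F}$ to a sub-line-bundle $\mc M=\mc O(2h-af+D)\hookrightarrow\wt{\mc F}$ with $D$ effective and $\mc Q:=\wt{\mc F}/\mc M$ torsion-free of rank $2$. Restricting to a general fibre shows $D$ is vertical, hence $\mc M=\mc O(2h-a'f)$ for an integer $a'$, and restricting to a general member of $|h|$ gives $a'\ge0$; thus $c_{1}(\mc Q)=a'f$ and $c_{2}(\mc Q)=3-2a'$. Since $c_{1}(\mc Q)^{2}=0$ while $3-2a'$ is never $0$, $\mc Q$ cannot be locally free (it would be a nef rank $2$ bundle violating $0\le c_{2}\le c_{1}^{2}$). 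Picking a point $p$ where $\mc Q$ is not locally free and a fibre $f_{0}\ni p$, the sheaf $\mc Q|_{f_{0}}$ has nonzero torsion, so the preimage of this torsion in $\wt{\mc F}|_{f_{0}}$ is a sub-line-bundle of degree at least $\deg(\mc M|_{f_{0}})+1=3$, while $\deg\wt{\mc F}|_{f_{0}}=2h\cdot f=2$; hence $\wt{\mc F}|_{f_{0}}$ has a negative summand, contradicting nefness. This gives (1). Statement (2) then follows at once: if the generic splitting type of $\wt{\mc F}|_{f}$ were $\mc O(2)\oplus\mc O^{\oplus 2}$ instead of $\mc O(1)^{\oplus 2}\oplus\mc O$, then $\wt{\mc F}(-2h)|_{f}=\mc O\oplus\mc O(-2)^{\oplus 2}$ would have a section for general $f$, so $\pi_{\ast}\wt{\mc F}(-2h)\neq0$, whence $H^{0}(\wt{\mc F}(-2h+Nf))\neq0$ for $N\gg0$, contradicting (1).

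For (3) and (4) I would start from Hirzebruch--Riemann--Roch on $\F$: using $c_{2}(\wt{\mc F})=3$, $-K_{\F}=2h$ and $h^{2}=2$ one gets $\chi(\wt{\mc F}(-h))=1$ and $\chi(\wt{\mc F}(-h-f))=-1$. For $a\in\{0,1\}$, Serre duality gives $h^{2}(\wt{\mc F}(-h-af))=h^{0}(\wt{\mc F}^{\vee}(-h+af))$, which vanishes by restricting to a general $\Gamma\in|h|\cong\P^{1}$ (there $\wt{\mc F}^{\vee}(-h+af)|_{\Gamma}$ has all summands of negative degree). Hence $h^{0}(\wt{\mc F}(-h))\ge\chi=1$, which is (4). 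For (3), $h^{1}(\wt{\mc F}(-h)|_{f})=0$ for every $f$ (from the two possible nef splitting types), so $\mc G:=\pi_{\ast}\wt{\mc F}(-h)$ is locally free of rank $2$ with $R^{1}\pi_{\ast}=0$; by the $\chi$-computation and Leray, $\deg\mc G=-1$, so $\mc G=\mc O(c)\oplus\mc O(-1-c)$ with $c\ge0$ and $h^{0}(\wt{\mc F}(-h-f))=h^{0}(\mc G(-1))=c$. So I would be reduced to showing $c=0$, i.e.\ that there is no sub-line-bundle $\mc O(h+c'f)\hookrightarrow\wt{\mc F}$ with $c'\ge1$.

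As in (1), saturating and restricting to $|h|$ bounds $c'\le2$, and the torsion-free quotient $\mc Q'$ has $c_{1}(\mc Q')=h-c'f$, $c_{2}(\mc Q')=1$; but $h-c'f$ is not nef when $Q$ is singular (negative on the $(-2)$-curve of $\F$) and when $Q$ is smooth with $c'=2$ (negative on the other ruling), contradicting that $\det\mc Q'$ is nef. The hard part is the one remaining case, $Q$ smooth and $c'=1$: here $\mc Q'$ is not locally free (else a nef rank $2$ bundle with $c_{1}^{2}=0<1=c_{2}$), $\mc Q'^{\vee\vee}=\mc O(h-f)\oplus\mc O$, and $\mc Q'=\ker(\mc Q'^{\vee\vee}\to\C_{p})$ for a single point $p$. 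I would finish by restricting $\wt{\mc F}$ to the normalization of a rational curve $\gamma\subset\F$ of class $dh$ having a large enough ordinary multiple point at $p$: the torsion picked up by the quotient along the branches over $p$ pulls the degree of the torsion-free part of the pulled-back quotient below $0$, so $\wt{\mc F}$ has a negative quotient line bundle on that $\P^{1}$, contradicting nefness. This forces $c=0$, completing (3). The main obstacle is this last sub-case (together with the standard fact that the double dual of a torsion-free quotient of a nef bundle is nef); everything else is Chern-class arithmetic and restrictions to rational curves.
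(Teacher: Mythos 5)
Your proofs of (1), (2) and (4), and your reduction of (3) to excluding a saturated sub-line-bundle $\mc O(h+c'f)\hra\wt{\mc F}$ with $c'\in\{1,2\}$, are correct, and they take a genuinely different route from the paper: the paper proves (1) by induction on $a$ using Ohno's splitting criterion and proves (3) in one stroke by showing that $\Cok(\mc O(h+f)\to\mc F)$ restricts to $\mc O^{2}$ on \emph{every} member of $|h-f|$, hence equals $\mc O\oplus\mc O(h-f)$ and forces $c_{2}=2\neq 3$, whereas you rely on saturation, the Fulton--Lazarsfeld inequalities $0\le c_{2}\le c_{1}^{2}$ for nef rank-$2$ bundles, and nefness of determinants and double duals of torsion-free quotients. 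Those tools are all legitimate, and your case analysis is sound up to the last sub-case.

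The gap is precisely in the step you flag as the crux, namely $Q$ smooth and $c'=1$, and the proposed construction provably cannot work. For any irreducible curve $\gamma\subset\F\simeq\P^{1}\times\P^{1}$ through $p$ other than the ruling $R\in|h-f|$ through $p$, one has $\operatorname{mult}_{p}\gamma\le R\cdot\gamma=(h-f)\cdot\gamma$; in particular for $\gamma\in|dh|$ the multiplicity at $p$ is at most $(h-f)\cdot dh=d$, so no such curve has a point of multiplicity exceeding $d$. Since for the normalization $\nu\colon\P^{1}\to\gamma$ one has $\lgth\bigl(\mc O_{\P^{1}}/\mathfrak{m}_{p}\mc O_{\P^{1}}\bigr)=\operatorname{mult}_{p}\gamma$, the torsion-free part of $\nu^{\ast}\mc Q'$ has degree $(h-f)\cdot\gamma-\operatorname{mult}_{p}\gamma\ge 0$ on every such curve: the intended negative-degree quotient never appears. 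The repair is immediate: take $\gamma=R$ itself. Then $\mc Q'^{\vee\vee}|_{R}\simeq\mc O_{\P^{1}}^{2}$ because $(h-f)^{2}=0$, and the image of $\wt{\mc F}|_{R}\to\mc Q'\otimes\mc O_{R}\to\mc Q'^{\vee\vee}\otimes\mc O_{R}$ is $\ker(\mc O_{\P^{1}}^{2}\to\C_{p})$, a locally free rank-$2$ quotient of the nef bundle $\wt{\mc F}|_{R}$ of degree $-1$, which has a quotient line bundle of negative degree --- the desired contradiction. With that substitution your argument for (3) closes.
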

\begin{proof}

(1) We may assume that $a \geq 0$. 
We will prove (1) by the induction on $a$. 
Assume that $a=0$. 
If $h^{0}(\mc F(-2h)) \neq 0$, it follows that $\mc F = \mc O(2h) \oplus \mc O^{2}$ from \cite[Proposition~5.2]{Ohno14}, which contradicts $c_{2}(\mc F)=3$. 
Thus we have $h^{0}(\mc F(-2h))=0$. 
Assume that $a>0$. 
If $h^{0}(\mc F(-2h+af)) \neq 0$, then there exists an injection 
$\iota \colon \mc O(2h-af) \to \mc F$. 
By the assumption of the induction $h^{0}(\mc F(-2h+(a-1)f))=0$, for every member $l \in |f|$, we have $\iota|_{l} \neq 0$. Therefore, we obtain an exact sequence 
$0 \to \mc O(2h-af)|_{l} \to \mc F|_{l} \to \Cok \iota|_{l} \to 0$. 
Since $\mc F|_{l}$ is a nef vector bundle with $c_{1}=2$, 
it holds that $\mc F|_{l}=\mc O(2) \oplus \mc O^{2}$ and $\Cok \iota|_{l}=\mc O_{l}^{2}$. 
It implies that the natural morphism $\pi^{\ast}\pi_{\ast}\Cok \iota \to \Cok \iota$ is isomorphic and thus we obtain an exact sequence 
$0 \to \mc O(2h-af) \to \mc F \to \mc O(a_{1}f) \oplus \mc O((a-a_{1})f) \to 0$ 
for some $a_{1} \in \Z$. 
Hence we have $c_{2}(\mc F)=2a$, which contradicts $c_{2}(\mc F)=3$. 

(2) Assume the contrary. 
Then we obtain $\mc F|_{l} \simeq \mc O^{2} \oplus \mc O(2)$ for any member $l \in |f|$ by the upper semicontinuity. 
Then $\pi_{\ast}\mc F(-2h)$ is a line bundle on $\P^{1}$ and we have a non-zero map $\pi^{\ast}\pi_{\ast}\mc F(-2h) \to \mc F(-2h)$. 
It implies that $h^{0}(\mc F(2h-af)) \neq 0$ for some $a \in \Z$, which contradicts (1). 

(3) If $h^{0}(\mc F(-h-f)) \neq 0$, then we have an injection $\a \colon \mc O(h+f) \to \mc F$. 
When $Q$ is smooth, we consider another ruling $h-f$. 
For every $l' \in |h-f|$, $\a|_{l'} \colon \mc O(h+f)|_{l'} \to \mc F|_{l'}$ is injective by (1). 
Since $\mc O(h+f)|_{l'} \simeq \mc O_{\P^{1}}(2)$ and $c_{1}(\mc F|_{l'})=2$, 
we have $\Cok \a|_{l'} \simeq \mc O_{l'}^{2}$ and hence $\Cok \a$ is a nef bundle. 
Then $\Cok \a \simeq \mc O \oplus \mc O(h-f)$ holds, which contradicts $c_{2}(\mc F)=3$. 
When $Q$ is singular, we take the restriction $\a|_{C_{0}}$ on the $(-2)$-curve $C_{0}$. 
By (1), $\a|_{C_{0}}$ is a non-zero map from $\mc O(h+f)|_{C_{0}} \simeq \mc O(1)$ to $\mc F|_{C_{0}} \simeq \mc O^{2}$, which is a contradiction. 
Therefore, we have $h^{0}(\mc F(-h-f))=0$.

(4) We have $\chi(\mc F(-h))=1$ by the Hirzebruch-Riemann-Roch theorem and $h^{2}(\mc F(-h))=h^{0}(\mc F^{\vee}(-h))$ by the Serre duality. 
Since $\mc F^{\vee}|_{l}=\mc O \oplus \mc O(-2)$ or $\mc O(-1)^{2}$ for any $l \in |f|$, $h^{0}(\mc F^{\vee}(-h))=0$ and hence $h^{0}(\mc F(-h)) >0$. 
\end{proof}

\begin{proof}[Proof of Proposition~\ref{prop-Spi2}]
Set $\mc F:=\s^{\ast}\mc F_{Q}$. 
Lemma~\ref{lem1-Spi2}~(4) attains an injection $\a \colon \mc O(h) \to \mc F$. 
Letting $\mc E=\Cok \a$, we have the following exact sequence: 
\begin{align}
\label{eq1-Spi2}0 \to \mc O(h) \mathop{\to}^{\a} \mc F \to \mc E \to 0.
\end{align}
By Lemma~\ref{lem1-Spi2}~(3), $\a|_{l}$ is non-zero for every $l \in |f|$. 
By Lemma~\ref{lem1-Spi2}~(2), the cokernel of $\a|_{l} \colon \mc O(h)|_{l} \to \mc F|_{l}$ is locally free for general members $l \in |f|$. 
Then $\mc E$ is locally free in codimension $1$ and hence torsion free by \cite[Lemma~5.4]{Ohno14}. 
Hence we have the following exact sequence: 
\begin{align}
\label{eq2-Spi2}0 \to \mc E \mathop{\to}^{\iota} \mc E^{\vee\vee} \to \mc T \to 0. 
\end{align}
Here $\mc T$ denotes the cokernel of the natural injection $\iota \colon \mc E \to \mc E^{\vee\vee}$. 
Since $\mc E$ is locally free in codimension $1$, $\Supp \mc T$ is 0-dimensional or empty.
Note that $\mc E^{\vee\vee}$ is locally free since $\F$ is a smooth surface. 
By \cite[Lemma~9.1]{Ohno16v7}, $\mc E^{\vee\vee}$ is nef. 
It is clear that $\det \mc E^{\vee\vee}=\det \mc E=\mc O(h)$. 
The Hirzebruch-Riemann-Roch theorem implies that $\chi(\mc F)=8$, $\chi(\mc O(h))=4$ and $\chi(\mc E^{\vee\vee})=5-c_{2}(\mc E^{\vee\vee})$. 
By the exact sequences (\ref{eq1-Spi2}) and (\ref{eq2-Spi2}), we have $\chi(\mc E)=4$ and $5-c_{2}(\mc E^{\vee\vee})=\chi(\mc E^{\vee\vee})=\chi(\mc E)+h^{0}(\mc T) \geq 4$. 
In particular, $c_{2}(\mc E^{\vee\vee})=0$ or $1$. 

If $c_{2}(\mc E^{\vee\vee})=0$, then $\mc E^{\vee\vee} \simeq \mc O(h) \oplus \mc O$ by Lemma~\ref{lem-Spi0}~(1). Hence we have a morphism $\mc F \to \mc O_{\F}$, which is surjective at generic point. 
This contradicts our assumption $h^{0}(\mc F_{Q}^{\vee})=0$. 
Hence we have $c_{2}(\mc E^{\vee\vee})=1$ and $\mc T=0$, which implies $\mc E \simeq \mc E^{\vee\vee} \simeq \ms S_{Q}$ by Lemma~\ref{lem-Spi0}~(2). 
Since $\Ext^{1}(\ms S_{Q},\mc O_{Q}(h))=H^{1}(\ms S_{Q}^{\vee}(h))=H^{1}(\ms S_{Q})=0$, 
the exact sequence (\ref{eq1-Spi2}) splits. 
We complete the proof of Proposition~\ref{prop-Spi2}. 
\end{proof}

\subsection{Certain vector bundles on families of surfaces with multi-sections}

We will use the following theorem for proving Theorem~\ref{mainthm-P22}. 

\begin{thm}\label{thm-univext}
Let $X,Y$ be smooth varieties with $\dim X=\dim Y+2$. 
Let $f \colon X \to Y$ be a flat projective morphism with $f_{\ast}\mc O_{X}=\mc O_{Y}$. 
Let $Z \subset X$ be a locally complete intersection closed subscheme of codimension $2$ and suppose that $f|_{Z} \colon Z \to Y$ is finite with $\deg(f|_{Z}) \geq 2$. 

If $R^{1}f_{\ast}\mc O_{X}$ is locally free and $H^{2}(Y,f_{\ast}\omega_{f} \otimes R^{1}f_{\ast}\mc I_{Z})=0$, 
then there exists a locally free sheaf $\mc F$ such that
\begin{enumerate}
\item $\mc F$ fits into the following exact sequence: 
\[0 \to f^{\ast}((R^{1}f_{\ast}\mc I_{Z})(-K_{Y})) \to \mc F \to \mc I_{Z}(-K_{X}) \to 0, \text{ and }\]
\item there are no surjections $\mc F|_{X_{y}} \to \mc O_{X_{y}}$ for every $y \in Y$. 
\end{enumerate}
\end{thm}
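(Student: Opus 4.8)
The plan is to construct $\mc F$ as a universal extension governed by the relative version of the Hartshorne--Serre (Serre duality) correspondence for the codimension-$2$ locally complete intersection subscheme $Z \subset X$. First I would set up the basic data: since $f|_Z \colon Z \to Y$ is finite and $Z$ is a local complete intersection of codimension $2$ in the smooth variety $X$, the normal sheaf $\mc N_{Z/X}$ is locally free of rank $2$, and by adjunction $\det \mc N_{Z/X} = \omega_Z \otimes (f|_Z)^{\ast}\omega_Y^{-1}$. The idea is to produce a rank-$(\dim X - \dim Y + 1) = 3$? — no, rather a sheaf $\mc F$ extending $\mc I_Z(-K_X)$ by a pullback from $Y$, so that the extension class lives in $\Ext^1_X(\mc I_Z(-K_X),\, f^{\ast}((R^1f_{\ast}\mc I_Z)(-K_Y)))$. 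Concretely I would compute this $\Ext$ group via the local-to-global spectral sequence together with the Leray spectral sequence for $f$: using $f_{\ast}\mc O_X = \mc O_Y$, relative duality $Rf_{\ast}R\mc Hom_X(-, \omega_f) \cong R\mc Hom_Y(Rf_{\ast}(-), \mc O_Y)$, and the vanishing hypothesis $H^2(Y, f_{\ast}\omega_f \otimes R^1f_{\ast}\mc I_Z) = 0$, one identifies a canonical element of this $\Ext^1$ — the one corresponding under relative Serre duality to the identity (or to the structure map) — whose associated extension sheaf $\mc F$ is locally free. The local freeness is the crucial local computation: at each point of $Z$ the extension is the Koszul-type resolution that resolves $\mc I_{Z}$ in terms of $\mc N_{Z/X}^{\vee}$, and one checks the chosen global class restricts fiberwise/locally to a generator, exactly as in the classical Serre construction; away from $Z$ the sheaf $\mc F$ is an extension of locally free sheaves, hence locally free.

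For part (1), once $\mc F$ is built as such an extension it tautologically sits in the displayed sequence
\[
0 \to f^{\ast}\big((R^1f_{\ast}\mc I_Z)(-K_Y)\big) \to \mc F \to \mc I_Z(-K_X) \to 0,
\]
so the content is entirely in showing $\mc F$ is locally free, which I would reduce to the statement that the extension class generates $\mc Ext^1_X(\mc I_Z(-K_X), \cdot)$ along $Z$ — a sheafified version of the fact that the Serre construction turns a section of a suitable twist of $\det \mc N_{Z/X}$ into a locally free extension. I would use the hypothesis that $R^1f_{\ast}\mc O_X$ is locally free to guarantee that base change behaves well and that $R^1f_{\ast}\mc I_Z$ is coherent with the expected fibers, feeding into the identification of the relevant $\Ext$ as the global sections of a line bundle on $Z$ (via $R^1f_{\ast}$ of the ideal sequence $0 \to \mc I_Z \to \mc O_X \to \mc O_Z \to 0$ and the fact that $f|_Z$ is finite so $\mc O_Z$ has no higher direct images).

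For part (2), suppose toward a contradiction that $\mc F|_{X_y} \twoheadrightarrow \mc O_{X_y}$ for some $y \in Y$; restricting the exact sequence to the fiber $X_y$ and noting that $f^{\ast}((R^1f_{\ast}\mc I_Z)(-K_Y))|_{X_y}$ is a trivial bundle (pullback of a vector space) while $\mc I_Z(-K_X)|_{X_y}$ relates to $\mc I_{Z_y/X_y}(-K_{X_y})$, one wants to derive that the surjection forces a splitting contradicting the non-splitness of the universal extension, i.e. contradicting that the chosen class is nonzero on every fiber. The clean way is: a surjection $\mc F|_{X_y} \to \mc O_{X_y}$ pulls the fiberwise extension class back to zero in $\Ext^1_{X_y}(\mc O_{X_y}, (\text{trivial}))$, but the compatibility of the universal class with base change (ensured by the local-freeness of $R^1f_{\ast}\mc O_X$ and cohomology-and-base-change) shows its fiberwise restriction is precisely the universal extension class of $\mc I_{Z_y}(-K_{X_y})$ by the appropriate trivial bundle, which is nonzero because $\deg(f|_Z) \geq 2$ makes $Z_y$ a nonempty scheme of length $\geq 2$ and such a universal extension never admits a trivial quotient summand.

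The main obstacle I expect is the first one: proving $\mc F$ is locally free, which amounts to showing the canonical extension class restricts to a generator of the stalk of $\mc Ext^1$ at every point of $Z$ simultaneously, not merely generically. This is where the finiteness of $f|_Z$, the locally-complete-intersection hypothesis on $Z$, and the precise twist by $-K_X$ versus $-K_Y$ all have to be used in concert, via relative duality $\omega_f = \omega_X \otimes f^{\ast}\omega_Y^{-1}$; the rest (the $\Ext$ computation and part (2)) is comparatively formal once the base-change behavior is pinned down. Since the theorem is stated "in a slightly general form" and its proof is deferred to the appendix, I would carry out this local freeness argument there in full, treating here only the reduction of (1) and (2) to it.
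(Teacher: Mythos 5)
Your architecture matches the paper's: construct $\mc F$ as a relative universal extension of $\mc I_{Z}(-K_{X})$ by $f^{\ast}((R^{1}f_{\ast}\mc I_{Z})(-K_{Y}))$, compute the relevant $\Ext^{1}$ via the Leray and local-to-global spectral sequences together with relative duality, prove local freeness by a relativized Serre-construction criterion, and get (2) from fiberwise universality plus base change. The genuine gap is precisely the step you flag as ``the main obstacle'' and then defer: the assertion that the universal class generates the stalk of $\mc Ext^{1}$ at every point of $Z$ is \emph{not} ``exactly as in the classical Serre construction,'' and it is the only non-formal content of the theorem. After unwinding relative duality ($\mc Ext^{1}_{f}(\mc I_{Z},\omega_{f})\simeq (R^{1}f_{\ast}\mc I_{Z})^{\vee}$ and $\mc Ext^{1}(\mc I_{Z},\omega_{f})\simeq\omega_{f|_{Z}}$), the local-freeness criterion reduces to the surjectivity of the composite
\[
f^{\ast}\mc E^{\vee}\longrightarrow f^{\ast}(f_{\ast}\mc O_{Z})^{\vee}\simeq f^{\ast}f_{\ast}\omega_{f|_{Z}}\longrightarrow \omega_{f|_{Z}},\qquad \mc E:=\Cok(\mc O_{Y}\to f_{\ast}\mc O_{Z}).
\]
The counit on the right is surjective for any finite flat morphism, but precomposing with the inclusion $\mc E^{\vee}\hookrightarrow(f_{\ast}\mc O_{Z})^{\vee}$ --- i.e., restricting to functionals annihilating $1$ --- is not formally surjective; this is exactly where $\deg(f|_{Z})\geq 2$ enters (it guarantees $\mc E\neq 0$; for degree $1$ the statement is false and the construction collapses). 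The paper isolates this as Proposition~\ref{prop-surj} (after Stankova--Frenkel) and proves it by an explicit local computation with the trace form: choosing a basis $1,b_{1},\dots,b_{n}$ of $B$ over $A$ with $\Tr(b_{i})=0$ and exhibiting the generator $1^{\vee}$ of $\omega_{B/A}\simeq\Hom_{A}(B,A)$ as an explicit $B$-linear combination of the $b_{i}^{\vee}$. Without this input (or an equivalent one) your local-freeness argument does not close, and no amount of spectral-sequence bookkeeping supplies it.

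A secondary, repairable imprecision: in (2), a surjection $\mc F|_{X_{y}}\to\mc O_{X_{y}}$ does not directly ``pull the fiberwise extension class back to zero.'' One must split into cases according to whether the surjection vanishes on the trivial subbundle: if it does, the snake lemma produces a surjection $\mc I_{Z_{y}}(-K_{X_{y}})\to\mc O_{X_{y}}$, impossible since $Z_{y}\neq\emp$ forces any such map to be an isomorphism onto a locally free sheaf; if it does not, it splits off a summand of the coefficient space through which the identity on $\Ext^{1}$ would factor, contradicting universality. This is the content of Lemma~\ref{lem-nosurj}, applied fiberwise after checking (via \cite[Satz~3]{BPS80} and $\Ext^{2}(\mc I_{Z_{y}},\omega_{X_{y}})=H^{0}(\mc I_{Z_{y}})^{\vee}=0$) that the restriction of the universal extension to each fiber is again universal.
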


This theorem resembles the Hartshorne-Serre correspondence for a family of surfaces $f \colon X \to Y$ with a multi-section $Z$. 
However, even if $Y$ is point, the Hartshorne-Serre correspondence does not implies Theorem~\ref{thm-univext} since $H^{2}(X,\mc O(K_{X}))=\C \neq 0$. 
To prove Theorem~\ref{thm-univext}, we use relative Ext-sheaves. 
We will prove this theorem in Appendix~\ref{app-relext}.

\section{Extensions to moderate $(\P^{1})^{3}$-fibrations}\label{sec-P13}

We devote this section to the proof of Theorem~\ref{mainthm-P13}. 

\subsection{Double projection from a point on $(\P^{1})^{3}$}\label{subsec-doubleprojP13}
In Subsection~\ref{subsec-doubleprojdP6}, we discussed the double projection from a point on a smooth sextic del Pezzo surface $S$ (see the diagram (\ref{dia-68gen})). 
Since $S$ is always a hyperplane section of $(\P^{1})^{3}$, it is natural to consider the double projection from a point on $(\P^{1})^{3}$. 


Let $S$ be a sextic del Pezzo surface and $x$ a general point on $S$.  
Then we have $\Bl_{x}S=\Bl_{y_{1},y_{2},y_{3}}\Q^{2}$ as in the diagram (\ref{dia-68gen}). 
We consider an embedding $\Q^{2} \hra \P^{3}$ and the blow-up $\Bl_{y_{1},y_{2},y_{3}}\P^{3}$. 
Then there is a flop $\Bl_{y_{1},y_{2},y_{3}}\P^{3} \dra \Bl_{x}(\P^{1})^{3}$: 

\begin{align}\label{dia-P13gen}
\xymatrix{
&\Bl_{x}(\P^{1})^{3} \ar[ld] &\ar@{-->}[l] \Bl_{y_{1},y_{2},y_{3}}\P^{3}\ar[rd]& \\
(\P^{1})^{3}&&&\P^{3}.
}
\end{align}

This birational map $(\P^{1})^{3} \dra \P^{3}$ is the double projection from $x$ on $(\P^{1})^{3}$. 
We will see the detail for this birational transformation (\ref{dia-P13gen}) in the proof of Theorem~\ref{thm-constP13}. 
Under this birational map, the quadric $\Q^{2}$ is proper transformed into $S$ on $(\P^{1})^{3}$ as we will see in the proof of Theorem~\ref{mainthm-P13}. 

\subsection{Moderate $(\P^{1})^{3}$-fibrations}

In the next theorem, we make a Mori fiber space $\vp_{Y} \colon Y \to C$ with smooth total space $Y$ whose smooth fibers are $(\P^{1})^{3}$ by relativizing the inverse of the double projection $(\P^{1})^{3} \dra \P^{3}$ from a point on $(\P^{1})^{3}$. 
We call this $\vp_{Y}$ a \emph{moderate $(\P^{1})^{3}$-fibration} in this paper.

\begin{thm}\label{thm-constP13}
Let $f \colon \F \to C$ be a $\P^{3}$-bundle and $T \subset \F$ be a smooth curve such that $\deg (f|_{T})=3$ and $T_{t}:=(f|_{T})^{-1}(t)$ is non-colinear in $\F_{t}=\P^{3}$ for each $t \in C$. 
\begin{enumerate}
\item There exists a unique sub $\P^{2}$-bundle $\E \subset \F$ containing $T$. 
\item There exists the following diagram: 
\begin{align}
\xymatrix{
&\ar[ld]_{\mu_{\F}}\wt{\F}^{+}\ar[rd]^{\psi_{\F}^{+}}&& \ar@{-->}[ll]_{\Phi}\ar[ld]_{\psi_{\F}} \wt{\F}=\Bl_{T}\F\ar[rd]^{\s_{\F}} \ar[dd]_{\wt{f}} & \\
Y\ar[rd]_{\vp_{Y}}&&\ar[ld]\ol{\F}\ar[rd]&&\F\ar[ld]^{f} \\
&C\ar@{=}[rr]&&C,&
}\label{dia-P13}
\end{align}
where 
\begin{itemize}
\item $\s_{\F} \colon \wt{\F}:=\Bl_{T}\F \to \F$ is the blow-up of $\F$ along $T$ with exceptional divisor $G_{\F}:=\Exc(\s_{\F})$; 
\item $\Phi \colon \wt{\F} \dra \wt{\F}^{+}$ is a family of Atiyah flops over $C$;
\item $Y$ is smooth and $\vp_{Y} \colon Y \to C$ is a Mori fiber space; 
\item $\mu_{\F}$ is the blow-up along a $\vp_{Y}$-section $C_{0}$ and contracts the proper transform $\wt{\E}^{+}$ of $\E$.  
\end{itemize}
\item If we set $G_{Y} \subset Y$ be the proper transform of $G_{\F}$, then it holds that $-K_{Y} \sim 2G_{Y}-\vp_{Y}^{\ast}(K_{C}+\det f_{\ast}\mc O_{\F}(\E))$. 
\item Every smooth $\vp_{Y}$-fiber is isomorphic to $(\P^{1})^{3}$. 
\end{enumerate}
In this paper, we call this Mori fiber space $\vp_{Y} \colon Y \to C$ the \emph{moderate $(\P^{1})^{3}$-fibration} with respect to the pair $(f \colon \F \to C,T)$. 
\end{thm}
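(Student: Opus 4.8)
The plan is to construct the diagram (\ref{dia-P13}) fiberwise, following the reverse of the double projection described in Subsection~\ref{subsec-doubleprojP13}, and then to check that all the birational modifications relativize to give a smooth total space $Y$ with the asserted properties. First I would prove part (1): since $T_t \subset \F_t = \P^3$ consists of three non-colinear points, there is a unique plane $\P^2 \subset \F_t$ through $T_t$ for each $t$, and these planes glue into a sub $\P^2$-bundle $\E \subset \F$ because $T \to C$ is finite flat of degree $3$, so $(f|_T)_*\mc O_T$ is a rank-$3$ bundle and the linear span construction is functorial in $t$; concretely $\E$ corresponds to the rank-$3$ quotient of $f_*\mc O_\F(\E')$ coming from the ideal of $T$. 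I would phrase the uniqueness by noting that any sub $\P^2$-bundle containing $T$ restricts fiberwise to the unique plane through $T_t$.

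Next I would build the right half of the diagram. Let $\s_\F \colon \wt\F = \Bl_T \F \to \F$ be the blow-up; since $\F$ and $T$ are smooth and $T$ has codimension $3$, $\wt\F$ is smooth with exceptional divisor $G_\F$ a $\P^2$-bundle over $T$. The key local observation is that the proper transform $\wt\E$ of $\E$ meets each $\wt\F$-fiber in $\Bl_{T_t}\P^2 = $ a surface with three $(-1)$-curves over the points of $T_t$; blowing up $\P^3$ along $T_t$ and then contracting along the other ruling of the exceptional $\P^1\times\P^1$'s performs the standard flop, and $\wt\E_t$ is exactly the flopped locus. So $\psi_\F \colon \wt\F \to \ol\F$ should be the contraction of the $\P^1$-bundle of lines in the fibers of $G_\F \to T$ that sit inside $\wt\E$, realized as a small contraction over $C$; I would verify $\rho(\wt\F/C) = 2$ via \cite[Corollary~(12.1.5)]{KM92} as in the proof of Proposition~\ref{prop-2ray68}, identify the two extremal rays, check that the one not contracted by $\s_\F$ gives a flopping contraction $\psi_\F$ whose flop $\Phi \colon \wt\F \dashrightarrow \wt\F^+$ exists (it is a family of Atiyah flops, fiberwise the standard flop of $\Bl_{T_t}\P^3$), and that $\wt\F^+$ is again smooth. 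Then on $\wt\F^+$ the remaining extremal ray over $C$ gives a divisorial contraction $\mu_\F \colon \wt\F^+ \to Y$: fiberwise it contracts $\wt\E^+_t = \Bl_{T_t}\P^2$ to the point $x_t$, and the fiberwise picture (\ref{dia-P13gen}) identifies $Y_t$ with $(\P^1)^3$. By \cite[Theorem~(3.3)]{Mori82} applied to the divisorial contraction $\mu_\F$ over $C$ — whose center $C_0 := \mu_\F(\wt\E^+)$ is not contained in any fiber because $\wt f^+$-fibers are irreducible — $Y$ is smooth and $\mu_\F$ is the blow-up of $Y$ along the smooth curve $C_0$, which is a $\vp_Y$-section. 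This gives (2) and (4).

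For part (3) I would run the divisor bookkeeping. Writing $E_{\wt\F^+} = \Exc(\mu_\F)$, $G_{\wt\F^+}$ and $G_Y$ for the relevant proper transforms, and using that $\Pic(\wt\F^+/C)$ is generated by (the proper transform of) the tautological class of $\F$ together with $E_{\wt\F^+}$ — or equivalently by $G_{\wt\F^+}$ and $E_{\wt\F^+}$ — I would express $-K_Y$ in this basis by pulling $K_{\wt\F^+}$ down through $\mu_\F$ and matching with the known formulas $K_{\wt\F} = \s_\F^* K_\F + 2 G_\F$ and the flop-invariance of the canonical class. The fiberwise relation $-K_{Y_t} \sim 2 (G_Y)_t$ on $(\P^1)^3$ (the anticanonical class is $(1,1,1)$, twice the class of $G_Y$ which is a $\P^1\times\P^1$, i.e. of bidegree $(1,1,1)/2$ — more precisely $G_Y$ restricts to a divisor of the class that is half of $-K$, this needs care) pins down the coefficients of $G_Y$ and $E_Y$, and the correction term in $\vp_Y^*(\cdot)$ is computed by intersecting with a section and using the adjunction formula on $\E$, yielding $K_C + \det f_*\mc O_\F(\E)$. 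I would double-check the twist via the exact sequence for $\mc O_\F(\E) \to \mc O_\E(\E)$ and the identification of $\det f_*\mc O_\F(\E)$ with the relevant determinant line on $C$.

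The main obstacle I expect is verifying carefully that the two intermediate spaces $\wt\F^+$ and $Y$ are smooth, i.e. that the relative flop $\Phi$ is genuinely a family of Atiyah flops (simultaneous standard flops in each fiber, with flat exceptional locus) and that the divisorial contraction $\mu_\F$ is the honest blow-up of a smooth curve rather than something with worse singularities. The smoothness of $Y$ does not follow formally from smoothness of $\wt\F^+$; here I would lean on \cite[Theorem~(3.3)]{Mori82} to classify the divisorial contraction over $C$ once I know each fiber of $\mu_\F$ contracts a copy of $\Bl_{T_t}\P^2$ to a point and the ambient $\wt f^+$-fibers are irreducible, exactly as in the analysis of $\s$ in the proof of Proposition~\ref{prop-2ray68}. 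Controlling the flop $\Phi$ requires checking the normal bundle of the flopping curves (each a $\P^1$ with normal bundle $\mc O(-1)^{\oplus 2}$ in the fiber, and with one more $\mc O$ direction along $C$, so $\mc O(-1)^{\oplus 2}\oplus\mc O$ in $\wt\F$) so that the flop is the standard one and preserves smoothness; this is the step where I would spend the most effort.
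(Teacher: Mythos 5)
Your overall architecture (blow up $T$, identify the two rays of $\rho(\wt{\F}/C)=2$, flop, blow the proper transform of $\E$ down to a section) is the same as the paper's, but two of the key identifications are wrong, and they sit exactly at the step you yourself flag as the main obstacle. First, the proper transform $\wt{\E}^{+}$ is \emph{not} fiberwise $\Bl_{T_{t}}\P^{2}$: since $\wt{\E}\cdot \ell=-1$ for each flopping curve $\ell$ (the flopping curves lie inside $\wt{\E}$), the flop restricted to $\wt{\E}_{t}=\Bl_{T_{t}}\E_{t}$ is the Cremona-type contraction of those $(-1)$-curves, so $\wt{\E}^{+}$ is a $\P^{2}$-bundle over $C$. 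This is not cosmetic: a copy of $\Bl_{T_{t}}\P^{2}$ cannot be contracted to a smooth point, whereas the smooth blow-down $\mu_{\F}$ exists precisely because $\wt{\E}^{+}_{t}\simeq\P^{2}$ with $-K_{\wt{\F}^{+}_{t}}|_{\wt{\E}^{+}_{t}}=\mc O_{\P^{2}}(2)$, hence $\mc N_{\wt{\E}^{+}_{t}/\wt{\F}^{+}_{t}}=\mc O_{\P^{2}}(-1)$ (the paper's Claim~\ref{claim-P13conclu}). Second, \cite[Theorem~(3.3)]{Mori82} classifies extremal contractions of smooth \emph{3-folds}; it cannot be invoked for the 4-fold contraction $\mu_{\F}\colon\wt{\F}^{+}\to Y$, so your smoothness argument for $Y$ does not go through as written — and, applied fiberwise, it would in fact rule out contracting a $\Bl_{3\,\mathrm{pts}}\P^{2}$ to a point, exposing the first error.

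The flopping locus is also misidentified: the curves killed by $\psi_{\F}$ are the proper transforms of the lines $\braket{p_{i},p_{j}}\subset\E_{t}$ spanned by pairs of points of $T_{t}$, on which $L_{\wt{\F}}=\s_{\F}^{\ast}\mc O_{\F}(2)-G_{\F}$ has degree $2-2=0$; the lines in the fibers of $G_{\F}\to T$ lying in $\wt{\E}$ have $L_{\wt{\F}}$-degree $1$ and generate the ray contracted by $\s_{\F}$, not the flopped ray. Moreover, since the ray in question is $K_{\wt{\F}}$-trivial, its contractibility is not supplied by the contraction theorem; one must prove that $L_{\wt{\F}}$ is relatively globally generated. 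The paper does this — and simultaneously pins down the flopping curves and their normal bundles $\mc O(-1)^{2}\oplus\mc O$ uniformly over \emph{every} $t$, including those where $T_{t}$ is non-reduced and $\wt{\F}_{t}$ is singular, where your ``$\mc O(-1)^{\oplus 2}$ in the fiber plus $\mc O$ along $C$'' decomposition is unavailable — by restricting to the auxiliary 3-fold $\wt{\E}=\Bl_{T}\E$ and running the 2-ray game there: fiberwise $\wt{\E}_{t}\to\ol{\E}_{t}$ is the Cremona map, $\psi_{\E}$ is the blow-up of a $\P^{2}$-bundle along a smooth curve $\ol{T}$ by Mori's theorem (legitimately applied to the 3-fold $\wt{\E}$), and $\Exc(\psi_{\F})=\Exc(\psi_{\E})$. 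Finally, identifying $Y_{t}\simeq(\P^{1})^{3}$ via the fiberwise picture (\ref{dia-P13gen}) is circular in the paper's logic, since that diagram is derived from this theorem; the paper instead computes $(-K_{Y_{t}})^{3}=48$, $\rho(Y_{t})=3$ and index $2$, and quotes Fujita's classification of del Pezzo manifolds.
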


\begin{rem}
In the setting of Theorem~\ref{thm-constP13}, for every $t \in C$, 
the scheme $T_{t}=f^{-1}(t) \cap T$ is reduced, or a union of one reduced point and a non-reduced length 2 subscheme, or a curvilinear scheme of length $3$, i.e., $T_{t} \simeq \Spec \C[\e]/(\e^{3})$. 
\end{rem}

\begin{proof}[Proof of Theorem~\ref{thm-constP13}]
We proceed in 4 steps. 

\noindent
\textbf{Step 1.} 
First, we show (1). Let $\mc O_{\F}(1)$ be a tautological line bundle. 
Then $f_{\ast}(\mc O_{\F}(1) \otimes \mc I_{T})$ is a line bundle $\mc L$ on $C$. 
Thus we have $\C=H^{0}(f_{\ast}(\mc O_{\F}(1) \otimes \mc I_{T}) \otimes \mc L^{-1})=H^{0}(\mc O_{\F}(1) \otimes \mc I_{T} \otimes f^{\ast}\mc L^{-1})$. 
Let $\E \in |\mc O_{\F}(1) \otimes \mc I_{T} \otimes f^{\ast}\mc L^{-1}|$ be the unique member. 
By the definition of $\mc L$, $\E \to C$ is a $\P^{2}$-bundle and $\E_{t}$ is the linear span of $T_{t}$ for every $t \in C$, which proves (1). 

Replacing $\mc O_{\F}(1)$, we may assume that  $\mc O_{\F}(1)=\mc O_{\F}(\E)$. 
In this setting, we have $-K_{\F}=4\E-f^{\ast}(K_{C}+\det f_{\ast}\mc O_{\F}(\E))$. 

\noindent
\textbf{Step 2.} 
Let $\s_{\F} \colon \wt{\F} \to \F$ and $\s_{\E} \colon \wt{\E} \to \E$ be the blow-ups along $T$ and $G_{\F} \subset \wt{\F}$ and $G_{\E} \subset \wt{\E}$ the exceptional divisors of $\s_{\F}$ and $\s_{\E}$ respectively. 
Set 
\begin{align}
\label{eq-P13-L}
L_{\wt{\F}}:=\s_{\F}^{\ast}\mc O_{\F}(2)-G_{\F} \text{ and } L_{\wt{\E}}:=L_{\wt{\F}}|_{\wt{\E}}. 
\end{align}
Note that 
\begin{align}
\label{eq-P13-0}
-K_{\wt{\F}}=2L_{\wt{\F}}-(f \circ \s)^{\ast}(K_{C}+\det f_{\ast}\mc O_{\F}(1)) \sim_{C} 2L_{\wt{\F}}.
\end{align}

The following claim is the 2-ray game of $\wt{\E}$ over $C$. 
\begin{claim}\label{claim-P13E}
\begin{enumerate}
\item $\mc O(L_{\wt{\E}})$ is globally generated and big over $C$ and $(f|_{\E} \circ \s_{\E})_{\ast}\mc O(L_{\wt{\E}})$ is a vector bundle of rank $3$. 
\item Set $\ol{\E}=\P_{C}((f|_{\E} \circ \s_{\E})_{\ast}\mc O(L_{\wt{\E}}))$ and let $\psi_{\E} \colon \wt{\E} \to \ol{\E}$ 
denote the morphisms over $C$ defined by $|L_{\wt{\E}}|$. 
Then $\psi_{\E}$ is the blow-up of $\ol{\E}$ along a non-singular curve $\ol{T} \subset \ol{\E}$. Moreover, the composite morphism $\ol{T} \hra \ol{\E} \to C$ is a triple covering. 
\end{enumerate}
\end{claim}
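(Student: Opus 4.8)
The plan is to run the relative $2$-ray game on the smooth $3$-fold $\wt{\E}=\Bl_{T}\E$, whose fibre over $t\in C$ is the blow-up $\Bl_{T_{t}}\P^{2}$ of the plane $\E_{t}$ along the length-$3$ curvilinear scheme $T_{t}$. Write $g:=f|_{\E}\circ\s_{\E}\colon\wt{\E}\to C$ and recall from Step~1 that $\mc O_{\F}(1)=\mc O_{\F}(\E)$, so that $\mc O_{\E}(1):=\mc O_{\F}(1)|_{\E}$ is a tautological bundle for $\E\to C$ and $L_{\wt{\E}}=\s_{\E}^{\ast}\mc O_{\E}(2)-G_{\E}$; on a fibre $\wt{\E}_{t}$, with $H$ the pulled-back hyperplane and $E_{t}$ the exceptional divisor, $L_{\wt{\E}}$ restricts to $2H-E_{t}$, i.e.\ to the linear system of conics through $T_{t}$.

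For (1): since $\s_{\E}$ is the blow-up of the smooth $3$-fold $\E$ along the smooth curve $T$, we have $\s_{\E\ast}\mc O_{\wt{\E}}(-G_{\E})=\mc I_{T/\E}$, hence $g_{\ast}\mc O(L_{\wt{\E}})\cong(f|_{\E})_{\ast}(\mc I_{T/\E}(2))$ by the projection formula. On each fibre this is $H^{0}(\P^{2},\mc I_{T_{t}}(2))$, and since $T_{t}$ has length $3$ one gets $h^{0}(\mc I_{T_{t}}(2))=3$ and $h^{1}(\mc I_{T_{t}}(2))=0$ for every $t$; so by Grauert's theorem $g_{\ast}\mc O(L_{\wt{\E}})$ is locally free of rank $3$ and its formation commutes with base change. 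As $T_{t}$ is not contained in a line of $\E_{t}$ (this is exactly the hypothesis on $T$), the base scheme of $|\mc I_{T_{t}}(2)|$ equals $T_{t}$, so $|2H-E_{t}|$ is base-point-free on $\wt{\E}_{t}$ with $(2H-E_{t})^{2}=1$; hence $\mc O(L_{\wt{\E}})$ is globally generated over $C$ and, being fibrewise nef of positive self-intersection, big over $C$.

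For (2): put $\ol{\E}:=\P_{C}(g_{\ast}\mc O(L_{\wt{\E}}))$ and let $\psi_{\E}\colon\wt{\E}\to\ol{\E}$ be the morphism over $C$ given by the surjection $g^{\ast}g_{\ast}\mc O(L_{\wt{\E}})\epm\mc O(L_{\wt{\E}})$. Both $\wt{\E}$ and $\ol{\E}$ are smooth (a blow-up along a smooth curve, resp.\ a $\P^{2}$-bundle over the smooth curve $C$), $\psi_{\E}$ is projective, and on each fibre it is the quadratic Cremona morphism $\Bl_{T_{t}}\P^{2}\to\P^{2}$ contracting the proper transforms of the three lines joining pairs of points of $T_{t}$; in particular $\psi_{\E}$ is birational, $\rho(\wt{\E}/C)=2>1=\rho(\ol{\E}/C)$, so $\psi_{\E}$ is a divisorial contraction of a single $K_{\wt{\E}}$-negative extremal ray (it contracts $(-1)$-curves in the fibres), and $\Exc(\psi_{\E})$ surjects onto $C$, so its image $\ol{T}$ is a curve rather than a point. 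By Mori's list of divisorial extremal contractions of smooth $3$-folds \cite[Theorem~(3.3)]{Mori82}, the only type with one-dimensional image is the blow-up along a smooth curve; hence $\psi_{\E}=\Bl_{\ol{T}}\ol{\E}$ with $\ol{T}$ smooth, $\Exc(\psi_{\E})\to\ol{T}$ a $\P^{1}$-bundle, and $\Exc(\psi_{\E})\to C$ surjective with general fibre three disjoint $(-1)$-curves; therefore $\ol{T}\to C$ is finite of degree $3$, i.e.\ a triple covering, which proves the claim.

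The step I expect to be the main obstacle is the fibrewise analysis over the finitely many $t$ where $T\to C$ ramifies, so that $T_{t}$ is non-reduced (a curvilinear length-$3$ scheme of one of the types in the Remark after Theorem~\ref{thm-constP13}) and $\Bl_{T_{t}}\P^{2}$ is a singular Gorenstein del Pezzo surface: one must verify, uniformly in $t$, the values $h^{0}(\mc I_{T_{t}}(2))=3$, $h^{1}(\mc I_{T_{t}}(2))=0$ and the base-point-freeness of $|2H-E_{t}|$, and it is precisely here that non-colinearity of $T_{t}$ is used. These can be checked by a direct coordinate computation of the conic linear system in each degeneration type; alternatively the base-point-freeness can be obtained a posteriori from the relative base-point-free theorem applied to the relatively nef and big divisor $L_{\wt{\E}}$. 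One should also record that the three contracted curves in a fibre span a single ray of $N_{1}(\wt{\E}/C)$, which follows because the rulings of $G_{\E}\to T$ over different points of $T$ are numerically equivalent in $\wt{\E}$.
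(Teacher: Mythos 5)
Your argument is correct and follows essentially the same route as the paper: a fibrewise computation of $h^{0}(\mc I_{T_{t}/\E_{t}}(2))=3$ plus Grauert for (1), and relative ampleness of $-K_{\wt{\E}}$, Mori's Theorem~(3.3), and the generic Cremona picture for (2). The one step you flag as the main obstacle --- global generation of $\mc O_{\E_{t}}(2)\otimes\mc I_{T_{t}/\E_{t}}$ uniformly in $t$, including the non-reduced degenerations --- is dispatched in the paper in one line with no case analysis: every such $T_{t}$ lies on a smooth conic $C\subset\E_{t}$, and twisting $0\to\mc I_{C/\E_{t}}\to\mc I_{T_{t}/\E_{t}}\to\mc I_{T_{t}/C}\to 0$ by $\mc O(2)$ reduces everything to the globally generated line bundle $\mc I_{T_{t}/C}(2)\simeq\mc O_{\P^{1}}(1)$.
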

\begin{proof}
(1) For every $t \in C$, there is a smooth conic $C \subset \E_{t}=\P^{2}$ containing $T_{t}$. 
By the exact sequence $0 \to \mc I_{C/\E_{t}} \to \mc I_{T_{t}/\E_{t}} \to \mc I_{T_{t}/C} \to 0$, we see that $\mc O_{\E_{t}}(2) \otimes \mc I_{T_{t}/\E_{t}}$ is globally generated and  
$h^{0}(\mc O_{\E_{t}}(2) \otimes \mc I_{T_{t}/\E_{t}})=3$ for every $t$, which proves (1). 

(2) For a general point $t \in C$, $\ol{\E}_{t} \gets \wt{\E_{t}} \to \E_{t}=\P^{2}$ is nothing but the Cremona involution. 
Thus $\psi_{\E}$ is a birational morphism onto the $\P^{2}$-bundle $\ol{\E} \to C$. 
Since $-K_{\wt{\E}} \sim_{C} L_{\E}+(\s|_{\E})^{\ast}\mc O_{\E}(1)$ is ample over $C$ and $\rho(\wt{\E})=3$, $\psi_{\E}$ is the contraction of an extremal ray. 
Then $\psi_{\E}$ is the blow-up along a non-singular curve $\ol{T}$ by \cite[Theorem~(3.3)]{Mori82}. 
For a general $t \in C$, $\wt{\E}_{t} \to \ol{\E}_{t}=\P^{2}$ is the blow-up at three points. Hence $\ol{T} \to C$ is a triple covering. 
\end{proof}

\noindent
\textbf{Step 3.} Next we play the 2-ray game of $\wt{\F}$ over $C$ by using Claim~\ref{claim-P13E}. 

\begin{claim}\label{claim-P13F}
\begin{enumerate}
\item $\mc O(L_{\wt{\F}})$ is globally generated and big over $C$. 
\item Let $\psi_{\F} \colon \wt{\F} \to \ol{\F}$ denote the Stein factorizations of the morphisms over $C$ defined by $|L_{\wt{\F}}|$. 
Then $\psi_{\F}|_{\wt{\E}}=\psi_{\E}$ and $\Exc(\psi_{\F})=\Exc(\psi_{\E})$. 
\item $\psi_{\F}$ is a family of Atiyah's flopping contraction over $C$. 
\end{enumerate}
\end{claim}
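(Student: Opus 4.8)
The plan is to reduce all three assertions to statements on a single fibre $\wt{\F}_{t}=\Bl_{T_{t}}\P^{3}$ of $\wt{f}$ and then transfer them to the situation over $C$ by cohomology and base change. For (1), the first step is to identify $\wt{f}_{\ast}\mc O_{\wt{\F}}(L_{\wt{\F}})$. Since we have normalized $\mc O_{\F}(1)=\mc O_{\F}(\E)$, formula (\ref{eq-P13-L}) reads $\mc O_{\wt{\F}}(L_{\wt{\F}})=\s_{\F}^{\ast}\mc O_{\F}(2)\otimes\mc O_{\wt{\F}}(-G_{\F})$; as $T$ is a smooth centre of codimension $3$ we have $\s_{\F\ast}\mc O_{\wt{\F}}(-G_{\F})=\mc I_{T}$ and $R^{i}\s_{\F\ast}\mc O_{\wt{\F}}(-G_{\F})=0$ for $i>0$, so $\wt{f}_{\ast}\mc O_{\wt{\F}}(L_{\wt{\F}})=f_{\ast}(\mc O_{\F}(2)\otimes\mc I_{T})$ and $R^{1}\wt{f}_{\ast}\mc O_{\wt{\F}}(L_{\wt{\F}})=R^{1}f_{\ast}(\mc O_{\F}(2)\otimes\mc I_{T})$. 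Pushing forward $0\to\mc O_{\F}(2)\otimes\mc I_{T}\to\mc O_{\F}(2)\to\mc O_{\F}(2)|_{T}\to 0$ and using that $\Sym^{2}f_{\ast}\mc O_{\F}(1)\to (f|_{T})_{\ast}(\mc O_{\F}(2)|_{T})$ is surjective — which holds because the non-colinearity of $T_{t}$ forces $T_{t}$ to impose three independent conditions on quadrics of $\P^{3}$ for every $t$ — one gets that $\wt{f}_{\ast}\mc O_{\wt{\F}}(L_{\wt{\F}})$ is locally free of rank $7$ and $R^{1}\wt{f}_{\ast}\mc O_{\wt{\F}}(L_{\wt{\F}})=0$. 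By cohomology and base change, relative global generation of $\mc O_{\wt{\F}}(L_{\wt{\F}})$ is then equivalent to base-point freeness of each complete system $|L_{\wt{\F}}|_{\wt{\F}_{t}}|$, i.e.\ of the linear system of quadrics of $\P^{3}$ through $T_{t}$ after blowing up $T_{t}$; when $T_{t}$ is reduced, a direct coordinate computation shows that the base locus of these quadrics is exactly $T_{t}$ and that $L_{\wt{\F}}$ restricts to $\mc O_{\P^{2}}(1)$ on each component of $G_{\F}|_{\wt{\F}_{t}}$, and the non-reduced fibres are handled by the analogous local computation. Bigness of $\mc O_{\wt{\F}}(L_{\wt{\F}})$ over $C$ follows because $L_{\wt{\F}}^{3}\cdot\wt{\F}_{t}=8-3=5>0$ and $L_{\wt{\F}}$ is nef.

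For (2), the key identity is $L_{\wt{\F}}-\wt{\E}=\s_{\F}^{\ast}\mc O_{\F}(1)$, which holds because $\wt{\E}=\s_{\F}^{\ast}\E-G_{\F}$ (the smooth divisor $\E$ contains $T$ with multiplicity one). Hence $R^{1}\wt{f}_{\ast}\mc O_{\wt{\F}}(L_{\wt{\F}}-\wt{\E})=R^{1}f_{\ast}\mc O_{\F}(1)=0$, so the restriction map $\wt{f}_{\ast}\mc O_{\wt{\F}}(L_{\wt{\F}})\to (f|_{\E}\circ\s_{\E})_{\ast}\mc O_{\wt{\E}}(L_{\wt{\E}})$ is surjective; this means exactly that $\psi_{\F}|_{\wt{\E}}$ is the morphism over $C$ attached to the complete relative system $|L_{\wt{\E}}|$, i.e.\ $\psi_{\F}|_{\wt{\E}}=\psi_{\E}$ and $\psi_{\F}(\wt{\E})=\ol{\E}$. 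For the statement on exceptional loci, one checks that every subvariety contracted by $\psi_{\F}$ lies in $\wt{\E}$: if an irreducible curve $\gamma$ is contracted then $L_{\wt{\F}}\cdot\gamma=0$, and in the fibre $\wt{\F}_{t}$ containing it $L_{\wt{\F}}$ is ample on every component of $G_{\F}|_{\wt{\F}_{t}}$, so $\s_{\F}(\gamma)$ is a genuine curve in $\P^{3}$ of some degree $d\ge 1$ whose multiplicities at the (at most three) points of $T_{t}$ sum to $2d$; comparing the $\delta$-invariant lower bound $\sum_{i}\binom{m_{i}}{2}$ with the genus bound $p_{a}(\s_{\F}(\gamma))\le\binom{d-1}{2}$ forces $d=1$, so $\s_{\F}(\gamma)$ is a line joining two points of $T_{t}$, which lies in the span $\E_{t}\subset\F_{t}$. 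Since $\psi_{\F}|_{\wt{\E}}=\psi_{\E}$ does not contract $\wt{\E}$, it follows that $\psi_{\F}$ is small and $\Exc(\psi_{\F})=\Exc(\psi_{\E})$.

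For (3), smallness is now available, and (\ref{eq-P13-0}) gives $-K_{\wt{\F}}\sim_{C}2L_{\wt{\F}}$ with $L_{\wt{\F}}$ pulled back from $\ol{\F}$, so $K_{\wt{\F}}$ is $\psi_{\F}$-trivial and $\psi_{\F}$ is a flopping contraction over $C$. By Claim~\ref{claim-P13E} the locus $\Exc(\psi_{\F})=\Exc(\psi_{\E})$ is the exceptional divisor of the blow-up $\psi_{\E}$ of $\ol{\E}$ along the smooth curve $\ol{T}$, hence a $\P^{1}$-bundle over $\ol{T}$, and $\ol{T}\to C$ is finite of degree $3$; so over each $t$ the contracted curves in $\wt{\F}_{t}$ are the (mutually disjoint, since $T_{t}$ is non-colinear) proper transforms of the lines joining pairs of points of $T_{t}$. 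One then checks that each fibre $F\cong\P^{1}$ of $\Exc(\psi_{\F})\to\ol{T}$ has normal bundle $\mc O_{\P^{1}}\oplus\mc O_{\P^{1}}(-1)^{\oplus 2}$ in $\wt{\F}$ — the defining local model of a family of Atiyah flops — starting from $N_{\ell/\P^{3}}=\mc O(1)^{\oplus 2}$ for the underlying line $\ell=\s_{\F}(F)$ together with the fact that $\ell$ carries two points of $T_{t}$, so that blowing up $T_{t}$ twists the relevant summands down by one. This proves (3) and produces the flop $\Phi$ in diagram (\ref{dia-P13}).

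The step I expect to be the main obstacle is the uniform treatment of the finitely many $t\in C$ for which $T_{t}$ is non-reduced: there one must redo the base-point-freeness computation for $\Bl_{T_{t}}\P^{3}$ when the centre is a point-plus-length-$2$ scheme or a curvilinear length-$3$ scheme (so that $\wt{\F}_{t}$ itself may be singular), and, more delicately, verify that the curves contracted there still carry the balanced normal bundle $\mc O_{\P^{1}}\oplus\mc O_{\P^{1}}(-1)^{\oplus 2}$ in $\wt{\F}$, so that the degeneration of the family of Atiyah flops does not acquire a worse small contraction. This is exactly where the explicit list of possible schemes $T_{t}$ from Theorem~\ref{thm-constP13}, and the $\P^{1}$-bundle structure of $\Exc(\psi_{\E})$ over the smooth curve $\ol{T}$ from Claim~\ref{claim-P13E}, do the work.
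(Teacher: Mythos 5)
Your proposal reaches the right conclusions, and the generic picture (reduced $T_{t}$) is handled correctly, but the route is genuinely different from the paper's, and the difference matters precisely where you predict trouble. The paper never performs a fiberwise computation on $\Bl_{T_{t}}\P^{3}$ at all: every step of Claim~\ref{claim-P13F} is run through the divisor $\wt{\E}$ via the single linear equivalence $L_{\wt{\F}} \sim \wt{\E}+\s_{\F}^{\ast}\mc O_{\F}(1)$. For (1), the exact sequence $0 \to \s_{\F}^{\ast}\mc O_{\F}(1) \to \mc O_{\wt{\F}}(L_{\wt{\F}}) \to \mc O_{\wt{\E}}(L_{\wt{\E}}) \to 0$ together with $R^{1}(f\circ\s_{\F})_{\ast}\s_{\F}^{\ast}\mc O_{\F}(1)=0$ reduces relative global generation to Claim~\ref{claim-P13E}~(1): sections restricting onto $|L_{\wt{\E}}|$ generate along $\wt{\E}$, and sections vanishing on $\wt{\E}$ (i.e.\ coming from $|\s_{\F}^{\ast}\mc O_{\F}(1)|$) generate off $\wt{\E}$; no coordinate analysis of base loci of quadrics through $T_{t}$ is needed, and bigness follows from $(L_{\wt{\F}}|_{\wt{\F}_{t}})^{3}=5$ as you say. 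For (2), if $L_{\wt{\F}}\cdot\g=0$ then $\s_{\F}^{\ast}\mc O_{\F}(1)\cdot\g>0$ (a fiber of $G_{\F}\to T$ has $L_{\wt{\F}}\cdot\g=1$, so $\g$ is not $\s_{\F}$-contracted), hence $\wt{\E}\cdot\g<0$ and $\g\subset\wt{\E}$ --- one line, uniform in $t$, replacing your degree--genus--$\delta$-invariant argument; the identification $\psi_{\F}|_{\wt{\E}}=\psi_{\E}$ then comes from the rigidity lemma. For (3), the normal bundle is read off from $0 \to \mc N_{l/\wt{\E}} \to \mc N_{l/\wt{\F}} \to \mc N_{\wt{\E}/\wt{\F}}|_{l} \to 0$, using only the blow-up structure of $\psi_{\E}$ along the smooth curve $\ol{T}$ established in Claim~\ref{claim-P13E}~(2) and $\wt{\E}\cdot l=-1$; no reference to lines in $\P^{3}$ or to how many points of $T_{t}$ they pass through.

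The practical consequence is that the obstacle you flag --- the finitely many $t$ with $T_{t}$ non-reduced --- simply does not arise in the paper's proof, whereas in yours it is a genuine gap rather than extra bookkeeping. Your identification of the contracted curves as strict transforms of lines through two reduced points of $T_{t}$, your multiplicity count $\sum_{i}m_{i}=2d$, and your normal-bundle computation (``$\mc N_{\ell/\P^{3}}=\mc O(1)^{2}$ twisted down at two points'') all presuppose reducedness of the relevant length-$2$ subscheme; when $T_{t}$ is curvilinear of length $3$ or contains a non-reduced length-$2$ component, $\wt{\F}_{t}$ is singular along part of $G_{\F}|_{\wt{\F}_{t}}$, the intersection $G_{\F}\cdot\g$ is no longer a sum of point multiplicities, and the Atiyah normal bundle must still be verified for the degenerate flopping curves. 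The idea you are missing is that all of this can be bypassed: $\wt{\E}$, $\Exc(\psi_{\E})$ and $\ol{T}$ are smooth as total spaces over $C$ even when individual fibers degenerate, so computing everything relative to $\wt{\E}$ --- which is exactly what Claim~\ref{claim-P13E} sets up --- makes the fiberwise case division unnecessary. I would redo (2) and (3) with $L_{\wt{\F}}\sim_{C}\wt{\E}+\s_{\F}^{\ast}\mc O_{\F}(1)$ as the only input.
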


\begin{proof}
(1) For any $t \in C$, we have $(L_{\wt{\F}}|_{\wt{\F}_{t}})^{3}=5$ by a direct calculation. 
Since $\wt{\E} \sim \s^{\ast}\mc O_{\F}(1)-G_{\F}$, we have $\wt{\E}+\s^{\ast}\mc O_{\F}(1) \sim L_{\wt{\F}}$. Then we obtain an exact sequence $0 \to \s^{\ast}\mc O_{\F}(1) \to \mc O_{\wt{\F}}(L_{\wt{\F}}) \to \mc O_{\wt{\E}}(L_{\wt{\E}}) \to 0$. 
Since $R^{1}(f \circ \s)_{\ast}(\s^{\ast}\mc O_{\F}(1))=0$, $\mc O(L_{\wt{\F}})$ is globally generated over $C$ by Claim~\ref{claim-P13E}~(1). 

(2) Let $\g \subset \wt{\F}$ be an irreducible curve with $L_{\wt{\F}}.\g=0$. 
Then $L_{\wt{\F}} \sim_{C} \wt{\E}+\s^{\ast} \mc O_{\F}(1)$ and $\s^{\ast} \mc O_{\F}(1).\g>0$ since $L_{\wt{\F}}$ is ample over $C$. 
Thus we obtain $\wt{\E}.\g<0$, which implies $\g \subset \wt{\E}$. 
Since $L_{\wt{\E}}.\g=0$, $\g$ is a fiber of $\Exc(\psi_{\E}) \to \ol{T}$. 
Conversely, every curve $\g$ contracted by $\psi_{\E}$ is also contracted by $\psi_{\F}$. 
Then we have $\psi_{\F}|_{\wt{\E}}=\psi_{\E}$ by the rigidity lemma and hence $\Exc(\psi_{\F})=\Exc(\psi_{\E})$. 

(3) Let $l$ be a fiber of $\Exc(\psi_{\F}) \to \ol{T}$. 
Then we have $l \simeq \P^{1}$ and the exact sequence $0 \to \mc N_{l/\wt{\E}} \to \mc N_{l/\wt{\F}} \to \mc N_{\wt{\E}/\wt{\F}}|_{l} \to 0$, which implies that $\mc N_{l/\wt{F}}=\mc O_{\P^{1}}(-1)^{2} \oplus \mc O_{\P^{1}}$. We complete the proof. 
\end{proof}

\noindent
\textbf{Step 4.} 
Let $\psi^{+}_{\F} \colon \wt{\F}^{+} \to \ol{\F}$ be the flop of $\psi_{\F}$ and $\wt{\E}^{+}$ and $G^{+}_{\F}$ the proper transforms of $\wt{\E}$ and $G_{\F}$ on $\wt{\F}^{+}$ respectively. 

\begin{claim}\label{claim-P13conclu}
There exists a birational morphism 
\[\mu_{\F} \colon \wt{\F}^{+} \to Y \] 
over $C$ such that $Y$ is non-singular and $\mu_{\F}$ blows $\wt{\E}^{+}$ down to a $\vp_{Y}$-section $C_{0,Y}$, 
where $\vp_{Y} \colon Y \to C$ is the induced morphism. 
\end{claim}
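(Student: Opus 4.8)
The plan is to carry out the final contraction of the $2$-ray game on $\wt{\F}$. Since $f\colon\F\to C$ is a $\P^{3}$-bundle we have $\rho(\wt{\F}^{+}/C)=\rho(\F/C)+1=2$ (the blow-up $\s_{\F}$ raising, and the flop $\Phi$ preserving, the relative Picard number), so $\ol{\NE}(\wt{\F}^{+}/C)\subset\R^{2}$ is a two-dimensional closed cone; one of its extremal rays, $R_{1}$, is the one contracted by the small contraction $\psi_{\F}^{+}\colon\wt{\F}^{+}\to\ol{\F}$. By~(\ref{eq-P13-0}), $-K_{\wt{\F}^{+}}$ is $\sim_{C}$ twice the strict transform of $L_{\wt{\F}}$, which is semiample and big over $C$ and trivial on $R_{1}$; hence the remaining ray $R_{2}$ is $K$-negative, and I let $\mu_{\F}\colon\wt{\F}^{+}\to Y$ be the contraction of $R_{2}$ over $C$ and $\vp_{Y}\colon Y\to C$ the induced morphism.

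It remains to identify $\mu_{\F}$. First I would show that $\psi_{\F}^{+}$ restricts to an isomorphism $\wt{\E}^{+}\overset{\sim}{\to}\ol{\E}$. By Claims~\ref{claim-P13E} and~\ref{claim-P13F} the flopping curves are precisely the fibres of $\Exc(\psi_{\E})=\Exc(\psi_{\F})\to\ol{T}\subset\ol{\E}$, so this is a local question around those curves: inspecting the Atiyah flop there — which fibrewise is the inverse of the double projection of $(\P^{1})^{3}$ recalled in Subsection~\ref{subsec-doubleprojP13}, where $\wt{\E}^{+}_{t}$ is the exceptional plane of $\Bl_{x}(\P^{1})^{3}\to(\P^{1})^{3}$ and each flopped curve is a ruling through $x$ meeting that plane once — one checks that $\wt{\E}^{+}$ contains no flopped curve. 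Granting this, $\psi_{\F}^{+}|_{\wt{\E}^{+}}\colon\wt{\E}^{+}\to\ol{\E}$ is proper, birational and quasi-finite onto the smooth variety $\ol{\E}$, hence an isomorphism; in particular $\wt{\E}^{+}\simeq\ol{\E}$ is a $\P^{2}$-bundle over $C$. Next, for a line $\ell$ in a general $\P^{2}$-fibre of $\wt{\E}^{+}$ the flop $\Phi$ is an isomorphism near $\ell$, so $\ell$ is the strict transform in $\wt{\F}=\Bl_{T}\F$ of a conic through $T_{t}$ in the plane $\E_{t}$, and using $\wt{\E}\sim\s_{\F}^{\ast}\mc O_{\F}(\E)-G_{\F}$ from Step~1 one finds $\wt{\E}^{+}\cdot\ell=\mc O_{\F}(\E)\cdot(\mathrm{conic})-\#(\mathrm{conic}\cap T)=2-3=-1$; since $\wt{\E}^{+}\to C$ is a genuine $\P^{2}$-bundle this forces $\mc O_{\wt{\F}^{+}}(\wt{\E}^{+})$ to restrict to $\mc O_{\P^{2}}(-1)$ on every fibre, and in particular $\wt{\E}^{+}\cdot R_{2}<0$, which together with the fibrewise description below shows that $\mu_{\F}$ is divisorial and contracts $\wt{\E}^{+}$.

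Finally, for each $t\in C$ the induced map $\mu_{\F}|_{\wt{\F}^{+}_{t}}\colon\wt{\F}^{+}_{t}\to Y_{t}$ is an extremal divisorial contraction of the smooth $3$-fold $\wt{\F}^{+}_{t}$ collapsing the plane $\wt{\E}^{+}_{t}\simeq\P^{2}$ with normal bundle $\mc O_{\P^{2}}(-1)$, so by \cite[Theorem~(3.3)]{Mori82} it is the blow-up of a smooth $3$-fold $Y_{t}$ at a point. Hence $\vp_{Y}\colon Y\to C$ has only smooth fibres, $Y$ is smooth, $\mu_{\F}$ is the blow-up of $Y$ along the smooth curve $C_{0,Y}:=\mu_{\F}(\wt{\E}^{+})$, and since $C_{0,Y}\to C$ is an isomorphism, $C_{0,Y}$ is a $\vp_{Y}$-section, as required. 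The genuinely delicate point is the local study of the flop along the fibres of $\Exc(\psi_{\E})\to\ol{T}$ needed to identify $\wt{\E}^{+}$ with $\ol{\E}$, together with verifying that these statements persist at the finitely many $t$ where $T_{t}$ is non-reduced; the explicit birational geometry of the double projection of $(\P^{1})^{3}$ developed in Subsection~\ref{subsec-doubleprojP13} is exactly the tool that makes this tractable.
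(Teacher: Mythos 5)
Your proposal is correct and follows essentially the same route as the paper: identify $\wt{\E}^{+}$ as a $\P^{2}$-bundle over $C$ via the flop, show its normal bundle restricts to $\mc O_{\P^{2}}(-1)$ on every fibre, and contract it to a $\vp_{Y}$-section. The only cosmetic difference is that the paper obtains the normal bundle from $-K_{\wt{\F}^{+}_{t}}|_{\wt{\E}^{+}_{t}}\simeq\mc O_{\P^{2}}(2)$ and adjunction rather than from the intersection number $\wt{\E}^{+}\cdot\ell=2-3=-1$, and it constructs $\mu_{\F}$ directly from this fibrewise contractibility instead of first invoking the cone theorem for the second ray.
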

\begin{proof}
By the construction of the flop $\wt{\F} \dra \wt{\F}^{+}$, the proper transform $\wt{\E}^{+} \subset \ol{\F}^{+}$ is a $\P^{2}$-bundle over $C$. 
By the equality (\ref{eq-P13-0}), we have $-K_{\ol{\F}_{t}}|_{\ol{\E}_{t}}=\mc O_{\P^{2}}(2)$ for every point $t$. 
Hence it holds that $-K_{\wt{\F}_{t}^{+}}|_{\ol{\E}^{+}_{t}} \simeq \mc O_{\P^{2}}(2)$ and hence $\mc N_{\wt{\E}^{+}_{t}/\wt{\F}^{+}_{t}} \simeq \mc O_{\P^{2}}(-1)$ for every $t \in C$. 
Thus we obtain the morphism $\mu_{\F}$. 
\end{proof}
Since $\mu_{\F}$ is the contraction of an extremal ray, we obtain $\rho(Y)=2$. Therefore, $\vp_{Y}$ is the contraction of a $K_{Y}$-negative ray. 
We complete the proof of Theorem~\ref{thm-constP13}~(2). 

Since $-K_{\wt{\F}^{+}} \sim 4\wt{\E}^{+}+2G_{\F}^{+}-(\vp_{Y} \circ \mu_{\F})^{\ast}(K_{C}+\det f_{\ast}\mc O_{\F}(1))$, 
if we set $G_{Y}=\mu_{\F\ast}G_{\F}^{+}$, then we have $-K_{Y} \sim 2G_{Y}-\vp_{Y}^{\ast}(K_{C}+\det f_{\ast}\mc O_{\F}(\E))$, which proves Theorem~\ref{thm-constP13}~(3). 

To confirm Theorem~\ref{thm-constP13}~(4), we take a point $t \in C$ such that $Y_{t}=\vp_{Y}^{-1}(t)$ is smooth. 
Then $Y_{t}$ is a smooth Fano 3-fold with index $2$, which is so-called a del Pezzo 3-fold. 
Since $(-K_{Y_{t}})^{3}=(-K_{\wt{\F}^{+}_{t}})^{3}+8=(-K_{\wt{\F}_{t}})^{3}+8=48$ and $\rho(Y_{t})=\rho(\wt{\F}^{+}_{t})-1=\rho(\wt{\F}_{t})-1=3$, 
we have $Y_{t} \simeq (\P^{1})^{3}$ by Fujita's classification of the del Pezzo manifolds \cite[Theorem~3.3.1]{Fanobook}. 
We complete the proof of Theorem~\ref{thm-constP13}. 
\end{proof}

\subsection{Proof of Theorem~\ref{mainthm-P13}}\label{subsec-proofmainthmP13}
Take a $\vp$-section $C_{0}$ and let $(q \colon Q \to C,T)$ be the relative double projection of $(\vp \colon X \to C,C_{0})$. 
Let $E_{Q} \subset Q$ be the proper transform of $\Exc(\mu \colon \Bl_{C_{0}}X \to X)$. 
By Proposition~\ref{prop-2ray68}~(5), there exists a divisor $\a$ on $C$ such that $-K_{Q} \sim 2E_{Q}-q^{\ast}\a$. 
Then consider the following projective bundle over $C$:
\[f \colon \F:=\P_{C}(q_{\ast}\mc O_{Q}(E_{Q})) \to C.\] 
Then $f$ is a $\P^{3}$-bundle and there is a natural closed embedding $Q \hra \F$ over $C$. 
When $\mc O_{\F}(1)$ denotes the tautological bundle of $f$, we have $\mc O_{\F}(1)|_{Q}=\mc O_{Q}(E_{Q})$. 
Since $-K_{Q} \sim (\mc O_{\F}(2)-f^{\ast}\a)|_{Q}$, we have the linear equivalence
\begin{align}\label{eq-classQ}
 Q \sim \mc O_{\F}(2)+f^{\ast}(\a-(K_{C}+\det (q_{\ast}\mc O_{Q}(E_{Q})))) 
\end{align}
by the adjunction formula. 
\begin{claim}\label{claim-noncoli}
There exists the unique member $\E \in |\mc O_{\F}(1)|$ such that 
\begin{enumerate}
\item $\E \cap Q=E_{Q}$ and $f|_{\E} \colon \E \to C$ is $\P^{2}$-bundle, and 
\item for every $t \in C$, $T_{t}=f^{-1}(t) \cap T$ is non-colinear $0$-dimensional subscheme of length $3$ in $\P^{3}=\F_{t}$ and spans $\E_{t}$. 
\end{enumerate}
In particular, the $\P^{3}$-bundle $f \colon \F \to C$ and $T$ satisfy the condition of the setting in Theorem~\ref{thm-constP13} and $\E$ is the sub $\P^{2}$-bundle as in Theorem~\ref{thm-constP13}~(1). 
\end{claim}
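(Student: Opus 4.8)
The statement of Claim~\ref{claim-noncoli} asserts existence and uniqueness of a member $\E \in |\mc O_{\F}(1)|$ with two geometric properties, and that the pair $(f \colon \F \to C, T)$ satisfies the hypotheses of Theorem~\ref{thm-constP13}. My plan is to produce $\E$ as the relative linear span of $T$ inside $\F$, in exact analogy with Step~1 of the proof of Theorem~\ref{thm-constP13}. Concretely, I would consider the sheaf $f_{\ast}(\mc O_{\F}(1) \otimes \mc I_{T/\F})$ on $C$ and argue that it is a line bundle $\mc L$; then the unique member $\E \in |\mc O_{\F}(1) \otimes f^{\ast}\mc L^{-1} \otimes \mc I_{T}| = |\mc O_{\F}(1) \otimes f^{\ast}\mc L^{-1}|$ (the latter equality because both sides are $0$-dimensional over each point of $C$ once we know the rank drop) will be the desired divisor, after twisting $\mc O_{\F}(1)$ to arrange $\mc L = \mc O_C$ — but note that here $\mc O_{\F}(1)$ is already fixed by the construction via $q_{\ast}\mc O_Q(E_Q)$, so I must instead identify $\mc L$ explicitly.

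**Key steps in order.** First I would recall from Proposition~\ref{prop-2ray68}~(5) that $E_Q \subset Q$ contains $T$, and that for general $t \in C$ the divisor $E_Q \cap Q_t$ is a hyperplane section of $Q_t \subset \P^3 = \F_t$ passing through the three points $T \cap Q_t$ (this is exactly the content of the proof of Prop.~\ref{prop-2ray68}~(5)). Second, I would take $\E$ to be the unique divisor in $|\mc O_{\F}(1)|$ restricting to $E_Q$ on $Q$: this exists because the restriction map $H^0(\F, \mc O_{\F}(1)) \to H^0(Q, \mc O_Q(E_Q))$ is — fiberwise over $C$ — the restriction $H^0(\P^3, \mc O(1)) \to H^0(Q_t, \mc O_{Q_t}(1))$, which for an irreducible reduced quadric surface in $\P^3$ is an isomorphism on $4$-dimensional spaces (a reduced irreducible quadric surface is nondegenerate, so no hyperplane contains it); hence $\E \cap Q = E_Q$ and $\E$ is unique. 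Third, fiberwise $\E_t$ is the hyperplane in $\P^3$ cut out by the hyperplane section $E_Q \cap Q_t$ of the quadric, so $\E_t$ contains $T_t$; since $f|_{\E}$ has every fiber a $\P^2$, it is a $\P^2$-bundle. Fourth, for property (2): $T_t$ is a $0$-dimensional length-$3$ subscheme because $q|_T \colon T \to C$ is finite of degree $3$ by Proposition~\ref{prop-2ray68}~(3)(b) and $T$ is smooth, so $T_t$ is a length-$3$ curvilinear subscheme (its image in the smooth curve $T$ is a divisor of degree $3$); and $T_t$ spans $\E_t = \P^2$, i.e. is non-colinear, because if $T_t$ lay on a line $\ell \subset \P^3$ then $\ell \subset E_Q \cap Q_t$ would force a component of $E_Q$ to be vertical, contradicting that $E_Q$ meets $Q_t$ in a hyperplane section (a pure $1$-cycle of degree $2$) for general $t$, or more robustly, I would argue via the reduced structure: the three points of $(T_t)_{\red}$ being colinear would give a line in the quadric through $\geq 3$ points hence contained in $Q_t$, and tracking this back through the flop $\chi$ contradicts integrality of the $\wt\vp$-fibers, exactly the type of argument already used in the proof of Prop.~\ref{prop-2ray68}~(3). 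Finally, with (1) and (2) established, the hypotheses of Theorem~\ref{thm-constP13} (namely $\deg(f|_T) = 3$ and $T_t$ non-colinear in $\F_t$) hold, and by the uniqueness clause in Theorem~\ref{thm-constP13}~(1) the $\P^2$-bundle $\E$ just constructed coincides with the one there.

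**Main obstacle.** The routine part is the dimension count identifying $\E$; the genuinely delicate point is verifying non-colinearity of $T_t$ for \emph{every} $t \in C$, not just general $t$, including at points where $Q_t$ degenerates to a quadric cone or where $T_t$ is non-reduced. I expect to handle this by combining two facts: that $T \subset E_Q$ with $T$ smooth and $q|_T$ finite flat of degree $3$, so $T_t$ is always curvilinear of length $3$; and that $E_Q$ is a Cartier divisor on the smooth $3$-fold $Q$ whose restriction to each fiber $Q_t$ is a hyperplane section (degree-$2$ curve), so $T_t$ cannot be contained in a line, for such a line would be a component of the conic $E_Q \cap Q_t$ meeting $T_t$ in length $3 > 2 = \deg(E_Q \cap Q_t)$. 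Making this last inequality rigorous when $E_Q \cap Q_t$ is non-reduced is where I would need to be careful; I would phrase it scheme-theoretically using that $\mc O_{\F_t}(1) \otimes \mc I_{T_t}$ has a section (namely $\E_t$) and that a length-$3$ curvilinear scheme in $\P^3$ spans a $\P^2$ unless it is contained in a line, the latter being excluded by the degree bound on $E_Q \cap Q_t$.
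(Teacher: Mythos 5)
Your treatment of part (1) is sound and is essentially the paper's argument: the paper obtains the isomorphism $H^{0}(\F,\mc O_{\F}(1))\simeq H^{0}(Q,\mc O_{Q}(E_{Q}))$ from the sequence $0\to\mc O_{\F}(1)\otimes\mc O_{\F}(-Q)\to\mc O_{\F}(1)\to\mc O_{Q}(E_{Q})\to 0$ and the vanishing of the $R^{i}f_{\ast}$ of the kernel, while you argue fiberwise via nondegeneracy of an irreducible reduced quadric; these are equivalent, and in both cases the $\P^{2}$-bundle statement comes from $E_{Q}$ being a prime divisor dominating $C$. The identification of $T_{t}$ as a curvilinear length-$3$ scheme is also fine.

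The genuine gap is in the non-colinearity of $T_{t}$ for \emph{every} $t$, which is the whole point of the claim at the degenerate fibers. Your degree bound proves only this: if $T_{t}\subset l$ for a line $l$, then $l\subset Q_{t}$ (since $\lgth(l\cap Q_{t})\geq 3>2$), and then, because a line not contained in the conic $E_{Q}\cap Q_{t}$ meets it in length at most $2<3=\lgth(T_{t})$, the line $l$ must be a \emph{component} of $E_{Q}\cap Q_{t}$. That is not a contradiction: at special $t$ the hyperplane section $E_{Q}\cap Q_{t}$ does degenerate to a pair of lines or a double line, and nothing in your argument prevents $T_{t}$ from lying on one component; the inequality $3>2=\deg(E_{Q}\cap Q_{t})$ says nothing once $l\subset E_{Q}\cap Q_{t}$. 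Your fallback via "integrality of the $\wt{\vp}$-fibers after the flop" is not the right mechanism either — in Proposition~\ref{prop-2ray68}~(3) integrality is used to show $T$ is not vertical, a different statement — and it only addresses $(T_{t})_{\red}$, not the scheme $T_{t}$. What the paper actually uses is the nefness of $-K_{\Bl_{T}Q}$ over $C$, which follows from Proposition~\ref{prop-2ray68}~(1)--(2) because $-K_{\wt{X}}$ is $\vp$-globally generated and $\chi$ is a flop: if $T_{t}\subset l\subset Q_{t}$, then for the proper transform $l'=\s^{-1}_{\ast}l$ on $\Bl_{T}Q$ one has $-K_{\Bl_{T}Q}\cdot l'=(-K_{Q}\cdot l)-\lgth(T\cap l)\leq 2-3<0$, using $-K_{Q}=2E_{Q}-q^{\ast}\a$ and $E_{Q}\cdot l=1$. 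You need this (or an equivalent use of the relative nefness of $-K_{\Bl_{T}Q}$) to close the argument; the purely enumerative reasoning does not suffice.
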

\begin{proof}
(1) Consider an exact sequence $0 \to \mc O_{\F}(-Q) \otimes \mc O_{\F}(1) \to \mc O_{\F}(1) \to \mc O_{Q}(E_{Q}) \to 0$. 
Since $\mc O_{\F}(Q) \sim_{C} \mc O_{\F}(2)$, we obtain $R^{i}f_{\ast}(\mc O_{\F}(-Q) \otimes \mc O_{\F}(1))=0$ for all $i \geq 0$. 
Then the restriction morphism $H^{0}(\F,\mc O_{\F}(1)) \to H^{0}(Q,\mc O_{Q}(E_{Q}))$ is isomorphism and hence there exists the unique member $\E \in |\mc O_{\F}(1)|$ such that $\E \cap Q=E_{Q}$. 
Since $E_{Q}$ is a prime divisor of $Q$, we obtain $\dim \E_{t}=2$ for all $t \in C$, which implies that $\E \to C$ is a $\P^{2}$-bundle. 

(2) Assume that there exists a line $l \subset \F_{t}$ such that $T_{t} \subset l$ for some $t$.  
Since $T_{t} \subset Q_{t}$ and $Q_{t}$ is a quadric surface, we obtain $l \subset Q_{t}$. 
Let $\s \colon \Bl_{T}Q \to Q$ denote the blow-up as in Proposition~\ref{prop-2ray68}. 
Then we have $-K_{\Bl_{T}Q}.\s^{-1}_{\ast}l <0$, which is a contradiction since $-K_{\Bl_{T}Q}$ is nef over $C$ from Proposition~\ref{prop-2ray68}~(2). 
Hence the linear span of $T_{t}$, say $\braket{T_{t}}$, is a 2-plane in $\F_{t}$ and thus we deduce that $\mb{E}_{t}=\braket{T_{t}}$ for every $t \in C$. 
\end{proof}
By Theorem~\ref{thm-constP13}, $\F \to C$ can be birationally transformed into a Mori fiber space $Y$ over $C$ with general fiber $(\P^{1})^{3}$ as in the diagram (\ref{dia-P13}). 
Note that $\wt{\F}=\Bl_{T}\F$ contains $\wt{Q}=\Bl_{T}Q$ in this setting. 

We use the same notation as in Theorem~\ref{thm-constP13} and its proof. 
The only remaining part is to show the following claim. 
\begin{claim}\label{claim-P13emb}
\begin{enumerate}
\item $\psi_{\F}|_{\wt{Q}}$ coincides with $\psi_{Q}$ in Proposition~\ref{prop-2ray68}~(2). 
\item The proper transform $\wt{Q}^{+} \subset \wt{\F}^{+}$ of $Q$ is isomorphic to $\wt{X}$. 
\item It holds that $\mu_{\F}|_{\wt{X}}=\mu_{X}$ and $C_{0,Y}=C_{0}$. 
\item $X$ is a member of $|G_{Y}+\vp_{Y}^{\ast}\b|$, where we set 
$\b:=\a-(K_{C}+\det (q_{\ast}\mc O_{Q}(E_{Q})))$. 
\item If we set 
$H_{Y}:=G_{Y}-\vp_{Y}^{\ast}\a$ and $\d:=\a+\b$, 
then we have $-K_{Y}=2H_{Y}+\vp_{Y}^{\ast}\d$, $X \in |H_{Y}+\vp_{Y}^{\ast}\d|$. Moreover, $\mc L:=\mc O_{C}(-\d)$ is isomorphic to $\Cok(\mc O_{C} \to \vp_{B\ast}\mc O_{B}) \otimes \mc O(-K_{C})$, where $\vp_{B} \colon B \to C$ denotes the associated double covering to $\vp$. 
\end{enumerate}
\end{claim}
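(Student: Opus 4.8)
The plan is to run the $2$-ray game of $\F$ over $C$ furnished by Theorem~\ref{thm-constP13} and restrict it, one step at a time, to the divisor $Q\subset\F$ and its successive birational transforms, identifying the outcome with the $2$-ray game of $Q$ over $C$ produced in Proposition~\ref{prop-2ray68}. Everything rests on two divisor-class identities on $\wt{\F}=\Bl_{T}\F$. First, since $Q$ and $\E$ are smooth along $T$ we have $\s_{\F}^{\ast}Q=\wt{Q}+G_{\F}$ and $\s_{\F}^{\ast}\E=\wt{\E}+G_{\F}$, so the class $(\ref{eq-classQ})$ of $Q$ in $\F$ gives, with $L_{\wt{\F}}=\s_{\F}^{\ast}\mc O_{\F}(2)-G_{\F}$, that $\wt{Q}\sim L_{\wt{\F}}+\wt{f}^{\ast}\b$ in $\Pic(\wt{\F})$. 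Secondly, $\s_{\F}|_{\wt{Q}}\colon\wt{Q}\to Q$ is the blow-up of $Q$ along $T$, so by adjunction on $\wt{Q}\subset\wt{\F}$ together with $-K_{Q}\sim 2E_{Q}-q^{\ast}\a$ (Proposition~\ref{prop-2ray68}~(5)) one gets $L_{\wt{\F}}|_{\wt{Q}}\sim -K_{\wt{Q}}+\wt{q}^{\ast}\a\sim_{C}-K_{\wt{Q}}$.

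Now for (1): since $\wt{Q}-L_{\wt{\F}}\sim_{C}0$, the vanishing $R^{1}\wt{f}_{\ast}(\mc O_{\wt{\F}}(L_{\wt{\F}}-\wt{Q}))=0$ shows that restriction to $\wt{Q}$ carries the relative system $|L_{\wt{\F}}|$ surjectively onto the complete relative system of $\mc O_{\wt{Q}}(L_{\wt{\F}}|_{\wt{Q}})$; as $L_{\wt{\F}}|_{\wt{Q}}\sim_{C}-K_{\wt{Q}}$ and $\psi_{Q}$ is the relative contraction over $C$ defined by $|-K_{\wt{Q}}|$ (Proposition~\ref{prop-2ray68}~(1)--(2)), this gives $\psi_{\F}|_{\wt{Q}}=\psi_{Q}$. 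For (2): the first identity gives $\wt{Q}.\g=(L_{\wt{\F}}+\wt{f}^{\ast}\b).\g=0$ for every curve $\g$ contracted by $\psi_{\F}$, so such a $\g\simeq\P^{1}$ either lies on $\wt{Q}$ or is disjoint from it; hence $\Phi$ restricts to a flop $\wt{Q}\dra\wt{Q}^{+}$ whose flopped curves are, by (1), exactly those flopped by $\chi$. Since the flop of $\psi_{Q}$ is $\chi^{-1}\colon\wt{Q}\dra\wt{X}$ (Proposition~\ref{prop-2ray68}~(2)), and when $\chi=\mathrm{id}$ one has $\psi_{\F}|_{\wt{Q}}=\mathrm{id}$ and $\wt{Q}$ disjoint from $\Exc(\psi_{\F})$, we conclude $\wt{Q}^{+}\simeq\wt{X}$. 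Then (3): $\mu_{\F}$ contracts $\wt{\E}^{+}$ onto the section $C_{0,Y}$, and $\wt{\E}^{+}\cap\wt{Q}^{+}$ is the $\chi^{-1}$-transform of $\wt{\E}\cap\wt{Q}$, which is the proper transform of $E_{Q}=\E\cap Q$ on $\wt{Q}=\Bl_{T}Q$ and equals $E=\Exc(\mu)$ by Proposition~\ref{prop-2ray68}~(5); since $\mu$ is the unique contraction of $E$ in $\wt{X}$, this forces $\mu_{\F}|_{\wt{X}}=\mu$, and following $E\to C_{0}$ gives $C_{0,Y}=C_{0}$.

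Granting (1)--(3), statement (4) is a class computation: $\wt{Q}^{+}=\Phi_{\ast}\wt{Q}$ has class $2\wt{\E}^{+}+G_{\F}^{+}+(\vp_{Y}\circ\mu_{\F})^{\ast}\b$ on $\wt{\F}^{+}$, and since $\wt{Q}^{+}\simeq\wt{X}$ is the proper transform of $X$ while $\mu_{\F}$ contracts $\wt{\E}^{+}$, pushing forward by $\mu_{\F}$ gives $X\sim G_{Y}+\vp_{Y}^{\ast}\b$. For (5), substituting $G_{Y}=H_{Y}+\vp_{Y}^{\ast}\a$ into this and into $-K_{Y}\sim 2G_{Y}-\vp_{Y}^{\ast}(K_{C}+\det f_{\ast}\mc O_{\F}(\E))$ (Theorem~\ref{thm-constP13}~(3)), and using $f_{\ast}\mc O_{\F}(\E)=q_{\ast}\mc O_{Q}(E_{Q})$ together with $K_{C}+\det(q_{\ast}\mc O_{Q}(E_{Q}))=\a-\b$, yields $X\in|H_{Y}+\vp_{Y}^{\ast}\d|$ and $-K_{Y}\sim 2H_{Y}+\vp_{Y}^{\ast}\d$, so $\mc O_{Y}(K_{Y}+2H_{Y})\simeq\vp_{Y}^{\ast}\mc O_{C}(-\d)$. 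It remains to identify $\mc L:=\mc O_{C}(-\d)$ with $\Cok(\mc O_{C}\to{\vp_{B}}_{\ast}\mc O_{B})\otimes\mc O(-K_{C})$; since $\vp_{B}$ is the double covering of $q$ from Lemma~\ref{lem-89}, that lemma's proof gives $\omega_{B}=\vp_{B}^{\ast}\big(\mc O(2\a)\otimes\omega_{C}^{-1}\otimes(\det q_{\ast}\mc O_{Q}(E_{Q}))^{-1}\big)$, and comparing with the Hurwitz formula $\omega_{B}=\vp_{B}^{\ast}(\omega_{C}\otimes\mc M)$, where ${\vp_{B}}_{\ast}\mc O_{B}=\mc O_{C}\oplus\mc M^{-1}$, and using injectivity of $\vp_{B}^{\ast}$ on Picard groups, one solves $\mc M=\mc O(2\a)\otimes\omega_{C}^{-2}\otimes(\det q_{\ast}\mc O_{Q}(E_{Q}))^{-1}$ and checks directly that $\mc M^{-1}\otimes\mc O(-K_{C})\simeq\mc O_{C}(-\d)$.

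The step I expect to be the main obstacle is (1)--(2): showing that the ambient $2$-ray game of $\F$ restricts cleanly to that of $Q$ — that $\psi_{\F}|_{\wt{Q}}$ contracts precisely the $\psi_{Q}$-curves and that $\Phi$ moves $\wt{Q}$ only along those curves — uniformly across the isomorphism/flop dichotomy of Proposition~\ref{prop-2ray68}. The two class identities above are exactly what make this work, so the real content lies in establishing them and handling the flop carefully; items (3)--(5) are then bookkeeping, given Lemma~\ref{lem-89} and Theorem~\ref{thm-constP13}.
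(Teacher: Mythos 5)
Your proposal is correct and follows essentially the same route as the paper: the identities $\wt{Q}\sim L_{\wt{\F}}+\wt{f}^{\ast}\b$ and $L_{\wt{\F}}|_{\wt{Q}}\sim_{C}-K_{\wt{Q}}$, the relative $R^{1}$-vanishing for (1), uniqueness of flops for (2), and pushforward of divisor classes together with duality for the finite flat double cover for (4)--(5) are exactly the ingredients the paper uses. The only noteworthy (but cosmetic) divergence is in (3), where you identify $\wt{\E}^{+}\cap\wt{X}$ with $\Exc(\mu_{X})$ geometrically via Proposition~\ref{prop-2ray68}~(5) and rigidity of contractions, whereas the paper verifies $\mu_{\F}^{\ast}G_{Y}|_{\wt{X}}\sim_{C}\mu_{X}^{\ast}(-K_{X})$ by a divisor-class computation; both arguments are valid.
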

\begin{proof}
(1) We have $Q \in |\mc O_{\F}(2)+f^{\ast}\b|$ by the equality (\ref{eq-classQ}) and hence $\wt{Q} \in |\mc O_{\F}(2)-G_{\F}+(f \circ \s)^{\ast}\b|=|L_{\wt{\F}}+(f \circ \s)^{\ast}\b|$ by the equality (\ref{eq-P13-L}). 
Recall that $\psi_{\F} \colon \wt{\F} \to \ol{\F}$ is the Stein factorization of the morphism given by $|L_{\wt{\F}}|$. 
For every $k>0$, it follows that $R^{1}\wt{f}_{\ast}\mc O_{\wt{\F}}((k-1)L_{\wt{\F}})=0$ and hence $\wt{f}_{\ast}\mc O_{\wt{\F}}(kL_{\wt{\F}}) \to (\wt{f}|_{\wt{Q}})_{\ast}\mc O_{\wt{Q}}(kL_{\wt{\F}})$ is surjective, which implies $\psi_{\F}|_{\wt{Q}}=\psi_{Q}$. 

(2) 
We obtain $\wt{Q} \sim_{\ol{\F}} 0$ as divisors on $\wt{\F}^{+}$ and hence $\ol{Q}:=\psi_{\F}(\wt{Q})$ is a Cartier divisor on $\ol{\F}$. 
Then we have $\wt{Q}^{+}=(\psi^{+}_{\F})^{-1}(\ol{Q}) \subset \wt{\F}^{+}$ and hence the dimension of the fibers of $\wt{Q}^{+} \to \ol{Q}$ is less than or equals to $1$. 
Since $\psi_{\F}|_{\wt{Q}}$ is the flopping contraction of $\wt{Q}$ over $C$, 
$\ol{Q} \cap \psi_{\F}(\Exc(\psi_{\F}))$ is a finite set or the empty set. 
(2) clearly holds when it is empty. 
Now we assume $\ol{Q} \cap \psi_{\F}(\Exc(\psi_{\F}))$ is not empty. 
Hence the morphism $\wt{Q}^{+} \to \ol{Q}$ is a small contraction. 
Thus $\wt{Q}^{+}$ is regular in codimension $1$ and hence normal since this is an effective divisor of a smooth variety $\wt{\F}^{+}$. 
Moreover, the birational map $\Psi|_{\wt{Q}} \colon \wt{Q} \dra \wt{Q}^{+}$ is isomorphic in codimension $1$, which implies that $E_{\wt{Q}}=\wt{\E}|_{\wt{Q}^{+}}$ coincides with the proper transform of $\wt{\E}^{+}|_{\wt{Q}}$. 
Since $-\wt{\E} \sim_{\ol{Y}} \s_{\F}^{\ast}\mc O_{\F}(1)$ is ample over $\ol{Y}$ and $\wt{\E}^{+}$ is ample over $\ol{Y}$, we conclude that $\wt{Q}^{+}$ is the flop of $\psi_{Q}$. 
Then we have $\wt{Q}^{+}=\wt{X}$ by the uniqueness of the flop. 

(3) It is enough to show that $\mu_{\F}^{\ast}G_{Y}|_{\wt{X}} \sim_{C}\mu_{X}^{\ast}(-K_{X})$. 
By (\ref{eq-classQ}),  $\wt{Q}$ is a member of $|\wt{\E}+G+(f \circ \s)^{\ast}\b|$ and hence $\wt{X}$ is a member of $|\wt{\E}^{+}+G^{+}+(\vp_{Y} \circ \mu_{\F})^{\ast}\b|$. 
Since $-K_{\wt{\F}^{+}} \sim_{C} 2(\wt{\E}^{+}+G_{\F}^{+})$, we have $-K_{\wt{X}} \sim_{C} (\wt{\E}^{+}+G_{\F}^{+})|_{\wt{X}}$. 
Recalling that we set $G_{Y}=\mu_{\F\ast}G^{+}$, we have $\mu_{\F}^{\ast}G_{Y}|_{\wt{X}} \sim_{C} \mu_{X}^{\ast}(-K_{X})$, which completes the proof of (3). 

(4) This assertion follows from the equality $\mu_{\F}^{\ast}X = \wt{\E}^{+}+\wt{X}$ as divisors. 

(5) By Theorem~\ref{thm-constP13}~(3), we have 
$-K_{Y}
=2G_{Y}-\vp_{Y}^{\ast}(K_{C}+\det q_{\ast}\mc O_{Q}(E_{Q}))
=2G_{Y}+\vp_{Y}^{\ast}(\b-\a)$. 
Thus we obtain 
$-K_{Y}=2H_{Y}+\vp_{Y}^{\ast}\d$ and $X \in |H_{Y}+\vp_{Y}^{\ast}\d|$. 
By the argument in the proof of Lemma~\ref{lem-89}~(3) and the equality (\ref{eq-2adetF}), we have $\omega_{B}=\vp_{B}^{\ast}\mc O_{C}(\d)$. 
By the duality of the finite flat morphism $\vp_{B} \colon B \to C$, we have 
$\mc O_{C}(\d) \otimes {\vp_{B}}_{\ast}\mc O_{B}={\vp_{B}}_{\ast}\omega_{B}=({\vp_{B}}_{\ast}\mc O_{B})^{\vee} \otimes \omega_{C}$. 
Thus $\mc O_{C}(K_{C}-\d)$ is the cokernel of the splitting injection $\mc O_{C} \to {\vp_{B}}_{\ast}\mc O_{B}$. 
\end{proof}
We complete the proof of Theorem~\ref{mainthm-P13}. \qed


\section{Extensions to moderate $(\P^{2})^{2}$-fibrations}\label{sec-P22}

We devote this section to prove Theorem~\ref{mainthm-P22}. 
A main idea is similar to that of Theorem~\ref{mainthm-P13}. 
\subsection{Double projection from a point on $(\P^{2})^{2}$}\label{subsec-doubleprojP22}
In Subsection~\ref{subsec-doubleprojP13}, we saw that the double projection from a point on a sextic del Pezzo surface $S \dra \Q^{2}$ is the restriction of the double projection $(\P^{1})^{3} \dra \P^{3}$ from the point as in the diagram (\ref{dia-P13gen}). 
As $S$ is also a codimension $2$ linear section of $(\P^{2})^{2}$, 
it is also natural to consider the double projection of $(\P^{2})^{2}$. 

Let $S$ be a sextic del Pezzo surface and take a general point $x \in S$. 
As in the diagram (\ref{dia-68gen}), we have $\Bl_{x}S \simeq \Bl_{y_{1},y_{2},y_{3}}\Q^{2}$ where $y_{1},y_{2},y_{3}$ are non-colinear three points. 
Applying Theorem~\ref{thm-univext} for $X=\Q^{2}$, $Y=\Spec \C$ and $Z=\{y_{1},y_{2},y_{3}\}$, we obtain a locally free sheaf $\mc F_{\Q^{2}}$ 
fitting into the following exact sequence 
\[0 \to \mc O_{\Q^{2}}^{2} \to \mc F_{\Q^{2}} \to \mc I_{Z}(-K_{\Q^{2}}) \to 0\]
such that $\mc F_{\Q^{2}}$ has no trivial bundles as quotients. 
Since $Z$ is a union of non-colinear three points, we can deduce that $\mc I_{Z}(-K_{\Q^{2}})$ is globally generated and so is $\mc F_{\Q^{2}}$ since $H^{1}(\mc O_{\Q^{2}})=0$. 
Thus we have $h^{0}(\mc F_{\Q^{2}}^{\vee})=0$ and hence 
$\mc F_{\Q^{2}} \simeq \ms S_{\Q^{2}} \oplus \mc O_{\Q^{2}}(1) \simeq \mc O_{(\P^{1})^{2}}(1,0) \oplus \mc O_{(\P^{1})^{2}}(0,1) \oplus \mc O_{(\P^{1})^{2}}(1,1)$
by Proposition~\ref{prop-Spi2}. 
Let us consider the projectivization $\pi \colon \P_{\Q^{2}}(\mc F_{\Q^{2}}) \to \Q^{2}$. 
Then we can see that there exists the following diagram:
\begin{align}\label{dia-genP22}
\xymatrix{
&\Bl_{x}(\P^{2})^{2} \ar[ld]_{\mu} &\P_{\Q^{2}}(\mc F_{\Q^{2}})\ar[rd]^{\pi} \ar@{-->}[l]_{\Psi}& \\
(\P^{2})^{2}&&&\Q^{2},
}
\end{align}
where $\Psi$ is a flop and $\mu$ is the blow-up of $(\P^{2})^{2}$ at a point $x$. 
The flopped locus is the union of two sections of $\pi$ which correspond to $\mc F_{\Q^{2}} \to \mc O_{\Q^{2}}(0,1)$ and $\mc F_{\Q^{2}} \to \mc O_{\Q^{2}}(1,0)$. 
The rational map $(\P^{2})^{2} \dra \Q^{2}$ is nothing but the double projection from $x$ on $(\P^{2})^{2}$ and its restriction on $S$ coincides with the double projection $S \dra \Q^{2}$. 
Our proof of Theorems~\ref{thm-constP22} and \ref{mainthm-P22} essentially includes the details of the above argument. 

\subsection{Moderate $(\P^{2})^{2}$-fibrations}

As in Subsection~\ref{subsec-doubleprojP22}, $(\P^{2})^{2}$ can be constructed from a quadric surface with non-colinear three points. 
We relativize this construction for making a Mori fiber space $\vp_{Z} \colon Z \to C$ with smooth total space $Z$ whose smooth fibers are $(\P^{2})^{2}$. 
We call this $\vp_{Z}$ a \emph{moderate $(\P^{2})^{2}$-fibration} in this paper.

\begin{thm}\label{thm-constP22}
Let $C$ be a smooth projective curve and $q \colon Q \to C$ a quadric fibration. 
Let $T \subset Q$ be a smooth irreducible closed subcurve such that $\deg(q|_{T})=3$ and $-K_{\Bl_{T}Q}$ is nef over $C$. 
\begin{enumerate}
\item There exists a locally free sheaf $\mc F$ and the following exact sequence
\begin{align}\label{eq-F}
0 \to q^{\ast}(R^{1}q_{\ast}\mc I_{T}(-K_{C})) \to \mc F \to \mc I_{T}(-K_{Q}) \to 0
\end{align}
such that 
$\mc F|_{Q_{t}} \simeq \ms S_{Q_{t}} \oplus \mc O_{Q_{t}}(1)$ 
holds for every $t \in C$. We refer the definitions of $\mc O_{Q_{t}}(1)$ and $\ms S_{Q_{t}}$ to Subsection~\ref{NC}~(11). 
\item There exists the following diagram:
\begin{align}
\label{dia-P22}
\xymatrix{
&\wt{Z}\ar[rd]_{\psi_{Z}} \ar[dd]_{r^{+}} \ar[ld]_{\mu_{Z}}&&\P_{Q}(\mc F) \ar[dd]^{r} \ar[ld]^{\psi_{\mc F}} \ar[rd]^{\pi_{\mc F}} \ar@{-->}[ll]_{\Psi}& \\
Z\ar[rd]_{\vp_{Z}}&&\ol{Z} \ar[rd] \ar[ld]&&Q \ar[ld]^{q} \\
&C\ar@{=}[rr]&&C,&
}
\end{align}
where 
\begin{itemize}
\item $\P_{Q}(\mc F) \dra \wt{Z}$ is a family of Atiyah flops;
\item $Z$ is smooth and $\vp_{Z} \colon Z \to C$ is a Mori fiber space; 
\item $\mu_{Z} \colon \wt{Z} \to Z$ is the blow-up along a $\vp_{Z}$-section $C_{0}$. 
\end{itemize}
\item Let $\xi_{\mc F}$ be a tautological divisor on $\P_{Q}(\mc F)$ and set $\xi_{Z}:={\mu_{Z}}_{\ast}\Psi_{\ast}\xi_{\mc F}$. 
Then $-K_{Z} \sim 3\xi_{Z}-\vp_{Z}^{\ast}\b$ and $\mu_{Z}^{\ast}\xi_{Z}=\Psi_{\ast}\xi_{\mc F}+\Exc(\mu_{Z})$ ,
where $\b=\det(R^{1}q_{\ast}\mc I_{T}(-K_{C}))$. 
\item Every smooth $\vp_{Z}$-fiber is isomorphic to $(\P^{2})^{2}$. 
\end{enumerate}
In this paper, we call the Mori fiber space $\vp_{Z} \colon Z \to C$ be as in Theorem~\ref{thm-constP22} the \emph{moderate $(\P^{2})^{2}$-fibration} with respect to the pair $(q \colon Q \to C,T)$. 
\end{thm}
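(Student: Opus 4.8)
The plan is to follow the blueprint of Theorem~\ref{thm-constP13}: first build the bundle $\mc F$, then run a relative $2$-ray game on $\P_Q(\mc F)$ over $C$, and finally identify the general fiber through Fujita's classification of del Pezzo manifolds. For part (1), I would apply Theorem~\ref{thm-univext} with $X=Q$, $Y=C$ and $Z=T$; its hypotheses are immediate, since $\dim Q=\dim C+2$, $q$ is flat and projective with $q_\ast\mc O_Q=\mc O_C$, $T$ is a smooth curve (hence a locally complete intersection of codimension $2$) with $\deg(q|_T)=3\ge 2$, $R^1q_\ast\mc O_Q=0$ because the $q$-fibers are quadric surfaces (so it is locally free), and $H^2(C,-)=0$ as $\dim C=1$. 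This produces a locally free $\mc F$ fitting into~(\ref{eq-F}) with no surjection $\mc F|_{Q_t}\to\mc O_{Q_t}$. To compute $\mc F|_{Q_t}$ I would use that $\mc I_T$ is flat over $C$ (because $T\to C$ is flat), so restricting~(\ref{eq-F}) to a fiber stays exact and yields
\[0\to\mc O_{Q_t}^{\oplus2}\to\mc F|_{Q_t}\to\mc I_{T_t/Q_t}(-K_{Q_t})\to0.\]
The nefness of $-K_{\Bl_TQ}$ over $C$ forces $T_t$ to be a non-colinear length-$3$ scheme: if $T_t$ spanned a line $\ell\subset Q_t$, the strict transform of $\ell$ would meet $-K_{\Bl_TQ}$ negatively, exactly as in Claim~\ref{claim-noncoli}. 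Hence $\mc I_{T_t/Q_t}(-K_{Q_t})$ is globally generated (as in the discussion of Subsection~\ref{subsec-doubleprojP22}), so $\mc F|_{Q_t}$ is globally generated and nef; the sequence gives $\det\mc F|_{Q_t}=\mc O_{Q_t}(2)$ and $c_2(\mc F|_{Q_t})=3$; and since on an integral surface a globally generated sheaf admitting a nonzero map to $\mc O$ also admits $\mc O$ as a quotient, the absence of such a quotient gives $h^0(\mc F|_{Q_t}^\vee)=0$. Then Proposition~\ref{prop-Spi2} gives $\mc F|_{Q_t}\simeq\ms S_{Q_t}\oplus\mc O_{Q_t}(1)$.

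For parts (2)--(4), I would run the relative $2$-ray game on $r\colon\P_Q(\mc F)\to C$, which has $\rho(\P_Q(\mc F)/C)=2$. One extremal ray is the bundle projection $\pi_{\mc F}$. For the other, note $r_\ast\mc O(\xi_{\mc F})=q_\ast\mc F$, and since $\mc F|_{Q_t}$ is globally generated with fiberwise $\xi_{\mc F}^4=c_1^2-c_2=5>0$, the line bundle $\mc O(\xi_{\mc F})$ is relatively free and big over $C$; the induced contraction $\psi_{\mc F}\colon\P_Q(\mc F)\to\ol Z$ contracts exactly the curves on which $\xi_{\mc F}$ vanishes. Fiberwise, because $\mc F|_{Q_t}=\ms S_{Q_t}\oplus\mc O_{Q_t}(1)$ with $\ms S_{Q_t}$ nef but not ample, these are one ruling of each of the two sections given by the quotients of $\mc F|_{Q_t}$ onto the line-bundle summands of $\ms S_{Q_t}$ — two threefolds isomorphic to $Q$ (possibly interchanged by the monodromy of the associated double covering $q_B\colon B\to C$) contracted onto surfaces with one-dimensional fibers, so $\psi_{\mc F}$ is small, and a normal-bundle computation giving $\mc O\oplus\mc O(-1)^{\oplus2}$ on each contracted curve shows it is a family of Atiyah flops over $C$; moreover $-K_{\P_Q(\mc F)}$ is trivial on this ray, so the flop $\Psi\colon\P_Q(\mc F)\dra\wt Z$ exists with $\wt Z$ smooth. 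On $\wt Z/C$ the remaining ray is the strict transform of the one contracted by $\pi_{\mc F}$, and I would check — using that $-K_{\wt Z}$ is positive along it and \cite[Theorem~(3.3)]{Mori82} — that it is a divisorial contraction $\mu_Z\colon\wt Z\to Z$ of a divisor whose fibers over $C$ are $\P^3$'s with normal bundle $\mc O_{\P^3}(-1)$, hence the blow-up of a smooth $5$-fold $Z$ along a smooth curve $C_0$, which is a $\vp_Z$-section; since $\rho(Z/C)=1$ and $-K_Z$ is $\vp_Z$-ample, $\vp_Z\colon Z\to C$ is a Mori fiber space.

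Part (3) is then bookkeeping: the relative Euler sequence gives $-K_{\P_Q(\mc F)}=3\xi_{\mc F}-\pi_{\mc F}^\ast(\det\mc F+K_Q)$, and~(\ref{eq-F}) gives $\det\mc F=q^\ast\b-K_Q$, so $-K_{\P_Q(\mc F)}=3\xi_{\mc F}-r^\ast\b$; pushing this through the crepant flop and through the blow-up $\mu_Z$ (discrepancy $3$), together with the relation $\mu_Z^\ast\xi_Z=\Psi_\ast\xi_{\mc F}+\Exc(\mu_Z)$ (which one verifies by intersecting with a line in a $\mu_Z$-exceptional $\P^3$), yields $-K_Z\sim3\xi_Z-\vp_Z^\ast\b$. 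For part (4), a smooth fiber satisfies $\wt Z_t=\Bl_{\mathrm{pt}}Z_t$, so $(-K_{Z_t})^4=(-K_{\P_{Q_t}(\mc F|_{Q_t})})^4+81=81\cdot5+81=486$, while $-K_{Z_t}=3(\xi_Z|_{Z_t})$ and $\rho(Z_t)=\rho(\wt Z_t)-1=2$; hence $Z_t$ is a del Pezzo $4$-fold of degree $(\xi_Z|_{Z_t})^4=6$ with $\rho=2$, which forces $Z_t\simeq(\P^2)^2$ by Fujita's classification of del Pezzo manifolds \cite[Theorem~3.3.1]{Fanobook}.

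The main obstacle will be carrying out the $2$-ray game of part (2) rigorously. Because $\mc F$ is only \emph{fiberwise} isomorphic to $\ms S_Q\oplus\mc O_Q(1)$ and does not split over $Q$ — the two rulings on the $q$-fibers, and hence the two threefolds contracted by $\psi_{\mc F}$, are permuted by the monodromy of $q_B\colon B\to C$ — one cannot argue from an explicit global decomposition; instead every step (relative global generation of $\xi_{\mc F}$, the exceptional locus and fiber dimensions of $\psi_{\mc F}$, the smoothness of $\wt Z$ after the Atiyah flops, and the precise structure of $\mu_Z$) must be controlled through fiberwise data combined with Mori's characterization of smooth blow-ups, which takes care, especially in checking that the second ray of $\wt Z/C$ really contracts a $\P^3$-bundle over a section and that smoothness persists throughout.
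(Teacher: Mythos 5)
Your overall architecture matches the paper's: Theorem~\ref{thm-univext} plus Proposition~\ref{prop-Spi2} for part (1), a relative $2$-ray game on $\P_{Q}(\mc F)$ over $C$ for parts (2)--(4), and Fujita's classification of del Pezzo manifolds for the smooth fibers; the numerical bookkeeping in (3) and (4) is also essentially the paper's. The genuine gap sits exactly where you flag ``the main obstacle'': you never supply the mechanism that substitutes for the missing global splitting of $\mc F$. The paper's device is to first produce the canonical divisor $E_{Q} \subset Q$ containing $T$ with $2E_{Q}+K_{Q} \sim_{C} 0$ and $q_{\ast}\mc I_{T}(E_{Q})=\mc O_{C}$ (Claim~\ref{claim-EQ}), which yields a global sub-line-bundle $\mc O(E_{Q}-q^{\ast}\a) \hookrightarrow \mc F$ whose cokernel $\mc E$ satisfies $\mc E|_{Q_{t}} \simeq \ms S_{Q_{t}}$ for every $t$ (Claim~\ref{claim-E'}). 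The resulting global divisor $\P_{Q}(\mc E) \subset \P_{Q}(\mc F)$ is what turns each subsequent step into a proof rather than a fiberwise heuristic: $\Exc(\psi_{\mc F})=\Exc(\psi_{\mc E})$ via the rigidity lemma; the normal bundle $\mc N_{l/\P_{Q}(\mc F)} \simeq \mc O(-1)^{2}\oplus\mc O^{2}$ of a flopping curve (rank $4$, not the rank-$3$ bundle you wrote) is computed from $\mc N_{l/\P_{Q}(\mc E)}$ together with $\P_{Q}(\mc E).l=-1$; and, most importantly, the divisor contracted by $\mu_{Z}$ is identified as the proper transform of $\P_{Q}(\mc E)$, which becomes the $\P^{3}$-bundle $\P_{C}(q_{\ast}\mc E)$ after the flop. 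Without this (or an equivalent global object), your assertions that $\psi_{\mc F}$ is a $2$-dimensional family of Atiyah flopping contractions and that the second ray of $\wt{Z}/C$ contracts a $\P^{3}$-bundle over $C$ onto a section are unsupported --- ``controlled through fiberwise data'' is not an argument, and note in passing that this second ray is a new ray, not the strict transform of the $\pi_{\mc F}$-ray.

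A secondary issue is in part (1): you deduce nefness of $\mc F|_{Q_{t}}$ from global generation of $\mc I_{T_{t}/Q_{t}}(-K_{Q_{t}})$, citing Subsection~\ref{subsec-doubleprojP22}, but that discussion only treats a smooth quadric with three reduced non-colinear points. For the degenerate configurations that actually occur (e.g.\ $T_{t}$ curvilinear and supported at the vertex of a quadric cone, or $T_{t}$ contained in a double-line plane section of the cone) global generation requires a separate argument you do not give. The paper sidesteps this entirely: it deduces $q$-nefness of $\mc F$ from the hypothesis that $-K_{\Bl_{T}Q}$ is nef over $C$, using the embedding of $\Bl_{T}Q$ into $\P_{Q}(\mc F)$ as the zero scheme of the section corresponding to the injection in (\ref{eq-F}).
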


\begin{proof}
We proceed in 4 steps.

\noindent\textbf{Step 1.} Let us prove (1). 
We apply Theorem~\ref{thm-univext} for $X=Q$, $Y=C$, $f=q$ and $Z=T$. 
Since $R^{1}q_{\ast}\mc O_{X}=0$ and $\dim C=1$, 
Theorem~\ref{thm-univext} gives the exact sequence (\ref{eq-F}) and the locally free sheaf $\mc F$. 
Moreover, there are no surjections $\mc F|_{Q_{t}} \to \mc O_{Q_{t}}$ for every $t$. 
Note that $\det \mc F|_{Q_{t}}=\mc O(-K_{Q_{t}})$ and $c_{2}(\mc F|_{Q_{t}})=3$ follow from (\ref{eq-F}). 
If $\mc F$ is $q$-nef, then every morphism $\mc F \to \mc O_{Q_{t}}$ is surjective and hence the zero map. 
Thus Proposition~\ref{prop-Spi2} shows (1). 
To show that $\mc F$ is $q$-nef, we consider the projectivization $\P_{Q}(\mc F)$. 
By the surjection $\mc F \epm \mc I_{T}(-K_{Q})$, $\Bl_{T}Q$ is embedded in $\P_{Q}(\mc F)$ over $Q$ as the zero scheme of the global section $s \in H^{0}(\P_{Q}(\mc F), \mc O_{\P_{Q}(\mc F)}(1) \otimes q^{\ast}(R^{1}q_{\ast}\mc I_{T}(-K_{C}))^{\vee})$ corresponding to the injection $q^{\ast}(R^{1}q_{\ast}\mc I_{T}(-K_{C})) \to \mc F$ (cf. Lemma~\ref{lem-irr}). 
Since $-K_{\Bl_{T}Q}=\mc O_{\P_{Q}(\mc F)}(1)|_{\wt{Q}}$ is nef over $C$ by assumption, so is $\mc O_{\P_{Q}(\mc F)}(1)$. 
We complete the proof of (1). 

\noindent\textbf{Step 2.} Next we confirm the following claim. 
\begin{claim}\label{claim-EQ}
There exists a unique effective divisor $E_{Q} \subset Q$ containing $T$ such that $2E_{Q}+K_{Q} \sim_{C} 0$ and $q_{\ast}\mc I_{T}(E_{Q})=\mc O_{C}$. 
\end{claim}
\begin{proof}
Take $f \colon \F \to C$ is a $\P^{3}$-bundle over $C$ such that $\F$ contains $Q$. 
Since $-K_{\Bl_{T}Q}$ is nef over $C$, the linear span of $T_{t}$ is a $2$-plane in $\F_{t}=\P^{3}$. 
By Theorem~\ref{thm-constP13}~(1), there exists a unique sub $\P^{2}$-bundle $\E$ contains $T$. 
Set $E_{Q}:=\E \cap Q$. 
Then we have an exact sequence $0 \to \mc O_{\F}(-Q+\E) \to \mc O_{\F}(\E) \otimes \mc I_{T/\F} \to \mc O(E_{Q}) \otimes \mc I_{T/Q} \to 0$. 
Since $R^{i}f_{\ast}\mc O_{\F}(-Q+E)=0$ for any $i$ and $f_{\ast}(\mc I_{T/\F} \otimes \mc O_{\F}(\E))=\mc O_{C}$ as in the proof of Theorem~\ref{thm-constP13}~(1), 
we have that $q_{\ast}\mc I_{T}(E_{Q})=\mc O_{C}$, which proves that $E_{Q}$ satisfies the conditions. 
For a general point $t \in C$, the fiber $(E_{Q})_{t}$ is the unique smooth conic passing through the three points $T_{t}$. 
Hence the uniqueness of $E_{Q}$ follows. 
\end{proof}
From now on, we fix a divisor $\a$ on $C$ such that $-K_{Q}=2E_{Q}-q^{\ast}\a$. 
\begin{claim}\label{claim-E'}
There exists a exact sequence 
\begin{align}
\label{eq-P22-2} 0 \to \mc O(E_{Q}-q^{\ast}\a) \to \mc F \to \mc E \to 0
\end{align}
where $\mc E$ is a locally free sheaf with $\mc E|_{Q_{t}} \simeq \ms S_{Q_{t}}$ for every $t \in C$. 
\end{claim}
\begin{proof}
Tensoring the exact sequence (\ref{eq-F}) with $\mc O(-E_{Q}+q^{\ast}\a)$, we obtain the following exact sequence: 
\[0 \to q^{\ast}((R^{1}q_{\ast}\mc I_{T})(\a-K_{C})) \otimes \mc O(-E_{Q}) \to \mc F(-E_{Q}+q^{\ast}\a) \to \mc I_{T}(E_{Q}) \to 0.\]
Claim~\ref{claim-EQ} implies that $q_{\ast}\mc I_{T}(E_{Q})=\mc O_{C}$. 
Since $R^{i}q_{\ast}\mc O_{Q}(-E_{Q})=0$ for any $i$, we have $q_{\ast}\mc F(-E_{Q} + q^{\ast}\a) \simeq q_{\ast}\mc I_{T}(E_{Q}) \simeq \mc O_{C}$. 
Hence we obtain an injection 
$\iota \colon \mc O(E_{Q}-q^{\ast}\a) \to \mc F$ 
with the locally free cokernel $\mc E:=\Cok \iota$. 
Then we have an exact sequence 
$0 \to \mc O_{Q_{t}}(1) \to \mc F|_{Q_{t}} \to \mc E|_{Q_{t}} \to 0$ 
for every $t \in C$. 
Since $\mc F|_{Q_{t}} \simeq \ms S_{Q_{t}} \oplus \mc O_{Q_{t}}(1)$ and $\Hom(\mc O_{Q_{t}}(1),\ms S_{Q_{t}})=0$, 
$\mc F|_{Q_{t}}$ contains $\mc O_{Q_{t}}(1)$ as a direct summand. 
Hence we have $\mc E|_{Q_{t}} \simeq \ms S_{Q_{t}}$ for every $t \in C$. 
\end{proof}

\noindent\textbf{Step 3.} Let $\P_{Q}(\mc E) \subset \P_{Q}(\mc F)$ be the natural inclusion from the exact sequence (\ref{eq-P22-2}). 
We define morphisms as in the following commutative diagram:
\[\xymatrix{
\P_{Q}(\mc F)\ar[d]^{\pi_{\mc F}}  \ar@(l,l)[dd]_{r_{\mc F}}& \ar@^{{(}->}[l] \ar[ld]^{\pi_{\mc E}}\P_{Q}(\mc E) \ar@(d,r)[ldd]^{r_{\mc E}}\\
Q \ar[d]_{q} \\
C. 
}\]
Let $\xi_{\mc F}$ be a tautological divisor of $\pi_{\mc F} \colon \P_{Q}(\mc F) \to Q$ and $\xi_{\mc E}:=\xi_{\mc F}|_{\P_{Q}(\mc E)}$. 
Then $\xi_{\mc E}$ is a tautological divisor of $\P_{Q}(\mc E)$ and $\P_{Q}(\mc E) \subset \P_{Q}(\mc F)$ is a member of $|\mc O(\xi_{\mc F}-\pi^{\ast}(E_{Q})) \otimes r_{\mc F}^{\ast}\mc O(\a)|$. 
Note that $\mc O_{\P_{Q}(\mc E)}(\xi_{\mc E})$ is $r_{\mc E}$-globally generated and $r_{\mc E\ast}\mc O_{\P_{Q}(\mc E)}(\xi_{\mc E})=q_{\ast}\mc E$ is a vector bundle of rank 4. 
Thus $\xi_{\mc E}$ gives a morphism 
$\psi_{\mc E} \colon \P_{Q}(\mc E) \to \P_{C}(q_{\ast}\mc E)$. 
Set $G=\Exc(\psi_{\mc E})$ and $S=\psi_{\mc E}(G)$. 
\begin{claim}\label{claim-P22E}
\begin{enumerate}
\item $S$ is smooth and $\psi_{\mc E} \colon \P_{Q}(\mc E) \to \P_{C}(q_{\ast}\mc E)$ is the blow-up of $\P_{C}(q_{\ast}\mc E)$ along $S$. 
\item The morphism $S \to C$ factors a non-singular curve $B$ such that $S \to B$ is a $\P^{1}$-bundle and $B \to C$ is a double covering. Moreover, $B \to C$ is the double covering associated to $q$. 
\end{enumerate}
\end{claim}
\begin{proof}

(1) Let $t \in C$ be a point and consider a morphism 
$\psi_{\mc E,t} \colon \P_{Q_{t}}(\ms S_{Q_{t}}) \to \P^{3}$. 
If $Q_{t}$ is smooth (resp. singular), then it is known that $\psi_{\mc E,t}$ is the blow-up along union of two disjoint lines in $\P^{3}$ \cite[Table~3, No~25.]{MM81} (resp. the blow-up along a double line which is contained in a smooth quadric surface). 
This fact implies that every fiber $l$ of $G \to S$ is isomorphic to $\P^{1}$ and satisfies $-K_{\P_{Q}(\mc E)}.l=1$. 
Then \cite[Theorem~2.3]{Ando85} implies that $S$ is non-singular and $\psi_{\mc E}$ is the blow-up along $S$. 

(2) For any $t\in C$, $S_{t} \subset \P^{3}$ is a union of two disjoint lines if  $Q_{t}$ is smooth and $(S_{t})_{\red}$ is a line if $Q_{t}$ is singular. 
Let $S \to B \to C$ be the Stein factorization of $S \to C$. 
Since $S$ is smooth, so is $B$. 
Then $S \to B$ is a $\P^{1}$-bundle and $B \to C$ is a double covering. 
The branched locus of this double cover $B \to C$ is $\{t \in C \mid Q_{t} \text{ is singular} \}$. Therefore, $B \to C$ is the double covering associated to $q \colon Q \to C$. 
\end{proof}

\noindent\textbf{Step 4.} 
Let $\psi_{\mc F} \colon \P_{Q}(\mc F) \to \ol{Z}$ be the Stein factorization of the morphism given by $|\xi_{\mc F}|$ over $C$. 
Since $-K_{\P_{Q}(\mc F)}\sim_{C} 3\xi_{\mc F}$, $\psi_{\mc F} \colon \P_{Q}(\mc F) \to \ol{Z}$ is a crepant contraction. 

\begin{claim}\label{claim-P22F}
\begin{enumerate}
\item $\Exc(\psi_{\mc E})=\Exc(\psi_{\mc F})$ holds and $\psi_{\mc F}$ is a 2-dimensional family of Atiyah's flopping contraction. In particular, if $\Psi \colon \P_{Q}(\mc F) \dra \wt{Z}$ denotes the flop, then $\wt{Z}$ is non-singular. 
\item Let $E_{\wt{Z}}$ be the proper transform of $\P_{Q}(\mc E) \subset \P_{Q}(\mc F)$ on $\wt{Z}$. Then there exists a birational morphism 
$\mu_{Z} \colon \wt{Z} \to Z$ 
over $C$ such that $Z$ is non-singular and $\mu_{Z}$ is the blow-up along a section $C_{0,Z}$ of the induced morphism $\vp_{Z} \colon Z \to C$. 
\end{enumerate}
\end{claim}
\begin{proof}
(1) Let $\g$ be a curve contracted by $\psi_{\mc F}$. 
Then we have $\xi_{\mc F}.\g=0$ and hence $\P_{Q}(\mc E).\g=-\pi_{\mc F}^{\ast}E_{Q}.\g<0$ since $E_{Q}$ is ample over $C$. 
Thus $\P_{Q}(\mc E)$ contains $\g$ and $\g$ is contracted by $\psi_{\mc E}$. 
Conversely, it is clear that every curve $\g$ contracted by $\psi_{\mc E}$ is also contracted by $\psi_{\mc F}$. 
Therefore, we have $\psi_{\mc F}|_{\P_{Q}(\mc E)}=\psi_{\mc E}$ by the rigidity lemma and $\Exc(\psi_{\mc F})=\Exc(\psi_{\mc E})$. 
Let $l$ be any fiber of $\Exc(\psi_{\mc F}) \to S$. 
Then we have $\mc N_{l/\P_{Q}(\mc E)}=\mc O(-1) \oplus \mc O^{2}$ and $1=-K_{\P_{Q}(\mc E)}.l=(\xi_{\mc F}+\pi_{\mc F}^{\ast}E_{Q}).l=\pi_{\mc F\ast}l.E_{Q}$. 
Hence $\P_{Q}(\mc E).l=-1$ in $\P_{Q}(\mc F)$. 
Considering the exact sequence $0 \to \mc N_{l/\P_{Q}(\mc E)} \to \mc N_{l/\P_{Q}(\mc F)} \to \mc N_{\P_{Q}(\mc E)/\P_{Q}(\mc F)}|_{l} \to 0$, 
we obtain $\mc N_{l/\P_{Q}(\mc F)} \simeq \mc O(-1)^{2} \oplus \mc O^{2}$. 
Thus $\psi_{\mc F}$ is a 2-dimensional family of Atiyah's flopping contraction. 

(2) We have $E_{\wt{Z}} \simeq \P_{C}(q_{\ast}\mc E)$ by the construction of this flop. 
Moreover, for each $t \in C$, if we take a line $l \subset E_{\wt{Z},t} \simeq \P^{3}$, then we have $-K_{\wt{Z}}.l=-K_{\wt{Z}_{t}}.l=3$. 
It implies that $-K_{\wt{Z}_{t}}|_{E_{\wt{Z}_{t}}}=\mc O_{\P^{3}}(3)$ and hence $\mc N_{E_{\wt{Z}_{t}}/\wt{Z}_{t}} \simeq \mc O_{\P^{3}}(-1)$ for every $t \in C$. 
Therefore, there exists a morphism $\mu_{Z} \colon \wt{Z} \to Z$ over $C$ such that $\mu_{Z}$ blows $E_{\wt{Z}}$ down to a section $C_{0,Z}$ of $\vp_{Z} \colon Z \to C$ and $Z$ is smooth. 
\end{proof}
Note that $\mu_{Z}$ is an extremal contraction and hence $\rho(Z)=2$. 

We show (3). 
Set $\b:=\det(R^{1}q_{\ast}\mc I_{T}(-K_{C}))$ as in Theorem~\ref{thm-constP22}~(3). 
Since $-K_{\P_{Q}(\mc F)}=3\xi_{\mc F}-r_{\mc F}^{\ast}\b$ by the exact sequence (\ref{eq-F}), 
we have $-K_{Z}=3\xi_{Z}-\vp_{Z}^{\ast}\b$. 
Moreover, since $\mu_{Z}$ is the blow-up along a $\vp_{Z}$-section, we have $\mu_{Z}^{\ast}K_{Z}=K_{\wt{Z}}+3E_{\wt{Z}}$. 
Thus we have $\mu_{Z}^{\ast}\xi_{Z}=\Psi_{\ast}\xi_{\mc F}+E_{\wt{Z}}$ since $\xi_{Z}={\mu_{Z}}_{\ast}\Psi_{\ast}\xi_{\mc F}$ by the definition. 
We complete the proof of (3). 

To prove (4), let $t \in C$ be a point such that $Z_{t}=\vp_{Z}^{-t}(t)$ is smooth. 
Then $Z_{t}$ is a so-called del Pezzo 4-fold. 
From the diagram (\ref{dia-P22}), we have $\xi_{Z}^{4} \cdot \vp_{Z}^{-1}(t)=\xi_{\mc F}^{4} \cdot r_{\mc F}^{-1}(t)+1=6$ by a direct calculation. 
Then we have $Z_{t} \simeq (\P^{2})^{2}$ by Fujita's classification of del Pezzo manifolds \cite[Theorem~3.3.1]{Fanobook}.

We complete the proof of Theorem~\ref{thm-constP22}. 
\end{proof}

\subsection{Proof of Theorem~\ref{mainthm-P22}}
Let $\vp \colon X \to C$ be a sextic del Pezzo fibration. 
Take a $\vp$-section $C_{0}$ and let $(q \colon Q \to C,T)$ be the relative double projection of $(\vp \colon X \to C,C_{0})$. 
Let $E_{Q} \subset Q$ be the proper transform of $\Exc(\Bl_{C_{0}}X \to X)$. 
Then $E_{Q}$ is nothing but the divisor that we obtain in Claim~\ref{claim-EQ}. 
By Proposition~\ref{prop-2ray68}~(2), we see that $\mc O(-K_{\Bl_{T}Q})$ is nef over $C$. 
Then Theorem~\ref{thm-constP22} gives the moderate $(\P^{2})^{2}$-fibration $\vp_{Z} \colon Z \to C$. 
In order to find an embedding from $X$ into $Z$ over $C$, it suffices to show that the proper transform of $Q$ on $Z$ coincides with $X$. 

Now let us use the same notation as in Theorem~\ref{thm-constP22} and its proof. 
Let $\mc G$ be as in the statement of Theorem~\ref{mainthm-P22}. 
Considering the exact sequence $0 \to \mc I_{T/Q} \to \mc O_{Q} \to \mc O_{T} \to 0$ and taking the cohomology of $q_{\ast}$, we obtain 
\begin{align}\label{eq-GisR1}
\mc G=R^{1}q_{\ast}\mc I_{T} \otimes \mc O(-K_{C}).
\end{align}
Note that $\wt{Q}=\Bl_{T}Q$ is the zero scheme of the global section of $H^{0}(\P_{Q}(\mc F),\mc O(\xi_{\mc F}) \otimes r_{\mc F}^{\ast}\mc G^{\vee})$ corresponding to the injection $q^{\ast}\mc G \to \mc F$ in the sequence (\ref{eq-F}) under the natural isomorphism $H^{0}(\P_{Q}(\mc F),\mc O(\xi_{\mc F}) \otimes r_{\mc F}^{\ast}\mc G^{\vee}) \simeq \Hom_{Q}(q^{\ast}\mc G,\mc F)$. 

Now Theorem~\ref{mainthm-P22} is a consequence of the following claim. 
\begin{claim}\label{claim-P22conclu}
\begin{enumerate}
\item $\psi_{\mc F}|_{\wt{Q}}$ coincides with $\psi_{Q}$ in Proposition~\ref{prop-2ray68}~(2).  \item If $\wt{Q}^{+} \subset \wt{Z}$ denotes the proper transform of $\wt{Q} \subset \P_{Q}(\mc F)$, then the birational map $\wt{Q} \dra \wt{Q}^{+}$ is the flop over $C$. In particular, $\wt{Q}^{+} \simeq \wt{X}$ holds. 
\item It holds that $\mu_{Z}|_{\wt{X}}=\mu_{X}$. In particular, there exists a closed embedding $i \colon X \hra Z$ such that $i(C_{0})=C_{0,Z}$. 
\item $X$ is the zero scheme of a global section of $\mc O_{Z}(\xi_{Z}) \otimes \vp_{Z}^{\ast}\mc G^{\vee}$. 
\item It holds that $\mc O(K_{Z}+3\xi_{Z}) \simeq \vp_{Z}^{\ast}\det \mc G$. 
\end{enumerate}
\end{claim}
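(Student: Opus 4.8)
The plan is to follow the proof of Claim~\ref{claim-P13emb} almost verbatim, with the divisor $\Bl_{T}Q\subset\wt{\F}$ there replaced by the codimension-two subvariety $\wt{Q}:=\Bl_{T}Q\subset\P_{Q}(\mc F)$, which is the zero scheme of the section $s$ of the rank-two bundle $\mc O(\xi_{\mc F})\otimes r_{\mc F}^{\ast}\mc G^{\vee}$. For (1): since $\mc O(\xi_{\mc F})|_{\wt{Q}}=-K_{\wt{Q}}$ (Step~1 of the proof of Theorem~\ref{thm-constP22}), both $\psi_{\mc F}|_{\wt{Q}}$ and $\psi_{Q}$ are morphisms over $C$ built from the relative system of $-K_{\wt{Q}}$; here $\psi_{Q}$ is the relative ample model of $-K_{\wt{Q}}$ over $C$ because $\wt{Q}=\Bl_{T}Q$ is isomorphic in codimension one to $\wt{X}$ (Proposition~\ref{prop-2ray68}~(2)). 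To see that $\psi_{\mc F}|_{\wt{Q}}$ is the whole relative ample model and not a further contraction, I would check that $(r_{\mc F})_{\ast}\mc O(k\xi_{\mc F})\to(r_{\mc F}|_{\wt{Q}})_{\ast}\mc O(k\xi_{\mc F})|_{\wt{Q}}$ is surjective for $k\gg0$, via the Koszul resolution $0\to\mc O(-2\xi_{\mc F})\otimes r_{\mc F}^{\ast}\det\mc G\to\mc O(-\xi_{\mc F})\otimes r_{\mc F}^{\ast}\mc G\to\mc I_{\wt{Q}}\to0$ together with $R^{>0}(\pi_{\mc F})_{\ast}\mc O(m\xi_{\mc F})=0$ for $m\ge-1$ and relative Serre vanishing on $\ol{Z}$ (using that $\xi_{\mc F}$ is the $\psi_{\mc F}$-pullback of a divisor ample over $C$). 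This gives (1).

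For (2): $\Psi$ is an isomorphism in codimension one, and by (1) the curves of $\wt{Q}$ contracted by $\psi_{\mc F}$ are exactly the one-dimensional flopping locus of $\psi_{Q}$, so $\wt{Q}\dra\wt{Q}^{+}$ is an isomorphism in codimension one. To identify it with the flop of $\psi_{Q}$ rather than another small modification, I would use that $\P_{Q}(\mc E)\sim\xi_{\mc F}-\pi_{\mc F}^{\ast}E_{Q}+r_{\mc F}^{\ast}\a$ is negative on the $\psi_{\mc F}$-contracted curves (because $\xi_{\mc F}$ is $\psi_{\mc F}$-trivial and $E_{Q}$ is $q$-ample, exactly as in the proof of Claim~\ref{claim-P22F}~(1)), so that its proper transform $E_{\wt{Z}}$ is positive on the flopped curves; combined with the normality of $\wt{Q}^{+}$ (it is Cohen--Macaulay, being the zero scheme of the transported section, and regular in codimension one, being isomorphic in codimension one to the smooth $\wt{Q}$), this shows $\wt{Q}\dra\wt{Q}^{+}$ is the flop of $\psi_{Q}$ over $C$. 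By uniqueness of flops and the identification of $\wt{X}$ in Proposition~\ref{prop-2ray68}~(2) as the flop of $\Bl_{T}Q$, we obtain $\wt{Q}^{+}\simeq\wt{X}$.

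For (3), I would compute $\mu_{Z}^{\ast}\xi_{Z}|_{\wt{X}}$ from $\mu_{Z}^{\ast}\xi_{Z}=\Psi_{\ast}\xi_{\mc F}+E_{\wt{Z}}$ (Theorem~\ref{thm-constP22}~(3)): on $\wt{X}=\wt{Q}^{+}$ the first summand is $\sim_{C}-K_{\wt{X}}$ (the flop carries $\xi_{\mc F}|_{\wt{Q}}=-K_{\wt{Q}}$ to $-K_{\wt{X}}$), while $E_{\wt{Z}}|_{\wt{X}}\sim_{C}E_{\wt{X}}$ because $\P_{Q}(\mc E)|_{\wt{Q}}\sim\s^{\ast}E_{Q}-G\sim E_{\wt{Q}}$ (using $-K_{Q}=2E_{Q}-q^{\ast}\a$ and that $E_{Q}$ is smooth along $T$), and $E_{\wt{Q}}$ is the flop-transform of the $\mu_{X}$-exceptional divisor. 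Since $-K_{\wt{X}}=\mu_{X}^{\ast}(-K_{X})-E_{\wt{X}}$, this yields $\mu_{Z}^{\ast}\xi_{Z}|_{\wt{X}}\sim_{C}\mu_{X}^{\ast}(-K_{X})$; as $-K_{X}$ is $\vp$-very ample and $\xi_{Z}$ is $\vp_{Z}$-ample, $\mu_{Z}|_{\wt{X}}$ is the relative anticanonical contraction, hence equals $\mu_{X}$ up to the isomorphism $\mu_{Z}(\wt{X})\simeq X$, giving the embedding $i\colon X\hra Z$ with $i(C_{0})=C_{0,Z}$. For (4), I would transport $s$ across $\Psi$ to a section $s^{+}\in H^{0}(\wt{Z},\mc O(\Psi_{\ast}\xi_{\mc F})\otimes(r^{+})^{\ast}\mc G^{\vee})$ (the sheaf is locally free and $\Psi$ an isomorphism in codimension one), whose zero scheme is $\wt{Q}^{+}=\wt{X}$ with no extra components, as $s$ does not vanish on $\Exc(\psi_{\mc F})$; then push down by $\mu_{Z}$, using $\mc O_{\wt{Z}}(\Psi_{\ast}\xi_{\mc F})=\mu_{Z}^{\ast}\mc O_{Z}(\xi_{Z})\otimes\mc O(-E_{\wt{Z}})$ and $(\mu_{Z})_{\ast}\mc O(-E_{\wt{Z}})=\mc I_{C_{0,Z}}$, to obtain the desired section $s_{Z}$ of $\mc O_{Z}(\xi_{Z})\otimes\vp_{Z}^{\ast}\mc G^{\vee}$ whose zero scheme is $X$. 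Finally (5) is immediate: $-K_{Z}\sim3\xi_{Z}-\vp_{Z}^{\ast}\b$ with $\b=\det(R^{1}q_{\ast}\mc I_{T}(-K_{C}))$ by Theorem~\ref{thm-constP22}~(3), while $\mc G=R^{1}q_{\ast}\mc I_{T}(-K_{C})$ by \eqref{eq-GisR1}, so $\b=\det\mc G$ and $\mc O(K_{Z}+3\xi_{Z})\simeq\vp_{Z}^{\ast}\det\mc G$.

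The step I expect to be the main obstacle is (4): one must control the zero scheme of the section through the flop (ruling out spurious components supported on the flopped locus) and then show scheme-theoretically that the zero scheme of $s_{Z}$ equals $X$ and survives the blow-down $\mu_{Z}$ without acquiring embedded components --- the point being that the zero scheme of $s_{Z}$ is a Cohen--Macaulay threefold agreeing with the smooth threefold $X$ over the dense open set $Z\setminus C_{0,Z}$, so the two coincide. Verifying in (2) that the small modification $\wt{Q}\dra\wt{Q}^{+}$ is precisely the flop of $\psi_{Q}$ is the other delicate point, but it is handled by the same sign computation as in Claim~\ref{claim-P13emb}~(2) once the relation between $\P_{Q}(\mc E)$ and $E_{Q}$ is used.
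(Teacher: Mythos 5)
Your proposal matches the paper's proof essentially step for step: the Koszul resolution of $\mc I_{\wt{Q}/\P_{Q}(\mc F)}$ to get surjectivity of the restriction of sections for (1), identification of $\wt{Q} \dra \wt{Q}^{+}$ with the flop of $\psi_{Q}$ via normality of $\wt{Q}^{+}$ and the signs of $\P_{Q}(\mc E)$ and $E_{\wt{Z}}$ on the contracted and flopped curves for (2), the computation $\mu_{Z}^{\ast}\xi_{Z}|_{\wt{X}} \sim_{C} \mu_{X}^{\ast}(-K_{X})$ for (3), transport of the defining section through $\Psi$ and $\mu_{Z}$ for (4), and adjunction for (5). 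The paper itself reduces (2) to the argument of Claim~\ref{claim-P13emb}~(2), which is exactly the route you take, so there is no substantive difference.
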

\begin{proof}
(1) It suffices to show the restriction morphism ${r_{\mc F}}_{\ast}\mc O_{\P_{Q}(\mc F)}(k\xi_{\mc F}) \to (r_{\mc F}|_{\wt{Q}})_{\ast}\mc O_{\wt{Q}}(k\xi_{\mc F})$ is surjective for every $k>0$.
Since $\wt{Q}$ is the zero scheme of a global section of the rank $2$ vector bundle $\mc O_{\P_{Q}(\mc F)}(1) \otimes r_{\mc F}^{\ast}\mc G^{\vee}$, we have the exact sequence 
$0 \to  \mc O_{\P_{Q}(\mc F)}((k-2)\xi_{\mc F}) \otimes r_{\mc F}^{\ast}\det \mc G  \to \mc O_{\P_{Q}(\mc F)}((k-1)\xi_{\mc F}) \otimes r_{\mc F}^{\ast}\mc G \to \mc I_{\wt{Q}/\P_{Q}(\mc F)} (k\xi_{\mc F}) \to 0$. 
Thus we have $R^{1}{r_{\mc F}}_{\ast}\mc I_{\wt{Q}/\P_{Q}(\mc F)}(k\xi_{\mc F})=0$ for every $k>0$. 
Hence we are done. 

(2) By (1), we have $\psi_{\mc F}(\wt{Q})=\ol{X}$. 
Since $\psi_{\mc F}$ is defined over $C$ by $\xi_{\mc F}$, 
there exists a Cartier divisor $\xi_{\ol{Z}}$ on $\ol{Z}$ such that $\xi_{\ol{Z}}$ is ample over $C$ and $\psi_{\mc F}^{\ast}\xi_{\ol{Z}}=\xi_{\mc F}$. 
Since $\wt{Q}$ is the zero scheme of a global section of $\psi_{\mc F}^{\ast}(\mc O(\xi_{\ol{Z}}) \otimes \ol{r}^{\ast}\mc G^{\vee})$, 
$\ol{X}$ is that of $\mc O(\xi_{\ol{Z}}) \otimes \ol{r}^{\ast}\mc G^{\vee}$ in $\ol{Z}$ and hence 
$\wt{Q}^{+}$ is that of $\psi_{Z}^{\ast}\mc O(\xi_{\ol{Z}}) \otimes r^{+\ast}\mc G^{\vee}$ in $\wt{Z}$. 
Then (2) follows from similar arguments as in the proof of Claim~\ref{claim-P13emb}~(2).  

(3) By Theorem~\ref{thm-constP22}~(3), we have $\mu_{Z}^{\ast}\xi_{Z} \sim \xi_{\wt{Z}}+E_{\wt{Z}}$. 
Since $\Psi_{\ast}\xi_{\mc F}|_{\wt{X}} \sim_{C} -K_{\wt{X}}$, we obtain $\mu_{Z}^{\ast}\xi_{Z}|_{\wt{X}} \sim_{C} \mu_{X}^{\ast}(-K_{X})$, which proves the assertion. 

(4) Since $\wt{X}$ is the zero scheme of a global section of $\mc O_{\wt{Z}}(\Psi_{\ast}\xi_{\mc F}) \otimes r^{+\ast}\mc G^{\vee}$ and $\Psi_{\ast}\xi_{\mc F}=\mu_{Z}^{\ast}\xi_{Z}-\Exc(\mu_{Z})$ holds, 
we obtain a section $s \in H^{0}(Z,\mc I_{C_{0}/Z}(\xi_{Z}) \otimes r^{+\ast}\mc G^{\vee})$ such that the zero scheme of $s$ is $X$. 

(5) This assertion immediately follows from (\ref{eq-GisR1}) and Theorem~\ref{thm-constP22}~(3). 
\end{proof}

We complete the proof of Theorem~\ref{mainthm-P22}. 
\qed

\section{Singular fibers of sextic del Pezzo fibrations}\label{sec-singfib}

In this section, we classify singular fibers of sextic del Pezzo fibrations.

\subsection{Singular sextic del Pezzo surfaces} 
For a sextic del Pezzo fibration $\vp \colon X \to C$, every fiber is irreducible and Gorenstein. 
Such surfaces were studied by many peoples (e.g. \cite{HW81,CT88,Fujita86,Reid94,AF03}). 
As the first step to classify singular fibers, we review the classification of irreducible Gorenstein sextic del Pezzo surfaces.

\begin{thm}[\cite{HW81,CT88,Fujita86,Reid94,AF03}]\label{thm-GordP6}
Let $S$ be an irreducible Gorenstein del Pezzo surface with $(-K_{S})^{2}=6$. 

(1) Suppose that $S$ has only Du Val singularities. 
Let $\wt{S} \to S$ be the minimal resolution $\wt{S} \to S$. 
Then there is a birational morphism $\e \colon \wt{S} \to \P^{2}$ such that $\e$ is the blow-up of $\P^{2}$ at three (possibly infinitely near) points. 
If $\Sigma$ denotes the $0$-dimensional subscheme in $\P^{2}$ of length $3$ corresponding to the three (possibly infinitely near) points, 
then the isomorphism class of $S$ is determined by $\Sigma$ as in Table~2. 

\begin{table}[h]
\begin{tabular}{|c||c|c|c|c|c|}
\hline 
Type & $\Sigma$ & Is $\Sigma$ colinear? & $\#$ of lines& Singularity\\
\hline \hline 
(2,3) & reduced & non-colinear & $6$ & smooth  \\
\hline
(2,2) & $\Spec \C \sqcup \Spec \C[\e]/(\e^{2})$ & non-colinear & $4$ & $A_{1}$  \\
\hline
(2,1) & $\Spec \C[\e]/(\e^{3})$ & non-colinear & $2$ & $A_{2}$ \\
\hline
(1,3) & reduced & colinear & $3$ & $A_{1}$  \\
\hline
(1,2) & $\Spec \C \sqcup \Spec \C[\e]/(\e^{2})$ & colinear & $2$ & $A_{1}+A_{1}$  \\
\hline
(1,1) & $\Spec \C[\e]/(\e^{3})$ & colinear & $1$ & $A_{1}+A_{2}$  \\
\hline
\end{tabular}
\caption{Classification of Du Val sextic del Pezzo surfaces}
\end{table}

We refer to \cite{HW81,CT88,Fujita86} for more precise details. 

(2) Suppose that $S$ is not Du Val but rational. 
Let $\nu \colon \ol{S} \to S$ be the the normalization. 
Then $\ol{S}$ is a Hirzebruch surface and the complete linear system $|\nu^{\ast}\omega_{S}^{-1}|$ gives an embedding $\ol{S} \hra \P^{7}$ and $S \subset \P^{6}$ is the image of the projection from a point away from $\ol{S}$. 
Let $\mc C \subset \mc O_{\ol{S}}$ be the conductor of $\nu$ and $E=\Spec \mc O_{\ol{S}}/\mc C$. 
Then $(\ol{S},\nu^{\ast}\omega_{S}^{-1},E,E \to \nu(E))$ is one of the two cases in Table~3. 

\begin{table}[h]
\begin{tabular}{|c||c|c|c|c|}
\hline 
Type & $\ol{S}$ & $\nu^{\ast}\omega_{S}^{-1}$ & $E$ & $E \to \nu(E)$\\
\hline \hline 
(n2) & $\F_{2}$ & $h+2f$ & $E=C_{0}$  & double cover \\
\hline
(n4) & $\F_{4}$ & $h+f$ & $E=E_{1}+E_{2}$ where & $\nu(E)=\nu(E_{1})=\nu(E_{2})$ and  \\
&&&$E_{1}=C_{0}$ and $E_{2} \sim f$&$E_{i} \to \nu(E)$ is isomorphic \\
\hline
\end{tabular}
\caption{Classification of non-normal rational sextic del Pezzo surfaces}
\end{table}

For notation of $\F_{n}$, $h$, $f$ and $C_{0}$, we refer to Subsection~\ref{NC}~(5). 
We refer to \cite{Reid94,AF03} for more precise details. 

(3) When $S$ is neither Du Val nor rational, $S$ is the cone over a curve $C \subset \P^{5}$ of degree $6$ and arithmetic genus $1$. 
\end{thm}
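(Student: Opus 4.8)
The statement is a synthesis of the classifications obtained in \cite{HW81,CT88,Fujita86,Reid94,AF03}, so the plan is to organize the argument around the three mutually exclusive hypotheses (only Du Val singularities / not Du Val but rational / neither), and for each extract the relevant structure and case-specific data from those sources. I record below the skeleton I would follow.

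\textbf{Case (1).} Here $S$ is automatically normal, and the minimal resolution $\rho\colon\wt{S}\to S$ is crepant since the singularities are Du Val; hence $\wt{S}$ is a smooth rational surface with $-K_{\wt S}$ nef and big and $K_{\wt S}^{2}=6$, i.e.\ a weak del Pezzo surface of degree $6$. Any weak del Pezzo surface of degree $\leq 7$ admits a birational morphism to $\P^{2}$ obtained by successively contracting $(-1)$-curves, so $\wt S$ is the blow-up $\e\colon\wt S\to\P^{2}$ at a length-$3$ cluster $\Sigma$; conversely, blowing up such a $\Sigma$ produces a weak del Pezzo of degree $6$ exactly when no strict transform drops to self-intersection $\leq-3$. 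A short bookkeeping on the two invariants ``$\Sigma$ reduced, a chain of two, or a chain of three'' and ``$\Sigma$ colinear or not'' then leaves precisely the six rows of Table~2, the $(-2)$-curves of $\wt S$ (the strict transform of the line through $\Sigma$ when $\Sigma$ is colinear, together with the strict transforms of the successive exceptional curves in a chain) determine the Du Val type after contraction, and the $(-1)$-curves meeting no $(-2)$-curve push down to the lines on $S\subset\P^{6}$. The fine dictionary is \cite{HW81,CT88,Fujita86}.

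\textbf{Case (2).} Let $\nu\colon\ol S\to S$ be the normalization, $\mc C\subset\mc O_{\ol S}$ the conductor and $E=\Spec(\mc O_{\ol S}/\mc C)\neq 0$. The conductor formula $\omega_{\ol S}(E)=\nu^{\ast}\omega_{S}$ gives $\nu^{\ast}\omega_{S}^{-1}=-K_{\ol S}-E$, which is ample and big; since $\ol S$ is rational and $E$ is a nonzero effective divisor, $-K_{\ol S}$ is big, $\ol S$ is a smooth rational surface, and a cohomology computation along the conductor exact sequence gives $h^{0}(\ol S,\nu^{\ast}\omega_{S}^{-1})=8$. Thus $|\nu^{\ast}\omega_{S}^{-1}|$ realizes $\ol S$ as a non-degenerate surface of degree $(\nu^{\ast}\omega_{S}^{-1})^{2}=6$ in $\P^{7}$ (smaller ambient spaces are excluded using the very ampleness of $\omega_{S}^{-1}$ on $S\subset\P^{6}$), i.e.\ a surface of minimal degree. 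The classification of minimal-degree varieties then forces $\ol S\in\{\P^{1}\times\P^{1},\F_{2},\F_{4}\}$ with $\nu^{\ast}\omega_{S}^{-1}$ the corresponding scroll polarization; computing $E=-K_{\ol S}-\nu^{\ast}\omega_{S}^{-1}$ in each case excludes $\P^{1}\times\P^{1}$ (where $E$ is not effective) and yields $E=C_{0}$ on $\F_{2}$ and $E\sim C_{0}+f$ on $\F_{4}$. Finally, analysing the finite gluing morphism $E\to\nu(E)$ onto a rational curve distinguishes the two rows of Table~3, and $S$ is recovered as the image of the projection of $\ol S\subset\P^{7}$ from the point prescribed by the gluing; this is the content of \cite{Reid94,AF03}.

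\textbf{Case (3).} A Gorenstein del Pezzo surface with only rational singularities is rational, and Case (2) shows that every \emph{non-normal} Gorenstein del Pezzo surface is rational; hence an $S$ that is neither Du Val nor rational must be normal with a non-rational singularity, which by Gorensteinness and Laufer's classification is simple elliptic. By \cite{HW81} such an $S$ is the cone over its base elliptic curve polarized by the associated line bundle, and comparing self-intersections the polarization has degree $6$, so the base is an elliptic normal sextic curve in $\P^{5}$, proving (3). The step I expect to be the most delicate is Case (2): proving that $\ol S$ is smooth and pinning it down via the minimal-degree dichotomy, and then reconstructing the gluing datum $E\to\nu(E)$ correctly so that the resulting projected surface is genuinely Gorenstein of degree $6$; the other two cases are, respectively, a crepant-resolution bookkeeping and a citation.
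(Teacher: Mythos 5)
First, a point of context: the paper does not prove Theorem~\ref{thm-GordP6} at all — it is imported wholesale from \cite{HW81,CT88,Fujita86,Reid94,AF03}, with the reader referred there for details — so there is no internal argument to measure yours against; the only question is whether your reconstruction of the literature is sound. In outline it largely is, and you correctly identify Case (2) as the delicate point, but as written that case does not close. The inference ``$\ol S$ is rational with $-K_{\ol S}$ big, hence $\ol S$ is a \emph{smooth} rational surface'' is unjustified: the normalization of a non-normal surface is normal but need not be smooth. Concretely, let $C \subset \P^{5}$ be an irreducible nodal (or cuspidal) curve of degree $6$ and arithmetic genus $1$, and let $S$ be the cone over it. Since $\omega_{C} \cong \mc O_{C}$ and $C$ is projectively normal, $S$ is an irreducible Gorenstein del Pezzo surface with $\omega_{S} \cong \mc O_{S}(-1)$ and $(-K_{S})^{2}=6$; it is rational (a cone over a rational curve); it is not normal (it is singular along the ruling over $\Sing C$, so Serre's condition $R_{1}$ fails); and its normalization is the cone over the rational normal sextic — not a Hirzebruch surface. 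Such an $S$ satisfies the hypothesis of your Case (2) but not its conclusion, and your minimal-degree argument would land on the singular scroll of type $(0,6)$ rather than on $\F_{2}$ or $\F_{4}$. The fix, which is how \cite{Reid94} actually proceeds, is to split off the cone case \emph{first} — so that case (3) of the theorem is read as ``all cones over irreducible curves of degree $6$ and arithmetic genus $1$'', whether or not the base curve (hence $S$) is rational; this is also how the paper uses the trichotomy later, cf.\ Remark~\ref{rem-nocone} and Theorem~\ref{mainthm-ex} — and only then run the smoothness and minimal-degree argument on the non-cones. Correspondingly, your Case (3) produces only the cones over \emph{smooth} elliptic curves, whereas the statement needs arithmetic genus $1$.

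A smaller but concrete slip in Case (1): the lines of $S \subset \P^{6}$ are the images of \emph{all} irreducible $(-1)$-curves of $\wt{S}$, not only of those disjoint from the $(-2)$-curves; an irreducible curve $\ell$ with $-K_{\wt S}\cdot \ell =1$ maps to a degree-$1$ curve whether or not it passes through a singular point of $S$. Your rule already gives the wrong count for type $(1,3)$: there the three exceptional curves $e_{1},e_{2},e_{3}$ each meet the unique $(-2)$-curve in the class $h-e_{1}-e_{2}-e_{3}$, yet they account for the $3$ lines in Table~2, while your criterion would yield $0$. The rest of Case (1) — crepancy of the minimal resolution, the weak del Pezzo structure, the enumeration of the six clusters, and the Du Val types read off from the $(-2)$-configurations — is correct.
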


\begin{rem}\label{rem-nocone}
Let $S$ be the cone over a curve $C \subset \P^{6}$ of degree $6$ and arithmetic genus $1$. 
Then we have $\dim T_{v}S=6$ where $v$ is the vertex. 
Hence for any sextic del Pezzo fibration $\vp \colon X \to C$, $X$ does not contains such an $S$ since we assume that the total space $X$ is smooth. 
In other words, every $\vp$-fiber is of type ($i$,$j$), (n2) or (n4) as in Theorem~\ref{thm-GordP6}. 
\end{rem}

\subsection{The aim and main results of Section~\ref{sec-singfib}}

The main aim of this section is to prove Theorem~\ref{mainthm-singfib} and Corollary~\ref{maincor-inv}.  
For this purpose, we will show the following three theorems, which are the main results of this section.

\begin{thm}[{\cite[(4.6)]{Fujita90}}]\label{thm-singfibP22}
Let $q \colon Q \to C$ and $T$ be as in the setting of Theorem~\ref{thm-constP22}. 
Let $\vp_{Z} \colon Z \to C$ be the moderate $(\P^{2})^{2}$-fibration that is obtained by Theorem~\ref{thm-constP22}. 
Then the following assertions hold for any $t \in C$. 
\begin{enumerate}
\item $Q_{t}$ is smooth if and only if $Z_{t} \simeq (\P^{2})^{2}$. 
\item $Q_{t}$ is singular if and only if $Z_{t} \simeq \P^{2,2}$. 
\end{enumerate}
For the definition of $\P^{2,2}$, we refer to Definition \ref{defi-22}. 
\end{thm}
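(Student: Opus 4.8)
The plan is to identify $Z_t$ abstractly as a Gorenstein del Pezzo fourfold of degree $6$, and then to decide which of the two such fourfolds it is by restricting the construction of Theorem~\ref{thm-constP22} to the fibre over $t$.

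Since $Z$ is smooth and $\vp_Z$ is flat over the smooth curve $C$, the fibre $Z_t$ is a local complete intersection, hence Gorenstein, and $K_{Z_t}=K_Z|_{Z_t}$. Because $\rho(Z/C)=1$ and $-K_Z$ is $\vp_Z$-ample, Theorem~\ref{thm-constP22}~(3) shows that $\xi_Z$ is $\vp_Z$-ample; hence $-K_{Z_t}=3\xi_Z|_{Z_t}$ is ample and $(\xi_Z|_{Z_t})^4=\xi_Z^4\cdot\vp_Z^{-1}(t)=6$, the last equality being the one computed in the proof of Theorem~\ref{thm-constP22}~(4). Thus $Z_t$ is a Gorenstein del Pezzo fourfold of degree $6$, so $Z_t\simeq(\P^2)^2$ or $Z_t\simeq\P^{2,2}$. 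As $(\P^2)^2$ is smooth and $\P^{2,2}$ is singular (Definition~\ref{defi-22}), and as $Q_t$ is either smooth or singular, it suffices to prove the two implications: $Q_t$ smooth $\Rightarrow Z_t\simeq(\P^2)^2$, and $Q_t$ singular $\Rightarrow Z_t\simeq\P^{2,2}$.

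Next I restrict the diagram (\ref{dia-P22}) over the point $t$. Since $r_{\mc F\ast}\mc O(k\xi_{\mc F})=q_\ast\Sym^k\mc F$ is locally free and commutes with base change, $\ol{Z}_t$ is the image of the $\P^2$-bundle $\P_{Q_t}(\mc F|_{Q_t})$ under $|\xi_{\mc F}|$, and $\psi_{\mc F}$ restricts over $t$ to this morphism. The flopping curves of $\psi_{\mc F}$ lie over $S$, hence inside the fibres of $\P_Q(\mc F)\to C$, so $\wt{Z}_t$ is obtained from $\P_{Q_t}(\mc F|_{Q_t})$ by the fibre of the family of Atiyah flops, i.e.\ by flopping the $\P^1$'s over $S_t$, and $Z_t$ is obtained from $\wt{Z}_t$ by contracting $E_{\wt{Z},t}\simeq\P^3$ to the point $\mu_Z(E_{\wt{Z},t})$. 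By Theorem~\ref{thm-constP22}~(1) we have $\mc F|_{Q_t}\simeq\ms S_{Q_t}\oplus\mc O_{Q_t}(1)$, so through this construction $Z_t$ depends only on the quadric surface $Q_t$.

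If $Q_t$ is smooth, then $\P_{Q_t}(\mc F|_{Q_t})$ is smooth, flopping a family of Atiyah $\P^1$'s keeps $\wt{Z}_t$ smooth, and contracting $E_{\wt{Z},t}\simeq\P^3$ with normal bundle $\mc O_{\P^3}(-1)$ inside the smooth fourfold $\wt{Z}_t$ is the inverse of blowing up a smooth point; hence $Z_t$ is smooth and therefore $Z_t\simeq(\P^2)^2$ (this is precisely the construction of $(\P^2)^2$ recalled in the diagram (\ref{dia-genP22}) of Subsection~\ref{subsec-doubleprojP22}, and also follows from Theorem~\ref{thm-constP22}~(4)). If $Q_t$ is the quadric cone $\Q^2_0$, the same construction applied to $\Q^2_0$ with $\ms S_{\Q^2_0}$ the non-split extension of Lemma~\ref{lem-Spi0}~(2) is, by Definition~\ref{defi-22}, the one producing $\P^{2,2}$, so $Z_t\simeq\P^{2,2}$. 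By the reduction of the second paragraph this also yields the two converse implications. The point I expect to be the real obstacle is the fibrewise restriction just used when $Q_t$ is singular: there $\P_{Q_t}(\mc F|_{Q_t})$ is singular along the $\P^2$ over the vertex of $Q_t$ and $S_t$ is a non-reduced line, so one must check carefully that the relative flop $\Psi$ and the relative contraction $\mu_Z$ restrict as claimed and that this singularity is not smoothed out, i.e.\ that $Z_t$ is the singular fourfold $\P^{2,2}$ and not $(\P^2)^2$.
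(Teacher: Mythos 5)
Your reduction to the two implications and your treatment of the smooth case are fine and essentially coincide with the paper's argument (which notes that $\Psi$ is a family of Atiyah flops over $C$, so $Q_{t}$ is smooth if and only if $Z_{t}$ is, and then invokes Theorem~\ref{thm-constP22}~(4)). The gap is in the singular case. You claim that the construction ``flop $\P_{\Q^{2}_{0}}(\ms S_{\Q^{2}_{0}}\oplus\mc O_{\Q^{2}_{0}}(1))$ along the curves over $S_{t}$ and contract the resulting $\P^{3}$'' produces $\P^{2,2}$ ``by Definition~\ref{defi-22}''. But Definition~\ref{defi-22} defines $\P^{2,2}$ as the anticanonical image of the $\P^{2}$-bundle $\P_{\P^{2}}(\mc O_{\P^{2}}(2)\oplus\Omega_{\P^{2}}(2))$ over $\P^{2}$; it says nothing about a construction starting from the quadric cone. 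Identifying the output of your fibrewise construction with that variety is precisely the nontrivial content that is missing, and it is exactly what the paper outsources to Fujita's result \cite[(4.6)]{Fujita90} (note that the theorem is attributed to Fujita in its header). Without either that citation or an explicit computation (e.g.\ matching the anticanonical model of the flop of $\P_{\Q^{2}_{0}}(\ms S_{\Q^{2}_{0}}\oplus\mc O_{\Q^{2}_{0}}(1))$ with the description of $\P^{2,2}$ as a cubic hypersurface in $\P(1^{3},2^{3})$ from Lemma~\ref{lem-P22}), your proof of assertion (2) does not close.

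Two smaller points. First, the assertion in your second paragraph that a Gorenstein del Pezzo fourfold of degree $6$ must be $(\P^{2})^{2}$ or $\P^{2,2}$ is itself an unproved classification statement (a priori one must also exclude, say, cones over degree-$6$ del Pezzo threefolds and non-normal degenerations); fortunately it is not load-bearing once you reduce to the two implications, so it should be deleted or attributed to Fujita. Second, the ``real obstacle'' you flag at the end --- that the relative flop and the contraction restrict fibrewise as claimed --- is essentially already supplied by Theorem~\ref{thm-constP22}~(2) (the flop is a family of Atiyah flops over $C$ and $\mu_{Z}$ blows a $\P^{3}$-bundle down to a section), and the paper uses exactly this to say that the blow-up of $Z_{t}$ at a smooth point is the flop of $\P_{Q_{t}}(\mc F|_{Q_{t}})$; the place where your argument actually fails is the identification of the singular fiber with $\P^{2,2}$, not the restriction.
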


\begin{thm}\label{thm-singfibP13}
Let $f \colon \F \to C$ and $T \subset \F$ be as in the setting of Theorem~\ref{thm-constP13}. 
Let $\vp_{Y} \colon Y \to C$ be the moderate $(\P^{1})^{3}$-fibration which is obtained by Theorem~\ref{thm-constP13}. 
Then the following assertions hold for any $t \in C$. 
\begin{enumerate}
\item $Y_{t} \simeq (\P^{1})^{3}$ if and only if $\#(T_{t})_{\red}=3$. 
\item $Y_{t} \simeq \P^{1} \times \Q^{2}_{0}$ if and only if $\#(T_{t})_{\red}=2$, where $\Q^{2}_{0}$ denotes the $2$-dimensional singular quadric cone. 
\item $Y_{t} \simeq \P^{1,1,1}$ if and only if $\#(T_{t})_{\red}=1$. 
\end{enumerate}
For the definition of $\P^{1,1,1}$, we refer to Definition \ref{defi-111}. 
\end{thm}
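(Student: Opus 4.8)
The plan is to reduce the statement to a fibrewise computation and then to run the birational construction of Theorem~\ref{thm-constP13} explicitly for each of the three possible local shapes of the trisection $T$. Fix $t\in C$. Since $T$ is smooth and $f|_{T}\colon T\to C$ is finite and flat, blowing up is compatible with restriction over $t$, so $\sigma_{\F}$ restricts to the blow-up $\wt{\F}_{t}=\Bl_{T_{t}}\F_{t}$ of $\F_{t}=\P^{3}$ along the length-$3$ subscheme $T_{t}$, which spans the plane $\E_{t}=\P^{2}$; by the relative cohomology vanishings used in the proof of Theorem~\ref{thm-constP13} the contraction $\psi_{\F}$ is formed fibrewise, and since $\Phi$ is a family of Atiyah flops over $C$ whose flopping locus is a $\P^{1}$-bundle over the curve $\ol{T}$ (which is finite over $C$), the flop commutes with restriction to $\wt{\F}_{t}$; finally $\wt{\E}^{+}_{t}\simeq\P^{2}$ has normal bundle $\mc O_{\P^{2}}(-1)$ in $\wt{\F}^{+}_{t}$, so $\mu_{\F}$ is fibrewise the contraction of a plane to a point. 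Hence $Y_{t}$ is the $3$-fold obtained from $\P^{3}$ by blowing up $T_{t}$, flopping the curves contracted by $\psi_{\F}$ over $t$ (the exceptional curves of the fibrewise map $\wt{\E}_{t}=\Bl_{T_{t}}\P^{2}\to\E_{t}$), and contracting the proper transform of the plane $\langle T_{t}\rangle$; in particular $Y_{t}$ depends only on the isomorphism class of $(\P^{2},T_{t})$.

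Next I would use that, since $T$ is a smooth curve, $T_{t}$ is a disjoint union of curvilinear subschemes, so a non-colinear such subscheme of $\P^{2}$ of length $3$ is, up to $\mathrm{PGL}_{3}$, of exactly one of three types, distinguished by $\#(T_{t})_{\mathrm{red}}$: three distinct points ($\#=3$), a reduced point together with a curvilinear length-$2$ scheme ($\#=2$), or a curvilinear length-$3$ scheme ($\#=1$); these correspond to the three non-colinear rows of Table~2 via Theorem~\ref{thm-GordP6}~(1). I would then treat the three cases separately. For $\#(T_{t})_{\mathrm{red}}=3$ the construction is the classical double projection of diagram~(\ref{dia-P13gen}); $Y_{t}$ is a smooth Gorenstein del Pezzo $3$-fold of degree $6$ with $\rho(Y_{t})=3$, hence $Y_{t}\simeq(\P^{1})^{3}$ exactly as in the proof of Theorem~\ref{thm-constP13}~(4). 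For $\#(T_{t})_{\mathrm{red}}=2$ and $\#(T_{t})_{\mathrm{red}}=1$ I would carry out the same chain of blow-up, flop and blow-down in affine coordinates on $\P^{3}$ adapted to the degenerate $T_{t}$, and identify the outcome with $\P^{1}\times\Q^{2}_{0}$, respectively with $\P^{1,1,1}$ of Definition~\ref{defi-111}. Here it helps to carry along the class $A_{t}:=G_{Y}|_{Y_{t}}$, which by Theorem~\ref{thm-constP13}~(3) satisfies $-K_{Y_{t}}=2A_{t}$ and, by flatness of $\vp_{Y}$, has $A_{t}^{3}=6$ for every $t$: so $(Y_{t},A_{t})$ is always a polarised Gorenstein del Pezzo $3$-fold of degree $6$, and the identification in the degenerate cases can be pinned down by exhibiting on $Y_{t}$ the expected contraction structure (a $\P^{1}$-bundle over $\Q^{2}_{0}$ in the second case, the contraction defining $\P^{1,1,1}$ in the third).

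Finally, since the three types of $T_{t}$ are exhaustive and $(\P^{1})^{3}$, $\P^{1}\times\Q^{2}_{0}$ and $\P^{1,1,1}$ are pairwise non-isomorphic (for instance their singular loci are empty, a $\P^{1}$, and that of $\P^{1,1,1}$ respectively), each of the three ``if'' implications forces the corresponding ``only if'' implication, completing the proof. The main obstacle is the explicit birational geometry of the two degenerate cases: one must track how the curves through $T_{t}$ and the plane $\langle T_{t}\rangle$ behave under the blow-up and the Atiyah flop when $T_{t}$ is non-reduced, and verify that the family flop $\Phi$ really does restrict over such $t$ to the expected $3$-fold flop. The reduced case and the numerical bookkeeping are routine.
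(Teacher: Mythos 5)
Your overall strategy is the paper's: reduce to the purely fibrewise statement that $Y_{t}$ is obtained from $(\P^{3},T_{t})$ by blow-up, flop, and contraction of the plane $\braket{T_{t}}$ (this is exactly Proposition~\ref{prop-singfibP13}), note that $T_{t}$ is curvilinear of one of three types distinguished by $\#(T_{t})_{\red}$, and treat the types separately. The reduced case and the exhaustiveness-plus-non-isomorphism argument at the end are fine. But the two degenerate cases, which you defer to ``the same chain of blow-up, flop and blow-down in affine coordinates,'' are the entire content of the theorem, and the one concrete structural claim you make about that chain is wrong: the curves flopped by $\Phi$ over $t$ are \emph{not} the exceptional curves of $\wt{\E}_{t}=\Bl_{T_{t}}\P^{2}\to\E_{t}$ (those have $L_{\wt{\F}}$-degree $1$), but the $L_{\wt{\F}}$-trivial curves, i.e.\ the proper transforms of the lines spanned by the length-$2$ subschemes of $T_{t}$. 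Moreover, when $T_{t}$ is non-reduced the fibrewise modification is not a single simultaneous Atiyah flop: in the paper's proof of Proposition~\ref{prop-singfibP13}~(2) (resp.\ (3)) it is a composition of two (resp.\ three) flops, because some of the flopping curves are $(-2)$-curves on the plane $P$ that only acquire normal bundle $\mc O(-1)^{\oplus 2}$ after the earlier flops have been performed. A coordinate computation set up with the wrong flopping locus would not close, and one must also first verify (as the paper does via the crepant resolution $M\to V=\Bl_{T_{t}}\P^{3}$ by successive point blow-ups) that $V$ is weak Fano with $-K_{V}$ divisible by $2$ and that the anticanonical morphism is small.

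Your proposed shortcut for the identification---that $(Y_{t},A_{t})$ is a polarized Gorenstein del Pezzo $3$-fold of degree $6$, so one only needs to exhibit ``the expected contraction structure''---does not avoid this work: Fujita's degree-$6$ list contains several members besides the three target varieties, and exhibiting the relevant $\P^{1}$-bundle structure on (the crepant resolution $W$ of) $Y_{t}$ is precisely what Steps~3--4 of the paper's proof do, by producing a base-point-free pencil $|D_{W}|$ from the planes through the tangent line of $T_{t}$, showing every member is $\F_{2}$, factoring the resulting fibration through a conic bundle, killing the discriminant by an Euler-number count, and then identifying the rank-$2$ bundle on $\F_{2}$ (resp.\ $(\P^{1})^{2}$) with the one defining $\wt{\P^{1,1,1}}$ (resp.\ $\F_{2}\times\P^{1}$). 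None of this is routine, and it is exactly where the distinction between $\P^{1}\times\Q^{2}_{0}$ and $\P^{1,1,1}$ is made; as it stands your proposal asserts the answer in the degenerate cases rather than proving it.
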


\begin{thm}\label{thm-singfibdP6}
Let $\vp \colon X \to C$ be a sextic del Pezzo fibration and $C_{0}$ be an arbitrary $\vp$-section. 
Let $(q \colon Q \to C,T)$ be the relative double projection of $(\vp \colon X \to C,C_{0})$.  
Then the following assertions hold for any $t \in C$. 
\begin{enumerate}
\item For $j \in \{1,2,3\}$, $X_{t}$ is of type $(2,j)$ if and only if $Q_{t}$ is smooth and $\#(T_{t})_{\red}=j$. 
\item For $j \in \{1,2,3\}$, $X_{t}$ is of type $(1,j)$ if and only if $Q_{t}$ is singular, $\#(T_{t})_{\red}=j$ and $\Sing Q_{t} \cap T_{t}=\emp$. 
\item $X_{t}$ is of type (n2) if and only if $Q_{t}$ is singular, $\# (T_{t})_{\red}=2$ and the double point of $T_{t}$ is supported at the vertex of $Q_{t}$. 
\item $X_{t}$ is of type (n4) if and only if $Q_{t}$ is singular, $\# (T_{t})_{\red}=1$ and $\Sing Q_{t}=(T_{t})_{\red}$. 
\end{enumerate}
\end{thm}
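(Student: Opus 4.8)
The plan is to run a fibre-by-fibre analysis of the relative double projection \eqref{dia-2ray68}, reading off the isomorphism class of $X_t$ from the pair $(Q_t,\,T_t\subset Q_t)$ by reversing the double projection $S\dra\Q^2$ of Subsection~\ref{subsec-doubleprojdP6}, and then matching the outcome with Theorem~\ref{thm-GordP6}. Since each condition on the right-hand side of (1)--(4) is mutually exclusive of the others, and since (as the analysis will show, together with Remark~\ref{rem-nocone}) the right-hand conditions are exhaustive, it suffices to establish all the ``only if'' implications; the ``if'' directions then follow automatically.

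First I would record the fibrewise data. Since $C_0$ is a $\vp$-section the differential of $\vp$ is non-zero along $C_0$, so $p:=C_0\cap X_t$ is a smooth point of $X_t$ and $\wt X_t=\Bl_pX_t$. Because $T$ is a smooth curve, flat over $C$ with $\deg(q|_T)=3$, the scheme $T_t=T\cap Q_t$ is a curvilinear subscheme of length $3$ and $(\Bl_TQ)_t=\Bl_{T_t}Q_t=:\wt Q_t$; moreover $T_t$ is non-colinear in the fibre $\P^3$, since a line through $T_t$ would lie on $Q_t$ and meet $-K_{\wt Q}$ negatively, contradicting Proposition~\ref{prop-2ray68}~(2). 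Also $Q_t$, being an irreducible reduced quadric surface in $\P^3$, is either smooth or a quadric cone, hence normal in either case. Restricting \eqref{dia-2ray68} over $t$: every curve contracted by $\psi_X$ (resp.\ $\psi_Q$) is sent to a point of $C$ and hence lies in one fibre, where a normal bundle computation shows it is a $(-2)$-curve; therefore $\wt X_t$ and $\wt Q_t$ have the same minimal resolution $\wh S_t$ and the same configuration of $(-2)$-curves. Reversing the transformation of Subsection~\ref{subsec-doubleprojdP6} on $\wh S_t$ then recovers $X_t$ as the anti-canonical model of the surface obtained from $\wt Q_t$ by contracting the strict transform of $D_t$, the fibre of the divisor $E_Q$ of Proposition~\ref{prop-2ray68}~(5), which is a hyperplane section of $Q_t$ containing $T_t$ (the unique smooth conic through $T_t$ for general $t$).

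Next I would go through the cases, governed by whether $Q_t$ is smooth or a cone and by $\#(T_t)_{\mathrm{red}}$. If $Q_t$ is smooth, then (working on the minimal resolution $\F_0\to Q_t$) the reconstruction turns $T_t$ into a length-$3$ subscheme $\Sigma\subset\P^2$ which is non-colinear with $\#\Sigma_{\mathrm{red}}=\#(T_t)_{\mathrm{red}}$, so Table~2 of Theorem~\ref{thm-GordP6} gives part~(1). If $Q_t$ is a quadric cone with vertex $v$ and $v\notin\Supp T_t$, then the extra $(-2)$-curve produced by the minimal resolution $\F_2\to Q_t$ makes $\Sigma$ colinear, still with $\#\Sigma_{\mathrm{red}}=\#(T_t)_{\mathrm{red}}$, which yields part~(2). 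There remains the case $v\in\Supp T_t$; here one first shows $\mathrm{mult}_v T_t=1$ is impossible, because the conic $D_t\supseteq T_t\ni v$ would then be a hyperplane section of $Q_t$ through $v$, hence a pair of rulings or a double line, so its strict transform on $\wt Q_t$---which ought to be the $(-1)$-curve reversing $\Bl_p$---would be reducible or non-reduced, a contradiction. Hence $\mathrm{mult}_v T_t\in\{2,3\}$, so $\#(T_t)_{\mathrm{red}}\in\{2,1\}$ respectively, $D_t$ degenerates, and the reconstructed $X_t$ is non-normal.

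The genuine obstacle is pinning down the two non-normal cases. For this I would perform a local computation near $v$ of the blow-up of the smooth $3$-fold $Q$ along the smooth curve $T$ at a point where $Q_t$ is nodal, following the strict transform of the fibre $\s^{-1}(v)\cong\P^1$ of $\Exc(\s)\to T$ through the flop $\chi$ and the contraction $\mu$, and show that the normalization of $X_t$ is the Hirzebruch surface $\F_{2(m-1)}$ with $m=\mathrm{mult}_v T_t$, together with the conductor data of Table~3 of Theorem~\ref{thm-GordP6}; a bookkeeping of $-K$, of the degree, and of the arithmetic genus of the non-normal locus then identifies $m=2$ with type (n2) and $m=3$ with type (n4), proving (3) and (4). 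Finally, combining (1)--(4) with the exhaustiveness of the right-hand conditions (established by the ``$\mathrm{mult}_v T_t=1$ is impossible'' step above) and with Remark~\ref{rem-nocone}, each of the four equivalences follows from the ``only if'' directions just proved.
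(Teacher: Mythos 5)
Your strategy --- reversing the double projection fibre by fibre on the surfaces $(Q_{t},T_{t})$ and matching the outcome with Theorem~\ref{thm-GordP6} --- is genuinely different from the paper's, which instead deduces the theorem from the two ambient extensions: the first index of the type is read off from whether $Z_{t}$ is $(\P^{2})^{2}$ or $\P^{2,2}$ via the cohomological computation of $h^{0}(\mc I_{\Sigma}(1))$ in Lemma~\ref{lem-linsecP22}, while the second index and the non-normal types come from the explicit birational analysis of $Y_{t}$ and its hyperplane sections in Propositions~\ref{prop-singfibP13} and~\ref{prop-hypsecP13}. Your route, however, has a genuine gap at the step on which the whole deduction rests.

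The gap is the exclusion of $\mathrm{mult}_{v}T_{t}=1$. You argue that the strict transform of $D_{t}=E_{Q}\cap Q_{t}$ on $\wt{Q}_{t}$ ``ought to be the $(-1)$-curve reversing $\Bl_{p}$'' and hence cannot be reducible or non-reduced. But that irreducible curve is $E\cap\wt{X}_{t}\cong\P^{1}$, which lives on $\wt{X}_{t}$, and $\wt{X}_{t}$ and $\wt{Q}_{t}$ sit on opposite sides of the flop $\chi$, whose flopping curves are \emph{divisors} inside the surface fibres; the fibre of $E_{\wt{Q}}\to C$ over $t$ can therefore gain or lose components relative to $E\cap\wt{X}_{t}$, so a degenerate $D_{t}$ is not a contradiction. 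This degeneration actually occurs in configurations the theorem allows: if two points of a reduced $T_{t}$ lie on a ruling $\ell$ of $Q_{t}$, then $D_{t}\cdot\ell=1$ forces $\ell\subset D_{t}$, so $D_{t}$ is a pair of rulings, yet $X_{t}$ is still of type $(2,3)$ (resp.\ $(1,3)$) --- the flop is exactly what reconciles the reducible $D_{t}$ with the irreducible exceptional curve of $\Bl_{p}X_{t}$. Your argument would thus ``exclude'' legitimate cases and does not establish that $v$ cannot be a reduced point of $T_{t}$; since you derive all the biconditionals from exhaustiveness of the right-hand conditions, the gap propagates to every part of the statement. Beyond this, the two assertions that carry the real content are left unproved: that colinearity of $\Sigma$ is governed by the smoothness of $Q_{t}$ (your ``same minimal resolution'' argument must track the flop through precisely the degenerate configurations above before it can locate $\Sigma$ in $\P^{2}$), and the identification of the normalization and conductor in the non-normal cases, which you defer to an unexecuted local computation --- this is where the paper's Cases~1 in the proofs of Proposition~\ref{prop-hypsecP13}~(2) and~(3) do the work via Lemma~\ref{lem-irr} on $\P_{\F_{2}}(\mc E)$.
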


\begin{proof}[Proof of Theorem~\ref{mainthm-singfib} assuming Theorems~\ref{thm-singfibP22}, \ref{thm-singfibP13} and \ref{thm-singfibdP6}] 
Fix a $\vp$-section $C_{0}$ and let $(q \colon Q \to C,T)$ be the relative double projection.  Then Theorem~\ref{mainthm-singfib} immediately follows from Theorems~\ref{thm-singfibP22}, \ref{thm-singfibP13} and \ref{thm-singfibdP6} since for any $t \in C$, $\#(B_{t})_{\red}=2$ (resp. $1$) if and only if $Q_{t}$ is smooth (resp. the quadric cone) by definition. 
\end{proof}


Corollary~\ref{maincor-inv} follows directly from Theorems~\ref{mainthm-inv}~and~\ref{mainthm-singfib}. 

\begin{proof}[Proof of Corollary~\ref{maincor-inv}]
The items (1) and (2) follow immediately from Theorem~\ref{mainthm-inv}~(3). 
To show (3), we compute $(-K_{X/C})^{3}=(-K_{X}+\vp^{\ast}K_{C})^{3}$. 
By Theorem~\ref{mainthm-inv}~(3), we have 
$(-K_{X}+\vp^{\ast}K_{C})^{3}
=(-K_{X})^{3}+3(-K_{X})^{2}(\vp^{\ast}K_{C})
=24g(C)-(6g(B)+4g(T)+14)$. 
By the Hurwitz formula, when $R_{B}$ and $R_{T}$ denote the ramification divisor of $\vp_{B}$ and $\vp_{T}$ respectively, we have 
$\deg R_{B}=2g(B)+2-4g(C)$ and $\deg R_{T}=2g(T)+4-6g(C)$. 
Thus we have $(-K_{X/C})^{3}=-(3\deg R_{B}+2\deg R_{T}) \leq 0$. 
Hence $(-K_{X/C})^{3}=0$ if and only if $\deg R_{B}=\deg R_{T}=0$, which is equivalent to that $\vp_{B}$ and $\vp_{T}$ are \'etale. 
By Theorem~\ref{mainthm-singfib}, it is equivalent to that $\vp$ is smooth, which completes the proof. 
\end{proof}

\subsection{Definition of $\P^{2,2}$ and proof of Theorem~\ref{thm-singfibP22}}
Now our aim is reduced to prove Theorems~\ref{thm-singfibP22}, \ref{thm-singfibP13} and \ref{thm-singfibdP6}. 
In this subsection, we define $\P^{2,2}$ and prove Theorem~\ref{thm-singfibP22}. 

\begin{defi}[\cite{Mukai-lect}]\label{defi-22}
We define 
\[\wt{\P^{2,2}}:=\P_{\P^{2}}(\mc O_{\P^{2}}(2) \oplus \Omega_{\P^{2}}(2)).\]
Then the tautological divisor $\xi$ is free and the linear system $|\xi|$ gives a morphism $\wt{\P^{2,2}} \to \P^{8}$ since $h^{0}(\P^{2},\mc O_{\P^{2}}(2) \oplus \Omega_{\P^{2}}(2))=9$. 
We define $\P^{2,2}$ as the image of the morphism and $\psi \colon \wt{\P^{2,2}} \to \P^{2,2}$. 
\end{defi}

\begin{rem}
The notation $\P^{2,2}$ was introduced by Mukai in his talk \cite{Mukai-lect}. 
\end{rem}

\begin{lem}[{\cite{Fujita86}, \cite{Mukai-lect}}]\label{lem-P22}
$\P^{2,2}$ is the del Pezzo variety $V$ of type (vu) in \cite{Fujita86}. 
Moreover, the following assertions hold. 
\begin{enumerate}
\item $\wt{\P^{2,2}}$ is a weak Fano $4$-fold and $\P^{2,2}$ is its anti-canonical model. The morphism $\psi \colon \wt{\P^{2,2}} \to \P^{2,2}$ is a crepant divisorial contraction and its exceptional divisor $\Exc(\psi)$ is the unique member of $|\xi-\pi^{\ast}\mc O_{\P^{2}}(2)|$. 
\item The embedding $\P^{2,2} \hra \P^{8}$ factors through the weighted projective space $\P(1^{3},2^{3}) \subset \P^{8}$.  In $\P(1^{3},2^{3})$, $\P^{2,2}$ is a cubic hypersurface. 
Moreover, $\Sing (\P^{2,2})=\Sing(\P(1^{3},2^{3}))$ and the singularity is a family of Du Val $A_{1}$-singularities. 
\end{enumerate}
\end{lem}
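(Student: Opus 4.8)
The plan is to extract the structure of $\wt{\P^{2,2}}=\P_{\P^{2}}(\mc E)$, $\mc E=\mc O_{\P^{2}}(2)\oplus\Omega_{\P^{2}}(2)$, directly from the projective bundle, and then to identify the contracted model $\P^{2,2}$ with Fujita's del Pezzo variety of type (vu) so as to borrow \cite{Fujita86} for any residual bookkeeping. First I would check that $\wt{\P^{2,2}}$ is a weak Fano $4$-fold with $-K_{\wt{\P^{2,2}}}=3\xi$. Since $\det\Omega_{\P^{2}}(2)=\mc O_{\P^{2}}(1)$ we get $\det\mc E=\mc O_{\P^{2}}(3)=\omega_{\P^{2}}^{-1}$, so the relative canonical bundle formula for a $\P^{2}$-bundle gives $-K_{\wt{\P^{2,2}}}=3\xi-\pi^{\ast}(K_{\P^{2}}+\det\mc E)=3\xi$. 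As $\Omega_{\P^{2}}(2)\cong T_{\P^{2}}(-1)$ is a quotient of $\mc O_{\P^{2}}^{\oplus 3}$ by the Euler sequence, $\mc E$ is globally generated, hence nef, so $\xi$ is nef; and with $c_{1}(\mc E)=3H$, $c_{2}(\mc E)=3H^{2}$ (here $H$ is a line on $\P^{2}$) the Grothendieck relation yields $\xi^{4}=6>0$, so $\xi$ is nef and big. Since $|\xi|$ is base point free and $-K_{\wt{\P^{2,2}}}=3\xi$, the induced morphism $\psi$ is a $K$-trivial, hence crepant, extremal contraction; thus $\P^{2,2}$ is Gorenstein with canonical singularities, and as $|\xi|$ already realizes this contraction it is the anti-canonical model of $\wt{\P^{2,2}}$ (normality of $\P^{2,2}$ following from $\psi$ being birational with connected fibres, checked next).

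Next I would determine $\Exc(\psi)$. Because $h^{0}(\mc E(-2))=h^{0}(\mc O_{\P^{2}}\oplus\Omega_{\P^{2}})=1$, the class $\xi-\pi^{\ast}\mc O_{\P^{2}}(2)$ has a unique effective member $D$; explicitly $D=\P_{\P^{2}}(\Omega_{\P^{2}}(2))$, obtained from the quotient $\mc E\epm\Omega_{\P^{2}}(2)$, and $D$ is the flag variety $\Fl=\{(p,\ell)\in\P^{2}\times(\P^{2})^{\vee}:p\in\ell\}$, with $\pi|_{D}$ one of its two projections. From $\xi\sim D+\pi^{\ast}\mc O_{\P^{2}}(2)$: a curve $\Gamma$ with $\xi\cdot\Gamma=0$ has $\pi^{\ast}\mc O_{\P^{2}}(2)\cdot\Gamma>0$ (a curve contained in a $\pi$-fibre $\P^{2}$ meets the very ample $\xi$ positively), hence $D\cdot\Gamma<0$ and $\Gamma\subset D$; conversely $(\xi|_{D})^{3}=-H^{3}=0$, so $\psi|_{D}$ is not generically finite. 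Therefore $\Exc(\psi)=D$. Moreover the summand $\mc O_{\P^{2}}(2)$ separates the fibres of $\pi$ and $\xi$ restricts to a very ample class on each fibre, so $\psi$ is injective off $D$, hence birational, and $\psi|_{D}$ is the other projection $\Fl\to(\P^{2})^{\vee}$. This proves (1).

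For (2) I would use the decomposition $H^{0}(\xi)=H^{0}(\mc O_{\P^{2}}(2))\oplus H^{0}(\Omega_{\P^{2}}(2))$: the subsystem spanned by $H^{0}(\mc O_{\P^{2}}(2))$ has fixed divisor $D$ and moving part $\pi^{\ast}|\mc O_{\P^{2}}(2)|$, so projecting $\P^{8}$ onto those coordinates and restricting to $\P^{2,2}$ recovers $v_{2}\circ\pi$, with $v_{2}\colon\P^{2}\hookrightarrow\P^{5}$ the Veronese embedding. Hence $\P^{2,2}$ lies in the join of $v_{2}(\P^{2})$ with the complementary coordinate $\P^{2}$, which is exactly the image of $\P(1^{3},2^{3})$ under $|\mc O(2)|$; as $\dim\P^{2,2}=4$ it is a hypersurface in $\P(1^{3},2^{3})$, and comparing $\xi^{4}=6$ with $\mc O_{\P(1^{3},2^{3})}(2)^{4}\cdot\mc O(d)=2d$ (as $\Q$-intersection numbers) forces $d=3$, so $\P^{2,2}$ is a cubic. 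Since $\psi$ is a crepant divisorial contraction from a smooth fourfold, $\P^{2,2}$ is singular precisely along $\psi(D)$, which coincides with the coordinate $\P^{2}=\Sing\P(1^{3},2^{3})$ (on which the cubic vanishes), giving $\Sing\P^{2,2}=\Sing\P(1^{3},2^{3})$. Finally, via $D\cong\Fl$ the map $\psi|_{D}$ is the $\P^{1}$-bundle $\pr_{2}\colon\Fl\to(\P^{2})^{\vee}$, and each fibre $l$ has $\mc N_{l/D}\cong\mc O_{l}^{\oplus 2}$ and $\mc N_{D/\wt{\P^{2,2}}}|_{l}\cong\mc O_{l}(-2)$, whence $\mc N_{l/\wt{\P^{2,2}}}\cong\mc O_{l}(-2)\oplus\mc O_{l}^{\oplus 2}$ — exactly the normal bundle of the exceptional curve in the simultaneous resolution of a family of $A_{1}$-surface singularities, so $\P^{2,2}$ has transverse Du Val $A_{1}$ singularities along $\psi(D)$. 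Matching $\dim=4$, Gorenstein (Fano) index $3$, degree $6$ and this singularity type against Fujita's list identifies $\P^{2,2}$ with the del Pezzo variety of type (vu). The main obstacle is precisely this last circle of ideas — controlling $\Exc(\psi)$ and nailing down the singularity type — where the identification $D\cong\Fl$, which makes $\psi|_{D}$ the ``other'' projection and renders all the relevant normal bundles transparent, does the real work; alternatively one can skip the normal bundle computation and simply quote Fujita's analysis of type (vu).
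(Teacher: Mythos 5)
Your argument is essentially correct, but it runs in the opposite direction from the paper's. The paper starts from Fujita's side: it recalls that $\P_{\P^{2}}(\mc O^{3}\oplus\mc O(2))$ is the blow-up of $\P(1^{3},2^{3})$ along its singular plane, that type (vu) is \emph{defined} as the image of a member $\wt V\in|\Xi+p^{\ast}\mc O(1)|$ meeting the exceptional $\P^{2}\times\P^{2}$ in $\P(T_{\P^{2}})$, and then reduces the whole lemma to the single computation that such a $\wt V$ is $\P_{\P^{2}}(\Cok v)$ with $\Cok v\simeq\mc O_{\P^{2}}(2)\oplus\Omega_{\P^{2}}(2)$ (splitting off the Euler sequence), after which (1) and (2) are inherited from the construction. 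You instead work intrinsically on $\P_{\P^{2}}(\mc O(2)\oplus\Omega_{\P^{2}}(2))$: the computations $-K=3\xi$, $\xi^{4}=6$, $h^{0}(\mc E(-2))=1$, the identification of $\Exc(\psi)$ with $\P_{\P^{2}}(\Omega_{\P^{2}}(2))\cong\Fl$ and of $\psi|_{D}$ with the second projection, the normal bundle $\mc N_{l/\wt{\P^{2,2}}}\cong\mc O(-2)\oplus\mc O^{2}$ giving transverse $A_{1}$, and the degree count $2d=\xi^{4}=6$ forcing a cubic, are all correct and together give a self-contained proof of (1) and of most of (2). What your route buys is an explicit geometric picture (the flag variety as the exceptional divisor, the join description of the ambient weighted space) independent of Fujita's bookkeeping; what the paper's route buys is that the label ``type (vu)'' comes for free.

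That label is the one place where you should tighten the argument. Fujita defines (vu) by a specific construction (a member of $|\Xi+p^{\ast}\mc O(1)|$ with $E\cap\wt V\cong\P(T_{\P^{2}})$), not by a tuple of invariants, and ``matching $\dim$, index, degree and singularity type against Fujita's list'' presupposes that these data single out a unique entry --- something you have not verified, and which is not obvious given that degree-$6$ del Pezzo $4$-folds also include $\P^{2}\times\P^{2}$ and various cones. The clean fix is essentially the paper's step run backwards: your cubic contains $\Sing\P(1^{3},2^{3})$, so its strict transform in $\Bl_{\Pi}\P(1^{3},2^{3})=\P_{\P^{2}}(\mc O^{3}\oplus\mc O(2))$ lies in $|\Xi+p^{\ast}\mc O(1)|$, and one checks it meets the exceptional divisor in $\P(T_{\P^{2}})$, i.e.\ it satisfies Fujita's defining condition. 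With that supplied, your proof is complete.
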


\begin{proof}
We recall the definition of the del Pezzo variety $V$ of type (vu) in \cite{Fujita86}. 
The projective space bundle $\P_{\P^{2}}(\mc O^{3} \oplus \mc O(2))$ is the blow-up of $\P(1^{3},2^{3})$ along the singular locus $\Pi$, which is a $2$-plane on $\P(1^{3},2^{3}) \subset \P^{8}$. 
Let $p \colon \P_{\P^{2}}(\mc O^{3} \oplus \mc O(2)) \to \P^{2}$ be the projection, $\Xi$ a tautological divisor and $f \colon \P_{\P^{2}}(\mc O^{3} \oplus \mc O(2)) \to \P(1^{3},2^{3})$ the blow-up. 
Then $\Exc(f)$ is the unique member of $|\mc O(\Xi) \otimes p^{\ast}\mc O(-2)|$ and isomorphic to $\P^{2} \times \P^{2}$. 
Consider a member $\wt{V} \in |\mc O(\Xi) \otimes p^{\ast}\mc O(1)|$ such that $E \cap \wt{V}$ is isomorphic to $\P(T_{\P^{2}})$. 
The del Pezzo variety of $V$ of type (vu) is defined to be the image $f(\wt{V})$ (see \cite[p.155]{Fujita86}). 
Then $V$ is a cubic hypersurface of $\P(1^{3},2^{3})$ containing $\Pi$ and $f|_{\wt{V}} \colon \wt{V} \to V$ is a divisorial contraction whose exceptional divisor is $E \cap \wt{V}=\P(T_{\P^{2}})$. 
Since $H^{i}(p^{\ast}\mc O(-1))=0$ for any $i$, 
$f|_{\wt{V}}$ is given by $|\Xi|_{\wt{V}}|$. 

Now it suffices to show that $\wt{V} \simeq \wt{\P^{2,2}}$ and $\Xi|_{\wt{V}}=\xi$. 
Such a member $\wt{V}$ corresponds to an injection $v=(s_{1},s_{2},s_{3},t) \colon \mc O(-1) \to \mc O^{3} \oplus \mc O(2)$ such that $s_{1},s_{2},s_{3} \in H^{0}(\P^{2},\mc O(1))$ forms a basis of $H^{0}(\P^{2},\mc O(1))$. 
Since $(s_{1}=s_{2}=s_{3}=0)=\emp$, $\Cok v$ is a locally free sheaf of rank $3$ and 
$\wt{V}$ is isomorphic to $\P_{\P^{2}}(\Cok v)$. 
Now $\Xi|_{\wt{V}}$ is a tautological divisor of $\P_{\P^{2}}(\Cok v)$. 
Letting $s=(s_{1},s_{2},s_{3}) \colon \mc O(-1) \to \mc O^{3}$, we have $\Cok s \simeq \Omega_{\P^{2}}(2)$ and the exact sequence $0 \to \mc O_{\P^{2}}(2) \to \Cok v \to \Omega_{\P^{2}}(2) \to 0$, which splits since $\Ext^{1}(\Omega_{\P^{2}}(2),\mc O_{\P^{2}}(2))=H^{1}(\P^{2},T_{\P^{2}})=0$. 
Therefore, $\wt{V}$ is isomorphic to $\wt{\P^{2,2}}$. 
Since $-K_{\wt{\P^{2,2}}}=3\xi$, the morphism $\psi$ is a crepant divisorial contraction. 
The remaining items follow from the above construction. 
\end{proof}

Then Theorem~\ref{thm-singfibP22} is just a corollary of a result of Fujita {\cite[(4.6)]{Fujita90}}. 

\begin{proof}[Proof of Theorem~\ref{thm-singfibP22}]
Set $\mc F$ as in Theorem~\ref{thm-constP22}~(1). 
By Theorem~\ref{thm-constP22}~(2), the blow-up of $Z_{t}$ at a smooth point is the flop of $\P_{Q_{t}}(\mc F_{Q_{t}})$. 
Moreover, we see that $Q_{t}$ is smooth if and only if $Z_{t}$ is smooth since the map $\Psi$ in the diagram (\ref{dia-P22}) is a family of Atiyah flop over $C$. 
Hence it suffices to show that $Z_{t} \simeq \P^{2,2}$ if $Z_{t}$ is singular, which is known by \cite[(4.6)]{Fujita90}. 
\end{proof}

Let us consider codimension $2$ linear sections of $(\P^{2})^{2}$ or $\P^{2,2}$. 
The following lemma is useful to classify linear sections of such kind of varieties. 

\begin{lem}\label{lem-irr}
Let $X$ be a smooth variety and let $\mc E$ and $\mc F$ be locally free sheaves on $X$ with $\rk \mc F=\rk \mc E-1$. 
Let $s \colon \mc F \to \mc E$ be an injection. 
Let $\pi \colon \P(\mc E) \to S$ be the projectivization and $\wt{s} \in H^{0}(\P_{X}(\mc E), \mc O_{\P(\mc E)}(1) \otimes \pi^{\ast}\mc F^{\vee})$ the section corresponding to $s$. 
Set $Z=\{p \in X \mid \rk s(p) < \rk \mc F\}$ and $Y=(\wt{s}=0)$. 
\begin{enumerate}
\item The following conditions are equivalent:
\begin{enumerate}
\item $\codim Z \geq 2$. 
\item $\Cok s$ is torsion free. 
\end{enumerate}
Moreover, if one of the above conditions holds, then $\Cok s \simeq \mc L \otimes \mc I_{Z}$ where $\mc L:=(\det \mc E) \otimes (\det \mc F)^{\vee}$. 
\item If $Y$ is irreducible, then $\Cok s$ is torsion free and $Y=\Bl_{Z}X$ holds. 
\end{enumerate}
\end{lem}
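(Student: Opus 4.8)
The plan is to extract from $s$ the ``maximal minor'' map $t\colon\mc E\to\mc L$, where $\mc L=(\det\mc E)\otimes(\det\mc F)^\vee$, obtained as $\wedge^{\rk\mc F}s\colon\det\mc F\to\wedge^{\rk\mc F}\mc E\cong\mc E^\vee\otimes\det\mc E$. First I would record three elementary facts: $(i)$ $t\circ s=0$ (locally $t\circ s$ is a determinant with a repeated column), so $t$ factors through $\bar t\colon\Cok s\to\mc L$; $(ii)$ $\Im(t\otimes\mc L^{-1})$ is exactly the Fitting ideal $\mathcal J:=\mathrm{Fitt}_{\rk\mc F-1}(\Cok s)$, locally generated by the maximal minors of $s$, whose zero scheme is $Z$ (this is the scheme structure I would put on $Z$); $(iii)$ $\bar t$ is an isomorphism over $X\setminus Z$, since there some minor is a unit, so $\bar t$ is a surjection of line bundles there. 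Then $\ker\bar t$ is supported on $Z$, hence torsion, and conversely any torsion subsheaf dies in the line bundle $\mc L$; so $\ker\bar t=(\Cok s)_{\mathrm{tors}}$. Therefore $\Cok s$ is torsion free iff $\bar t$ is injective, and in that case $\Cok s\cong\Im\bar t=\mc L\otimes\mc I_Z$, which is the ``moreover'' clause of (1).

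For the equivalence $(a)\Leftrightarrow(b)$ I would localize at codimension-one points. At the generic point $\eta_0$ of a codimension-one subvariety $Z_0\subseteq X$ the local ring is a DVR, and Smith normal form writes $s$ there as $\mathrm{diag}(\pi^{a_1},\dots,\pi^{a_{\rk\mc F}})$ with a zero row appended; so $\Cok s_{\eta_0}$ has torsion iff some $a_i>0$ iff $\rk s(\eta_0)<\rk\mc F$ iff $Z_0\subseteq Z$. Since $s$ is injective it has full generic rank, so $Z\neq X$; this gives $(b)\Rightarrow(a)$. Conversely, if $\codim Z\geq 2$ the same computation shows $\Cok s$ is locally free of rank one at every point of codimension $\leq 1$, so $(\Cok s)_{\mathrm{tors}}$ is supported in codimension $\geq 2$; but the exact sequence $0\to\mc F\to\mc E\to\Cok s\to0$ and regularity of $X$ force $\mathrm{pd}(\Cok s)\leq 1$, so by Auslander--Buchsbaum $\Cok s$ has positive depth at every point of codimension $\geq 2$, hence no nonzero finite-length submodule there; localizing the torsion at a minimal prime of its support then gives a contradiction unless it vanishes. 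This proves (1).

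For (2) I would first compute fibres: $Y_p$ is the projective space of quotients of $\mc E_p/s(\mc F_p)$, so $\dim Y_p=\rk\mc F-\rk s(p)$, which is $0$ precisely off $Z$; thus $Y\to X$ is an isomorphism over $X\setminus Z$, and the closure $Y_0$ of $\pi^{-1}(X\setminus Z)\cap Y$ is an irreducible component of dimension $\dim X$. If $Z$ had a codimension-one component $Z_0$, then over the generic point $\eta_0$ of $Z_0$ the fibre $Y_{\eta_0}$ is a positive-dimensional projective space, whereas $(Y_0)_{\eta_0}$ is a single point --- a projective birational morphism onto the smooth (hence normal) $X$ has exceptional image of codimension $\geq 2$, so it is finite over $\eta_0$. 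This contradicts irreducibility of $Y$, so $\codim Z\geq 2$, and (1) yields $\Cok s$ torsion free with $\Cok s\cong\mc L\otimes\mc I_Z$. On $Y$ the tautological surjection $\pi^\ast\mc E\to\mc O_{\P(\mc E)}(1)$ kills $\pi^\ast(s(\mc F))$, so it descends to a surjection $\pi^\ast(\Cok s)|_Y\to\mc O_{\P(\mc E)}(1)|_Y$ onto a line bundle; by the universal property of $\Proj\Sym$ this identifies $Y$ with $\P_X(\Cok s)\cong\Proj\Sym\mc I_Z$, which contains $\Bl_Z X=\Proj\bigoplus_n\mc I_Z^{\,n}$ as a closed subscheme via the canonical surjection of the symmetric algebra onto the Rees algebra. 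Finally, since $Y$ is irreducible of dimension $\dim\P(\mc E)-\rk\mc F$, the section $\wt s$ of the rank-$\rk\mc F$ bundle $\mc O_{\P(\mc E)}(1)\otimes\pi^\ast\mc F^\vee$ cuts out $Y$ in the expected codimension, so $Y$ is Cohen--Macaulay; being generically reduced it is reduced, hence integral, hence equals the integral closed subscheme $\Bl_Z X$ of the same dimension.

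I expect the delicate points to be in (2): first, the fibre-dimension-plus-normality argument showing that irreducibility of $Y$ is equivalent to $\codim Z\geq 2$, and second, upgrading the a priori closed immersion $\Bl_Z X\hookrightarrow\Proj\Sym\mc I_Z$ to an isomorphism, for which the observation that $\wt s$ meets its zero locus in the expected codimension (so $Y$ is Cohen--Macaulay and therefore reduced) is the key input.
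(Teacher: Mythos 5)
Your proof is correct. Part (2) follows essentially the same route as the paper: the fibre computation $\pi^{-1}(p)\cap Y\simeq \P^{\rk\mc F-\rk s(p)}$, the observation that a divisorial component of $Z$ would force a second irreducible component of $Y$ (the paper gets this by the direct dimension count $\dim(\pi|_{Y})^{-1}(D)\geq\dim X$, you via Zariski's main theorem applied to $\ol{Y}\to X$ --- both work), the inclusion $\Bl_{Z}X\hra Y$, and reducedness of $Y$ from the fact that $\wt{s}$ vanishes in the expected codimension; your spelling out of ``lci $\Rightarrow$ CM, plus generically reduced $\Rightarrow$ reduced'' is actually more careful than the paper's one-line assertion. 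Part (1) is where you genuinely diverge: the paper quotes Ohno's Lemma~5.4 for (a)$\Rightarrow$(b) and obtains the ``moreover'' clause abstractly from the reflexive hull $\Cok s\hra(\Cok s)^{\vee\vee}\simeq\mc L$ with torsion cokernel supported on $Z$, whereas you construct the map $\Cok s\to\mc L$ explicitly as the wedge $\wedge^{\rk\mc F}s$, identify its image with the ideal of maximal minors, and prove (a)$\Rightarrow$(b) by Smith normal form at codimension-one points plus an Auslander--Buchsbaum depth argument. Your version is more self-contained and has the side benefit of exhibiting the scheme structure on $Z$ concretely as the determinantal (Fitting) ideal, which matches the paper's implicit one. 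Two cosmetic points: the Fitting ideal generated by the maximal minors of the presentation $\mc F\to\mc E\to\Cok s\to 0$ is $\mathrm{Fitt}_{1}(\Cok s)$, not $\mathrm{Fitt}_{\rk\mc F-1}$ (harmless, since you describe the generators correctly); and the identification $Y\simeq\P_{X}(\Cok s)$ is cleaner as an equality of closed subschemes of $\P_{X}(\mc E)$ via $\Sym\mc E/(s(\mc F)\cdot\Sym\mc E)=\Sym(\Cok s)$, since the universal property by itself only produces a morphism $Y\to\P_{X}(\Cok s)$.
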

\begin{proof}
Set $U:=X \setminus Z$ and $\ol{Y}:=\ol{\pi^{-1}(U) \cap Y}$. 
For any $p \in X$,  we have $\pi^{-1}(p) \cap Y \simeq \P^{\rk \mc F-\rk s(p)}$. 
Hence $\ol{Y} \to X$ and $Y \to X$ are isomorphic over $U$ and $\ol{Y}$ is an irreducible component of $Y$. 
Note that $U$ coincides with the set $\{p\in X\mid (\Cok s)_{p} \text{ is free } \mc O_{X,p} \text{-module}\}$. 

(1) The implication (a) $\ra$ (b) holds by \cite[Lemma~5.4]{Ohno14}. 
Conversely, if $\Cok s$ is torsion free, then it is well-known that $\codim Z \geq 2$ since $X$ is smooth. 
Moreover, the natural map $\Cok s \to (\Cok s)^{\vee\vee}$ is injective and its cokernel $\mc T$ is a torsion sheaf with $\Supp \mc T=Z$. 
Set $\mc L= \det \mc E \otimes \det \mc F^{\vee} \simeq (\Cok s)^{\vee\vee}$. 
Then we have an exact sequence $0 \to \Cok s \otimes \mc L^{-1} \to \mc O \to \mc T \otimes \mc L \to 0$. 
Thus $\Cok s \otimes \mc L^{-1}$ is an ideal sheaf of $Z$ since the scheme theoretic support 
$\Supp (\mc T \otimes \mc L)$ is $Z$. 

(2) Suppose that $Y$ is irreducible, i.e., $Y=\ol{Y}$. 
If $Z$ contains a prime divisor $D$ on $X$, then we have $\dim (\pi|_{Y})^{-1}(D) \geq \dim D+1=\dim X$ and hence $Y$ is not irreducible, which is a contradiction. 
Thus $\codim_{X}Z \geq 2$ holds and hence $\Cok s$ is torsion free by (1). 
The surjection $\mc E \epm \mc L \otimes \mc I_{Z}$ gives an embedding $\Bl_{Z}X \hra \P_{X}(\mc E)$. 
Since $\Bl_{Z}X$ coincides with $Y$ over $U$ and $Y$ and $\Bl_{Z}X$ are irreducible, $Y$ coincides with $\Bl_{Z}X$ as sets. 
Since $Y$ is the zero scheme of $\wt{s}$ and $\codim_{\P_{X}(\mc E)}Y=\rk \mc F$, $Y$ is reduced. 
Thus we have $Y=\Bl_{Z}X$ as schemes. 
\end{proof}

The following lemma will be used for Theorem~\ref{thm-singfibdP6}. 

\begin{lem}\label{lem-linsecP22}
Let $S$ be a Du Val sextic del Pezzo surface. 
If $S$ is a codimension $2$ linear section of $(\P^{2})^{2}$ (resp. $\P^{2,2}$) with respect to the embedding into $\P^{8}$, then $S$ is of type (2,$j$) (resp. (1,$j$)). 
\end{lem}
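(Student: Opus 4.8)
The plan is to pull $S$ back to the tautological $\P^2$-model of each ambient variety and to decide, via Lemma~\ref{lem-irr} and Theorem~\ref{thm-GordP6}~(1), whether the centre of the resulting blow-up of $\P^2$ lies on a line. Recall that by Theorem~\ref{thm-GordP6}~(1) a Du Val sextic del Pezzo surface is exactly a blow-up $\Bl_{\Sigma}\P^2$ along a length-$3$ subscheme $\Sigma$ with its $(-2)$-curves contracted, and that the type is $(2,j)$ or $(1,j)$ according as $\Sigma$ is non-colinear or colinear (with $j=\#\Sigma_{\red}$). So both $S$ (first case) and its strict transform (second case) will be realised as honest blow-ups $\Bl_{Z}\P^2$ with $Z$ a length-$3$ local complete intersection, and the whole content of the lemma will be the position of $Z$ with respect to lines, which drops out of the one-line observation that $H^{0}(\P^2,\mc I_{Z}(1))\neq 0$ exactly when $Z$ is colinear.

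\textbf{The $(\P^2)^2$-case.} Write $(\P^2)^2=\P_{\P^2}(\mc O_{\P^2}^{\oplus 3})$ via the first projection $\pr_1$, so that the bidegree $(1,1)$ polarization is the tautological line bundle of this $\P^2$-bundle twisted by $\pr_1^{\ast}\mc O_{\P^2}(1)$. The two hyperplanes cutting out $S$ then correspond, under the identification in Lemma~\ref{lem-irr}, to a homomorphism $s\colon\mc O_{\P^2}(-1)^{\oplus 2}\to\mc O_{\P^2}^{\oplus 3}$, injective since $S$ has the expected dimension $2$. As $S$ is irreducible, Lemma~\ref{lem-irr}~(2) gives $S=\Bl_{Z}\P^2$ with $Z=\{p:\rk s(p)<2\}$ of codimension $2$, hence of length $9-6=3$, and $\Cok s\simeq\mc I_{Z}(2)$. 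Being a quotient of $\mc O_{\P^2}^{\oplus 3}$, the sheaf $\mc I_{Z}(2)$ is globally generated; but if $Z$ lay on a line $\{\ell=0\}$, then every conic through $Z$ would contain that line, so every global section of $\mc I_{Z}(2)$ would vanish along $\{\ell=0\}$ and $\mc I_{Z}(2)$ could not be generated at points of $\{\ell=0\}\setminus Z$. Hence $Z$ is non-colinear, and by Theorem~\ref{thm-GordP6}~(1) the surface $S$ is of type $(2,j)$ (with $j=\#Z_{\red}$).

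\textbf{The $\P^{2,2}$-case.} By Lemma~\ref{lem-P22}, $\wt{\P^{2,2}}=\P_{\P^2}(\mc G)$ with $\mc G=\mc O_{\P^2}(2)\oplus\Omega_{\P^2}(2)$, the embedding $\P^{2,2}\hra\P^{8}$ is given by $|\xi|$, and the crepant contraction $\psi\colon\wt{\P^{2,2}}\to\P^{2,2}$ contracts the unique member of $|\xi-\pi^{\ast}\mc O_{\P^2}(2)|$ onto the linearly embedded plane $\Pi=\Sing(\P^{2,2})\subset\P^{8}$. Let $\wt S$ be the strict transform of $S$; since $S$ is a surface it is not contained in $\Pi$, so $\wt S$ is irreducible and coincides with the zero scheme in $\P_{\P^2}(\mc G)$ of the homomorphism $s\colon\mc O_{\P^2}^{\oplus 2}\to\mc G$ attached to the two hyperplanes. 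Lemma~\ref{lem-irr}~(2) gives $\wt S=\Bl_{Z}\P^2$ with $Z$ of codimension $2$ and $\Cok s\simeq\mc I_{Z}\otimes\det\mc G=\mc I_{Z}(3)$, whence $\xi|_{\wt S}=\rho^{\ast}\mc O_{\P^2}(3)-E$ for $\rho\colon\wt S\to\P^2$ the blow-up with exceptional divisor $E$. Now $S$ meets $\Pi$ (two hyperplanes cannot avoid a $2$-plane in $\P^{8}$), so $\wt S$ meets $\Exc(\psi)$, and therefore $(\xi-\pi^{\ast}\mc O_{\P^2}(2))|_{\wt S}=\rho^{\ast}\mc O_{\P^2}(1)-E$ is effective; its global sections are the lines through $Z$, so $Z$ lies on a line. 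Hence $Z$ is colinear, and Theorem~\ref{thm-GordP6}~(1) shows that $S$ is of type $(1,j)$.

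\textbf{Main obstacle.} The delicate part is making the two reductions to Lemma~\ref{lem-irr} rigorous: identifying the strict transform $\wt S$ scheme-theoretically with the zero locus of $s$ in $\P_{\P^2}(\mc G)$, verifying injectivity of the relevant homomorphisms and that $Z$ is a length-$3$ local complete intersection (so that $\Bl_{Z}\P^2$ really is the minimal resolution of $S$ after contracting its $(-2)$-curves), and carrying out the Chern-class bookkeeping $\det\mc G=\mc O_{\P^2}(3)$ together with the identification of $\xi|_{\wt S}$. Everything after that is formal.
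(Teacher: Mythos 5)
Your proof is correct and follows essentially the same route as the paper: realize $S$ (resp.\ $\psi^{-1}(S)$) as a complete intersection of two tautological divisors in $\P_{\P^{2}}(\mc F)$, apply Lemma~\ref{lem-irr} to identify it with $\Bl_{\Sigma}\P^{2}$ for a length-$3$ subscheme $\Sigma$, and decide whether $\Sigma$ is colinear. The only divergence is the final step: the paper detects colinearity uniformly by twisting the exact sequence $0\to\mc O^{2}\to\mc F\to\mc I_{\Sigma}(3)\to 0$ by $\mc O(-2)$ and reading off $h^{0}(\mc I_{\Sigma}(1))=h^{0}(\mc F(-2))$, which is a bit shorter than your two separate arguments (global generation of $\mc I_{Z}(2)$ in the $(\P^{2})^{2}$ case, effectivity of $\Exc(\psi)|_{\wt S}$ in the $\P^{2,2}$ case), but both are valid.
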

\begin{proof}
Suppose that $S$ is a codimension $2$ linear section of $\P^{2} \times \P^{2}$ (resp. $\P^{2,2}$). 
Letting $\mc F=\mc O(1)^{3}$ (resp. $\mc F=\Omega_{\P^{2}}(2) \oplus \mc O(2)$), 
$S$ (resp. $\psi^{-1}(S)$) is the complete intersection of two members of $|\mc O_{\P(\mc F)}(1)|$. 
Let $s_{1},s_{2} \colon \mc O \to \mc F$ be the corresponding two morphisms and $s=\!^{t}(s_{1},s_{2}) \colon \mc O^{2} \to \mc F$. 
Then Lemma~\ref{lem-irr} gives the exact sequence $0 \to \mc O^{2} \mathop{\to}^{s}  \mc F \to \mc I_{\Sigma/\P^{2}}(3) \to 0$. 
Note that $S=\Bl_{\Sigma}\P^{2}$ (resp. $\psi^{-1}(S)=\Bl_{\Sigma}\P^{2}$) and $\Sigma$ is a $0$-dimensional subscheme of length $c_{2}(\mc F)=3$. 
Tensoring the above exact sequence with $\mc O(-2)$, we get $h^{0}(\mc I_{\Sigma}(1))=0$ (resp. $1$), which implies that $S$ is of type (2,$j$) (resp. (1,$j$)) by Theorem~\ref{thm-GordP6}. 
\end{proof}

\subsection{Definition of $\P^{1,1,1}$}

To define $\P^{1,1,1}$, we need the following lemma. 

\begin{lem}\label{lem-wtP111}
Let $\e \in \Ext^{1}_{\F_{2}}(\mc O(f),\mc O(2h-f))=H^{1}(\F_{2},\mc O_{\F_{2}}(2h-2f))=\C$ be a non-zero element. 
This element gives the following non-trivial extension 
\begin{align}\label{ex-111}
0 \to \mc O(2h-f) \to \mc E \to \mc O(f) \to 0.
\end{align}
Then $\mc E$ is the cokernel of an injection $\mc O_{\F_{2}}(-h+f) \to \mc O^{2} \oplus \mc O(h+f)$. In particular, $\mc E$ is globally generated. 
\end{lem}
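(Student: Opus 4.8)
We are given a non-split extension $0 \to \mc O(2h-f) \to \mc E \to \mc O(f) \to 0$ on $\F_2$, where $\mathrm{Ext}^1_{\F_2}(\mc O(f),\mc O(2h-f)) = H^1(\F_2,\mc O_{\F_2}(2h-2f)) = \C$, and we must exhibit $\mc E$ as the cokernel of an injection $\mc O_{\F_2}(-h+f) \hra \mc O^2 \oplus \mc O(h+f)$, hence conclude $\mc E$ is globally generated.

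**The plan.** The strategy is to construct a natural surjection $\mc O^2 \oplus \mc O(h+f) \epm \mc E$ and identify the kernel. First I would compute $H^0(\F_2,\mc E)$ and the global sections of the two pieces. From the long exact cohomology sequence of (\ref{ex-111}) twisted suitably: $H^0(\mc O(2h-f))$ has dimension $h^0(\F_2,\mc O(2h-f))$ — writing $\mc O(2h-f)$ in terms of the Hirzebruch data on $\F_2 = \P_{\P^1}(\mc O\oplus\mc O(2))$, one computes $h^0 = 2$ — while $H^0(\mc O(f)) = 2$ and $H^1(\mc O(2h-f)) = 0$, so $h^0(\mc E) = 4$ and the sequence $0 \to H^0(\mc O(2h-f)) \to H^0(\mc E) \to H^0(\mc O(f)) \to 0$ is exact. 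Thus $\mc E$ has four sections; I would check that the subsheaf generated by $H^0(\mc O(2h-f)) \subset H^0(\mc E)$ together with the evaluation of the line bundle $\mc O(h+f)$ accounts for a map. More precisely, $\mathrm{Hom}(\mc O(-h+f),\mc E) = H^0(\mc E(h-f))$, and I would show this map $\mc O(-h+f)\to \mc E$ composed into $\mc O(f)$ is nonzero (using that the extension is non-split, which forces the "new" section to move in the $h$-direction), giving a nowhere-vanishing inclusion $\mc O(-h+f)\hra \mc E$ — equivalently an inclusion into $\mc O^2\oplus\mc O(h+f)$ once we know the latter maps onto $\mc E$.

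**Key steps in order.** (1) Compute $h^0(\F_2,\mc O(2h-f)) = 2$ and $h^0(\F_2,\mc E) = 4$ via (\ref{ex-111}); also compute $h^0(\mc E(h-f))$ and check it is $1$ (so there is, up to scalar, a unique map $\mc O(-h+f)\to\mc E$). (2) Show this map is a sub-bundle inclusion: it cannot vanish on a fiber $l\in|f|$, since on $l$ the sequence (\ref{ex-111}) restricts to $0\to\mc O_{\P^1}\to\mc E|_l\to\mc O_{\P^1}\to 0$ which splits, and a vanishing locus would produce a section of $\mc O(2h-f)|_l$ contradicting the way the global section sits; more cleanly, compute $c_1(\mc E) = 2h$ and $c_2(\mc E) = (2h-f)\cdot f = 2$, so the quotient $\mc Q := \mc E/\mc O(-h+f)$ has $c_1(\mc Q)=3h-f$ and $c_2(\mc Q) = 0$, hence $\mc Q$ is locally free if the inclusion is a sub-bundle. (3) Identify $\mc Q$: a rank-$2$ bundle on $\F_2$ with $c_1 = 3h-f$, $c_2 = 0$, and I claim $\mc Q\simeq \mc O(h)\oplus\mc O(2h-f)$ or more usefully that dualizing gives the presentation. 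Actually the cleanest route is to dualize: from $0\to\mc O(-h+f)\to\mc E\to\mc Q\to 0$ take duals and Ext to get $\mc E^\vee$ as an extension, or better, directly construct the surjection $\mc O^2\oplus\mc O(h+f)\epm\mc E$ by exhibiting three sections — two sections of $\mc E$ coming from $H^0(\mc O(2h-f))\hookrightarrow H^0(\mc E)$ need twisting, so instead I take the two sections spanning $\mathrm{Hom}(\mc O,\mc E)$... but $h^0(\mc E)=4$, not matching $\mc O^2$ directly. The correct reading: $\mc O^2\oplus\mc O(h+f)$ means two copies of $\mc O$ and one copy of $\mc O(h+f)$; I would produce the map from $2$ general global sections of $\mc E$ and from a section of $\mathrm{Hom}(\mc O(h+f),\mc E) = H^0(\mc E(-h-f))$. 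Compute $h^0(\mc E(-h-f))$ from (\ref{ex-111}): $h^0(\mc O(h-2f)) = 0$ and $h^0(\mc O(-h)) = 0$, so this is $0$ — meaning there is NO such map, so this reading is wrong. Hence the intended statement is the \emph{kernel} presentation: $0 \to \mc O(-h+f) \to \mc O^2\oplus\mc O(h+f) \to \mc E \to 0$, and I should dualize: $\mc E^\vee$ then fits in $0\to \mc E^\vee \to \mc O^2\oplus\mc O(-h-f) \to \mc O(h-f)\to 0$, and since $\mc E^\vee = \mc E(-2h)$ (as $\det\mc E = \mc O(2h)$) up to checking rank $2$... this is the same content. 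So: build the surjection $\mc O^2\oplus\mc O(h+f)\epm\mc E$ by choosing two sections of $H^0(\mc E)$ and one section of $H^0(\mc E(-h-f))$ — but that group vanishes. Therefore I must instead take the surjection onto $\mc E(a)$ for a suitable twist; equivalently the claim is literally that $\mc E$ \emph{is} $\mathrm{coker}(\mc O(-h+f)\hra\mc O^2\oplus\mc O(h+f))$, which I verify by: constructing the injection $\mc O(-h+f)\hra\mc O^2\oplus\mc O(h+f)$ whose cokernel $\mc E'$ satisfies $\det\mc E' = \mc O(2h)$, $c_2(\mc E')=2$, $\mc E'$ fits in a non-split extension of the required type (compute $\mathrm{Hom}(\mc O(f),\mc E')$, $\mathrm{Hom}(\mc O(2h-f),\mc E')$), and then invoke uniqueness of the non-split extension (the Ext group is $\C$, so one-dimensional, hence the non-split extension is unique up to isomorphism). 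Global generation is then immediate since $\mc O^2\oplus\mc O(h+f)$ is globally generated and global generation passes to quotients.

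**Main obstacle.** The hard part is getting the bookkeeping of twists exactly right — identifying which three line bundles genuinely surject onto $\mc E$ (or equivalently pinning down the injection $\mc O(-h+f)\to\mc O^2\oplus\mc O(h+f)$ so its cokernel has the right Chern classes and is locally free, i.e. the injection is a sub-bundle away from codimension $2$, automatically true here as the base is a surface and cokernel torsion-free forces it). Once the numerics force $\mathrm{coker} \cong \mc E$ via the one-dimensionality of $\mathrm{Ext}^1_{\F_2}(\mc O(f),\mc O(2h-f)) = \C$ — exactly the same uniqueness device used in the proof of Lemma~\ref{lem-Spi0}(2) — the global generation conclusion is a formality. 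I would write the proof by first \emph{defining} the injection $\mc O(-h+f)\to\mc O^2\oplus\mc O(h+f)$ explicitly (e.g. using that $\mathrm{Hom}(\mc O(-h+f),\mc O) = H^0(\mc O(h-f))$ is nonzero and $\mathrm{Hom}(\mc O(-h+f),\mc O(h+f)) = H^0(\mc O(2h))$ is nonzero), checking the cokernel is locally free by verifying the three sections have no common zero, then matching Chern classes and applying the uniqueness of the non-split extension.
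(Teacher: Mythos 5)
Your final plan --- write down an explicit injection $v=(s_{1},s_{2},t)\colon \mc O(-h+f)\to \mc O^{2}\oplus \mc O(h+f)$ with $s_{1},s_{2}\in H^{0}(\mc O(h-f))$ and $t\in H^{0}(\mc O(2h))$, check that the three sections have no common zero so that $\Cok v$ is locally free, and then identify $\Cok v$ with $\mc E$ using $\dim\Ext^{1}(\mc O(f),\mc O(2h-f))=1$ --- is exactly the route the paper takes. But the identification step as you state it, ``matching Chern classes and applying the uniqueness of the non-split extension,'' has a genuine gap: Chern classes alone do not show that $\Cok v$ sits in \emph{any} extension of $\mc O(f)$ by $\mc O(2h-f)$. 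For instance $\mc O(h)^{\oplus 2}$ also has $c_{1}=2h$ and $c_{2}=2$ and is globally generated, so nothing in your numerics rules it out. The one-dimensionality of the Ext group only becomes usable after you have produced the short exact sequence $0\to\mc O(2h-f)\to\Cok v\to\mc O(f)\to 0$ and shown it does not split.

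The paper supplies precisely this missing step, and it rests on a non-generic feature of the sections: since $(h-f)\cdot C_{0}=-1$, \emph{every} $s_{i}\in H^{0}(\mc O(h-f))$ vanishes on $C_{0}$, so $(s_{i}=0)=C_{0}+l_{i}$ with $l_{i}\in|f|$; choosing $l_{1}\cap l_{2}=\emp$, the map $s=(s_{1},s_{2})$ factors through $(l_{1},l_{2})\colon\mc O(-f)\to\mc O^{2}$, whose cokernel is $\mc O(f)$. This yields an explicit surjection $\Cok v\epm\Cok s\epm\mc O(f)$ with line-bundle kernel of class $2h-f$, hence the desired extension; non-splitness is then immediate because $\mc O(2h-f)$ is not globally generated while $\Cok v$ is. Your alternative suggestion of computing $\Hom(\mc O(2h-f),\Cok v)$ could in principle substitute, but a nonzero such map might vanish along a divisor or have a non-locally-free quotient, so pinning down the extension that way needs an extra saturation and $c_{2}$ argument that you do not address. (Separately, several numbers in your exploratory computations are off --- $h^{0}(\mc O_{\F_{2}}(2h-f))=6$, not $2$, hence $h^{0}(\mc E)=8$, and $\Hom(\mc O(-h+f),\mc E)$ is far from one-dimensional --- but these occur in lines of attack you abandon.)
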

\begin{proof}
Let $s_{1},s_{2} \in H^{0}(\mc O_{\F_{2}}(h-f))$ be sections such that $(s_{i}=0)=C_{0}+l_{i}$, where $l_{i} \in |f|$ and $l_{1} \cap l_{2}=\emp$. 
Let $t \in H^{0}(\mc O_{\F_{2}}(2h))$ be a general section such that $(t=0) \cap C_{0}=\emp$. 
Then the cokernel of the map 
$v:=(s_{1},s_{2},t) \colon \mc O_{\F_{2}}(-h+f) \to \mc O^{2} \oplus \mc O(h+f)$ 
is locally free. 
Let us confirm that $\Cok v=\mc E$. 
Set $s=(s_{1},s_{2}) \colon \mc O_{\F_{2}}(-h+f) \to \mc O_{\F_{2}}^{2}$. 
Then we have a surjection $\Cok v \epm \Cok s$. 
Since $(s_{1}=0)$ and $(s_{2}=0)$ contain $C_{0}$, 
the morphism $s \colon \mc O(-h+f) \to \mc O^{2}$ factors through the map $\mc O(-f) \to \mc O^{2}$ which is given by $(l_{1},l_{2})$.  
Hence we have a surjection $\Cok s \to \mc O(f)$ and hence a surjection $\Cok v \to \mc O(f)$ and an exact sequence $0 \to \mc O(2h-f) \to \Cok v \to \mc O(f) \to 0$, which does not split since $\mc O(2h-f)$ is not globally generated but $\Cok v$ is. 
Since $\Ext^{1}(\mc O(f),\mc O(2h-f))=\C$, we have $\Cok v \simeq \mc E$. 
\end{proof}


\begin{defi}[\cite{Fujita86}]\label{defi-111}
Let $\mc E$ be the bundle on $\F_{2}$ fitting into the exact sequence (\ref{ex-111}) that does not split. 
We define $\wt{\P^{1,1,1}}:=\P_{\F_{2}}(\mc E)$ and set $\P^{1,1,1} \subset \P^{7}$ be the image of the morphism defined by $|\mc O_{\P_{\F_{2}}(\mc E)}(1)|$. 
\end{defi}
\begin{lem}[\cite{Fujita86}]\label{lem-P111}
$\P^{1,1,1}$ is the del Pezzo variety of type (si31i) in \cite{Fujita86}. 
Moreover, the following assertions hold. 
\begin{enumerate}
\item $\wt{\P^{1,1,1}}$ is a weak Fano $3$-fold and $\P^{1,1,1}$ is its anti-canonical model. 
The morphism $\psi \colon \wt{\P^{1,1,1}} \to \P^{1,1,1}$ is a crepant divisorial contraction. 
If $\pi \colon \P_{\F_{2}}(\mc E) \to \F_{2}$ denotes the projection, 
then the exceptional divisors of $\P_{\F_{2}}(\mc E) \to \P^{1,1,1}$ is the union of the unique members $E_{1} \in |\mc O_{\P(\mc E)}(1) \otimes \pi^{\ast}\mc O(-2h+f)|$ and $E_{2}:=\pi^{\ast}C_{0}$. 
$E_{1}$ is a $\pi$-section and hence isomorphic to $\F_{2}$ and $E_{2} \simeq \P^{1} \times \P^{1}$. 
$\psi(E_{1})$ and $\psi(E_{2})$ are the same line on $\P^{1,1,1}$ and $\psi|_{E_{i}} \colon E_{i} \to \P^{1}$ is a $\P^{1}$-bundle. 
The intersection $E_{1} \cap E_{2}$ is a section of $\psi|_{E_{i}}$ for each $i$. 
\item $\P^{1,1,1}$ is a Gorenstein canonical del Pezzo $3$-fold of Picard rank $1$. 
The singular locus of $\P^{1,1,1}$ is a line and $\P^{1,1,1}$ has a family of Du Val $A_{2}$-singularities along the line. 
\end{enumerate}
\end{lem}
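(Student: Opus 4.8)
The plan is to follow the proof of Lemma~\ref{lem-P22} almost verbatim: first compute the intrinsic invariants of $\wt{\P^{1,1,1}}=\P_{\F_2}(\mc E)$ to see that it is a weak Fano $3$-fold whose anticanonical model is a Gorenstein del Pezzo $3$-fold, then identify this model with Fujita's del Pezzo variety of type (si31i) by comparing the contraction $\psi$ with the explicit construction in \cite{Fujita86}, and finally read off the exceptional divisors and the singularity type directly from the $\P^1$-bundle $\pi\colon\P_{\F_2}(\mc E)\to\F_2$.

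First I would record the numerics. Using $K_{\F_2}=-2h$ and $\det\mc E=\mc O_{\F_2}(2h-f)\otimes\mc O_{\F_2}(f)=\mc O_{\F_2}(2h)$, the relative canonical bundle formula for a $\P^1$-bundle gives $-K_{\wt{\P^{1,1,1}}}=2\xi$, where $\xi$ is the tautological divisor. By Lemma~\ref{lem-wtP111} the bundle $\mc E$ is globally generated, so $\xi$ is nef, and $\xi^3=c_1(\mc E)^2-c_2(\mc E)=8-2=6>0$, so $\xi$ is big and nef; hence $\wt{\P^{1,1,1}}$ is a weak Fano $3$-fold and $\psi$, being the morphism attached to $|\xi|=\tfrac{1}{2}|-K_{\wt{\P^{1,1,1}}}|$, is the birational contraction onto the anticanonical model $\P^{1,1,1}$. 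Since $\psi$ is crepant and $\wt{\P^{1,1,1}}$ is smooth, $\P^{1,1,1}$ is a Gorenstein del Pezzo $3$-fold with at worst canonical singularities, and $h^0(\F_2,\mc E)=h^0(\mc O_{\F_2}(2h-f))+h^0(\mc O_{\F_2}(f))=6+2=8$, so $|\xi|$ maps into $\P^7$ as in Definition~\ref{defi-111}. To match Fujita's type (si31i) I would — exactly as in Lemma~\ref{lem-P22}, where $\wt{\P^{2,2}}$ was identified with $\P_{\P^2}(\mc O(2)\oplus\Omega_{\P^2}(2))$ inside the scroll resolving $\P(1^3,2^3)$ — use the presentation $\mc E=\Cok(\mc O_{\F_2}(-h+f)\to\mc O^2\oplus\mc O(h+f))$ from Lemma~\ref{lem-wtP111} to embed $\P_{\F_2}(\mc E)$ into $\P_{\F_2}(\mc O^2\oplus\mc O(h+f))$ and compare the resulting divisor with Fujita's resolution of the variety of type (si31i), checking that the tautological polarizations agree.

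Next I would determine $\Exc(\psi)$. The quotient $\mc E\epm\mc O_{\F_2}(f)$ defines a $\pi$-section $E_1$ with $[E_1]=\xi-\pi^*(2h-f)$ and $\xi|_{E_1}=\mc O_{\F_2}(f)$; tensoring the defining sequence of $\mc E$ by $\mc O_{\F_2}(-2h+f)$ shows $h^0(\mc E(-2h+f))=1$, so $E_1\simeq\F_2$ is the unique member of $|\xi-\pi^*(2h-f)|$ and $\psi|_{E_1}$ is the ruling $\F_2\to\P^1$. Restricting $0\to\mc O_{\F_2}(2h-f)\to\mc E\to\mc O_{\F_2}(f)\to 0$ to the negative section $C_0$ gives $0\to\mc O_{\P^1}(-1)\to\mc E|_{C_0}\to\mc O_{\P^1}(1)\to 0$, which cannot split (otherwise $\mc E|_{C_0}$ would have $\mc O_{\P^1}(-1)$ as a direct summand, contradicting global generation), so $\mc E|_{C_0}\simeq\mc O_{\P^1}^{2}$ and $E_2:=\pi^*C_0=\pi^{-1}(C_0)\simeq\P^1\times\P^1$ with $\xi|_{E_2}$ the projection onto the second factor. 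A curve of $\wt{\P^{1,1,1}}$ is $\xi$-trivial precisely when it maps isomorphically either onto a ruling of $\F_2$ over a point of $\P^1$ (inside $E_1$) or onto $C_0$ by a constant section (inside $E_2$); hence $\psi$ contracts exactly the $\xi$-trivial curves, which sweep out precisely $E_1\cup E_2$, so $\Exc(\psi)=E_1\cup E_2$, $\psi$ is divisorial, and $\rho(\P^{1,1,1})=\rho(\wt{\P^{1,1,1}})-2=1$. Since $\xi\cdot C_0=\mc O_{\F_2}(f)\cdot C_0=1$ on $E_1$, the image $\psi(E_1)$ is a line $\ell$; and $E_1\cap E_2$, which is $C_0$ under $E_1\simeq\F_2$, is simultaneously a section of $\psi|_{E_1}$ and of $\psi|_{E_2}$, so its image is all of $\psi(E_1)$ and all of $\psi(E_2)$, giving $\psi(E_1)=\psi(E_2)=\ell$. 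This is part~(1).

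Finally, for (2): $\psi$ is an isomorphism over $\P^{1,1,1}\setminus\ell$, and over a general point $p\in\ell$ the fibre $\psi^{-1}(p)$ is the union of the fibre of $E_1\to\ell$ with the fibre of $E_2\to\ell$, two $\P^1$'s meeting transversally at the single point $E_1\cap E_2\cap\psi^{-1}(p)$ (a single point because $E_1\cap E_2\to\ell$ has degree $1$). Cutting by a general hyperplane section through $p$ turns $\psi$ into a crepant resolution of a surface germ whose exceptional set is a chain of two $(-2)$-curves, i.e.\ exactly a Du Val $A_2$-point, so $\P^{1,1,1}$ is smooth away from $\ell$ and carries a one-parameter family of $A_2$-singularities along $\ell$. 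The main obstacle is the identification with Fujita's type-(si31i) variety inside part~(1): unwinding the explicit construction in \cite{Fujita86} and matching it to our intrinsic bundle $\mc E$ on $\F_2$ is the only genuinely bookkeeping-heavy step, everything else being a routine computation on $\P_{\F_2}(\mc E)$. I would also take care that the general hyperplane slice produces a transverse $A_2$ rather than two separate $A_1$'s — this is precisely why it matters that $E_1\cap E_2$ dominates $\ell$, so that the two exceptional $\P^1$'s genuinely meet.
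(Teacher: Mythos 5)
Your proposal is correct, and for the key step --- identifying $\P^{1,1,1}$ with Fujita's variety of type (si31i) --- your plan is exactly the paper's argument: use Lemma~\ref{lem-wtP111} to realize $\wt{\P^{1,1,1}}=\P_{\F_2}(\mc E)$ as a sub $\P^1$-bundle of $\P_{\F_2}(\mc O^2\oplus\mc O(h+f))$, whose image under the tautological map is by definition the type-(si31i) variety; the one cohomological input you leave implicit, and which the paper supplies, is $H^{i}(\F_{2},\mc O(-h+f))=0$, which guarantees that the restriction of the tautological system of the big scroll to $\wt{\P^{1,1,1}}$ is the complete system $|\mc O_{\P(\mc E)}(1)|$, so the two polarizations agree. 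Where you genuinely diverge is the description of $\Exc(\psi)$: the paper shows $E_1+E_2=\Exc(\Phi)|_{\wt{\P^{1,1,1}}}$ using $h^0(\mc E(-h-f))=1$ and then decomposes this divisor inside $\Exc(\Phi)\simeq\F_2\times\P^1$ by a divisor-class computation, whereas you work intrinsically on $\P_{\F_2}(\mc E)$, exhibiting $E_1$ as the section of the quotient $\mc E\epm\mc O(f)$ (unique since $h^0(\mc E(-2h+f))=1$), showing $\mc E|_{C_0}\simeq\mc O_{\P^1}^2$ so that $E_2=\pi^{-1}(C_0)\simeq(\P^1)^2$, and then characterizing the $\xi$-trivial curves directly. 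Your characterization is stated a little loosely but is sound: if an irreducible $\xi$-trivial curve $\g$ is not contained in $E_1$, then $0\le E_1\cdot\g=-(2h-f)\cdot\pi_\ast\g$ forces $\pi(\g)=C_0$ (as $2h-f$ is positive on every irreducible curve of $\F_2$ except $C_0$), hence $\g\subset E_2$; spelling this out would make the step airtight. Your approach buys a self-contained, coordinate-free treatment of the exceptional locus, and in addition you give an actual proof of part~(2) --- two $(-2)$-curves in a general anticanonical surface section meeting transversally in one point, hence an $A_2$ chain --- where the paper merely asserts that (2) follows from (1); the paper's route, in exchange, gets the decomposition $E_1+E_2$ in one line from the geometry of the ambient scroll.
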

\begin{proof}
Let $\Phi \colon \P_{\F_{2}}(\mc O^{2} \oplus \mc O(h+f)) \to \P^{7}$ be the morphism given by the complete linear system of the tautological line bundle. 
By Lemma~\ref{lem-wtP111}, $\wt{\P^{1,1,1}}$ is a sub $\P^{1}$-bundle of $\P_{\F_{2}}(\mc O^{2} \oplus \mc O(h+f))$. 
In \cite[p.170]{Fujita86}, the del Pezzo variety of type (si31i) is defined as the image of $\wt{\P^{1,1,1}}$ under the morphism $\Phi$. 
Since $H^{i}(\F_{2},\mc O(-h+f))=0$ holds for any $i$, $\Phi|_{\wt{\P^{1,1,1}}}$ is also given by $|\mc O_{\P(\mc E)}(1)|$. 
Thus $\P^{1,1,1}$ is the del Pezzo variety of type (si31i). 

Let us show (1). Since $\mc O(-K_{\P_{\F_{2}}(\mc E)})=\mc O_{\P_{\F_{2}}(\mc E)}(2)$, 
$\psi \colon \wt{\P^{1,1,1}} \to \P^{1,1,1}$ is a crepant birational morphism. 
$E_{1}+E_{2}$ is a member of $|\mc O_{\P(\mc E)}(1) \otimes \pi^{\ast}\mc O(-h-f)|=|\Exc(\Phi)|_{\wt{\P^{1,1,1}}}|$ by the definitions. 
Since $h^{0}(\mc E(-h-f))=1$ by (\ref{ex-111}), 
we have $E_{1}+E_{2}=\Exc(\Phi)|_{\wt{\P^{1,1,1}}}$. 
It is clear that $E_{1}$ is a $\pi$-section. 
As divisors on $\Exc(\Phi)=\F_{2} \times \P^{1}$, 
$E_{1}+E_{2}$ is a member of $|\pr_{1}^{\ast}\mc O(h-f) \otimes \pr_{2}^{\ast}\mc O_{\P^{1}}(1)|$. 
Since $E_{2} \in |\pr_{1}^{\ast}\mc O(h-2f)|$, we have $E_{1} \in |\pr_{1}^{\ast}\mc O(f) \otimes \pr_{2}^{\ast}\mc O_{\P^{1}}(1)|$, which implies that $E_{2}=C_{0} \times \P^{1} \simeq (\P^{1})^{2}$ and $E_{1} \cap E_{2}$ is a section of $\pr_{2}$. 
It also follows that $\pr_{2}|_{E_{i}} \colon E_{i} \to \P^{1}$ gives a $\P^{1}$-bundle structure for each $i$. In particular, we have $E_{1}+E_{2}=\Exc(\psi)$. 
(2) immediately follows from (1). 
\end{proof}

\subsection{Reducing Theorems~\ref{thm-singfibP13}\ and \ref{thm-singfibdP6}\ to Propositions~\ref{prop-singfibP13}\ and \ref{prop-hypsecP13}}

In order to prove Theorems~\ref{thm-singfibP13}~and~\ref{thm-singfibdP6}, we reduce these theorems to explicit descriptions of birational transforms of the blow-up of $\P^{3}$ along a $0$-dimensional subscheme of length $3$. 
First, we reduce Theorem~\ref{thm-singfibP13} to the following proposition. 
\begin{prop}\label{prop-singfibP13}
Let $\Sigma \subset \P^{3}$ be a $0$-dimensional closed subscheme of length $3$. 
Suppose that the minimal linear subspace $\braket{\Sigma}$ containing $\Sigma$ as a subscheme is a hyperplane on $\P^{3}$. 
Let $\s_{V} \colon V=\Bl_{\Sigma}\P^{3} \to \P^{3}$ be the blow-up and $G$ the Cartier divisor such that $\mc O_{V}(-G)=\s_{V}^{-1}\mc I_{\Sigma/\P^{3}} \cdot \mc O_{V}$. 

Then $V$ is a canonical Gorenstein weak Fano $3$-fold and the contraction morphism $\psi_{V} \colon V \to \ol{V}:=\Proj \bigoplus_{n \geq 0} H^{0}(V,\mc O(-nK_{V}))$ is a flopping contraction. Let $\chi_{V} \colon V \dra V^{+}$ be the flop and $P_{V^{+}}$ be the proper transformation of the $2$-plane $\braket{\Sigma}$ on $V^{+}$. 
Then there exists the blow-down 
$\mu_{V} \colon V^{+} \to \ol{W}$ 
which contracts $P_{V^{+}}$ to a smooth point on $\ol{W}$. 
Moreover, $\ol{W}$ is one of the following. 
\begin{enumerate}
\item If $\Sigma$ is reduced, then $\ol{W} \simeq (\P^{1})^{3}$. 
\item If $\Sigma$ is isomorphic to $\Spec \C[\e]/(\e^{2}) \sqcup \Spec \C$, 
then $\ol{W} \simeq \P^{1} \times \Q^{2}_{0}$, where $\Q^{2}_{0}$ is the cone over a conic. 
\item If $\Sigma$ is isomorphic to $\Spec \C[\e]/(\e^{3})$, 
then $\ol{W} \simeq \P^{1,1,1}$. 
\end{enumerate}
\end{prop}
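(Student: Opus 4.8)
The plan is to run, over a single point rather than over a curve, the same birational ``$2$-ray game'' as in the proof of Theorem~\ref{thm-constP13} (so that the unique ``fiber'' is $\Sigma$), and then to identify $\ol W$ using Fujita's classification of Gorenstein del Pezzo $3$-folds. Write $H=\s_V^{\ast}\mc O_{\P^3}(1)$, so that $\mc O_V(-G)=\s_V^{-1}\mc I_{\Sigma}\cdot\mc O_V$; since $\Sigma$ has length $3$ and $\braket{\Sigma}$ is a hyperplane, $h^0(\mc I_{\Sigma}(1))=1$ and the proper transform $\wt P\subset V$ of $P:=\braket{\Sigma}$ is the unique member of $|H-G|$. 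A local computation (using that $\Sigma$ is a length-$3$ local complete intersection) gives $-K_V=4H-2G=2(2H-G)$ and shows that $V$ is Gorenstein with at worst Du Val singularities along a curve: none when $\Sigma$ is reduced, a curve of $A_1$-points when $\Sigma\simeq\Spec\C[\e]/(\e^2)\sqcup\Spec\C$, and a curve of $A_2$-points when $\Sigma\simeq\Spec\C[\e]/(\e^3)$. In particular $V$ is canonical Gorenstein; setting $H_V:=2H-G$ one has $-K_V=2H_V$.

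First I would show $|2H-G|$ is base-point-free: from the exact sequence $0\to\mc I_{P/\P^3}(2)\to\mc I_{\Sigma/\P^3}(2)\to\mc I_{\Sigma/P}(2)\to0$ the outer terms are globally generated ($\mc I_{P/\P^3}(2)\simeq\mc O_{\P^3}(1)$, and $\mc I_{\Sigma/P}(2)$ has base scheme exactly $\Sigma$ since $\Sigma$ spans $P\simeq\P^2$), and $H^1(\mc I_{P/\P^3}(2))=0$, so $\mc I_{\Sigma/\P^3}(2)$ has base scheme exactly $\Sigma$. Since $H_V^3=5$, $H_V$ is big, so the semiample system $|mH_V|$ ($m\gg0$) defines a crepant morphism $\psi_V\colon V\to\ol V$. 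I would then identify the curves contracted by $\psi_V$ as the proper transforms of the $2$-secant lines of $\Sigma$, i.e.\ the lines $\ell\subset\P^3$ with $\mathrm{length}(\ell\cap\Sigma)=2$: there are $3$, $2$, $1$ of them in the three respective cases, they are mutually disjoint, and each is a smooth rational curve with normal bundle $\mc O(-1)^{\oplus2}$ in $V$ (the local content of Claim~\ref{claim-P13F}(3)). Hence $\psi_V$ is a flopping contraction and the flop $\chi_V\colon V\dra V^{+}$ is a composite of Atiyah flops; $V^{+}$ is again canonical Gorenstein with $-K_{V^{+}}=2H_{V^{+}}$ nef and big, and smooth along $P_{V^{+}}$.

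Next I would note that every $2$-secant line of $\Sigma$ lies on $P$ (the line through a length-$2$ subscheme $\Sigma'\subset\Sigma$ is $\braket{\Sigma'}\subset\braket{\Sigma}=P$), hence on $\wt P\simeq\Bl_{\Sigma}\P^2$, which is a sextic del Pezzo surface of type $(2,j)$ in the notation of Theorem~\ref{thm-GordP6} with $j=3,2,1$. Restricting the flop $\chi_V$ to $\wt P$ contracts precisely these $(-1)$-curves, so produces $P_{V^{+}}\simeq\P^2$; moreover $-K_{V^{+}}|_{P_{V^{+}}}=\mc O_{\P^2}(2)$ (from $-K_{V^{+}}=2H_{V^{+}}$, with $P_{V^{+}}$ meeting $H_{V^{+}}$ in a conic), so by adjunction $\mc N_{P_{V^{+}}/V^{+}}\simeq\mc O_{\P^2}(-1)$. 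Therefore $P_{V^{+}}$ may be contracted to a smooth point, giving $\mu_V\colon V^{+}\to\ol W$ with $\ol W$ smooth at $\mu_V(P_{V^{+}})$ and $K_{V^{+}}=\mu_V^{\ast}K_{\ol W}+2P_{V^{+}}$.

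Finally I would identify $\ol W$. As $\chi_V$ is crepant and $\mu_V$ is the blow-down of a $\P^2$ with normal bundle $\mc O_{\P^2}(-1)$, one gets $-K_{\ol W}=2H_{\ol W}$ with $H_{\ol W}$ ample and $(-K_{\ol W})^3=(-K_{V^{+}})^3+8=(-K_V)^3+8=48$, i.e.\ $H_{\ol W}^3=6$, so $\ol W$ is a Gorenstein del Pezzo $3$-fold of degree $6$ with at worst Du Val singularities along a curve. Counting the exceptional divisors of $\Bl_{\Sigma}\P^3$ (three, two, one in the three cases), which is preserved by $\chi_V$ and dropped by one under $\mu_V$, gives $\rho(\ol W)=3,2,1$; combined with the singularity type ($A_1$- resp.\ $A_2$-curve in the last two cases, compare Lemma~\ref{lem-P111}(2) and the $A_1$ of the quadric cone) and Fujita's classification of Gorenstein del Pezzo $3$-folds of degree $6$ \cite{Fujita86}, this forces $\ol W\simeq(\P^1)^3$, $\ol W\simeq\P^1\times\Q^2_0$, and $\ol W\simeq\P^{1,1,1}$ respectively; the reduced case is in any case classical, being the double projection of $(\P^1)^3$ from a point (cf.\ Subsection~\ref{subsec-doubleprojP13}). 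The main obstacle will be the non-reduced cases: verifying that $\Bl_{\Sigma}\P^3$, the $2$-secant flopping curves, and the flop behave exactly as stated (canonical Gorenstein singularities of the prescribed type, normal bundle $\mc O(-1)^{\oplus2}$, and so on) and then pinning $\ol W$ down inside Fujita's list via its Picard rank and singularities; once these local and bookkeeping points are settled, the remainder is formal.
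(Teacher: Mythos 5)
Your overall skeleton (flopping contraction given by $|2H-G|$, flop, contract the plane to a smooth point, identify $\ol W$ as a degree-$6$ del Pezzo $3$-fold) is the same two-ray game as the paper's, and your identification of the flopped curves as the proper transforms of the $2$-secant lines of $\Sigma$ is correct. But there is a genuine gap in the non-reduced cases, at the step you treat as "the local content of Claim~\ref{claim-P13F}(3)": the flopping curves do \emph{not} have normal bundle $\mc O(-1)^{\oplus 2}$ in $V$, and $\chi_{V}$ is \emph{not} a composite of Atiyah flops. The reason is that these curves pass through $\Sing V$. In case (2), with local ideal $(x^{2},y,z)$ at the double point $p_{2}$, the chart $x^{2}=ys$ shows $V$ has a curve of $A_{1}$-singularities over $p_{2}$, and the proper transform of the secant line through $p_{1}$ and $p_{2}$ (e.g.\ $\{x=z=0\}$) meets that curve; in case (3) the unique flopping curve meets the $A_{2}$-curve of $V$. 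So the flop is a flop of a curve through a compound Du Val point, with no normal bundle to speak of, and your claim that $V^{+}$ is smooth along $P_{V^{+}}$ (needed for the adjunction computation of $\mc N_{P_{V^{+}}/V^{+}}$) does not follow from what you wrote. This is precisely why the paper does not work on $V$ at all: it resolves $V$ by an explicit smooth model $M$ (iterated blow-ups of $\P^{3}$ at reduced, possibly infinitely near, points), plays the two-ray game there, and finds that extra Atiyah flops are needed on $M$ (of the curves $e_{2}^{+}$, resp.\ $e_{1}^{++}$ and $e_{2}^{+}$) which are invisible on $V$ because they lie over its singular locus; only afterwards does it descend the whole picture to $V \dra V^{+}$.

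Your final step also diverges from the paper: you want to pin down $\ol W$ from $(-K_{\ol W})^{3}=48$, index $2$, the Picard rank, and the singularity type via Fujita's list, whereas the paper identifies $W$ (the crepant resolution of $\ol W$) explicitly as a $\P^{1}$-bundle over $(\P^{1})^{2}$ resp.\ $\F_{2}$ by producing the fibration $\a\colon W\to\P^{1}$ from the pencil $|D|$ and using an Euler-number count to rule out degenerate conics. Your route could in principle work, but as written it is circular at the key point: you have not determined $\Sing(\ol W)$ (the flop can change the singularities along the flopped curves, so "$A_{1}$-curve on $V$" does not transport for free to $\ol W$), and the uniqueness of the corresponding entry in Fujita's degree-$6$ list given only $\rho$ and the singular locus is asserted rather than checked. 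You flag these as "bookkeeping", but in the non-reduced cases they are the actual content of the proof.
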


\begin{proof}[Proof of Theorem~\ref{thm-singfibP13} assuming Proposition~\ref{prop-singfibP13}.]
By Theorem~\ref{thm-constP13}, the blow-up of $\P^{3}=\F_{t}$ at the $0$-dimensional scheme $\Sigma:=T_{t}$ of length $3$ has a flop $\Phi_{t} \colon \Bl_{\Sigma}\P^{3} \dra \wt{Y}_{t}$. 
Moreover, the morphism $\wt{Y}_{t} \to Y_{t}$ contracts the proper transform of the $2$-plane $\braket{\Sigma}$ on $\wt{Y}_{t}$ to a smooth point on $Y_{t}$. 
Then Theorem~\ref{thm-singfibP13} is a direct corollary of Proposition~\ref{prop-singfibP13}. 
\end{proof}

\begin{prop}\label{prop-hypsecP13}
As in Proposition~\ref{prop-singfibP13}, consider the following diagram. 
\begin{align}\label{dia-P13sp}
\xymatrix{
&\Bl_{x}\ol{W}=V^{+} \ar[ld]_{\mu_{V}}&V\ar[rd]^{\s_{V}} \ar@{-->}[l]_{\qquad \chi_{V}}& \\
\ol{W}&&&\P^{3}.
}
\end{align}
Let $X$ be a prime hyperplane section of $\ol{W}$ containing $x$ as a smooth point. 
Then the proper transform of $X$ on $\P^{3}$ is an irreducible and reduced quadric surface $Q$ containing $\Sigma$. 
Moreover, the following assertions hold. 
\begin{enumerate}
\item If $\# \Sigma=3$, then $X$ is of type (2,3) or (1,3). 
Moreover, $Q$ is smooth at every point of $\Sigma$. 
\item If $\# \Sigma_{\red}=2$, then $X$ is of type (2,2), (1,2) or (n2). 
In this case, if $p_{1}$ denote the reduced point of $\Sigma$ and $p_{2}$ the double point of $\Sigma$, then $Q$ is smooth at $p_{1}$. 
Moreover, $X$ is of type (2,2) or (1,2) (resp. (n2)) if and only if $Q$ is smooth (resp. singular) at $p_{2}$. 
\item If $\# \Sigma_{\red}=1$, then $X$ is of type (2,1), (1,1) or (n4). 
In this case, $X$ is of type (2,1) or (1,1) (resp. (n4)) if and only if $Q$ is smooth (resp. singular) at $\Sigma_{\red}$. 
\end{enumerate}
\end{prop}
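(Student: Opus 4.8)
The plan is to run the diagram (\ref{dia-P13sp}) with the divisor $X$ added, identify its proper transform inside $\P^{3}$ by a divisor‑class computation, and then read off the type of $X$ from the way $\Sigma$ sits relative to the singular locus of that quadric, using Theorem~\ref{thm-GordP6}. First I would set up the numerology. Writing $H$ for the pullback to $V=\Bl_{\Sigma}\P^{3}$ of the hyperplane class of $\P^{3}$, one has $-K_{V}=4H-2G$, and the proper transform $P_{V}$ of $\braket{\Sigma}$ is the unique member of $|H-G|$. Since $\ol W$ is one of $(\P^{1})^{3}$, $\P^{1}\times\Q^{2}_{0}$, $\P^{1,1,1}$, it is embedded by $A:=-\tfrac12 K_{\ol W}$, and because $x$ is a smooth point of $\ol W$ we get $-K_{V^{+}}=\mu_{V}^{\ast}(2A)-2P_{V^{+}}$; hence a hyperplane section $X\in|A|$ that is smooth at $x$ has proper transform $\wt X=\mu_{V}^{\ast}X-P_{V^{+}}\in|\mu_{V}^{\ast}A-P_{V^{+}}|=\bigl|-\tfrac12 K_{V^{+}}\bigr|$. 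Transporting this class back through the flop $\chi_{V}$ (an isomorphism in codimension $1$) identifies $\wt X$ with the proper transform $\wt Q\subset V$ of a member of $\bigl|-\tfrac12 K_{V}\bigr|=|2H-G|$; pushing down by $\s_{V}$ shows that the proper transform $Q$ of $X$ on $\P^{3}$ is a member of $|\mc O_{\P^{3}}(2)\otimes\mc I_{\Sigma}|$, i.e. a quadric surface containing $\Sigma$, with $\wt Q=\Bl_{\Sigma}Q$ and $\s_{V}|_{\wt Q}\colon\wt Q\to Q$ the corresponding blow‑up.

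Next I would check that $Q$ is irreducible and reduced. Since $x$ is a smooth point of the prime divisor $X$, its proper transform $\wt X$, hence also $\wt Q$ and the integral surface $Q=\s_{V}(\wt Q)$, are prime; being a member of $|\mc O_{\P^{3}}(2)|$, such a $Q$ is an irreducible reduced quadric, so $Q\simeq\Q^{2}$ or $Q\simeq\Q^{2}_{0}$. (A union of two planes, or a double plane, would force $X$ to be reducible, resp. non‑reduced, using that the only plane containing $\Sigma$ is $\braket{\Sigma}$, whose class $H-G$ is not $2H-G$.)

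Then comes the surface‑level analysis. The flopping curves of $\psi_{V}\colon V\to\ol V$ are exactly the lines $\ell\subset\P^{3}$ whose strict transform $\wt\ell\subset V$ satisfies $-K_{V}\cdot\wt\ell=0$, that is, the lines meeting $\Sigma$ in a subscheme of length $2$; since $\deg Q=2$ and $\Sigma\subset Q$, every such $\ell$ lies on $Q$, so all flopping curves already lie on $\wt Q=\Bl_{\Sigma}Q$. Restricting the whole diagram (\ref{dia-P13sp}) to $\wt Q$ therefore produces an explicit birational model relating $\Bl_{\Sigma}Q$ and $\Bl_{x}X$. When $Q$ is smooth along $\Sigma$, the curves contracted on the two sides are $(-2)$‑curves, $X$ has only Du Val singularities, and these singularities — together with the lines on $X$ — are controlled by the finitely many lines on $Q$ through two points of $\Sigma$; comparing with Table~2 of Theorem~\ref{thm-GordP6} then shows that $X$ is of type $(2,j)$ when $Q\simeq\Q^{2}$ and of type $(1,j)$ when $Q\simeq\Q^{2}_{0}$, with $j=\#\Sigma_{\red}$. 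In particular, in case~(1) the alternative ``$Q$ singular at a point of $\Sigma$'' is excluded, because it would yield a surface incompatible with $\#\Sigma_{\red}=3$ in that table.

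Finally, the non‑normal cases. When $Q\simeq\Q^{2}_{0}$ and $\Sigma$ meets the vertex with the non‑reduced structure relevant to (2) or (3), blowing up $\P^{3}$ along $\Sigma$ no longer restricts to blowing up points of $Q$: it introduces a whole $\P^{1}$ over the vertex, and after the family of Atiyah flops the surface $X$ fails to be normal. Computing the normalization $\ol X\to X$ and its conductor in this model, one recognises $\ol X$ as the Hirzebruch surface $\F_{2}$ (resp. $\F_{4}$) with $\ol X\to X$ the double cover on the conductor curve $C_{0}$ (resp. the gluing $E_{1}+E_{2}$) of Table~3, which is case~(n2) when the double point of $\Sigma$ lies at the vertex (so $\#\Sigma_{\red}=2$) and case~(n4) when $\Sigma_{\red}=\Sing Q$ (so $\#\Sigma_{\red}=1$). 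This last step is the main obstacle: one must carefully blow up a curvilinear length‑$3$ scheme supported at the singular point of a quadric cone, track which $(-2)$‑curves survive on the surface after flopping, and identify the normalization with the explicit Hirzebruch‑surface data of part~(2) of Theorem~\ref{thm-GordP6}. The Du Val cases, by contrast, reduce to routine blow‑up/blow‑down bookkeeping on rational surfaces.
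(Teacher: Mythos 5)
The opening divisor-class computation is sound and is in fact a pleasantly uniform version of what the paper does case by case: since $\mu_{V}$ blows up a smooth point and $\chi_{V}$ is an isomorphism in codimension one, $\wt{X}\in|-\tfrac12 K_{V^{+}}|$ transports to $|2H-G|$ on $V$, so $Q$ is a quadric through $\Sigma$ (the paper instead computes $\chi^{-1}_{\ast}\wt X$ on the resolution $M$ via (\ref{rel-P11,1}) and (\ref{rel-P111}), which has the advantage of recording the multiplicity of $Q$ at each point of $\Sigma$ separately, not just the class modulo the $\tau$-exceptional divisors). The problem begins at the pivot of your case analysis: the assertion that ``since $\deg Q=2$ and $\Sigma\subset Q$, every such $\ell$ lies on $Q$, so all flopping curves already lie on $\wt Q$'' is false. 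A line meeting $\Sigma$ in a length-$2$ subscheme meets $Q$ in a subscheme of length at least $2$, and Bezout forces $\ell\subset Q$ only when that length is at least $3$. In the reduced case with $\Sigma$ three general points of a smooth quadric, none of the three lines $\ell_{ij}$ lies on $Q$; correspondingly a general divisor of type $(1,1,1)$ on $(\P^{1})^{3}$ through $x$ contains none of the three rulings through $x$, so the flopped curves do not lie on $\Bl_{x}X$. What is actually true is that each flopping curve is either disjoint from $\wt Q$ or contained in it as a $(-2)$-curve, and deciding which — and analysing what the flop does to $\wt Q$ in the non-reduced cases, where the flopping curves interact with the singular locus of $V$ — is exactly the content that has to be proved. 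So the mechanism ``restrict the diagram to $\wt Q$'' is not established, and everything downstream of it is unsupported.

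Even granting that mechanism, the identification of the types is asserted rather than derived: ``comparing with Table~2 then shows that $X$ is of type $(2,j)$ when $Q\simeq\Q^{2}$ and of type $(1,j)$ when $Q\simeq\Q^{2}_{0}$'' is a claim that needs an argument (and is in fact stronger than the proposition, which only ties the type to the local behaviour of $Q$ along $\Sigma$; the global dichotomy $(2,j)$ versus $(1,j)$ is settled in the paper by Lemma~\ref{lem-linsecP22} through the $(\P^{2})^{2}$/$\P^{2,2}$ model, not here). The non-normal cases, which you yourself flag as ``the main obstacle,'' are precisely where the proof lives: the paper handles them by writing $\ol W$ as the image of an explicit projective bundle $\P_{\F_{2}}(\mc E)$ from Proposition~\ref{prop-singfibP13}, expressing $\wt X$ as a divisor $\xi-aE_{1}-bE_{2}$ there, and using Lemma~\ref{lem-irr} either to realise $\wt X$ as $\Bl_{Z}\F_{2}$ for a length-$2$ scheme $Z$ (normal cases, with the type read off via elementary transformations) or as a section of the bundle (non-normal cases, which immediately exhibits the normalization as $\F_{2}$ or $\F_{4}$ and matches Table~3 of Theorem~\ref{thm-GordP6}); the statements about where $Q$ is singular then come from the coefficients of the individual $E_{i}$ in the transformed class. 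As it stands, your proposal proves the first sentence of the proposition and sketches a programme for the rest that rests on an incorrect lemma.
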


\begin{proof}[Proof of Theorem~\ref{thm-singfibdP6} assuming Proposition~\ref{prop-hypsecP13}]
Let $\vp \colon X \to C$ be a sextic del Pezzo fibration and $C_{0}$ a $\vp$-section. 
Let $(q \colon Q \to C,T)$ be the relative double projection of $(\vp \colon X \to C,C_{0})$. 
Let $f \colon \F \to C$ be the $\P^{3}$-bundle containing $Q$ as in the proof of Theorem~\ref{mainthm-P13} (see Subsection~\ref{subsec-proofmainthmP13}). 
In Theorems \ref{mainthm-P13} and \ref{mainthm-P22}, 
we constructed a moderate $(\P^{1})^{3}$-fibration $\vp_{Y} \colon Y \to C$ and a moderate $(\P^{2})^{2}$-fibration $\vp_{Z} \colon Z \to C$ such that $Y$ and $Z$ contain $X$ as a relatively linear section over $C$. 
Let us fix $t \in C$. 
Then $X_{t}$ is a linear section of $Y_{t}$ and $Z_{t}$ and $x:=C_{0,t}$ is a smooth point of $X_{t}$ and hence that of $Y_{t}$ and $Z_{t}$. 

\begin{claim}\label{claim-ij}
If $X_{t}$ is Du Val, then $X_{t}$ is of type ($\#(B_{t})_{\red}$, $\#(T_{t})_{\red}$). 
\end{claim}
\begin{proof}
Take $i \in \{1,2\}$ and $j \in \{1,2,3\}$ such that $X_{t}$ is of type $(i,j)$. 
First, we show $i=\#(B_{t})_{\red}$. 
By Theorem~\ref{thm-singfibP22}, $\#(B_{t})_{\red}=2$ or $1$ if and only if $Z_{t}=(\P^{2})^{2}$ or $\P^{2,2}$ respectively. 
Since $X_{t}$ is a codimension $2$ linear section of $Z_{t}$, Lemma~\ref{lem-linsecP22} implies $\#(B_{t})_{\red}=i$. 
Next, we show $\#(T_{t})_{\red}=j$. 
Letting $\Sigma:=T_{t} \subset \F_{t}=\P^{3}$, $Y_{t}$ is nothing but $\ol{W}$ in Proposition~\ref{prop-singfibP13}. 
Then $\#(T_{t})_{\red}=j$ follows from Proposition~\ref{prop-hypsecP13}. 
\end{proof}
Let us prove Theorem~\ref{thm-singfibdP6}. 

(1) Assume $X_{t}$ is of type (2,$j$). 
By the definition of $B$, $\#(B_{t})_{\red}=2$ (resp. $1$) if and only if $Q_{t}$ is smooth (resp. singular). 
Claim~\ref{claim-ij} shows that $Q_{t}$ is smooth and $\#(T_{t})_{\red}=j$. 
Conversely, let us assume that $Q_{t}$ is smooth and $\# (T_{t})_{\red}=j$. 
Then $X_{t}$ is normal by Proposition~\ref{prop-hypsecP13}. 
By Claim~\ref{claim-ij}, $X_{t}$ is of type (2,$j$). 

(2) Assume $X_{t}$ is of type (1,$j$).  
Then Claim~\ref{claim-ij} shows that $Q_{t}$ is singular and $\#(T_{t})_{\red}=j$. 
Moreover, it follows that $\Sing Q_{t} \cap (T_{t})_{\red} = \emp$ from Proposition~\ref{prop-hypsecP13}. 
Conversely, let us assume that $Q_{t}$ is singular, $\# (T_{t})_{\red}=j$ and $\Sing Q_{t} \cap (T_{t})_{\red}=\emp$. 
Then Proposition~\ref{prop-hypsecP13} and Claim~\ref{claim-ij} shows that $X_{t}$ is of type (1,$j$). 

(3) and (4) follow directly from Proposition~\ref{prop-hypsecP13}~(2) and (3). 
\end{proof}

\subsection{Proofs of Propositions~\ref{prop-singfibP13}\ and \ref{prop-hypsecP13}}
To complete the aim, we devote the remaining part of this section for proving Propsositions~\ref{prop-singfibP13} and \ref{prop-hypsecP13}. 
We proceed in the following order: 
Proposition~\ref{prop-singfibP13}~(1); 
Proposition~\ref{prop-hypsecP13}~(1); 
Proposition~\ref{prop-singfibP13}~(2); 
Proposition~\ref{prop-hypsecP13}~(2); 
Proposition~\ref{prop-singfibP13}~(3); 
Proposition~\ref{prop-hypsecP13}~(3). 

\subsubsection{Proof of Propositions~\ref{prop-singfibP13}~(1) and \ref{prop-hypsecP13}~(1)}
Suppose that $\Sigma$ is reduced. 
Note that such $\Sigma$ is unique up to projection equivalences on $\P^{3}$. 
Thus, as we saw in the proof of Theorem~\ref{thm-constP13}, we conclude Proposition~\ref{prop-singfibP13}~(1) by taking the proper transform of general fibers in the diagram~(\ref{dia-P13}). 
Let us show Proposition~\ref{prop-hypsecP13}~(1). 
Set $\mc E=\mc O_{(\P^{1})^{2}}(1,1)^{2}$. 
Then $X$ is a member of $|\mc O_{\P(\mc E)}(1)|$. 
Lemma~\ref{lem-irr} gives an exact sequence 
\[0 \to \mc O \to \mc O(1,1)^{2} \to \mc I_{Z}(2,2) \to 0\]
on $(\P^{1})^{2}$ and $\Bl_{Z}(\P^{1})^{2}=X$, where $Z$ is a $0$-dimensional subscheme with $\lgth(\mc O_{Z})=c_{2}(\mc E)=2$. 
By seeing the above sequence, no lines in the quadric $(\P^{1})^{2} \subset \P^{3}$ contain $Z$. 
By counting the number of the lines on $X$, we can conclude that $X$ is of type (2,3) or (2,2). 
Let $H$ be a hyperplane section of $\P^{3}$. 
Under the birational map in the diagram (\ref{dia-P13sp}), 
we have 
$(\chi_{V})^{-1}_{\ast}(\mu_{V})^{-1}_{\ast}X \sim (\chi_{V})^{-1}_{\ast}(\mu_{V}^{\ast}X-\Exc(\mu_{V})) \sim (\s_{V}^{\ast}(3H)-2\Exc(\s_{V}))-(\s_{V}^{\ast}(H)-\Exc(\s_{V})) \sim \s_{V}^{\ast}(2H)-\Exc(\s)$. 
Thus $Q$ is a quadric on $\P^{3}$ and smooth at all points of $\Sigma$. 
\qed

\subsubsection{Proof of Proposition~\ref{prop-singfibP13}~(2)}
We proceed in 5 steps. 

\noindent\textbf{Step 1. } In this step, we set our basic preliminaries and notation. 

Let $p_{1} \in \Sigma$ be the unique reduced point of $\Sigma$ and let $p_{2}=(\Sigma \setminus p_{1})_{\red}$. 
Let $f \colon \Bl_{p_{1},p_{2}}\P^{3} \to \P^{3}$ be the blow-up and $E_{1}'$ and $E_{2}'$ the $f$-exceptional divisors which dominates $p_{1}$ and $p_{2}$ respectively. 
Let $p_{3} \in E_{2}'$ be the point corresponding to the tangent vector of $\Sigma$ at $p_{2}$. 
Set $g \colon M:=\Bl_{p_{3}}\Bl_{p_{1},p_{2}}\P^{3} \to \Bl_{p_{1},p_{2}}\P^{3}$ and $\s:=f \circ g$. 
Set $H:=\s^{\ast}\mc O_{\P^{3}}(1)$, $E_{3}:=\Exc(g)$, $E_{i}:=g^{-1}_{\ast}E_{i}'$ for $i=1,2$. 
Let $P$ be the proper transform of the linear hull of $\Sigma$. 
Since $K_{M} \sim \s^{\ast}K_{\P^{3}}+2E_{1}+2E_{2}+4E_{3}$, we have 
\begin{align}\label{eq-KP-P11,1}
P \in |H-(E_{1}+E_{2}+2E_{3})| \text{ and } -K_{M} \sim 2(H+P). 
\end{align}
Since $\s^{-1}\mc I_{\Sigma} \cdot \mc O_{M}=\mc O(-E_{1}-E_{2}-2E_{3})$, there is a natural morphism 
\[\tau \colon M \to V:=\Bl_{\Sigma}\P^{3}\]
by the universal property of the blow-up. 
Note that $\tau(E_{2})=C \simeq \P^{1}$ and $\tau$ is a crepant divisorial contraction. 
Thus $\Bl_{\Sigma}\P^{3}$ has a family of Du Val $A_{1}$-singularities along its singular locus $C$. 

Finally, let $l_{12}$ (resp. $l_{23}$) be the proper transform of the linear hull of $\Sigma_{\red}=\{p_{1},p_{2}\}$ (resp. $\Sigma \setminus \{p_{1}\}$) and $e_{i}=E_{i} \cap P$ for $i=1,2,3$. 
As divisors on $P$, we have 
\begin{align}\label{eq-le-P11,1}
l_{12} \sim H|_{P}-(e_{1}+e_{2}+e_{3}) \text{ and } l_{23} \sim H|_{P}-(e_{2}+2e_{3}). 
\end{align}

\noindent
\textbf{Step 2. } In this step, we show that $V$ is weak Fano (and so is $M$) and the morphism $\psi_{V}$ is small. We also construct a birational link from $M$ explicitly. 

First, we confirm that $-K_{V}$ is nef, big and divisible by $2$, which is equivalent to the following claim. 
\begin{claim}\label{claim-MwF}
$M$ is weak Fano and $-K_{M}$ is divisible by $2$ in $\Pic(M)$. 
\end{claim}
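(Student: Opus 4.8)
The divisibility statement is already contained in $(\ref{eq-KP-P11,1})$: there we have $-K_M \sim 2(H+P)$, and since $M$ is smooth the class $H+P = 2H - E_1 - E_2 - 2E_3$ is an honest line bundle, so $\mc O_M(-K_M) \simeq \mc O_M(H+P)^{\otimes 2}$. Because the contraction $\tau$ is crepant we have $-K_M = \tau^{\ast}(-K_V)$, and $-K_M$ is nef (resp.\ big) precisely when $-K_V$ is; thus the remaining content of the Claim is that $H+P$ is nef and big, i.e.\ that $M$ is weak Fano.

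For nefness the plan is to prove that the complete linear system $|H+P| = |2H - E_1 - E_2 - 2E_3|$ is base point free. Via $\s$ this system is identified with the strict transform of the system of quadric surfaces in $\P^3$ containing the length-$3$ subscheme $\Sigma$. Since a zero-dimensional scheme of length $3$ imposes independent conditions on quadrics (so $h^1(\P^3,\mc I_\Sigma(2))=0$), one gets $h^0(\mc O_M(H+P)) = 7$; moreover one checks — using the vanishing of the pertinent groups $H^1(M,\mc O_M((H+P) - E_i))$, where the non-degeneracy of $\Sigma$ (it spans the plane $P$, so such a $\Sigma$ is unique up to $\mathrm{PGL}_4$) enters — that the restriction maps $H^0(\mc O_M(H+P)) \to H^0(E_i,\mc O_{E_i}((H+P)|_{E_i}))$ are surjective for $i = 1,2,3$. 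Base point freeness is then verified stratum by stratum: on $M \setminus (E_1 \cup E_2 \cup E_3) \cong \P^3 \setminus \Sigma$ it holds because quadrics through $\Sigma$ have no base point outside $\Sigma$; and a direct computation from $(\ref{eq-KP-P11,1})$ and $(\ref{eq-le-P11,1})$ gives $(H+P)|_{E_1} = (H+P)|_{E_3} = \mc O_{\P^2}(1)$ and $(H+P)|_{E_2}$ equal to a ruling class of $E_2 \cong \F_1$, all of which are base point free. Hence $H+P$, and therefore $-K_M = 2(H+P)$, is nef.

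Bigness is then automatic: a nef class with positive top self-intersection is big, and $(-K_M)^3 = 8(H+P)^3 = 40 > 0$, which one obtains either by a routine intersection computation on the threefold point-blow-up $\s \colon M \to \P^3$ or by following $(-K)^3$ through the three successive point blow-ups (from $64$ down to $56$, then $48$, then $40$). Therefore $M$ is a weak Fano $3$-fold and $-K_M$ is divisible by $2$, which is the Claim; together with $\tau$ being crepant and the $A_1$-singularities of $V$ recorded above, this also yields that $V$ is a Gorenstein canonical weak Fano $3$-fold with $-K_V$ divisible by $2$.

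The step I expect to cause the most trouble is the base point freeness verification — concretely, the $H^1$-vanishings giving surjectivity of the restriction maps, especially near the infinitely near point $p_2$, where the multiplicity $2E_3$ and the linear equivalences $(\ref{eq-le-P11,1})$ must be used carefully. A shorter alternative, which also sidesteps these vanishings, is to build the birational link $M \dra V^{+} \to \ol W$ promised in this step by explicit blow-ups and flops and then deduce that $M$ is weak Fano a posteriori: $\ol W \simeq \P^1 \times \Q^2_0$ has ample anticanonical class, and pulling it back first along the blow-down $\mu_V \colon V^{+} \to \ol W$ (the strict transform of a general hyperplane through the blown-up point being base point free) and then along the flop and the crepant contraction $\tau$ keeps $-K$ nef.
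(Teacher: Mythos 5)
Your proof is correct in outline but takes a genuinely different, and considerably heavier, route than the paper. The paper's argument for nefness is a two-line reduction: since $H$ is nef and $P$ is effective, any irreducible curve $\gamma$ with $(H+P)\cdot\gamma<0$ must satisfy $P\cdot\gamma<0$ and hence lie in $P$; so it suffices to check that $(H+P)|_{P}\sim l_{12}+l_{23}+e_{2}+e_{3}$ is nef, which is an elementary computation on the surface $P$ using (\ref{eq-KP-P11,1}) and (\ref{eq-le-P11,1}). Bigness then follows from $(-K_{M})^{3}=40>0$ exactly as you say. You instead prove the stronger statement that $|H+P|$ is base point free. That route does work --- $h^{0}(\mathcal{I}_{\Sigma}(2))=7$, quadrics through a non-colinear length-$3$ scheme have no base points off $\Sigma$, and the restrictions to $E_{1},E_{2},E_{3}$ are the globally generated classes you list, with the needed generation of $(H+P)|_{E_{i}}$ obtainable by exhibiting explicit proper transforms of quadrics (e.g.\ pairs of planes) rather than via $H^{1}$-vanishing --- but you have deferred precisely those verifications, which you correctly identify as the delicate part, and they are entirely avoidable: base point freeness is more than the Claim needs. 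Note also that your stratification should include $P$ itself (or one should observe that $P\setminus(E_{1}\cup E_{2}\cup E_{3})$ sits inside $\P^{3}\setminus\Sigma$, which it does). Finally, your proposed ``shorter alternative'' is circular as stated: in the paper the identification $\ol{W}\simeq\P^{1}\times\Q^{2}_{0}$ (Steps 3--5 of the proof of Proposition~\ref{prop-singfibP13}~(2)) is \emph{deduced from} Claim~\ref{claim-MwF} (weak Fano-ness of $W$ enters in Step 4), so you cannot pull $-K_{\ol{W}}$ back to establish the Claim without first identifying $\ol{W}$ by some independent computation; moreover $-K_{V^{+}}$ is $\mu_{V}^{\ast}(-K_{\ol{W}})-2\Exc(\mu_{V})$, not a pullback, so nefness would still require an argument on the blown-up point.
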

\begin{proof}
Since $-K_{M} \sim 2(H+P)$ by (\ref{eq-KP-P11,1}) and $(-K_{M})^{3}=40$, it suffices to see that $(H+P)|_{P}$ is nef. 
By (\ref{eq-KP-P11,1}) and (\ref{eq-le-P11,1}), we have $(H+P)|_{P} \sim l_{12}+l_{23}+e_{2}+e_{3}$, which is nef. 
\end{proof}
Next, we show that the morphism from $V$ onto the anti-canonical model $\ol{V}$ is flopping. 
From the proof of Claim~\ref{claim-MwF}, it follows that a curve $\g \subset P$ satisfying $-K_{M}.\g=0$ is $l_{12}$, $l_{23}$ or $e_{2}$. 
Set $H_{V}:=\tau_{\ast}H$ and $P_{V}:=\tau_{\ast}P$. 
Then we get $-K_{V} \sim 2(H_{V}+P_{V})$, which is ample over $\P^{3}$ from our construction. 
Therefore, if a curve $\g \subset V$ satisfies $-K_{V}.\g=0$, then we get $\g \subset P_{V}$. 
Since the morphism $\tau|_{P} \colon P \to P_{V}$ is the contraction of the $(-2)$-curve $e_{2}$ on $P$, it holds that $\g=\tau(l_{12})$ or $\tau(l_{23})$. 
Hence $V \to \ol{V}$ is small. 

Let us see the flop of $l_{12}$ and $l_{23}$ on $M$ explicitly. 
Note that $P \subset M$ contains $l_{12},l_{23}$ as $(-1)$-curves and $e_{2}$ as a $(-2)$-curve. 
Hence we can see that $\mc N_{l_{12}/M},\mc N_{l_{23}/M} \simeq \mc O(-1)^{2}$. 
Let $\chi_{1} \colon M \dra M^{+}$ be the flop of $l_{12}$ and $l_{23}$. 
Set $e_{2}^{+}:=\chi_{1\ast}(e_{2})$ and $P^{+}:=\chi_{1\ast}(P)$. 
Then $e_{2}^{+}$ is a $(-1)$-curve of $P^{+}$ and hence $\mc N_{e_{2}^{+}/M^{+}} \simeq \mc O(-1)^{2}$. 
Let $\chi_{2} \colon M^{+}\dra M^{++}$ be the flop of $e_{2}^{+}$ and set $P^{++}:=\chi_{2\ast}(P^{+})$. 
Then $P^{++}$ is isomorphic to $\P^{2}$. 
By the construction of these flops, we have 
\begin{align}\label{E2Pemp}
E_{2}^{++} \cap P^{++}=\emp.
\end{align}
If $e_{3}^{++} \subset M^{++}$ denotes the proper transform, then $e_{3}^{++} \subset P^{++}$ is a line of $P^{++} \simeq \P^{2}$ and $-K_{M^{++}}.e_{3}^{++}=-K_{M}.e_{3}=2$. 
Thus $-K_{M}|_{P^{++}} \simeq \mc O_{\P^{2}}(2)$ and hence $\mc N_{P^{++}/M^{++}} \simeq \mc O(-1)$. 
Hence there exists $\mu \colon M^{++} \to W$ such that $\mu(P^{++})=p$ is a smooth point and $\mu$ is the blow-up at $p$: 
\[\xymatrix{
&\ar[ld]_{\mu}M^{++}&\ar@{-->}[l]_{\chi_{2}}M^{+}&M\ar[rd]^{\s} \ar@{-->}[l]_{\chi_{1}}& \\
W&&&&\P^{3}.
}\]
Let $\chi=\chi_{2} \circ \chi_{1}$ be the composition. 

\noindent
\textbf{Step 3. } We consider a general effective member $D \in |H-(E_{2}+2E_{3})|$ in $M$ and let $D^{++} \subset M^{++}$ be the proper transform. 
\begin{claim}\label{claim3-P1,11}
The following assertions follow.
\begin{enumerate}
\item $\dim |D|=1$ and $\Bs|D|=l_{23}$. 
\item $D^{++} \simeq \F_{2}$ and $D^{++} \cap E_{2}^{++}$ is the $(-2)$-curve on $D^{++}$. 
\item $|D^{++}|$ is base point free and $D^{++} \cap P^{++}=\emp$. 
\end{enumerate}
\end{claim}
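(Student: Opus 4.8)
The plan is to prove (1)--(3) in turn: identify the pencil $|D|$ by pushing forward to $\P^3$, and then follow a general member through the two flops $\chi_1,\chi_2$ of Step~2 (the flop $\chi_1$ of $l_{12},l_{23}$, then the flop $\chi_2$ of $e_2^+$). Write $\ell\subset\P^3$ for the line through $p_2$ whose tangent direction at $p_2$ is $p_3$, so that $l_{23}$ is the proper transform of $\ell$ on $M$; the hypothesis $\braket{\Sigma}=\P^2$ forces $\Sigma$ to be non\nobreakdash-colinear, hence $p_1\notin\ell$. For (1), the projection formula gives $H^0(M,\mc O_M(D))\simeq H^0(\P^3,\mc O(1)\otimes\s_*\mc O_M(-E_2-2E_3))$, and a local computation of $\s^*$ shows that a plane $\Pi\subset\P^3$ satisfies $\s^*\Pi\ge E_2+2E_3$ exactly when $p_2\in\Pi$ and the strict transform of $\Pi$ on $\Bl_{p_1,p_2}\P^3$ contains $p_3$, i.e.\ exactly when $\ell\subset\Pi$ (using $g^*E_2'=E_2+E_3$, so that containing $p_3$ contributes an extra $E_3$). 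Thus $|D|$ is the pencil of planes through $\ell$, so $\dim|D|=1$; its base locus on $\P^3$ is $\ell$, and since $p_1\notin\ell$ the proper transform of $\ell$ on $M$ is exactly $l_{23}$, giving $\Bs|D|=l_{23}$.

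For (2), a general $D$ is the proper transform of a general plane $\Pi\supset\ell$ with $p_1\notin\Pi$; then $D$ is disjoint from $E_1$ and is the blow-up of $\Pi\cong\P^2$ at $p_2$ and the infinitely near point $p_3$ --- a degree-$7$ weak del Pezzo surface whose unique $(-2)$-curve is $D\cap E_2$. Among the curves flopped in Step~2, $D$ contains $l_{23}$ (a $(-1)$-curve on $D$ by the analogue of the numerics in \eqref{eq-le-P11,1}) but is disjoint from $l_{12}$ and from $e_2$, because $D\cap P=l_{23}$, $l_{23}\cap E_2=\emptyset$, and $D$ meets $E_2\cong\F_1$ in a ruling different from $e_2$. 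Using the local model of the Atiyah flop along $l_{23}$, with $\mc N_{l_{23}/M}=\mc O(-1)^{\oplus2}\supset\mc N_{l_{23}/D}=\mc O(-1)$, the proper transform of $D$ meets the exceptional $\P^1\times\P^1$ in a fibre of the ruling contracted by $M^+$, so $D^+\subset M^+$ is the contraction of the $(-1)$-curve $l_{23}\subset D$. A direct intersection computation then shows $D^+$ has Picard number $2$, carries a base-point-free pencil of smooth rational curves, and contains a $(-2)$-curve (the image of $D\cap E_2$), hence $D^+\simeq\F_2$. Since $D$ is disjoint from $e_2$, the flop $\chi_2$ does not affect $D^+$, so $D^{++}\simeq\F_2$ and $D^{++}\cap E_2^{++}$ is the image of $D\cap E_2$, which is the $(-2)$-curve.

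For (3), since $\chi_2\circ\chi_1$ is an isomorphism in codimension $1$ we get $\dim|D^{++}|=1$ and $\Bs|D^{++}|\subset l_{23}^{++}$. The key point is transversality of $|D|$ along $l_{23}$: two general members meet along $l_{23}$ transversally, their conormal sub-bundles being complementary copies of $\mc O(-1)$ inside $\mc N_{l_{23}/M}$, so in $\Bl_{l_{23}}M$ distinct general members meet the exceptional $\P^1\times\P^1$ in distinct fibres of the ruling contracted by $M^+$; hence their proper transforms pass through distinct points of $l_{23}^{++}$ and $\Bs|D^{++}|=\emptyset$, so $|D^{++}|$ defines a morphism to $\P^1$. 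The same transversality applied to $D\cap P=l_{23}$ (the planes $\Pi$ and $\braket{\Sigma}$ meet along $\ell$) shows the proper transforms of $D$ and $P$ become disjoint after $\chi_1$, and $\chi_2$ leaves $D^+$ untouched, so $D^{++}\cap P^{++}=\emptyset$. I expect the main obstacle to be precisely this local analysis of the Atiyah flops along $l_{23}$ --- checking that the pencil $|D|$ becomes base-point-free and that the divisors $D$ and $P$ get separated --- which relies on the normal-bundle data of Step~2; by contrast the surface-level numerics on $D$ are routine.
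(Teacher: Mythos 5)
Your argument is correct and follows essentially the same route as the paper's: identify $|D|$ as the pencil of proper transforms of planes containing the line $l_{23}'$ spanned by the tangent vector of $\Sigma$ at $p_{2}$, observe that a general $D$ is the blow-up of $\P^{2}$ at the length-two infinitely near cluster with $D\cap E_{2}$ its $(-2)$-curve, and use transversality of $D$ with $P$ (and of the pencil with itself) along $l_{23}$ to see that the Atiyah flop contracts $l_{23}\subset D$ and separates the relevant divisors. The extra checks you supply --- that $D$ is disjoint from $l_{12}$ and from $e_{2}$, and the explicit normal-bundle bookkeeping for the flop --- are left implicit in the paper but are consistent with it.
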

\begin{proof}
(1) Let $l_{23}' \subset \P^{3}$ be the linear span of the tangent vector of $\Sigma$ at $p_{2}$. 
Then $D$ is the proper transform of a $2$-plane in $\P^{3}$ containing $l_{23}'$. 
Since $l_{23}$ is the proper transform of $l_{23}'$ on $M$, the assertion (1) holds. 

(2) Note that $D$ is the blow-up of $\P^{2}$ at an infinitely near point of order $2$ supported on $p_{2}$. 
Therefore, $E_{2} \cap D$ is the $(-2)$-curve on $D$. 
Since $\chi|_{E_{2}}$ and $\chi|_{D}$ are isomorphic along $E_{2} \cap D$, 
$E_{2}^{++} \cap D^{++}$ is a $(-2)$-curve on $D$ also. 
Since $\chi|_{D}$ is a morphism and blows down the $(-1)$-curve $l_{23}$ on $D$, we have $D^{++} \simeq \F_{2}$, which proves (2). 

(3) Recall that $P$ is the blow-up of $\P^{2}$ at a union of reduced point $p_{1}$ and an infinitely near point of order $2$ supported at $p_{2}$. 
Note that $D$ meets $P$ transversally along $l_{23}$. 
Since $M \dra M^{+}$ is the Atiyah flop of $l_{12}$ and $l_{23}$, it holds that $D^{+} \cap P^{+}=\emp$ and $\Bs |D^{+}|=\emp$. 
Since $D^{+} \cap e_{2}=\emp$, we obtain $D^{++} \cap P^{++}=\emp$ and $\Bs|D^{++}|=\emp$. 
Hence we are done. 
\end{proof}

\noindent
\textbf{Step 4. } 
In this step, we show that $W \simeq \F_{2} \times \P^{1}$. 
By Claim~\ref{claim-MwF}, $W$ is also weak Fano and 
\begin{align}\label{eq-index2-P1,11}
-K_{W} \sim 2\mu_{\ast}H.
\end{align}
Set $D_{W}:=\t_{\ast}D^{++}$. 
By Claim~\ref{claim3-P1,11}, the complete linear system $|D_{W}|$ gives a morphism $\a \colon W \to \P^{1}$ whose fibers are connected. 
By Claim~\ref{claim3-P1,11}~(2), a general fiber of $\a$ is isomorphic to $\F_{2}$ and hence  there exists an extremal ray $R \subset \ol{\NE}(W/\P^{1})$ such that $-K_{W}.R<0$. 
Let $\b \colon W \to S$ be the contraction of $R$ and $\pi \colon S \to \P^{1}$ the induced morphism. 
Since a general $\a$-fiber $D_{W}$ is isomorphic to $\F_{2}$, 
$\b|_{D_{W}} \colon D_{W} \to \b(D_{W})$ is the $\P^{1}$-bundle structure of $\F_{2}$, which implies $\dim S=2$. 
Then $S$ is non-singular and a general fiber of $S \to \P^{1}$ is a smooth rational curve. 
Since $\rho(S)=2$, $S \to \P^{1}$ is a $\P^{1}$-bundle. 
If $\D$ denotes the discriminant of the standard conic bundle $\b \colon W \to S$, then $\pi|_{\D} \colon \D \to \P^{1}$ is not dominant. 
Hence $\D$ is supported on a disjoint union of smooth rational curves if $\D$ is not empty. 
As the composition of flops $M \dra M^{+} \dra M^{++}$ preserves their Euler numbers with respect to analytic topologies, we have $\Eu(W)=8$, which implies $\D=\emp$, that is, $\b \colon W \to S$ is a $\P^{1}$-bundle: 
\[\xymatrix{
W \ar[r]^{\b} \ar@(u,u)[rr]_{\a}& S \ar[r]^{\pi} & \P^{1}.
}\]
In particular, $\a$ is smooth as a morphism and has $\F_{2}$ as a general fiber. 
Thus every $\a$-fiber is $\F_{2}$ since $W$ is weak Fano. 

\begin{claim}\label{claim5-P1,11}
Set $E_{i,W}=\t_{\ast}(E_{i}^{++})$ for $i \in \{1,2,3\}$. 
Then $E_{2,W}$ is a section of $\b$ and isomorphic to $(\P^{1})^{2}$. 
In particular, we have $S \simeq (\P^{1})^{2}$. 
Moreover, it holds that $-K_{W}|_{E_{2,W}}=(\a|_{E_{2,W}})^{\ast}\mc O_{\P^{1}}(2)$. 
\end{claim}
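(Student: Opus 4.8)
The plan is to follow the divisor $E_2\subset M$ through the two flops $\chi_1,\chi_2$ and the blow-down $\mu$. Since $E_2^{++}\cap P^{++}=\emp$ by \eqref{E2Pemp}, the map $\mu$ restricts to an isomorphism $E_2^{++}\simeq E_{2,W}$, so everything may be computed on $E_2^{++}$. Recall that $E_2$ is the blow-up of $E_2'\simeq\P^2$ at $p_3$, so $E_2\simeq\F_1$; in this picture $\lambda:=E_2\cap E_3$ is the negative section $C_0$, while $e_2=E_2\cap P$ and $D\cap E_2$ are fibres of the ruling. First I would record how the flopped curves meet $E_2$: the curve $l_{23}$ is disjoint from $E_2$ (blowing up $p_3$ separates the proper transform of the tangent line of $\Sigma$ at $p_2$ from $E_2$), whereas $l_{12}$ meets $E_2$ transversally at a single point $q$, which lies on $e_2$ but not on $\lambda$ (because $\braket{\Sigma}$ is a plane, so $p_1$ is not on the tangent line at $p_2$). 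The local Atiyah-flop model then shows that $\chi_1$ has on $E_2$ the effect of blowing up $q$, so $E_2^+$ is the blow-up of $\P^2$ at $p_3$ and at the image of $q$, with $e_2^+$ the proper transform of the line joining these two points; and $\chi_2$ has on $E_2^+$ the effect of contracting the $(-1)$-curve $e_2^+$. Contracting the line joining the two blown-up points of a blow-up of $\P^2$ at two points produces $\P^1\times\P^1$, so $E_2^{++}\simeq(\P^1)^2$ and hence $E_{2,W}\simeq(\P^1)^2$. This flop bookkeeping is the step I expect to be the main obstacle: one must run both Atiyah flops carefully enough to see that the two successive surgeries on $E_2\simeq\F_1$ yield $(\P^1)^2$ and not another Hirzebruch surface, and to record which curves on $E_2$ become fibres of $\a|_{E_{2,W}}$ and which become sections.

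Next I would show that $E_{2,W}$ is a section of $\b$. By Claim~\ref{claim3-P1,11}~(2), for a general $\a$-fibre $D_W\simeq\F_2$ the intersection $E_{2,W}\cap D_W$ is its negative section; transporting the ruling fibres of $E_2\simeq\F_1$ through the flops shows that they become one of the two rulings of $E_{2,W}\simeq(\P^1)^2$ and that $\a|_{E_{2,W}}$ is the projection onto the corresponding $\P^1$. Since every $\a$-fibre $F$ is isomorphic to $\F_2$ (Step~4) and $E_{2,W}\cap F$ is always a section of the ruling $\b|_F$, the divisor $E_{2,W}$ contains no fibre of $\b$, so $\b|_{E_{2,W}}\colon E_{2,W}\to S$ is finite; it is birational because it has degree $1$ over a general fibre of $\pi\colon S\to\P^1$. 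A finite birational morphism onto the smooth surface $S$ is an isomorphism, which proves both that $E_{2,W}$ is a section of $\b$ and that $S\simeq E_{2,W}\simeq(\P^1)^2$.

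Finally I would compute $-K_W|_{E_{2,W}}$. For an $\a$-fibre $F$ one has $\mc N_{F/W}=\mc O_F$, so $-K_W|_F=-K_F$, and $-K_{\F_2}$ has degree $0$ on the negative section $E_{2,W}\cap F$; hence $-K_W|_{E_{2,W}}=(\a|_{E_{2,W}})^{\ast}\mc O_{\P^1}(d)$ for some $d$, and $d=-K_W\cdot\g$ for $\g$ any section of $\a|_{E_{2,W}}$, e.g.\ the image of $\lambda$. Rather than using \eqref{eq-index2-P1,11} directly I would use $-K_M=2(H+P)$, since $-K$ is crepant under the two flops. On $M$ one has $H\cdot\lambda=0$ ($\lambda$ is $\s$-exceptional); and from $E_2|_{E_2}=-\ell-\lambda$ (with $\ell$ the pullback of a line on $E_2'$), $E_3|_{E_2}=\lambda$ and $[\lambda]=\mc O_{\P^2}(1)$ on $E_3\simeq\P^2$, a short computation gives $E_2\cdot\lambda=1$, $E_3\cdot\lambda=-1$, hence $P\cdot\lambda=1$ and $-K_M\cdot\lambda=2$. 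As $-K_M$ is trivial on the flopping curves, $-K_{M^{++}}\cdot\lambda^{++}=2$; and $\mu^{\ast}(-K_W)=-K_{M^{++}}+2P^{++}$ with $P^{++}\cdot\lambda^{++}=0$ by \eqref{E2Pemp}, so $-K_W\cdot\g=2$, i.e.\ $d=2$ and $-K_W|_{E_{2,W}}=(\a|_{E_{2,W}})^{\ast}\mc O_{\P^1}(2)$.
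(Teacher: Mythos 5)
Your argument is correct and follows essentially the same route as the paper: you realize $E_2\dra E_2^{++}$ as the elementary transformation of $E_2\simeq\F_1$ centered at $l_{12}\cap E_2$ (phrased as blowing up a second point of $\P^2$ and contracting the connecting line), you use Claim~\ref{claim3-P1,11} to see that $E_{2,W}$ meets each $\a$-fibre in its negative section so that $\b|_{E_{2,W}}$ is an isomorphism, and you compute the degree of $-K_W$ on the other ruling via the curve $\lambda=E_2\cap E_3$ (the paper's $e_{23}$), getting $-K_M\cdot E_2\cdot E_3=2$ exactly as in the paper. The only differences are cosmetic: you justify more explicitly why $l_{23}\cap E_2=\emp$ and why $q\notin\lambda$, and you compute $-K_W\cdot\lambda_W$ via $\mu^{*}(-K_W)=-K_{M^{++}}+2P^{++}$ rather than quoting \eqref{eq-index2-P1,11}, but these are equivalent to the paper's steps.
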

\begin{proof}
Recall that $E_{2} \simeq \F_{1}$ as varieties and $\tau|_{E_{2}} \colon E_{2} \to C \simeq \P^{1}$ gives the ruling. 
Then $E_{2} \dra E_{2}^{++}$ is the elementary transformation of the $\P^{1}$-bundle $\tau|_{E_{2}} \colon E_{2} \to C$ with center $l_{12} \cap E_{2}$. 
Since the $(-1)$-curve on $E_{2}$ does not pass through $l_{12} \cap E_{2}$, we have $E_{W} \simeq E_{2}^{++} \simeq (\P^{1})^{2}$ by (\ref{E2Pemp}). 
By Claim~\ref{claim3-P1,11}, for a $\a$-fiber $D_{W} \simeq \F_{2}$, the intersection $D_{W} \cap E_{2,W}$ is the $(-2)$-curve on $D_{W}$. 
Thus $\b|_{E_{2,W}} \colon E_{2,W} \to S$ is birational and hence is isomorphic. Hence $E_{2,W}$ is a section of $\b$. 

Let $\g$ be a fiber of $\a|_{E_{2,W}}$ and $D_{W}$ the fiber of $\a$ containing $\g$. 
Then $\g$ is the $(-2)$-curve on $D_{W}$ and hence $-K_{W}.\g=0$. 
Hence $-K_{W}|_{E_{2,W}}$ is the pull-back of a line bundle on $\P^{1}$ under the morphism $\a|_{E_{2,W}}$. 
To see the last statement, set $e_{23}:=E_{2} \cap E_{3}$ and let $e_{23}^{++}$ be the proper transform on $M^{++}$. 
Since $E_{2}^{++}$ contains $e_{23}^{++}$, we can define the proper transform $e_{23,W}$ on $W$, which is the different ruling of $E_{2,W} \simeq (\P^{1})^{2}$ from $\g$. 
Then we have $-K_{W}.e_{23,W}=-K_{M^{++}}.e_{23}^{++}=-K_{M}.e_{23}=-K_{M}E_{2}E_{3}=2$ by a direct calculation. Hence we are done. 
\end{proof}

Let $\mc E$ be a rank $2$ bundle on $S \simeq (\P^{1})^{2}$ such that $\P(\mc E) \simeq W$. 
By (\ref{eq-index2-P1,11}) we may assume that $\det \mc E=-K_{S}$. 
Then $\mc O(-K_{W})=\mc O_{\P(\mc E)}(2)$. 
By Claim~\ref{claim5-P1,11}, $E_{2,W}$ is the section corresponding to a surjection $\mc E \to \pi^{\ast}\mc O(1)$ on $S$. 
Therefore, we have $\mc E \simeq \pi^{\ast}\mc O(1) \oplus \mc O(-K_{S}) \otimes \pi^{\ast}\mc O(-1)$, which implies $W \simeq \F_{2} \times \P^{1}$. 

\noindent
\textbf{Step 5. } 
Set $\ol{W}=\Q^{2}_{0} \times \P^{1}$ and $\psi_{W} \colon W \to \ol{W}$ be the crepant contraction. 
Since $P^{++} \cap E_{1}^{++}=\emp$ by Claim~\ref{claim3-P1,11}, the center of the blow-up $M^{++} \to W$, say $w \in W$, is not contained in $E_{1,W}$. 
Set $\ol{w}:=\psi_{W}(w)$ and let $V^{+}:=\Bl_{\ol{w}}\ol{W}$. 
Note that $\ol{w}$ is a smooth point of $\ol{W}$. 
Then we have a commutative diagram 
\[\xymatrix{
&\ar[ld]_{\mu}M^{++}\ar[d]&\ar@{-->}[l]_{\chi_{2}}M^{+}&\ar@{-->}[l]_{\chi_{1}}M\ar[rdd]^{\sigma} \ar[d]_{\tau}& \\
W\ar[d]_{\psi_{W}}&\ar[ld]^{\mu_{V}}V^{+}&&\ar@{-->}[ll]_{\chi_{V}}V=\Bl_{\Sigma}\P^{3}\ar[rd]_{\sigma_{V}}& \\
\ol{W}&&&&\P^{3}.
}\]
The rational map $\chi_{V}$ is isomorphic in codimension $1$ and not isomorphism. 
Since the anti-canonical models of $M,M^{+},M^{++}$ are the same, those of $V,V^{+}$ are also the same. Hence $\chi_{V}$ is a flop, which completes the proof. 
\qed

\subsubsection{Proof of Proposition~\ref{prop-hypsecP13}~(2)}

We use the same notation as in Proposition~\ref{prop-singfibP13}~(2) and its proof. 
Set $\mc E=\mc O_{\F_{2}}(h)^{2}$. 
Then $W=\P_{\F_{2}}(\mc E)$ and $\psi_{W}$ is the blow-up of $\ol{W}=\Q^{2}_{0} \times \P^{1}$ along its singular locus. 
Let $p \colon W \to \F_{2}$ be the projection, $\xi$ the tautological divisor of $\P_{\F_{2}}(\mc E)$, $L=\pi^{\ast}h$ and $F=\pi^{\ast}f$. 
Then the unique member $E \in |L-2F|$ is $\Exc(\psi_{W})$. 
From the construction, we have the following linear equivalences of divisors by a direct calculation. 
\begin{align}\label{rel-P11,1}
\left\{ \begin{array}{ll}
\chi^{-1}_{\ast}\mu^{\ast}F\sim D \sim (H-(E_{2}+2E_{3})) \\
\chi^{-1}_{\ast}\mu^{\ast}(L-2F) \sim E_{2} \\
\chi^{-1}_{\ast}P^{++}\sim P  \sim H-(E_{1}+E_{2}+2E_{3}) \text{ and } \\
\chi^{-1}_{\ast}\mu^{\ast}\xi 
\sim \frac{1}{2}\left( -K_{M}+2P\right) \sim 3H-(2E_{1}+2E_{2}+4E_{3}). 
\end{array} \right.
\end{align}

Let $\wt{X} \subset W$ be the proper transformation of $X \subset \ol{W}$. 
Noting that $\mc O_{\P(\mc E)}(\xi)=f^{\ast}\mc O_{\ol{W}}(1)$, we can take an integer $a \geq 0$ such that $\wt{X} \in |\xi-aE|=|\xi-a(L-2F)|$. 
It holds that $a \in \{0,1\}$ since $|\xi-aE| \neq \emp$. 

\textit{Case 1; $a=1$. } 

In this case, $\wt{X} \in |\xi-E|=|\xi-(L-2F)|$. Since $X$ contains $\Sing(\ol{W})$, $X$ is non-normal. 
Let $s \colon \mc O(h-2f) \to \mc E$ be the section which corresponds to $\wt{X}$. 
Since $\wt{X}$ is irreducible, Lemma~\ref{lem-irr} gives the exact sequence 
$0 \to \mc O(h-2f) \to \mc O(h)^{2} \to \mc I_{Z}(h+2f)  \to 0$, 
where $Z \subset \F_{2}$ is $0$-dimensional or empty. 
By computing the Chern classes, we obtain $\lgth(\mc O_{Z})=0$, i.e., $Z=\emp$. 
Therefore, $\wt{X}$ is a section of $\pi \colon \P_{\F_{2}}(\mc E) \to \F_{2}$. 
The morphism $\wt{X} \simeq \F_{2} \to X$ is nothing but the normalization of $X$ and hence $X$ is of type (n2). 
Moreover, we have 
\begin{align*}
\chi^{-1}_{\ast}\wt{X} &\sim \chi^{-1}_{\ast}\mu^{\ast}(\xi-(L-2F))-P 
\sim 2H-(E_{1}+2E_{2}+2E_{3}). 
\end{align*}
by (\ref{rel-P11,1}). 
Hence $Q$ is a quadric cone in $\P^{3}$ such that $Q$ contains $p_{1}$ as a smooth point and $p_{2}$ as the vertex. 

\textit{Case 2; $a=0$. }

In this case, $\wt{X}$ is a prime member of $|\mc O_{\P(\mc E)}(1)|$. 
Lemma~\ref{lem-irr} gives the following exact sequence on $\F_{2}$: 
\begin{align}\label{ex-sing(n,2)}
0 \to \mc O \to \mc O(h)^{2} \to \mc I_{Z}(2h) \to 0. 
\end{align}
Now $\Bl_{Z}\F_{2}=\wt{X}$ and $Z$ is a $0$-dimensional subscheme of length $c_{2}(\mc E)=2$. 
Since $-K_{\wt{X}}$ is nef and big, we have $Z \cap C_{0}=\emp$.
Tensoring the sequence (\ref{ex-sing(n,2)}) with $\mc O(-2h+f)$, 
we have $h^{0}(\mc I_{Z}(f))=0$, which means that every fiber of $\F_{2} \to \P^{1}$ does not contain $Z$. 

If $Z$ is a union of two points $\{q_{1},q_{2}\}$, 
then by taking an elementary transformation of $\F_{2}$ with center $q_{1}$, 
we can see that $\wt{X}$ is the blow-up of $\P^{2}$ at a non-colinear union of a reduced point and an infinitely near point of order $2$. 
Hence $X$ is of type (2,2). 

On the other hand, if $Z$ is non-reduced, 
then by taking an elementary transformation of $\F_{2}$ with center $q_{1}:=Z_{\red}$, 
we can see that the minimal resolution of $\wt{X}$ is the blow-up of $\P^{2}$ at a colinear union of a reduced point and an infinitely near point of order $2$. 
Hence $X$ is of type (1,2). 

For each cases, we obtain 
\begin{align*}
\chi^{-1}_{\ast}\wt{X} &\sim \chi^{-1}_{\ast}\mu^{\ast}\xi-P
\sim 2H-(E_{1}+E_{2}+2E_{3}).
\end{align*}
by (\ref{rel-P11,1}). 
Hence $Q$ is an irreducible and reduced quadric surface in $\P^{3}$ such that $Q$ contains $p_{1},p_{2}$ as smooth points. 
We complete the proof. 
\qed

\subsubsection{Proof of Proposition~\ref{prop-singfibP13}~(3)}
We proceed in the same way as in the proof of Proposition~\ref{prop-singfibP13}~(2). 

\noindent
\textbf{Step 1.} 
We set our basic preliminaries and fix notation as follows. 

Let $\Sigma_{\red}=\{p_{1}\}$ and $f_{1} \colon \Bl_{p_{1}}\P^{3} \to \P^{3}$ the blow-up at $p_{1}$. 
Let $\Sigma'$ be the $0$-dimensional length $2$ subscheme $\Sigma'$ corresponding to $\Sigma$. 
We set $\Sigma'_{\red}=\{p_{2}\}$. Then $p_{2}$ is a point of $\Exc(f_{1})$. 
Let $f_{2} \colon \Bl_{p_{2}}\Bl_{p_{1}}\P^{3} \to \Bl_{p_{1}}\P^{3}$ be the blow-up at $p_{2}$. 
Then we have a point $p_{3} \in \Exc(f_{2})$ corresponding to the tangent vector of $\Sigma'$ at $p_{2}$. 
Since $\Sigma$ is not colinear, $\Sigma'$ is not contained in $\Exc(f_{1})$ as schemes and hence it holds that $p_{3} \not\in (f_{2})^{-1}_{\ast}\Exc(f_{1})$. 
Let $f_{3} \colon M:=\Bl_{p_{3}}\Bl_{p_{2}}\Bl_{p_{1}}\P^{3} \to \Bl_{p_{2}}\Bl_{p_{1}}\P^{3}$ denote the blow-up at $p_{3}$ and $\s=f_{1} \circ f_{2} \circ f_{3}$. 
Let $H=\s^{\ast}\mc O(1)$ and $E_{i} \subset M$ the proper transformation of $\Exc(f_{i})$. 
Note that $E_{1}$ and $E_{2}$ are isomorphic to $\F_{1}$. 
Let $P \subset M$ be the proper transform of the linear hull of $\Sigma$. 
Since $K_{M}=\s^{\ast}K_{\P^{3}}+2E_{1}+4E_{2}+6E_{3}$, we have 
\begin{align}
\label{eq-P111-PK}
P \in |H-(E_{1}+2E_{2}+3E_{3})| \text{ and } -K_{M} \sim 2H+2P. 
\end{align}

Since $\s^{-1}\mc I_{\Sigma} \cdot \mc O_{M}=\mc O(-E_{1}-2E_{2}-3E_{3})$, we have a natural morphism 
\[\tau_{M} \colon M \to V:=\Bl_{\Sigma}\P^{3}\]
by the universal property. 
Then $\tau_{M}$ contracts $E_{1}$ and $E_{2}$ to one smooth rational curve $C$. 
For any $i \in \{1,2\}$, $\tau_{M}|_{E_{i}} \colon E_{i} \to C$ is a $\P^{1}$-bundle and $E_{1} \cap E_{2}$ is a section of $\tau_{M}|_{E_{i}}$. 
Moreover, if $f_{i}$ denotes a fiber of $\tau_{M}|_{E_{i}}$, then we have $\mc N_{f_{i}/M} \simeq \mc O_{\P^{1}} \oplus \mc O_{\P^{1}}(-2)$. 
Hence $V$ has a family of Du Val $A_{2}$-singularities along $C$. 
Finally, let $l$ be the proper transform of the linear hull of the tangent vector of $\Sigma$ and 
$e_{i}=E_{i} \cap P$ for $i \in \{1,2,3\}$. 
Then we have 
\begin{align}\label{eq-P111-le}
l \sim H|_{P}-(e_{1}+2e_{2}+e_{3}) \text{ on } P.
\end{align}

\noindent
\textbf{Step 2.} 
Let us show that $V$ is weak Fano and $-K_{V}$ is divisible by $2$, which is equivalent to the following claim. 
\begin{claim}\label{claim-MwF-P111}
$M$ is weak Fano and $-K_{M}$ is divisible by $2$ in $\Pic(M)$. 
\end{claim}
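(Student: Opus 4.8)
The plan is to imitate the proof of Claim~\ref{claim-MwF}. By~(\ref{eq-P111-PK}) we already have $-K_{M} \sim 2(H+P)$, so $-K_{M}$ is divisible by $2$ in $\Pic(M)$, and the only thing left is to prove that $M$ is weak Fano, i.e.\ that $H+P$ is nef and big. Since $M \to \P^{3}$ is a composite of three blow-ups of (possibly infinitely near) smooth points of smooth $3$-folds, each of which drops the anticanonical degree $(-K)^{3}$ by $8$, we get $(-K_{M})^{3}=64-24=40>0$; hence once $-K_{M}=2(H+P)$ is shown to be nef, bigness is automatic, so it is enough to establish nefness.

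To prove that $H+P$ is nef it suffices, exactly as in Claim~\ref{claim-MwF}, to show that $(H+P)|_{P}$ is nef. Indeed $H=\s^{\ast}\mc O_{\P^{3}}(1)$ is nef and $P$ is an irreducible effective divisor, so any irreducible curve $\g \subset M$ with $\g \not\subset P$ satisfies $(H+P)\cdot\g = H\cdot\g + P\cdot\g \geq 0$, while for $\g \subset P$ the number $(H+P)\cdot\g$ is computed on $P$. By~(\ref{eq-P111-PK}) we have $P|_{P} \sim H|_{P}-(e_{1}+2e_{2}+3e_{3})$, hence $(H+P)|_{P} \sim 2H|_{P}-e_{1}-2e_{2}-3e_{3}$; using~(\ref{eq-P111-le}) and the geometry of $P$ I would rewrite this class as an effective combination of curves on $P$ — in analogy with the decomposition $(H+P)|_{P} \sim l_{12}+l_{23}+e_{2}+e_{3}$ found in Claim~\ref{claim-MwF} — and then verify directly that the resulting class is nef.

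The surface $P$ is the blow-up of the plane $\braket{\Sigma} \simeq \P^{2}$ along the curvilinear length-$3$ subscheme cut out by $\Sigma$, so it is a weak del Pezzo surface and its cone of curves is spanned by finitely many explicit classes: the proper transforms $e_{1},e_{2}$ of the two non-final exceptional curves (which are $(-2)$-curves in this chain configuration), the final exceptional curve $e_{3}$ (a $(-1)$-curve), and the proper transform $l$ of the line spanned by the tangent vector of $\Sigma$ (cf.~(\ref{eq-P111-le})). The main — indeed essentially the only — obstacle is this finite surface computation: correctly identifying all negative curves on the iterated blow-up $P$ (which requires careful bookkeeping of the chain $p_{1},p_{2},p_{3}$ of infinitely near points and their incidences with $P$) and checking that $(H+P)|_{P}$ meets each of them non-negatively. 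Once this is done, $-K_{M}=2(H+P)$ is nef and big, so $M$ is weak Fano with $-K_{M}$ divisible by $2$, which is precisely Claim~\ref{claim-MwF-P111}.
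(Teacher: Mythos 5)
Your proposal is correct and follows the paper's own proof of Claim~\ref{claim-MwF-P111} essentially verbatim: the paper likewise records $-K_{M}\sim 2(H+P)$ and $(-K_{M})^{3}=40$ and reduces everything to the nefness of $(H+P)|_{P}$. The one step you describe without executing is the explicit decomposition; the paper writes $(H+P)|_{P}\sim 2l+e_{1}+2e_{2}+e_{3}$ and checks (implicitly) that this class meets each of the negative curves $e_{1},e_{2},e_{3},l$ you correctly list non-negatively, which is exactly the finite surface computation you outline.
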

\begin{proof}
Since $-K_{M}=2(H+P)$ by (\ref{eq-P111-PK}) and $(-K_{M})^{3}=40$, it suffices to show that $(H+P)|_{P}$ is nef. 
By (\ref{eq-P111-PK}) and (\ref{eq-P111-le}), we have 
$(H+P)|_{P} \sim 2l+e_{1}+2e_{2}+e_{3}$. 
It is easy to see that the above divisor is nef. 
\end{proof}

Now let us confirm that $\psi_{V}$ is small. 
Let $\g \subset V$ be a curve such that $-K_{V}.\g=0$. 
Letting $H_{V}=\t_{\ast}H$ and $P_{V}:=\t_{\ast}P$, we have $-K_{V}=2(H_{V}+P_{V})$. 
Since $-K_{V}$ is ample over $\P^{3}$, we have $\g \subset P_{V}$. 
Let $\wt{\g} \subset P$ be the proper transform. 
Then $-K_{M}.\wt{\g}=0$ and it follows that $\wt{\g}=l$, $e_{1}$ or $e_{2}$ from the proof of Claim~\ref{claim-MwF-P111}. 
Since the morphism $\tau|_{P} \colon P \to P_{V}$ is the contraction of $e_{1}$ and $e_{2}$, we have $\g=\tau(l)$. 
Hence $\psi_{V}$ is small. 

Let us construct a sequence of flops and a divisorial contraction from $M$ as follows. 
Now $P$ is the blow-up of $\P^{2}$ at an infinitely near point of order $3$ corresponding to $\Sigma$. 
Since $l$ is a $(-1)$-curve in $P$, we have $\mc N_{l/M}=\mc O_{\P^{1}}(-1)^{2}$ and there exists the Atiyah flop $\chi_{1} \colon M \dra M^{+}$ of $l$. 
Let $l^{+}$ be the flopped curve and $e_{i}^{+},P^{+}$ be the proper transforms of $e_{i},P$ on $M^{+}$ respectively. 
Since $e_{2}^{+}$ is a $(-1)$-curve in $P^{+}$, we have $\mc N_{e_{2}^{+}/M^{+}}=\mc O_{\P^{1}}(-1)^{2}$ and there exists the Atiyah flop $\chi_{2} \colon M^{+} \dra M^{++}$ of $e_{2}^{+}$. 
Let $e_{2}^{++}$ be the flopped curve and $l^{++},e_{1}^{++},e_{3}^{++},P^{++}$ be the proper transforms of $l^{+},e_{1}^{+},e_{3}^{+},P^{+}$ on $M^{++}$ respectively. 
Since $e_{1}^{++}$ is a $(-1)$-curve in $P^{++}$,we have the Atiyah flop $\chi_{3} \colon M^{++} \dra M^{+++}$ of $e_{1}^{++}$. 
If $P^{+++} \subset M^{+++}$ denotes the proper transform of $P$, then we have $P^{+++} \simeq \P^{2}$. 
Let $e_{3}^{+++}$ be the proper transform of $e_{3}$. 
Then we have $-K_{M^{+++}}.e_{3}^{+++}=-K_{M}.e_{3}=2$. 
Therefore, $\mc N_{P^{+++}/M^{+++}}=\mc O_{\P^{2}}(-1)$ and hence there exists $\mu \colon M^{+++} \to W$ such that $\mu(P^{+++})=w$ is a smooth point of $W$ and $\mu$ is the blow-up of $W$ at $w$. 

\[\xymatrix{
&\ar[ld]_{\mu}M^{+++} & \ar@{-->}[l]_{\chi_{3}} M^{++}& \ar@{-->}[l]_{\chi_{2}} M^{+}& \ar@{-->}[l]_{\chi_{1}}M\ar[rd]^{\s}& \\
W&&&&&\P^{3}.
}\]

Set $\chi=\chi_{3} \circ \chi_{2} \circ \chi_{1}$. 

\noindent
\textbf{Step 3. }
Take a general member $D \in |H-(E_{1}+2E_{2}+2E_{3})|$ on $M$. 
Let $\ol{l} \subset \P^{3}$ be the linear span of the tangent vector of $\Sigma$ at $p_{1}$. 
In this setting, it holds that $\s^{-1}_{\ast}\ol{l}=l$. 
Then $D$ is the proper transform of a $2$-plane in $\P^{3}$ containing $\ol{l}$. 
Let $D^{+},D^{++},D^{+++}$ be the proper transforms of $D$ on $M^{+},M^{++},M^{+++}$ respectively. 
\begin{claim}\label{claim1-P111}
\begin{enumerate}
\item $\Bs |D|=l$ and $\dim |D|=1$. 
\item $|D^{+++}|$ is base point free and $D^{+++} \cap P^{+++}=\emp$. 
\item $D^{+++} \simeq \F_{2}$ and $E_{1}^{+++} \cap D^{+++}$ is the $(-2)$-curve of $D^{+++}$. 
\item $E_{1}^{+++} \cap P^{+++}=\emp$ and $E_{2}^{+++} \cap P^{+++}=\emp$. 
\end{enumerate}
\end{claim}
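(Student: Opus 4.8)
The plan is to run, with three Atiyah flops in place of the two in Claim~\ref{claim3-P1,11}, the same program: identify $D$ geometrically, then follow the surfaces $D$, $P$ and the divisors $E_1$, $E_2$ through $\chi=\chi_3\circ\chi_2\circ\chi_1$.

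For (1): we already know that a general $D\in|H-(E_1+2E_2+2E_3)|$ is the proper transform of a $2$-plane $\Pi\subset\P^3$ containing the tangent line $\ol{l}$, and such $2$-planes form a pencil, so $\dim|D|=1$. Every member of that pencil contains $\ol{l}$, hence $\Bs|D|\subseteq\sigma^{-1}(\ol{l})=l\cup E_1\cup E_2\cup E_3$ and $\Bs|D|\supseteq l$; to obtain $\Bs|D|=l$ I would check that $|D|$ has no base component on the exceptional locus, e.g. $D|_{E_3}\equiv 0$ so $D\cap E_3=\emp$, and $|D|$ restricts to $E_1\simeq\F_1$ as the base-point-free pencil of fibers of $E_1\to\P^1$, while on $E_2$ its base locus reduces to the single point $l\cap E_2$. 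The same geometric description shows $D=\Bl_{p_2}\Bl_{p_1}\P^2$ is the blow-up of $\P^2$ at an infinitely near point of order $2$ supported at $p_1$; hence $D$ has a unique $(-2)$-curve, namely $E_1\cap D$, while $l=l\cap D$ and $E_2\cap D$ are $(-1)$-curves, and $l$ is disjoint from $E_1\cap D$ inside $D$.

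For (2) and (3): as recorded in Step~2, $l$ is a $(-1)$-curve both in $D$ and in $P$ with $\mc N_{l/M}\simeq\mc O_{\P^1}(-1)^2$, so $\chi_1$ restricted to each of $D$ and $P$ is the contraction of $l$. Contracting the $(-1)$-curve $l\subset D$, which is disjoint from the $(-2)$-curve $E_1\cap D$, turns $D$ into $\F_2$ with the image of $E_1\cap D$ as its negative section, so $E_1^+\cap D^+$ is the $(-2)$-curve of $D^+\simeq\F_2$; and since $D\cap P=l$ exactly, $D^+\cap P^+=\emp$. A short intersection count on $P$ gives $l\cdot e_1=0$ and $l\cdot e_2=1$, from which I would deduce $l\cap E_1=\emp$ on $M$ (so $\chi_1$ leaves $E_1$ untouched), $e_2^+\cap D^+=\emp$, and $e_1^+\cap D^+=\emp$. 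Therefore $\chi_2$ and $\chi_3$ are isomorphisms in a neighbourhood of $D^+$, whence $D^{+++}\simeq\F_2$, $E_1^{+++}\cap D^{+++}$ is its $(-2)$-curve, and $D^{+++}\cap P^{+++}=\emp$. Finally, $\Bs|D|=l$ is flopped by $\chi_1$, so $|D^+|$ is base-point-free, and this is preserved by $\chi_2$ and $\chi_3$, giving the first half of (2).

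For (4): each $\chi_i$ restricts on $P$ to the contraction of the relevant $(-1)$-curve ($l$, then $e_2^+$, then $e_1^{++}$), as set up in Step~2. Since $e_2=E_2\cap P$ lies in both $P$ and $E_2$ with $P^+\cap E_2^+=e_2^+$, flopping $e_2^+$ makes $P^{++}\cap E_2^{++}=\emp$; as $e_1^{++}\subset P^{++}$ is then disjoint from $E_2^{++}$, the last flop preserves this, so $E_2^{+++}\cap P^{+++}=\emp$. The same mechanism applied to $e_1=E_1\cap P\subset P\cap E_1$ with $P^{++}\cap E_1^{++}=e_1^{++}$ yields $E_1^{+++}\cap P^{+++}=\emp$. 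The step I expect to need the most care --- though it is pure bookkeeping, carried out exactly as in the $\#\Sigma_{\red}=2$ case of Claim~\ref{claim3-P1,11} --- is keeping track of which of $l$, $e_1$, $e_2$, $e_3$ is contained in which of $D$, $P$, $E_1$, $E_2$ and of how these containments and the $(-1)$/$(-2)$-characters propagate through the three successive flops; the only genuinely new ingredient compared with Proposition~\ref{prop-singfibP13}~(2) is the presence of the third flop $\chi_3$.
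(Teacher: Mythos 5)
Your proposal is correct and follows essentially the same route as the paper: (1) via the pencil of planes through $\ol{l}$, (2)--(3) via the transversal intersection $D\cap P=l$ being separated by $\chi_{1}$ and the fact that the later flopping curves $e_{2}^{+}$ and $e_{1}^{++}$ lie in $P^{+}$, $P^{++}$ and hence miss $D^{+}$, and (4) via the transversal intersections $E_{i}\cap P=e_{i}$ being flopped. The extra intersection-number bookkeeping ($l\cdot e_{1}=0$, $l\cdot e_{2}=1$, the restriction of $|D|$ to each $E_{i}$) is detail the paper leaves implicit, and it checks out.
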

\begin{proof}
(1) This assertion follows directly from that $|D|$ is the proper transform of the pencil of hyperplanes in $\P^{3}$ containing $\ol{l}$. 

(2) Since $P$ is the blow-up of $\P^{2}$ at the infinitely near $3$ points with respect to $\Sigma$, $P$ meets $D$ transversally along $l$. 
By the construction of the flop $\chi_{1} \colon M \dra M^{+}$, we have that $D^{+} \cap P^{+}=\emp$ and $\Bs |D^{+}|=\emp$ by (1). 
Since we take the member $D$ generically, $D^{+}$ does not meet $e_{2}^{+}$ and $e_{3}^{+}$. 
Thus we obtain $D^{+++} \cap P^{+++}=\emp$ and $\Bs |D^{+++}|=\emp$, which proves (2). 

(3) Note that $D$ is the blow-up of $\P^{2}$ at an infinitely near point of order $2$. 
Then $D \dra D^{+}$ is the contraction of $l$, which is a $(-1)$-curve in $D$. 
Thus we have $D^{+} \simeq \F_{2}$. 
Moreover, we can see that $D^{+} \dra D^{++} \dra D^{+++}$ are isomorphic by the construction of the flops and hence $D^{+++} \simeq \F_{2}$. 
Note that $E_{1} \cap D$ is $(-2)$-curve on $D$ and $\chi$ is isomorphic along $E_{1} \cap D$, which show the second assertion. 

(4) It follows from that $E_{i}$ meets $P$ along $e_{i}$ transversally for $i=1,2$. 
\end{proof}

\noindent
\textbf{Step 4. }
In this step, we will show that $W \simeq \wt{\P}^{1,1,1}$. 
By Claim~\ref{claim-MwF-P111}, $W$ is a weak Fano $3$-fold of Picard rank $3$ and $-K_{W}$ is divisible by $2$. 
Let $D_{W}=\mu_{\ast}D^{+++}$. 
By Claim~\ref{claim1-P111}~(1) and (2), 
the complete linear system $|D_{W}|$ gives the morphism $\a \colon W \to \P^{1}$. 
Using Claims~\ref{claim-MwF-P111} and \ref{claim1-P111}, we can see that $\a$ decomposes into two $\P^{1}$-bundles $\b \colon W \to S$ and $\pi \colon S \to \P^{1}$ by the same argument in Step 4 of the proof of Proposition~\ref{prop-singfibP13}~(2): 

\begin{claim}\label{claim-singP13-2-2}
Let $E_{1,W}$ be the proper transform of $E_{1}$ on $W$. 
Then it holds that $E_{1,W} \simeq \F_{2}$ and $-K_{W}|_{E_{1,W}} \sim (\a|_{E_{1,W}})^{\ast}\mc O_{\P^{1}}(2)$. 
Moreover, $E_{1,W}$ is a $\b$-section and hence $S \simeq \F_{2}$. 
\end{claim}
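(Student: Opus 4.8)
The plan is to run, almost verbatim, the argument of Step~4 of the proof of Proposition~\ref{prop-singfibP13}~(2) (i.e.\ the proof of Claim~\ref{claim5-P1,11}), with $E_{1}$ now in the role played there by $E_{2}$. The key is to follow the divisor $E_{1}\simeq\F_{1}$ through the birational maps $\chi=\chi_{3}\circ\chi_{2}\circ\chi_{1}$ and then through $\mu$, keeping track of the ruled structure $\tau_{M}|_{E_{1}}$, whose negative section is $E_{1}\cap E_{2}$, and of the curve $E_{1}\cap P$, which a short computation with (\ref{eq-P111-PK}) (restricting $P$ to $E_{1}$) identifies with a fibre of this ruling meeting $E_{1}\cap E_{2}$ in one point $y$.

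First I would observe that $\chi_{1}$ is an isomorphism near $E_{1}$: its flopping curve $l$ is the strict transform of the tangent line to $\Sigma$ at $p_{1}$, and since $\Sigma$ is not colinear this strict transform is disjoint from $E_{1}$. Next, the flopping curve $e_{2}^{+}$ of $\chi_{2}$ meets $E_{1}$ transversally at $y$ (and is not contained in $E_{1}$), so $\chi_{2}$ restricts to the blow-up of $E_{1}$ at $y$; and the flopping curve $e_{1}^{++}$ of $\chi_{3}$ lies in $E_{1}^{++}=\Bl_{y}E_{1}$ as the $(-1)$-curve which is the strict transform of the fibre $E_{1}\cap P$ through $y$, so $\chi_{3}$ restricts to its blow-down. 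Hence $E_{1}\dra E_{1}^{+++}$ is exactly the elementary transformation of the ruled surface $E_{1}$ centred at $y\in E_{1}\cap E_{2}$, so $E_{1}^{+++}\simeq\F_{2}$; and since $E_{1}^{+++}\cap P^{+++}=\emp$ by Claim~\ref{claim1-P111}~(4), $\mu$ is an isomorphism near $E_{1}^{+++}$, giving $E_{1,W}\simeq E_{1}^{+++}\simeq\F_{2}$. For $-K_{W}|_{E_{1,W}}$: for the general fibre $\g$ of $\a|_{E_{1,W}}$, Claim~\ref{claim1-P111}~(3) identifies $\g$ with the $(-2)$-curve $C_{0}$ of the $\a$-fibre $D_{W}\simeq\F_{2}$ containing it, so $-K_{W}\cdot\g=-K_{D_{W}}\cdot\g=-K_{\F_{2}}\cdot C_{0}=0$ by adjunction for the fibre $D_{W}$; thus $-K_{W}|_{E_{1,W}}\sim(\a|_{E_{1,W}})^{\ast}\mc O_{\P^{1}}(d)$ for some $d$. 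Under the elementary transformation above $E_{1}\cap E_{2}$ becomes the negative section of $E_{1,W}\simeq\F_{2}$, hence a section of $\a|_{E_{1,W}}$, and a direct computation (using that the flops are crepant and that $E_{1}\cap E_{2}$ becomes disjoint from the flopped and exceptional loci after the blow-up at $y$) gives $d=-K_{W}\cdot(E_{1}\cap E_{2})_{W}=-K_{M}\cdot E_{1}\cdot E_{2}=2$, the last step from $(-K_{M})|_{E_{1}}\sim 2f$. Finally, for a general $\a$-fibre $D_{W}\simeq\F_{2}$ the curve $E_{1,W}\cap D_{W}$ is the $(-2)$-curve of $D_{W}$ and maps isomorphically onto the $\pi$-fibre $\b(D_{W})$, so $\b|_{E_{1,W}}\colon E_{1,W}\to S$ is a birational morphism from $\F_{2}$ to the ruled surface $S$; as $\F_{2}$ has no $(-1)$-curve it is an isomorphism, so $E_{1,W}$ is a $\b$-section and $S\simeq E_{1,W}\simeq\F_{2}$.

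The main obstacle is the middle step: the careful bookkeeping of how $\chi_{1},\chi_{2},\chi_{3}$ act on $E_{1}$, and in particular verifying that their net effect is the elementary transformation centred \emph{on} the negative section $E_{1}\cap E_{2}$ (so that $E_{1,W}\simeq\F_{2}$ and not $(\P^{1})^{2}$, which is what happens in the previous case). This is precisely where the curvilinear case diverges from the two earlier ones, and it must be read off from the local geometry of a non-colinear length-$3$ subscheme of $\P^{3}$; once it is in place, the $-K$-computations and the Zariski main theorem step are routine.
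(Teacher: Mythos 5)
Your proof is correct and follows essentially the same route as the paper: the paper's own argument for this claim is just a terse citation of Claim~\ref{claim1-P111}~(3),(4) together with $E_{1}\simeq\F_{1}$ and $-K_{M}|_{E_{1}}=2f$, and what you have written out is exactly the bookkeeping behind that citation, namely that $\chi$ restricts on $E_{1}$ to the elementary transformation centred at the point $y=e_{2}\cap E_{1}$ lying \emph{on} the negative section $E_{1}\cap E_{2}$ (so that $E_{1,W}\simeq\F_{2}$ rather than $(\P^{1})^{2}$ as in the case of Claim~\ref{claim5-P1,11}), followed by the same $(-2)$-curve and birationality argument for $\b|_{E_{1,W}}$. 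Your identification of where the curvilinear case diverges from the previous one is precisely the point the paper leaves implicit.
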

\begin{proof}
We recall that $E_{1} \simeq \F_{1}$ and $-K_{M}|_{E_{1}}=2f$, where $f$ is a fiber of the $\P^{1}$-bundle $\F_{1} \to \P^{1}$. 
Claim~\ref{claim1-P111}~(3) and (4) gives that $\F_{2} \simeq E_{1}^{+++} \simeq E_{1,W}$ and $-K_{W}|_{E_{1,W}}=2f$. 
Moreover, for an $\a$-fiber $D_{W}$, the intersection $D_{W} \cap E_{1,W}$ is the $(-2)$-curve of the fiber $D_{W} \simeq \F_{2}$. 
Hence $\b|_{E_{1,W}} \colon E_{1,W} \to S$ is birational and hence isomorphic since $S$ is a Hirzebruch surface. 
\end{proof}
Let $\mc E$ be a rank $2$ bundle $\mc E$ on $S \simeq \F_{2}$ such that $\P_{S}(\mc E)=W$. 
Since $-K_{W}$ is divisible by $2$, we may assume that $\det \mc E =\mc O(-K_{\F_{2}})$. 
In this setting, $\mc E$ is nef since $W$ is weak Fano. 
As we have seen in Claim~\ref{claim-singP13-2-2}, $\b \colon W=\P_{S}(\mc E) \to S \simeq \F_{2}$ has a section $E_{1,W}$ satisfying $\mc O_{\P(\mc E)}|_{E_{1,W}}=\mc O_{\F_{2}}(f)$. 
Hence the nef bundle $\mc E$ fits into the exact sequence (\ref{ex-111}). 
Then we have $W=\wt{\P}^{1,1,1}$ by the definition of $\wt{\P}^{1,1,1}$. 

\noindent
\textbf{Step 5.}
Let $\psi_{W} \colon W \to \ol{W}$ be the morphism to the anti-canonical model $\ol{W}=\P^{1,1,1}$ of $W=\wt{\P}^{1,1,1}$. 
For $i \in \{1,2,3\}$, if $E_{i,W}$ denotes the proper transform of $E_{i}$, then 
$\Exc(\psi_{W})=E_{1,W} \cup E_{2,W}$. 
Moreover, $\psi(E_{1,W})=\psi(E_{2,W})$ and $\psi(\Exc(\psi_{W}))$ is one smooth rational curve $\ol{C} \subset \ol{W}$. 
Set $w=\mu(\Exc(\mu))$. Then Claim~\ref{claim1-P111}~(3) implies that $\ol{w}:=\psi_{W}(w)$ is also a smooth point of $\ol{W}$. 
Setting $V^{+}:=\Bl_{\ol{w}}\ol{W}$, we have a birational map $\chi_{V} \colon V \dra V^{+}$ by composing birational maps in the following commutative diagram:
\[\xymatrix{
&\ar[ld]_{\mu}M^{+++} \ar[d]& \ar@{-->}[l]_{\chi_{3}} M^{++}& \ar@{-->}[l]_{\chi_{2}} M^{+}& \ar@{-->}[l]_{\chi_{1}}M\ar[d]_{\tau_{M}}\ar[rdd]^{\s}& \\
W\ar[d]_{\psi_{W}}&V^{+}\ar[ld]^{\mu_{V}}&&&\ar@{-->}[lll]_{\chi_{V}}V=\Bl_{\Sigma}\P^{3}\ar[rd]_{\s_{V}}& \\
\ol{W}&&&&&\P^{3}.
}\]
Then $\chi_{V}$ is an isomorphism in codimension $1$ and not isomorphic since they have different extremal contractions. 
Since $M^{+++} \to V^{+}$ and $M \to V$ are crepant contractions, the anti-canonical model of $V$ coincides with that of $V^{+}$. 
Hence $\chi_{V}$ is the flop. 
We complete a proof of Proposition~\ref{prop-singfibP13}~(3). 
\qed

\subsubsection{Proof of Proposition~\ref{prop-hypsecP13}~(3)}
Let $\b \colon \P_{\F_{2}}(\mc E)=\wt{\P}^{1,1,1}=W \to \F_{2}=S$ be the projection, $L,F$ the pull-back of $h,f$ respectively and $\xi$ the tautological divisor of $\b$. 
Then $E_{1} \in |\xi-(2L-F)|$, $E_{2} \in |L-2F|$ and $\Exc(\psi_{W})=E_{1,W} \cup E_{2,W}$ hold. 
From the construction, we have the following linear equivalence by a direct calculation: 
\begin{align}\label{rel-P111}
\left\{ \begin{array}{l}
\chi^{-1}_{\ast}\t^{\ast}F=H-(E_{1}+2E_{2}+2E_{3}),\\
\chi^{-1}_{\ast}\t^{\ast}(\xi-(2L-F))=E_{1},\quad \chi^{-1}_{\ast}\t^{\ast}(L-2F)=E_{2} ,  \\
\chi^{-1}_{\ast}\Exc(\t)=P=H-(E_{1}+2E_{2}+3E_{3}) \text{ and } \\
\chi^{-1}_{\ast}\t^{\ast}\xi=H+2P=3H-(2E_{1}+4E_{2}+6E_{3}). 
\end{array} \right.
\end{align}

If we set $\wt{X}:=\psi_{W\ast}^{-1}X$, then there exists $a,b \in \Z_{\geq 0}$ such that 
\[\wt{X} \sim \xi-(aE_{1}+bE_{2}) = (1-a)\xi+(2a-b)L+(2b-a)F.\]
Since $\wt{X}$ is a prime divisor, we have $a \leq 1$. 
If $a=1$, then $X$ is non-normal. 
Moreover, $\wt{X} \sim (2-b)L+(2b-1)F$ implies $b=1$ since $\wt{X}$ is a prime divisor. 
If $a=0$, then $\wt{X} \in \xi-b(L-2F)$ and hence $b=0$ or $1$. 
Thus we have $(a,b) \in \{(1,1),(0,1),(0,0)\}$. 
\begin{claim}
It does not occur that $(a,b)=(0,1)$. 
\end{claim}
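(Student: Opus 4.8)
The plan is to rule out $(a,b)=(0,1)$ by a restriction argument on the section $E_{1,W}\subset W$, showing that a divisor in the class $\xi-E_{2,W}$ can never be prime.

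Concretely, suppose $(a,b)=(0,1)$, so that $\wt{X}\sim\xi-E_{2,W}\sim\xi-(L-2F)$. By Claim~\ref{claim-singP13-2-2} the projection $\b$ restricts to an isomorphism $E_{1,W}\xrightarrow{\ \sim\ }S\simeq\F_{2}$, under which $L|_{E_{1,W}}=h$ and $F|_{E_{1,W}}=f$; moreover the restriction of $\xi$ to $E_{1,W}$ is $\mc O_{\F_{2}}(f)$, which is the defining property of the section $E_{1,W}$ (coming from the non-split sequence (\ref{ex-111})). Hence the restriction of the class $\xi-L+2F$ to $E_{1,W}$ is $3f-h$ on $\F_{2}$. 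Now $\wt{X}$ is a prime divisor and $\wt{X}\not\sim E_{1,W}$ — the two classes differ by $L+F=\b^{\ast}(h+f)\neq 0$ in $\Pic(W)$ — so $\wt{X}$ cannot contain $E_{1,W}$; therefore $\wt{X}\cap E_{1,W}$ is an effective divisor (possibly empty) on $E_{1,W}$ whose class equals $3f-h$. But $3f-h$ is not effective on $\F_{2}$: the fibre class $f$ is nef while $(3f-h)\cdot f=-1<0$, and $3f-h\neq 0$. This contradiction proves the claim.

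I do not expect a genuine obstacle here; the only points requiring care are the bookkeeping of the restrictions $\xi|_{E_{1,W}},L|_{E_{1,W}},F|_{E_{1,W}}$ — all furnished by Claim~\ref{claim-singP13-2-2} — and the description of the effective cone of $\F_{2}$ as spanned by $C_{0}=h-2f$ and $f$. As a sanity check, the same restriction to $E_{1,W}$ sends the classes $\xi$ and $L+F$ of the surviving cases $(0,0)$ and $(1,1)$ to the effective classes $f$ and $h+f$, so the obstruction is genuinely specific to $(0,1)$; equivalently, one could phrase the argument as computing via $\b_{\ast}$ that $E_{1,W}$ is a fixed component of the linear system $|\xi-E_{2,W}|$, which forces every member to be reducible.
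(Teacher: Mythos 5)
Your argument is correct, and it takes a genuinely different route from the paper's. The paper works on the base: it views $\wt{X}\in|\xi-(L-2F)|$ as a section $s\colon\mc O(h-2f)\to\mc E$, invokes Lemma~\ref{lem-irr} (irreducibility of $\wt{X}$ forces $\Cok s\simeq\mc I_{Z}(h+2f)$ to be torsion free), computes $\lgth(\mc O_{Z})=c_{2}(\mc E)-(h-2f)(h+2f)=0$, and then derives a contradiction with the non-split structure of $\mc E$ in (\ref{ex-111}) — essentially because every map $\mc O(h-2f)\to\mc E$ factors through the sub-bundle $\mc O(2h-f)$ and so degenerates along a divisor. You instead work on the total space: restrict the class $\xi-L+2F$ to the distinguished section $E_{1,W}$ (legitimate, since $\wt{X}$ is prime and not linearly equivalent to $E_{1,W}$, so it cannot contain it) and observe that $3f-h$ pairs negatively with the nef class $f$ and is nonzero, hence is not the class of an effective (or empty) divisor. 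The two proofs are two faces of the same phenomenon — your closing remark that $E_{1,W}$ is a fixed component of $|\xi-E_{2,W}|$ is exactly the geometric content of the paper's observation that all sections of $\mc E(-h+2f)$ come from the sub-line-bundle — but yours avoids Lemma~\ref{lem-irr} and the Chern-class bookkeeping entirely, at the cost of needing the restriction data $\xi|_{E_{1,W}}=f$, which Claim~\ref{claim-singP13-2-2} and Lemma~\ref{lem-P111} do supply. Both are complete; yours is the more elementary and, in my view, the more transparent of the two, and it has the added virtue of making visible (via the sanity check on $(0,0)$ and $(1,1)$) why the obstruction singles out $(0,1)$.
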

\begin{proof}
In this case, $\wt{X} \sim \xi-(L-2F)$. 
Let $s \colon \mc O(h-2f) \to \mc E$ be the section corresponding $\wt{X}$. 
Then Lemma~\ref{lem-irr} gives an exact sequence 
\[0 \to \mc O(h-2f) \to \mc E \to \mc I_{Z}(h+2f) \to 0.\]
Since $c_{2}(\mc E)=2$, $Z$ must be empty. 
Then we have $\mc E \simeq \mc O(h)^{2}$ since $\mc E$ is nef, which is a contradiction to (\ref{ex-111}). 
\end{proof}

\textit{Case 1; $a=b=1$.} 

Since $\wt{X} \in |L+F|$, $C:=\b(\wt{X})$ is an irreducible member of $|h+f|$ and hence is smooth. 
The sequence (\ref{ex-111}) gives $\mc E|_{C}=\mc O(1) \oplus \mc O(5)$. 
Therefore, we have $\wt{X} \simeq \P(\mc E|_{C})=\F_{4}$. 
$\wt{X} \to X$ is nothing but the normalization of $X$. Hence $X$ is of type (n4). 
By (\ref{rel-P111}), we obtain 
\begin{align*}
\chi^{-1}_{\ast}\wt{X} &\sim \chi^{-1}_{\ast}(\t^{\ast}(L+F)-P^{+++}) \sim 2H-(2E_{1}+3E_{2}+3E_{3}). 
\end{align*}
Hence $Q$ is singular and contains $p_{1}$ as its vertex. 

\textit{Case 2; $a=b=0$.} 

Let $s \colon \mc O \to \mc E$ be the corresponding section to $\wt{X} \in |\xi|$. 
Applying Lemma~\ref{lem-irr}, we obtain an exact sequence 
\begin{align}\label{ex2-sing(n,1)}
0 \to \mc O \mathop{\to}^{s} \mc E \to \mc I_{Z}(2h) \to 0.
\end{align}
Since $c_{2}(\mc E)=2$, we obtain $\lgth (\mc O_{Z})=2$. 
Noting that $-K_{\wt{X}}$ is nef and big, we have $Z \cap C_{0}=\emp$. 
By tensoring the sequence (\ref{ex2-sing(n,1)}) with $\mc O(-2h+f)$, 
we have $h^{0}(\mc I_{Z}(f))=1$ from the exact sequence (\ref{ex-111}). 
Therefore, $Z$ is contained in a fiber $l$ of $p \colon \F_{2} \to \P^{1}$. 

When $Z$ consists of two points $q_{1},q_{2}$, 
taking the elementary transformation of $\F_{2}$ with center $q_{1}$, 
we can see that $\wt{X}$ is the blow-up of $\P^{2}$ at non-colinear curvilinear infinitely near $3$ points.
Hence $X$ is type of (2,1). 

If $Z$ is not reduced, then by taking an elementary transformation of $\F_{2}$ with center $q_{1}:=Z_{\red}$, the minimal resolution of $\wt{X}$ is the blow-up of $\P^{2}$ at colinear curvilinear infinitely near $3$ points. 
Hence $X$ is type of (1,1). 

For each case, we obtain 
\begin{align*}
\chi^{-1}_{\ast}\wt{X} &\sim \chi^{-1}_{\ast}(\t^{\ast}\xi-P^{+++}) \sim 2H-(E_{1}+2E_{2}+3E_{3}). 
\end{align*}
by (\ref{rel-P111}). 
Hence $Q$ contains $p_{1}$ as a smooth point. 

We complete the proof of Propositions~\ref{prop-singfibP13} and \ref{prop-hypsecP13}. 
\qed

\section{Proof of Theorem~\ref{mainthm-ex}}

Let us show Theorem~\ref{mainthm-ex} by presenting explicit examples of sextic del Pezzo fibrations which contains singular fibers of type $(2,j)$ for $j=1,2,3$ (see Example~\ref{ex-(2,j)}), $(1,j)$ for $j=1,2,3$ (see Example~\ref{ex-(1,j)}), (n2) and (n4) (see Example~\ref{ex-nonnormal}). 

\noindent\textbf{Step 1.}
We fix the following submanifolds in $\P^{4}$:
\[\ol{T} \subset \Q^{2} \subset \Q^{3} \subset \P^{4},\]
where $\Q^{3}$ is a smooth quadric $3$-fold, $\Q^{2} \subset \Q^{3}$ is a smooth hyperplane section and $\ol{T} \subset \Q^{2}$ is a twisted cubic curve. 

For a smooth conic $C \subset \Q^{3}$ with $\ol{T} \cap C=\emp$, 
let $\t \colon Q:=\Bl_{C}\Q^{3} \to \Q^{3}$ be the blow-up along $C$ and $T:=\t^{-1}_{\ast}\ol{T}$. 
Then there exists a quadric fibration $q \colon Q \to \P^{1}$ given by the linear system of hyperplane sections of $\Q^{3}$ containing $C$. 
Note that $q|_{T} \colon T \to \P^{1}$ is a triple covering. 

\noindent\textbf{Step 2.}
\begin{claim}\label{claim-22}
There exists a sextic del Pezzo fibration $\vp \colon X \to \P^{1}$ with a section $s$ such that $(q \colon Q \to \P^{1},T)$ is the relative double projection of $(\vp \colon X \to \P^{1},s)$. 
Moreover, $X$ is a weak Fano 3-fold with $(-K_{X})^{3}=22$ and $-K_{X}.s=0$. 
\end{claim}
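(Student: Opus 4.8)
The plan is to \emph{invert} the relative double projection: from the pair $(q\colon Q\to\P^{1},T)$ I reconstruct a sextic del Pezzo fibration $\vp\colon X\to\P^{1}$ whose relative double projection is exactly $(q,T)$, and then read off the two numerical assertions from the formulae already proved. The starting point is the geometry on $\Q^{3}$: since $\ol{T}\cap C=\emptyset$, the curve $T=\t^{-1}_{\ast}\ol{T}$ is disjoint from $\Exc(\t)$, the morphism $q|_{T}\colon T\to\P^{1}$ is the degree-$3$ map $\ol{T}\cong\P^{1}\to\P^{1}$, and $\Bl_{T}Q\cong\Bl_{C\sqcup\ol{T}}\Q^{3}$. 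The key observation, used repeatedly, is that a twisted cubic has no trisecant line, so for every $t$ the length-$3$ subscheme $T_{t}=\ol{T}\cap H_{t}$ of $\ol{T}$ is not contained in a line. I would then embed $Q$ into the $\P^{3}$-bundle $f\colon\F:=\P_{\P^{1}}(q_{\ast}\mc O_{Q}(E_{Q}))\to\P^{1}$ exactly as in Subsection~\ref{subsec-proofmainthmP13}, with $E_{Q}:=\t^{\ast}H$ (so that $-K_{Q}\sim 2E_{Q}-q^{\ast}\a$ with $\deg\a=-1$) and $\mc O_{\F}(1)$ normalised so that $\mc O_{\F}(1)|_{Q}=\mc O_{Q}(E_{Q})$ and $Q\in|\mc O_{\F}(2)+f^{\ast}\b|$; by the previous sentence $T_{t}$ is non-colinear in $\F_{t}=\P^{3}$ for every $t$.

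Next I would apply Theorem~\ref{thm-constP13} to the pair $(f\colon\F\to\P^{1},T)$ --- whose hypotheses $\deg(f|_{T})=3$ and non-colinearity of $T_{t}$ now hold --- to produce the moderate $(\P^{1})^{3}$-fibration $\vp_{Y}\colon Y\to\P^{1}$ and the diagram~(\ref{dia-P13}). I then \emph{define} $X\subset Y$ to be the proper transform of $Q$ and $s$ the distinguished $\vp_{Y}$-section, and re-run the arguments of Claim~\ref{claim-P13emb}: because $\wt{Q}\in|L_{\wt{\F}}+(f\circ\s)^{\ast}\b|$, the restriction $\psi_{\F}|_{\wt{Q}}$ is a (possibly trivial) crepant small contraction over $\P^{1}$, its flop $\wt{Q}^{+}\subset\wt{\F}^{+}$ is normal and is the blow-up of a smooth $3$-fold along a curve, and $\mu_{\F}$ contracts $\wt{\E}^{+}$ onto that curve; one identifies the $3$-fold with $X$ and the curve with $s$. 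Since $Y$ is smooth and $X$ is the proper transform of the smooth $3$-fold $Q$, $X$ is smooth; its general fibre is a hyperplane section of $(\P^{1})^{3}$, hence a smooth sextic del Pezzo surface, and $\rho(X/\P^{1})=1$ with $-K_{X}$ $\vp$-ample. Thus $\vp\colon X\to\P^{1}$ is a sextic del Pezzo fibration admitting the section $s$, and the uniqueness statement in Definition~\ref{defi-68}, together with the compatibilities just established, shows that $(q\colon Q\to\P^{1},T)$ is the relative double projection of $(\vp,s)$.

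For the remaining assertions I would argue as follows. Blow-up formulae on $\Q^{3}$ give $(-K_{Q})^{3}=(3H-E_{C})^{3}=54-18+4=40$, so $g(B)=0$ by Lemma~\ref{lem-89}(3); since also $g(T)=g(\ol{T})=0$ and the base is $\P^{1}$, Theorem~\ref{mainthm-inv}(2) gives $(-K_{X})^{3}=22$. Likewise $-K_{Q}\cdot T=(3H-E_{C})\cdot T=3\cdot 3=9$, and substituting this together with $g(T)=g(C)=0$ and $(-K_{X})^{3}=22$ into the second formula of Proposition~\ref{prop-2ray68}(4) forces $-K_{X}\cdot s=0$. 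Finally, adjunction in $Y$ (using Theorem~\ref{mainthm-P13}(2),(3)) gives $-K_{X}=(-K_{Y}-X)|_{X}=H_{Y}|_{X}$, so $-K_{X}$ is $\vp$-ample; and $-K_{X}$ is nef once $-K_{\wt{X}}$ is nef, which holds if and only if $-K_{\Bl_{C\sqcup\ol{T}}\Q^{3}}=3H-E_{C}-E_{\ol{T}}$ is nef, because $\wt{X}$ is obtained from $\Bl_{C\sqcup\ol{T}}\Q^{3}$ by a flop and because $K_{\wt{X}}=\mu_{X}^{\ast}K_{X}+E_{X}$. Checking that $3H-E_{C}-E_{\ol{T}}$ is nef reduces to the proper transforms of lines and conics of $\Q^{3}$ meeting $C$ and $\ol{T}$, where the bounds $(\ell\cdot C)\le 1$ and $(\ell\cdot\ol{T})\le 2$ (again no trisecants) keep the intersection numbers $\ge 0$. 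Since $(-K_{X})^{3}=22>0$, $-K_{X}$ is then nef and big, so $X$ is weak Fano.

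The main obstacle is two-fold. First, making the inversion rigorous: one must re-run Claim~\ref{claim-P13emb} \emph{without} a pre-existing $X$, i.e.\ verify directly that the proper transform of $Q$ in $Y$ is smooth and irreducible and that the flop in~(\ref{dia-P13}) restricts to the $2$-ray game of $\Bl_{s}X$. Second, the global nef-ness of $-K_{X}$: relative ampleness is automatic, but excluding bad horizontal curves on $\Bl_{C\sqcup\ol{T}}\Q^{3}$ genuinely uses the no-trisecant property of twisted cubics and a short case analysis. Everything else is routine intersection theory on blow-ups.
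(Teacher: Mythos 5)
Your overall strategy differs from the paper's: rather than rebuilding $X$ inside the moderate $(\P^{1})^{3}$-fibration of Theorem~\ref{thm-constP13}, the paper simply blows up $T$ in $Q$, observes that $-K_{\wt{Q}}=3H-E-G$ is base point free (hence $\wt{Q}$ is weak Fano), and invokes \cite[Proposition~3.5]{Fukuoka17}, which is precisely the inverse of the relative double projection of Proposition~\ref{prop-2ray68} and outputs $\vp\colon X\to\P^{1}$, the section $s$, and the weak Fano property in one stroke. Your numerical computations ($(-K_{Q})^{3}=40$, $-K_{Q}.T=9$, hence $(-K_{X})^{3}=22$ and $-K_{X}.s=0$ via Proposition~\ref{prop-2ray68}~(4)) are correct and are exactly what the cited proposition encodes.

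However, two steps of your argument have genuine gaps. First, your verification that $-K_{\Bl_{C\sqcup\ol{T}}\Q^{3}}=3H-E_{C}-E_{\ol{T}}$ is nef ``reduces to proper transforms of lines and conics'': there is no justification for ignoring curves of higher degree, and for an arbitrary irreducible curve $\g\subset\Q^{3}$ of degree $d$ there is no a priori bound on the secancy numbers $(\g\cdot C)$ and $(\g\cdot\ol{T})$ in terms of $d$, so the case analysis does not close. The correct (and much shorter) argument is the paper's: $3H-E_{C}-E_{\ol{T}}=(H-E_{C})+(2H-E_{\ol{T}})$, the first summand being the free fibre class of the composite $\Bl_{C\sqcup\ol{T}}\Q^{3}\to Q\to\P^{1}$ and the second being free because a twisted cubic is cut out by quadrics; a sum of free divisors is free, hence nef, and bigness follows from $(-K_{\wt{Q}})^{3}=20>0$. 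Second, the construction of $X$ itself is only sketched: Claim~\ref{claim-P13emb} is proved in the paper under the hypothesis that $X$ already exists (its key step ``$\wt{Q}^{+}=\wt{X}$ by uniqueness of the flop'' compares $\wt{Q}^{+}$ with the already known smooth flop $\wt{X}$), so ``re-running'' it with $X$ \emph{defined} as the proper transform of $Q$ requires new arguments --- notably the smoothness of $\wt{Q}^{+}$, the smoothness of its image under $\mu_{\F}$, and the fact that the induced map $X\to\P^{1}$ is an extremal contraction. Your justification ``$X$ is the proper transform of the smooth $3$-fold $Q$, hence smooth'' is not a valid inference. These are exactly the points that the citation of \cite[Proposition~3.5]{Fukuoka17} is there to settle.
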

\begin{proof}
Let $\s \colon \wt{Q}:=\Bl_{T}Q \to Q$ be the blow-up. 
Set $H=\s^{\ast}\t^{\ast}\mc O_{\Q^{3}}(1)$, $E=\Exc(\s)$ and $G=\s^{-1}_{\ast}\Exc(\t)$. 
Note that $-K_{\wt{Q}}=3H-E-G$ is free since $H-G$ and $2H-E$ are free. 
Using \cite[Proposition~3.5]{Fukuoka17}, which is an inverse transformation of Proposition~\ref{prop-2ray68}, we obtain a sextic del Pezzo fibration $\vp \colon X \to \P^{1}$ and a $\vp$-section $s$ satisfying $(-K_{X})^{3}=22$ and $-K_{X}.s=0$. 
Since $\wt{Q}$ is weak Fano, $X$ is also weak Fano by [ibid, Proposition~3.5]. 
\end{proof}

\begin{rem}\label{rem-quad40}
Let $(B,T)$ be the coverings associated to this sextic del Pezzo fibration. 
Since $(-K_{X})^{3}=22$, Theorem~\ref{mainthm-inv} implies that $B \simeq T \simeq \P^{1}$. 
Then $B \to \P^{1}$ is ramified over exactly two points and hence $Q \to \P^{1}$ has exactly two singular fibers by Lemma~\ref{lem-89}~(1). 
\end{rem}

We prove Theorem~\ref{mainthm-ex} by showing that for any condition in Theorem~\ref{thm-singfibdP6}, there exists a suitable smooth conic $C$ and a point $t \in \P^{1}$ such that the pair $(Q_{t},T_{t})$ satisfies the condition. 

\noindent\textbf{Step 3.} For a point $p \in \Q^{3}$, let $\T_{p}\Q^{3}$ denote the projective tangent space of $\Q^{3}$ at $p$. 
For two points $v_{1},v_{2} \in \Q^{3}$, we set 
\[C(v_{1},v_{2}):=\Q^{3} \cap \T_{v_{1}}\Q^{3} \cap \T_{v_{2}}\Q^{3}.\]
\begin{claim}\label{claim-tang}
Let $C \subset \Q^{3}$ be a smooth conic and $\braket{C} \subset \P^{4}$ be the linear span of $C$. 
Then there exists exactly two points $v_{1}(C),v_{2}(C)$ of $\Q^{3}$ such that 
$C(v_{1}(C),v_{2}(C))=C$. 

Moreover, there exists the following one-to-one correspondence:
\[\begin{array}{ccc}
\left\{ \left. \{v_{1},v_{2}\}  \subset \Q^{3} \right| 
C(v_{1},v_{2})\text{ is smooth } 
 \right\} & \longleftrightarrow & \{\text{ smooth conics in } \Q^{3} \}\\
\rotatebox{90}{$\in$} & & \rotatebox{90}{$\in$} \\
\{v_{1},v_{2}\} & \longmapsto & C(v_{1},v_{2}) \\
\{v_{1}(C),v_{2}(C)\}&\longmapsfrom& C.
\end{array}\]
\end{claim}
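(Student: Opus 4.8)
The plan is to pass to the dual quadric. I would first record the elementary reduction: a smooth conic $C$ spans a plane $\braket{C}=\P^{2}\subset\P^{4}$, and since the smooth quadric $\Q^{3}$ contains no $2$-plane, $\Q^{3}\cap\braket{C}$ is a conic in $\braket{C}$ which contains, hence equals, $C$. Writing $\Lambda:=\braket{C}$, if $v_{1}\neq v_{2}$ then $\T_{v_{1}}\Q^{3}$ and $\T_{v_{2}}\Q^{3}$ are hyperplanes, so $\T_{v_{1}}\Q^{3}\cap\T_{v_{2}}\Q^{3}$ has dimension at most $2$; if moreover $C(v_{1},v_{2})=C$, then $\Lambda=\braket{C}\subseteq\T_{v_{1}}\Q^{3}\cap\T_{v_{2}}\Q^{3}$, forcing $\T_{v_{1}}\Q^{3}\cap\T_{v_{2}}\Q^{3}=\Lambda$ and in particular $\Lambda\subseteq\T_{v_{i}}\Q^{3}$ for $i=1,2$. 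Conversely, if two distinct points $v_{i}$ satisfy $\Lambda\subseteq\T_{v_{i}}\Q^{3}$, then the two distinct hyperplanes $\T_{v_{i}}\Q^{3}$ already meet along the $\P^{2}$ $\Lambda$, so $C(v_{1},v_{2})=\Q^{3}\cap\Lambda=C$. Thus the entire statement reduces to showing that the set $S_{\Lambda}:=\{v\in\Q^{3}\mid\T_{v}\Q^{3}\supseteq\Lambda\}$ has exactly two elements whenever $C=\Q^{3}\cap\Lambda$ is a smooth conic.

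Next I would use the Gauss map $\gamma\colon\Q^{3}\to(\Q^{3})^{\vee}\subset(\P^{4})^{\vee}$, which for a smooth quadric is the restriction of a linear isomorphism between $\P^{4}$ and its dual and identifies $\Q^{3}$ with the (again smooth) dual quadric $(\Q^{3})^{\vee}$. Under $\gamma$, the set $S_{\Lambda}$ is carried bijectively onto $(\Q^{3})^{\vee}\cap\Lambda^{\vee}$, where $\Lambda^{\vee}$ is the pencil of hyperplanes through the $\P^{2}$ $\Lambda$, i.e.\ a line in $(\P^{4})^{\vee}$. A line meets the smooth quadric $(\Q^{3})^{\vee}$ in a length-$2$ scheme unless it is contained in it, so it remains to prove that this scheme is reduced: that $\Lambda^{\vee}$ is neither contained in $(\Q^{3})^{\vee}$ nor tangent to it, under the hypothesis that $C$ is smooth.

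These two exclusions are the substance of the proof. If $\Lambda^{\vee}\subseteq(\Q^{3})^{\vee}$, then $\gamma^{-1}(\Lambda^{\vee})$ is a line $L\subseteq\Q^{3}$; every $v\in L$ satisfies $\Lambda\subseteq\T_{v}\Q^{3}$, and a direct dimension count with the symmetric matrix of $\Q^{3}$ gives $\bigcap_{v\in L}\T_{v}\Q^{3}=\braket{L}^{\perp}$-type linear space of dimension $3$, so $\Lambda=\bigcap_{v\in L}\T_{v}\Q^{3}$; then $L\subseteq\Q^{3}\cap\Lambda=C$, impossible for a smooth conic. If instead $\Lambda^{\vee}$ is tangent to $(\Q^{3})^{\vee}$ at a point $\gamma(v)$, then $\Lambda^{\vee}\subseteq\T_{\gamma(v)}(\Q^{3})^{\vee}$; since the Gauss map of $(\Q^{3})^{\vee}$ is $\gamma^{-1}$, the hyperplane $\T_{\gamma(v)}(\Q^{3})^{\vee}$ is precisely the set of hyperplanes of $\P^{4}$ passing through $v$, so every hyperplane containing $\Lambda$ also contains $v$, whence $v\in\Lambda$. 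Together with $\Lambda\subseteq\T_{v}\Q^{3}$ this yields $C=\Q^{3}\cap\Lambda=(\Q^{3}\cap\T_{v}\Q^{3})\cap\Lambda$, a plane section of the quadric cone $\Q^{3}\cap\T_{v}\Q^{3}$ through its vertex $v\in\Lambda$, hence a pair of lines or a double line — again not a smooth conic. Therefore $S_{\Lambda}$ consists of exactly two distinct points $v_{1}(C),v_{2}(C)$, and $C(v_{1}(C),v_{2}(C))=C$ by the reduction of the first paragraph.

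Finally I would assemble the bijection. Send a smooth conic $C$ to $\{v_{1}(C),v_{2}(C)\}$; in the other direction, send an unordered pair $\{v_{1},v_{2}\}$ with $C(v_{1},v_{2})$ smooth (necessarily $v_{1}\neq v_{2}$, since $C(v,v)=\Q^{3}\cap\T_{v}\Q^{3}$ is a surface) to $C(v_{1},v_{2})$. That these are mutually inverse is exactly the uniqueness already obtained: if $C(v_{1},v_{2})=:C$ is smooth, then $\braket{C}\subseteq\T_{v_{i}}\Q^{3}$ for both $i$, so $\gamma(v_{1}),\gamma(v_{2})\in(\Q^{3})^{\vee}\cap\braket{C}^{\vee}=\{\gamma(v_{1}(C)),\gamma(v_{2}(C))\}$, forcing $\{v_{1},v_{2}\}=\{v_{1}(C),v_{2}(C)\}$; and the other composite returns $C(v_{1}(C),v_{2}(C))=C$. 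I expect the genuine obstacle to be the second exclusion — ruling out that $\Lambda^{\vee}$ is tangent to $(\Q^{3})^{\vee}$ — since that is where the smoothness of $C$ enters essentially; everything else is formal manipulation with the Gauss map and dimension counts.
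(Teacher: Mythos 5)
Your proof is correct, but it is a genuinely different argument from the one in the paper. The paper deduces the claim from the machinery it has already built: the pencil of hyperplane sections through $\braket{C}$ defines the quadric fibration $q\colon \Bl_{C}\Q^{3}\to\P^{1}$, and Remark~\ref{rem-quad40} (via the formula $(-K_{Q})^{3}=40-8g(B)$ of Lemma~\ref{lem-89}~(3) and the identification of the branch locus of $B\to\P^{1}$ with the discriminant in Lemma~\ref{lem-89}~(1)) shows that this fibration has exactly two singular fibers; the two vertices of those singular quadric cones are by definition $v_{1}(C),v_{2}(C)$, and the biduality step ``the vertices of $\T_{v_{i}}\Q^{3}\cap\Q^{3}$ are the $v_{i}$ themselves'' closes the correspondence. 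You instead reduce everything to the statement that $S_{\Lambda}=\{v\in\Q^{3}\mid \T_{v}\Q^{3}\supseteq\Lambda\}$ has exactly two elements and prove it by classical projective duality: $S_{\Lambda}\cong(\Q^{3})^{\vee}\cap\Lambda^{\vee}$ is a line meeting a smooth quadric, and the smoothness of $C$ rules out both containment (your exclusion via $L\subseteq L^{\perp}\cap\Q^{3}=C$) and tangency (your exclusion via the plane section of a cone through its vertex). Both arguments are sound; yours is more elementary and self-contained, needing nothing from Sections~2--3, while the paper's is shorter given what has already been established and fits the narrative in which the two singular fibers of $q$ are the objects of interest. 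The only blemish in your write-up is the phrase ``linear space of dimension $3$'' for $\bigcap_{v\in L}\T_{v}\Q^{3}$: this is the vector-space dimension of $W^{\perp}$ for $L=\P(W)$, i.e.\ a $\P^{2}$, which is what the comparison with $\Lambda$ requires; stating the projective dimension explicitly would avoid confusion.
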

\begin{proof}
$q \colon Q \to \P^{1}$ has exactly two singular fibers $Q_{1,C}$ and $Q_{2,C}$ as we saw in Remark~\ref{rem-quad40}. 
Then $Q_{i}:=\t(Q_{i,C}) \subset \Q^{3}$ are singular quadric cones. 
Set $v_{i}(C)=\s(\Sing Q_{i})$.
Then a hyperplane $H$ in $\P^{4}$ containing $\braket{C}$ tangents $\Q^{3}$ if and only if $H \cap \Q^{3}=Q_{i}$ and $H=\T_{v_{i}(C)}\Q^{3}$ for some $i \in \{1,2\}$. 
Therefore, we have $\T_{v_{1}(C)}\Q^{3} \cap \T_{v_{2}(C)}\Q^{3}=\braket{C}$. 

Let us confirm the one-to-one correspondence. 
Take two points $v_{1},v_{2} \in \Q^{3}$ such that $C(v_{1},v_{2})$ is smooth conic. 
Then $\{v_{1}(C(v_{1},v_{2})),v_{2}(C(v_{1},v_{2}))\}$ is the set of vertices of the cones $\{\T_{v_{1}}\Q^{3} \cap \Q^{3},\T_{v_{2}}\Q^{3} \cap \Q^{3}\}$, which is nothing but $\{v_{1},v_{2}\}$. Hence we are done. 
\end{proof}

\noindent\textbf{Step 4.} We finish the proof by presenting suitable examples as follows. 

\begin{ex}[Singular fiber of type $(2,j)$ for $j=1,2,3$]\label{ex-(2,j)}
Fix $j \in \{1,2,3\}$. 
We can take a smooth conic $C \subset \Q^{2}$ such that $\# (C \cap \ol{T})_{\red}=j$. 
Let $Q_{1}$ be a smooth hyperplane section of $\Q^{3}$ such that $C=Q_{1} \cap \Q^{2}$. 
Then we have $Q_{1} \cap \ol{T}=C \cap \ol{T}$. 
Let $Q_{2} \subset \Q^{3}$ be a general hyperplane section such that $Q_{2} \cap Q_{1} \cap \ol{T} =\emp$. 
The pencil $\L=\{\lambda Q_{1} + \mu Q_{2} \mid [\lambda:\mu]\}$ induces a quadric fibration 
$q \colon \Bl_{Q_{1} \cap Q_{2}}\Q^{3} \to \P^{1}$. 
Let $f \colon \Bl_{Q_{1} \cap Q_{2}}\Q^{3} \to \Q^{3}$ be the blow-up. 
If we set $\wt{Q_{i}}:=f^{-1}_{\ast}Q_{i}$, then $\wt{Q_{i}}$ is a $q$-fiber. 
Then we obtain $T \cap \wt{Q_{1}} \simeq \ol{T} \cap C$ since $Q_{1} \cap Q_{2} \cap \ol{T}=\emp$. 
Set $t=q(\wt{Q_{1}})$. 
Seeing the fiber $(Q_{t},\ol{T}_{t})=(\wt{Q_{1}},T \cap \wt{Q_{1}})$ and using Theorem~\ref{thm-singfibdP6}, we obtain an example of a sextic del Pezzo fibration having fibers of type $(2,j)$.
\end{ex}

\begin{ex}[Singular fiber of type $(1,j)$ for $j=1,2,3$]\label{ex-(1,j)}
Fix $j \in \{1,2,3\}$. 
We can take a smooth conic $C \subset \Q^{2}$ such that $\# (C \cap \ol{T})_{\red}=j$. 
By Claim~\ref{claim-tang}, we can take a point $v_{1},v' \in \Q^{3}$ such that $C=C(v_{1},v')$. Then we have $\T_{v_{1}}\Q^{3} \cap \Q^{2}=C$. 

Let $v_{2} \in \Q^{3}$ be a point of $\Q^{3}$ such that $\braket{v_{1},v_{2}} \not\subset \Q^{3}$ and $\T_{v_{2}}\Q^{3} \cap C \cap \ol{T}=\emp$. 
Then 
$C(v_{1},v_{2})$ is smooth and 
%
%
$C(v_{1},v_{2}) \cap \ol{T}=\emp$. 
%
Set $Q_{i}=\T_{v_{i}}\Q^{3} \cap \Q^{3}$ and $\wt{Q_{i}} \subset \Bl_{C(v_{1},v_{2})}\Q^{3}$ be the proper transform of $Q_{i}$ for $i=1,2$. 
Then by Claim~\ref{claim-tang}, $\wt{Q_{1}},\wt{Q_{2}}$ are the singular fibers of the quadric fibration $\Bl_{C(v_{1},v_{2})}\Q^{3} \to \P^{1}$. 
Recall that $Q_{1} \cap \Q^{2}=C$ is a smooth conic. 
Hence $Q_{1} \cap \Q^{2}$ does not contain the vertex of $Q_{1}$ and hence we have $C \cap \ol{T} \simeq \wt{Q_{1}} \cap T$. 
Therefore, $T$ does not pass through the vertex of $Q_{1}$. 
Set $t=q(\wt{Q_{1}})$. 
Seeing the fiber $(Q_{t},\ol{T}_{t})=(\wt{Q_{1}},T \cap \wt{Q_{1}})$ and using Theorem~\ref{thm-singfibdP6}, we obtain an example of a sextic del Pezzo fibration having fibers of type $(1,j)$.
\end{ex}

\begin{ex}[Singular fibers of type (n2) and (n4)]\label{ex-nonnormal} 

We fix an isomorphism $\Q^{2} \simeq \P^{1}_{a} \times \P^{1}_{b}$ and let $l_{a}$ and $l_{b}$ be the two rulings. 
We may assume that $\ol{T} \in |l_{a}+2l_{b}|$. 
Let $g_{a} \colon \ol{T} \to \P^{1}_{a}$ be the restriction of the first projection to $\ol{T}$. 
Take a point $p_{1} \in \ol{T}$. 
Then we have $\T_{p_{1}}\Q^{2} =l_{a}+l_{b}$ and hence 
$\T_{p_{1}}\Q^{3} \cap \ol{T}=\T_{p_{1}}\Q^{3} \cap \Q^{2} \cap \ol{T}=(l_{a}+l_{b}) \cap \ol{T}=p_{1}+g_{a}^{-1}(g_{a}(p_{1}))$ 
as effective Cartier divisors on $\ol{T}$. 

Let $p_{2} \in \Q^{3}$ be a general point such that $C(p_{1},p_{2})$ is smooth conic. 
Let $Q_{i}:=\T_{p_{i}}\Q^{3} \cap \Q^{3}$, $q \colon Q=\Bl_{C}\Q^{3} \to \P^{1}$ the quadric fibration and $\wt{Q_{i}}$ the proper transform of $Q_{i}$. 
Then $\wt{Q_{1}}$ is a singular $q$-fiber with the vertex $p_{1}$ and $\wt{Q_{1}} \cap T \simeq \ol{T} \cap (l_{a}+l_{b})=p_{1}+g_{a}^{-1}(g_{a}(p_{1}))$. 

If $p_{1}$ is a unramified (resp. ramified) point of $\ol{T} \to \P^{1}_{a}$, 
then we obtain an example of a sextic del Pezzo fibration having fibers of type (n2) (resp. (n4)) by seeing the fiber $(\wt{Q_{1}},T \cap \wt{Q_{1}})$ and using Theorem~\ref{thm-singfibdP6}. 
\end{ex}

We complete the proof of Theorem~\ref{mainthm-ex}. \qed

\section{Related result for Du Val families of sextic del Pezzo surfaces}

Since a paper \cite{AHTVA16} came out, families of sextic del Pezzo have been studied more actively from various interests, e.g., cubic 4-folds and their rationality, Hodge theory and derived categories. 
We devote this section to mention some related results about Du Val families of sextic del Pezzo surfaces, which are defined as follows. 
\begin{defi}[{\cite[Definition~5.1]{Kuznetsov17}}]
A morphism $f \colon \ms X \to S$ between schemes over $\C$ is said to be a \emph{Du Val family of sextic del Pezzo surfaces} if $f$ is a flat projective morphism and every geometric fiber of $f$ is a sextic del Pezzo surface with only Du Val singularities. 
\end{defi}

\begin{rem}
Note that we do not impose the condition $\rho(\ms X/S)=1$ on a Du Val family of sextic del Pezzo surfaces. 
A sextic del Pezzo fibration in our sense is not necessarily a Du Val family of sextic del pezzo surfaces since it may have non-normal fiber by Theorem~\ref{mainthm-ex}. 
\end{rem}

\begin{dsc}
Addington, Hassett, Tschinkel and V{\'a}rilly-Alvarado \cite{AHTVA16} treated Du Val family of sextic del Pezzo surfaces $f \colon \ms X \to S$ such that $\ms X$, $S$ are smooth and which is good in the sense of \cite[Definition~11]{AHTVA16}. 
In this case, they constructed a double cover $Y \to S$, a triple cover $Z \to S$, a Brauer class on $Y$, and that on $Z$ corresponding to $f$. 
In the paper \cite{AHTVA16}, they mainly studied about the case when $\ms X$ is the blow-up of a cubic $4$-fold along a certain elliptic scroll of degree $6$. 
In this case, the corresponding double covering $Y \to \P^{2}$ is branched along a sextic curve and $Y$ is a smooth K3 surface. 
They expect that this degree 2 smooth K3 surface with the Brauer class is obtained by using Hodge theory \cite[Proposition~10 and Remark~18]{AHTVA16}. 
\end{dsc}

\begin{dsc}
After that, for any Du Val family of sextic del Pezzo surfaces $f \colon \ms X \to S$, Kuznetsov \cite{Kuznetsov17} constructed two finite flat morphisms $\ms Z_{2} \to S$ and $\ms Z_{3} \to S$ of degrees $3$ and $2$ respectively and a semiorthogonal decomposition 
$\mathbf{D}(\ms X)=\braket{\mathbf{D}(S),\mathbf{D}(\ms Z_{2},\beta_{\ms Z_{2}}), \mathbf{D}(\ms Z_{3},\beta_{\ms Z_{3}})}$, 
where $\beta_{\ms Z_{2}}$ (resp. $\beta_{\ms Z_{3}}$) is a Brauer class on $\ms Z_{2}$ (resp. a Brauer class on $\ms Z_{3}$) \cite[Theorem~5.2]{Kuznetsov17}. 
These coverings with Brauer classes are generalization of that of \cite{AHTVA16}. 
\end{dsc}
From the viewpoint of the following lemma, our associated coverings $(\vp_{B},\vp_{T})$ associated to a sextic del Pezzo fibration $\vp$ are generalization of coverings $(\ms Z_{3} \to C,\ms Z_{2} \to C)$. 

\begin{prop}\label{prop-Kuz}
Let $\vp \colon X \to C$ be a sextic del Pezzo fibration such that every $\vp$-fiber is normal. 
Then the two finite flat morphisms $\ms Z_{2} \to C$ and $\ms Z_{3} \to C$ due to \cite{Kuznetsov17} are $\vp_{T} \colon T \to C$ and $\vp_{B} \colon B \to C$ in Definition~\ref{defi-BT} respectively. 
\end{prop}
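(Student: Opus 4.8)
The plan is to play Kuznetsov's construction off against Lemma~\ref{lem-indep}, which already identifies $\vp_{B}$ and $\vp_{T}$ with relative Hilbert schemes. First I would check that $\vp\colon X\to C$ is a Du Val family of sextic del Pezzo surfaces in the sense of \cite{Kuznetsov17}, so that the coverings $\ms Z_{2}\to C$ and $\ms Z_{3}\to C$ are defined. Every $\vp$-fiber is an irreducible Gorenstein sextic del Pezzo surface, hence by Theorem~\ref{thm-GordP6} and Remark~\ref{rem-nocone} is of type $(i,j)$, (n2) or (n4). The types (n2) and (n4) are non-normal, so the hypothesis that every $\vp$-fiber be normal forces all fibers to be of type $(i,j)$, and these have only Du Val singularities by Theorem~\ref{thm-GordP6}~(1). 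As $X$ and $C$ are smooth, Kuznetsov's construction then yields smooth curves $\ms Z_{2}$ and $\ms Z_{3}$, finite flat over $C$ of degrees $3$ and $2$ respectively.

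Next let $U\subset C$ be the dense open subset over which $\vp$ is smooth, so that $C\setminus U$ is finite. Over $U$, Kuznetsov's coverings restrict fiberwise to the classical constructions on a smooth sextic del Pezzo surface: $\ms Z_{2}|_{U}$ is the relative space of the three conic bundle structures of $X_{U}\to U$, i.e. the finite part of the Stein factorization of the relative Hilbert scheme of conics $\Hilb_{2t+1}(X_{U}/U)\to U$, and $\ms Z_{3}|_{U}$ is the relative space of the two contractions to $\P^{2}$, i.e. the finite part of the Stein factorization of $\Hilb_{3t+1}(X_{U}/U)\to U$. But the proof of Lemma~\ref{lem-indep} shows exactly that, over $U$, these finite parts are already smooth and isomorphic over $U$ to $T|_{U}$ and $B|_{U}$ respectively. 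Hence $\ms Z_{2}|_{U}\simeq T|_{U}$ and $\ms Z_{3}|_{U}\simeq B|_{U}$ as schemes over $U$.

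Finally I would promote these isomorphisms over $U$ to isomorphisms over all of $C$. Each of $\ms Z_{2}$, $\ms Z_{3}$, $T$, $B$ is a smooth projective curve finite over $C$; an isomorphism over the dense open $U$ between two such curves extends, uniquely and compatibly with the structure maps to $C$, to an isomorphism over $C$, by the valuative criterion of properness applied at each of the finitely many points of $C\setminus U$ together with the separatedness of all the morphisms in sight. Applying this twice gives $\ms Z_{2}\simeq\vp_{T}$ and $\ms Z_{3}\simeq\vp_{B}$ over $C$, which is the assertion of Proposition~\ref{prop-Kuz} in the notation of Definition~\ref{defi-BT}. The step I expect to require the most care is the one in the middle: one must verify that Kuznetsov's abstractly constructed coverings genuinely restrict over the smooth locus to the relative Hilbert-scheme descriptions stated above --- and, in particular, that the labelling $\ms Z_{2},\ms Z_{3}$ matches the degrees $3,2$ in the way claimed --- which requires unwinding the construction of \cite{Kuznetsov17}; once this bookkeeping is settled the rest is formal.
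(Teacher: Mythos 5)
Your proposal is correct, and its core is the same as the paper's: both arguments reduce the statement to Lemma~\ref{lem-indep} by identifying Kuznetsov's coverings with the Stein factorizations of the relative Hilbert schemes of conics and twisted cubics. The difference is in how that identification is obtained. The paper's proof is two lines: it cites Kuznetsov's Propositions~5.12 and 5.14, which assert \emph{globally over $C$} (not merely over the smooth locus) that $\Hilb_{dt+1}(X/C)\to C$ factors through $\ms Z_{d}$ as its Stein factorization and that $\ms Z_{d}$ is non-singular; since Lemma~\ref{lem-indep} describes $B$ and $T$ as the \emph{normalizations} of these Stein factorizations, the smoothness of $\ms Z_{d}$ makes the normalization redundant and the identification is immediate. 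You instead establish the identification only over the open set $U$ where $\vp$ is smooth and then extend by a properness/separatedness argument for smooth projective curves finite over $C$ agreeing on a dense open set. That extension step is sound, but the detour is unnecessary: the ``step requiring the most care'' that you flag --- matching Kuznetsov's abstract coverings with the Hilbert-scheme descriptions and getting the degrees right --- is precisely the content of the cited propositions, and it already holds over all of $C$. What your route buys is independence from the precise global form of Kuznetsov's statements (you only need the fiberwise classical picture over the smooth locus plus smoothness of $\ms Z_{d}$); what the paper's route buys is brevity and the avoidance of any gluing argument.
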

\begin{proof}
We use the same notation as in Lemma~\ref{lem-indep}. 
By \cite[Propositions~5.12 and 5.14]{Kuznetsov17}, $\Hilb_{dt+1}(X/C) \to C$ factors through $\ms Z_{d}$ as the Stein factorization and $\ms Z_{d}$ is non-singular. 
Thus Lemma~\ref{lem-indep} implies this assertion. 
\end{proof}

\begin{dsc}\label{work-AABF}
Let $f \colon \ms X \to S$ be a Du Val family of sextic del Pezzo surfaces and suppose that $\ms X$ and $S$ are smooth. 
Recently, assuming some conditions on the discriminant of $f$, Addington, Auel, Bernardara and Faenzi constructed a $(\P^{2})^{2}$-fibration $f_{\ms Y} \colon \ms Y \to S$ with smooth $\ms Y$ such that $\ms Y$ contains $\ms X$ as a relative linear section. 
More precisely, they supposed that $f$ is good in the sense of \cite[Definition~11]{AHTVA16} and $\{t \in S \mid \ms X_{t} \text{ is of type } (1,j) \text{ for some } j \} $ is a smooth divisor on $S$. 

This statement is close to Theorem~\ref{mainthm-P22}. 
However, the result does not implies Theorem~\ref{mainthm-P22} and not conversely. 
\end{dsc}

It should be emphasized that our method are totally different from that of them. 
Our results have mutually complementary relationships with the above results. 

\appendix
\section{Relative universal extension of sheaves}\label{app-relext}

This appendix is devoted to prove Theorem~\ref{thm-univext}. 

\subsection{Relative Ext-sheaf}

First, we recall the notion of relative Ext sheaves and organize some basic properties. 
Let $f \colon X \to Y$ be a proper morphism between noetherian schemes $X$ and $Y$. 
Let $\mc F$ and $\mc G$ be coherent sheaves on $X$ and $\mc E$ a coherent sheaf on $Y$. 
\begin{defi}[cf. \cite{Kleiman80,BPS80}]
We denote the $i$-th cohomology of right derived functor of $f_{\ast}\mc Hom(\mc F,-)$ as $\mc Ext^{i}_{f}(\mc F,-)$. 
We call this sheaf $\mc Ext^{i}_{f}(\mc F,\mc G)$ the \emph{relative Ext sheaf}. 
\end{defi}

\begin{rem}
(1) $\mc Ext^{i}_{f}(\mc O_{X},\mc G)=R^{i}f_{\ast}\mc G$ by the definition. 

(2) Composing natural canonical morphisms 
\[f_{\ast}\mc Hom(\mc F,\mc G) \otimes \mc E \to f_{\ast}\left(\mc Hom(\mc F,\mc G) \otimes f^{\ast}\mc E\right) \to f_{\ast}\mc Hom(\mc F,\mc G \otimes f^{\ast}\mc E),\]
we obtain a natural morphism $\mc Ext^{i}_{f}(\mc F,\mc G \otimes f^{\ast}\mc E) \to \mc Ext^{i}_{f}(\mc F,\mc G) \otimes \mc E$. 
By the projection formula, this is an isomorphism if $\mc E$ is locally free. 
\end{rem}

\begin{rem}\label{rem-relext}
Consider the following three spectral sequences:
\begin{align}
\label{eq1-relext}
&R^{i}f_{\ast}\mc Ext^{j}(\mc F,\mc G \otimes f^{\ast}\mc E) \ra \mc Ext^{i+j}_{f}(\mc F,\mc G\otimes f^{\ast}\mc E), \\
\label{eq2-relext}
&H^{i}(Y,\mc Ext^{j}_{f}(\mc F,\mc G\otimes f^{\ast}\mc E)) \ra \Ext^{i+j}(\mc F,\mc G\otimes f^{\ast}\mc E) \text{ and } \\
\label{eq3-relext}
&H^{i}(X,\mc Ext^{j}(\mc F,\mc G\otimes f^{\ast}\mc E)) \ra \Ext^{i+j}(\mc F,\mc G\otimes f^{\ast}\mc E). 
\end{align}
These spectral sequence give three exact sequences as follows: 
{\small 
\begin{align}
\label{eq4-relext}
0& \to R^{1}f_{\ast}\mc Hom(\mc F,\mc G\otimes f^{\ast}\mc E) \to \mc Ext^{1}_{f}(\mc F,\mc G\otimes f^{\ast}\mc E) \mathop{\to}^{\a} f_{\ast}\mc Ext^{1}(\mc F,\mc G\otimes f^{\ast}\mc E) \\
\notag
&\quad \to R^{2}f_{\ast}\mc Hom(\mc F,\mc G\otimes f^{\ast}\mc E) \to \mc Ext^{2}_{f}(\mc F,\mc G \otimes f^{\ast}\mc E),  \\
\label{eq5-relext}
0& \to H^{1}(Y,f_{\ast}\mc Hom(\mc F,\mc G\otimes f^{\ast}\mc E)) \to \Ext^{1}(\mc F,\mc G\otimes f^{\ast}\mc E) \mathop{\to}^{\b'} H^{0}(Y,\mc Ext^{1}_{f}(\mc F,\mc G\otimes f^{\ast}\mc E)) \\
\notag
&\quad \to H^{2}(Y,f_{\ast}\mc Hom(\mc F,\mc G\otimes f^{\ast}\mc E)) \to \Ext^{2}(\mc F,\mc G \otimes f^{\ast}\mc E)
 \text{ and } \\
\label{eq6-relext}
0& \to H^{1}(X,\mc Hom(\mc F,\mc G\otimes f^{\ast}\mc E)) \to \Ext^{1}(\mc F,\mc G\otimes f^{\ast}\mc E) \mathop{\to}^{\g'} H^{0}(X,\mc Ext^{1}(\mc F,\mc G\otimes f^{\ast}\mc E)) \\
\notag
&\quad \to H^{2}(X,\mc Hom(\mc F,\mc G\otimes f^{\ast}\mc E)) \to \Ext^{2}(\mc F,\mc G \otimes f^{\ast}\mc E).
\end{align}
}
Then we obtain the following commutative diagram: 
\[
\xymatrixrowsep{5mm}
\xymatrixcolsep{5mm}
\xymatrix{
\Ext^{1}(\mc F,\mc G \otimes f^{\ast}\mc E) \ar[r]^{\b' \quad} \ar@(ur,lu)[rr]^{\g'} &H^{0}(Y,\mc Ext^{1}_{f}(\mc F,\mc G \otimes f^{\ast}\mc E)) \ar[d]\ar[r]^{ H^{0}(\a) \ } & H^{0}(Y,f_{\ast}\mc Ext^{1}(\mc F,\mc G \otimes f^{\ast}\mc E)) \ar[d]&\\
&H^{0}(Y,\mc Ext^{1}_{f}(\mc F,\mc G) \otimes \mc E)  \ar[d] \ar[r]& H^{0}(Y,f_{\ast}\mc Ext^{1}(\mc F,\mc G) \otimes \mc E) \ar[d]&\\
&\Hom(\mc E^{\vee},\mc Ext^{1}_{f}(\mc F,\mc G)) \ar[r]_{\a \circ -} & \Hom(\mc E^{\vee},f_{\ast}\mc Ext^{1}(\mc F,\mc G)) \ar[d]^{\simeq}_{\mathrm{adjoint}}& \\
&&\Hom(f^{\ast}\mc E^{\vee},\mc Ext^{1}(\mc F,\mc G))  
}\]

Note that all vertical arrows are isomorphic when $\mc E$ is locally free. 

Let $\wt{\b},\b,\g$ denote the composition of morphisms in the above diagram as follows:
\begin{align}
\label{eqwtb-relext}
&\wt{\b} \colon \Ext^{1}(\mc F,\mc G \otimes f^{\ast}\mc E) \to H^{0}(Y,\mc Ext^{1}_{f}(\mc F,\mc G) \otimes \mc E) \\
\label{eqb-relext}
&\b \colon \Ext^{1}(\mc F,\mc G \otimes f^{\ast}\mc E) \to \Hom(\mc E^{\vee},\mc Ext^{1}_{f}(\mc F,\mc G)) \\
\label{eqg-relext}
&\g \colon \Ext^{1}(\mc F,\mc G \otimes f^{\ast}\mc E) \to \Hom(f^{\ast}\mc E^{\vee},\mc Ext^{1}(\mc F,\mc G))
\end{align}
For an element $t \in \Ext^{1}(\mc F,\mc G \otimes f^{\ast}\mc E)$, 
it is easy to verify that the composite morphism 
\begin{align}
\label{eqea-relext}
f^{\ast}\mc E^{\vee} \mathop{\to}^{f^{\ast}\b(t)} f^{\ast}\mc Ext^{1}_{f}(\mc F,\mc G) \mathop{\to}^{f^{\ast}\a} f^{\ast}f_{\ast}\mc Ext^{1}(\mc F,\mc G) \mathop{\to}^{\e} \mc Ext^{1}(\mc F,\mc G)
\end{align}
is nothing but $\g(t)$, where $\e \colon f^{\ast}f_{\ast}\mc Ext^{1}(\mc F,\mc G) \to \mc Ext^{1}(\mc F,\mc G)$ is the natural morphism. 
\end{rem}

\begin{lem}\label{lem-locfr}
Assume that $X$ is regular, $\mc E$ is locally free, $\mc G$ is locally free and $\mc F$ satisfies $\hd(\mc F_{x}) \leq 1$ for every $x \in X$. 
Let $t \in \Ext^{1}(\mc F,\mc G \otimes f^{\ast}\mc E)$ be an arbitrary element  
and $\mc H_{t}$ the extension corresponding to $t$: 
\begin{align}\label{eq-locfr}
0 \to \mc G \otimes f^{\ast}\mc E \to \mc H_{t} \to \mc F \to 0.
\end{align}
If $\g(t)$ is surjective, then $\mc H_{t}$ is locally free. 
\end{lem}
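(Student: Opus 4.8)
The statement is local on $X$, so I would fix a point $x \in X$ and check that $(\mc H_t)_x$ is a free $\mc O_{X,x}$-module. Working with the stalk at $x$, let $A = \mc O_{X,x}$ (a regular local ring), $M = \mc F_x$, $N = (\mc G \otimes f^\ast \mc E)_x$ (a free $A$-module, being the stalk of a locally free sheaf), and let $P = (\mc H_t)_x$, so that we have a short exact sequence $0 \to N \to P \to M \to 0$ of $A$-modules with $\hd_A(M) \le 1$. Since $N$ is free and $\hd_A(M) \le 1$, $P$ is automatically a second syzygy-type module, but freeness is exactly the condition that this extension class, viewed appropriately, vanishes. So the real content is: the hypothesis that $\g(t)$ is surjective forces the extension to be, locally at $x$, the universal extension killing $\mc Ext^1(\mc F,\mc G)_x$, and this universal extension has locally free total space.

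The key computation I would carry out is to relate $\g(t)$ to the local Yoneda class. Recall $\mc Ext^1(\mc F, \mc G)$ is supported on the locus where $\mc F$ fails to be locally free (using $\hd(\mc F_x)\le 1$ and $\mc G$ locally free, $\mc Ext^j(\mc F,\mc G)=0$ for $j\ge 2$). By the diagram in Remark~\ref{rem-relext}, $\g(t)$ is the composite $f^\ast \mc E^\vee \to \mc Ext^1(\mc F, \mc G)$; dualizing and using that $\mc E$ is locally free, giving $\g(t)$ is equivalent to giving a map $\mc E^\vee \to \mc Ext^1_f(\mc F,\mc G) = R^1 f_\ast \mc Hom(\cdots)$ composed with $\a$ into $f_\ast \mc Ext^1(\mc F,\mc G)$. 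The point is that, after pulling back to $X$ and restricting to the stalk at $x$, the element $t$ induces precisely a map of $A$-modules $(\mc E^\vee)_x \to \Ext^1_A(M, \mc G_x)$ — namely, $t_x$ lies in $\Ext^1_A(M, \mc G_x \otimes \mc E_x) \cong \Hom_A(\mc E_x^\vee, \Ext^1_A(M, \mc G_x))$ — and this is exactly the stalk of $\g(t)$. So the surjectivity of $\g(t)$ at $x$ says: the induced map $\mc E_x^\vee \to \Ext^1_A(M,\mc G_x)$ is surjective.

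Granting that identification, freeness follows from a standard lemma on universal extensions. Take a free presentation $A^b \xrightarrow{\;\phi\;} A^a \to M \to 0$; since $\hd_A M \le 1$, the kernel $K = \ker(A^a \to M)$ is free, and $0 \to K \to A^a \to M \to 0$ gives $\Ext^1_A(M, \mc G_x) \cong \Hom_A(K, \mc G_x)/\Hom_A(A^a,\mc G_x)$. Now the extension $0 \to \mc G_x \otimes \mc E_x \to P \to M \to 0$ being classified by $t_x \in \Ext^1_A(M,\mc G_x \otimes \mc E_x)$ whose associated map $\mc E_x^\vee \to \Ext^1_A(M,\mc G_x)$ is surjective means: pulling back along $K \hookrightarrow A^a$ gives $0 \to \mc G_x \otimes \mc E_x \to P' \to K \to 0$, and the connecting data shows $P$ sits in a pushout; surjectivity of $\mc E_x^\vee \to \Hom(K,\mc G_x)/(\text{im})$ translates, after writing $K$ free, into: the map $K^\vee \otimes \mc E_x^\vee \to \mc G_x$... — more precisely I would argue directly that $P$ is the middle term of a pullback of the free resolution $0 \to K \to A^a \to M \to 0$ along a surjection $\mc G_x \otimes \mc E_x \twoheadleftarrow$(something free onto it), forcing $P$ free. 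The cleanest route: show $\Ext^1_A(M, P) \to \Ext^1_A(M, M)$ has the class of $0 \to P \to ? \to M$... Actually the standard fact is that an extension $0 \to N \to P \to M \to 0$ with $N$ free, $\hd_A M \le 1$, has $P$ free iff the classifying map $N^\vee \to \Ext^1_A(M, A)$ (after choosing a free $N$) is surjective onto the torsion-relevant part; here $\g(t)$ surjective is precisely that condition. I would write this out via the snake lemma applied to the map from the free resolution to the extension.

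\textbf{Main obstacle.} The hard part will be the bookkeeping in the identification of $\g(t)$ with the genuine local Yoneda/extension class — i.e., verifying that the formal composite $(\ref{eqea-relext})$ in Remark~\ref{rem-relext}, when restricted to the stalk at $x$, really computes the $A$-module extension class of $(\ref{eq-locfr})_x$ viewed through the adjunction $\Ext^1_A(M, \mc G_x\otimes\mc E_x) \cong \Hom_A(\mc E_x^\vee, \Ext^1_A(M,\mc G_x))$. This is a compatibility between the Grothendieck spectral sequence edge maps and the naive adjunction, and it requires being careful that $\mc Ext^1(\mc F,\mc G)$ has $0$-dimensional... no, arbitrary-dimensional support but that $\mc Ext^{\ge 2}(\mc F,\mc G) = 0$ so the spectral sequence $(\ref{eq3-relext})$ degenerates enough to make $\g'$ (hence $\g$) literally the "evaluate on stalks" map. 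Once that is pinned down, the algebra is routine. I would also need the trivial remark that freeness can be checked stalkwise and that $\mc G \otimes f^\ast\mc E$ is locally free because both factors are.
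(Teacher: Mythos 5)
Your plan is, at its core, the paper's own proof: both arguments reduce local freeness of $\mc H_{t}$ to the vanishing of $\mc Ext^{1}(\mc H_{t},\mc G)$ (the $\mc Ext^{\geq 2}$ vanish automatically because $\hd(\mc F_{x})\leq 1$ and $\mc G$ is locally free), read off from the long exact sequence of $\mc Ext^{\bullet}(-,\mc G)$ applied to (\ref{eq-locfr}) that this vanishing is equivalent to surjectivity of the connecting map $\d\colon \mc Hom(\mc G\otimes f^{\ast}\mc E,\mc G)\to\mc Ext^{1}(\mc F,\mc G)$, and then note that $\g(t)=\d\circ\t$ where $\t\colon f^{\ast}\mc E^{\vee}\to\mc Hom(\mc G\otimes f^{\ast}\mc E,\mc G)$ is the canonical map sending $1$ to $\id_{\mc G}$, so surjectivity of $\g(t)$ forces that of $\d$. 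The paper carries this out globally at the level of $\mc Ext$-sheaves, which dissolves the issue you single out as the main obstacle: the identity $\d\circ\t=\g(t)$ is immediate from the definition (\ref{eqea-relext}) of $\g$, and no stalkwise free-resolution bookkeeping or spectral-sequence edge-map compatibility is required. One caveat on your write-up: the opening assertion that ``freeness is exactly the condition that this extension class, viewed appropriately, vanishes'' is false as stated (the split extension $\mc G\otimes f^{\ast}\mc E\oplus\mc F$ is not locally free unless $\mc F$ is); the correct criterion is the surjectivity of the connecting map, which you do arrive at further on, so this is a local misstatement rather than a gap in the plan.
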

\begin{proof}
$\mc H_{t}$ is locally free if and only if $\mc Ext^{i}(\mc H_{t},\mc G) =0$ for any $i \geq 1$ since $\mc G$ is locally free and $X$ is regular. 
By taking $\mc Hom(-,\mc G )$ of the sequence (\ref{eq-locfr}), we obtain an exact sequence
\begin{align*}
0 &\to \mc Hom(\mc F,\mc G) \to \mc Hom(\mc H_{t},\mc G) \to \mc Hom(\mc G \otimes f^{\ast}\mc E,\mc G) \\
&\mathop{\to}^{\d} \mc Ext^{1}(\mc F,\mc G) \to \mc Ext^{1}(\mc H_{t},\mc G) \to 0. 
\end{align*}
Since $\hd(\mc F_{x}) \leq 1$ for all $x \in X$, we have $\mc Ext^{i}(\mc H_{t},\mc G)=0$ for any $i \geq 2$. 
Therefore, $\mc H_{t}$ is locally free if and only if $\d$ is surjective. 
Let $\t \colon f^{\ast}\mc E^{\vee} \to \mc Hom(\mc G \otimes f^{\ast}\mc E,\mc G)$ be the natural map which comes from the morphism $\mc O \ni 1 \mapsto \id \in \mc Hom(\mc G,\mc G)$. 
By the definition of $\g(t)$, we obtain $\d \circ \t=\g(t)$. 
If $\g(t)$ is surjective, then so is $\d$, which completes the proof. 
\end{proof}

\subsection{Relative universal extensions of sheaves}
Next we introduce a notion of $f$-universal extension. 
Let $f \colon X \to Y$ be a proper morphism between noetherian schemes $X,Y$ and $\mc F,\mc G$ coherent sheaves on $X$. 
\begin{defi}\label{defi-univext}
Put $\mc E=\mc Ext^{1}_{f}(\mc F,\mc G)^{\vee}$. 
Consider the composition 
\begin{align}\label{eqt-relext}
\t \colon \Ext^{1}(\mc F,\mc G \otimes f^{\ast}\mc Ext^{1}_{f}(\mc F,\mc G)^{\vee}) &\mathop{\to}^{\wt{\b}} H^{0}(Y,\mc Ext^{1}_{f}(\mc F,\mc G) \otimes \mc Ext^{1}_{f}(\mc F,\mc G)^{\vee}) \\
\notag&\mathop{\to}^{\theta} H^{0}(Y,\mc O_{Y}),
\end{align}
where $\wt{\b}$ is the morphism defined in (\ref{eqwtb-relext}) and $\theta$ is the natural morphism. 
We say that an element $t \in \Ext^{1}(\mc F,\mc G \otimes f^{\ast}\mc Ext^{1}_{f}(\mc F,\mc G)^{\vee})$ is \emph{universal} if $\t(t)=1$. 
If $t$ is universal, then we say that the corresponding extension 
\[0 \to \mc G \otimes f^{\ast}\mc Ext^{1}_{f}(\mc F,\mc G)^{\vee} \to \mc H_{t} \to \mc F \to 0.\] 
is an \emph{$f$-universal extension} of $\mc F$ by $\mc G$. 
If $Y=\Spec k$ for some field $k$, we just say that $\mc H_{t}$ is a \emph{universal extension} of $\mc F$ by $\mc G$. 
\end{defi}

The following lemma is a criterion for the existence of a locally free $f$-universal extension of $\mc F$ by $\mc G$. 
\begin{lem}\label{lem-univlocfr}
Suppose the following conditions:
\begin{enumerate}
\item $X$ is regular, $\mc G$ is locally free and $\hd(\mc F_{x}) \leq 1$ for any $x \in X$. 
\item $\e \circ f^{\ast}\a \colon f^{\ast}\mc Ext^{1}_{f}(\mc F,\mc G) \to \mc Ext^{1}(\mc F,\mc G)$ is surjective (See (\ref{eqea-relext})).
\item $\mc Ext_{f}^{1}(\mc F,\mc G)$ is locally free. 
\item $H^{2}(Y,f_{\ast}\mc Hom(\mc F,\mc G) \otimes \mc Ext^{1}_{f}(\mc F,\mc G)^{\vee})=0$. 
\end{enumerate}
Then there exists a locally free $f$-universal extension $\mc H$ of $\mc F$ by $\mc G$. 
\end{lem}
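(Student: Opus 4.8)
The plan is to exhibit an explicit universal element $t\in\Ext^{1}(\mc F,\mc G\otimes f^{\ast}\mc E)$ for which the morphism $\g(t)$ of (\ref{eqg-relext}) is surjective, and then to invoke Lemma~\ref{lem-locfr}, whose hypotheses ($X$ regular, $\mc G$ and $\mc E$ locally free, $\hd(\mc F_{x})\leq 1$) are supplied by (1) and (3), to conclude that the corresponding extension $\mc H_{t}$ is locally free. Throughout write $\mc A:=\mc Ext^{1}_{f}(\mc F,\mc G)$, so $\mc E=\mc A^{\vee}$; by (3) the sheaf $\mc A$, hence also $\mc E$, is locally free and $\mc E^{\vee}=\mc A$.

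First I would show the relevant global sections are full. Since $\mc E$ is locally free, the canonical morphisms of Remark~\ref{rem-relext} give isomorphisms $\mc Ext^{1}_{f}(\mc F,\mc G\otimes f^{\ast}\mc E)\simeq\mc A\otimes\mc E$ and $f_{\ast}\mc Hom(\mc F,\mc G\otimes f^{\ast}\mc E)\simeq f_{\ast}\mc Hom(\mc F,\mc G)\otimes\mc E$, and hypothesis (4) is precisely the statement that $H^{2}(Y,f_{\ast}\mc Hom(\mc F,\mc G)\otimes\mc E)=0$. Feeding $\mc E$ into the exact sequence (\ref{eq5-relext}) then forces $\b'$ to be surjective; chasing the commutative diagram of Remark~\ref{rem-relext} (all of whose vertical arrows are isomorphisms because $\mc E$ is locally free) I obtain that the map $\b\colon\Ext^{1}(\mc F,\mc G\otimes f^{\ast}\mc E)\to\Hom(\mc E^{\vee},\mc A)=\End(\mc A)$ of (\ref{eqb-relext}) is surjective, and that under the identification $H^{0}(Y,\mc A\otimes\mc A^{\vee})=\End(\mc A)$ the morphism $\theta$ of (\ref{eqt-relext}) is the trace, so $\t(t)=\Tr(\b(t))$ for every $t$.

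Next I would pick $t$ with $\b(t)=\tfrac{1}{r}\,\id_{\mc A}$, where $r$ is the locally constant rank of $\mc A$. On the open and closed locus where $r=0$ one has $\mc A=\mc E=0$, so (2) forces $\mc Ext^{1}(\mc F,\mc G)=0$ there, and then $\mc G$ locally free together with $X$ regular and $\hd(\mc F_{x})\leq 1$ makes $\mc F$ itself locally free over that locus, where there is nothing to prove; on its complement $\tfrac{1}{r}$ is a regular function, so such a $t$ exists by surjectivity of $\b$. For this $t$ one computes $\t(t)=\Tr(\tfrac{1}{r}\,\id_{\mc A})=1$, so $t$ is universal in the sense of Definition~\ref{defi-univext}. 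Moreover, by the formula (\ref{eqea-relext}) the morphism $\g(t)$ equals $\e\circ f^{\ast}\a\circ f^{\ast}\b(t)=\tfrac{1}{r}\,(\e\circ f^{\ast}\a)$, which is surjective exactly by hypothesis (2). Hence Lemma~\ref{lem-locfr} applies to $t$, and the extension $\mc H_{t}$ of $\mc F$ by $\mc G\otimes f^{\ast}\mc E=\mc G\otimes f^{\ast}\mc Ext^{1}_{f}(\mc F,\mc G)^{\vee}$ is locally free; this $\mc H:=\mc H_{t}$ is the required $f$-universal extension.

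The delicate part is the bookkeeping of the middle two paragraphs: one must route hypothesis (4) into the correct sequence (\ref{eq5-relext}), transport the resulting surjectivity of $\b'$ across the diagram of Remark~\ref{rem-relext}, and verify that $\theta$ is the trace, so that the single choice $\b(t)=\tfrac{1}{r}\,\id_{\mc A}$ simultaneously makes $t$ universal and makes $\g(t)$ surjective. Everything else is formal once these identifications are pinned down, the stratum $r=0$ being the only genuine edge case.
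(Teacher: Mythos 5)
Your proof is correct and follows the same skeleton as the paper's: hypothesis (4) fed into the sequence (\ref{eq5-relext}) gives surjectivity of $\wt{\b}$ (equivalently of $\b$, all the vertical arrows in the diagram of Remark~\ref{rem-relext} being isomorphisms since $\mc E$ is locally free by (3)), hypothesis (3) makes $\theta$ surjective, and Lemma~\ref{lem-locfr} with (1) and (2) yields local freeness. The one place you go beyond the paper is the explicit choice $\b(t)=\tfrac{1}{r}\id_{\mc A}$. The paper simply picks some $t$ with $\t(t)=1$ and then cites (2) and Lemma~\ref{lem-locfr}; but that lemma requires $\g(t)=\e\circ f^{\ast}\a\circ f^{\ast}\b(t)$ to be surjective, and an arbitrary trace-one endomorphism $\b(t)$ (e.g.\ a rank-one projector when $\rk \mc A=2$, which is exactly the rank arising in the paper's application) need not make this composition surjective even when $\e\circ f^{\ast}\a$ is. Your choice makes $f^{\ast}\b(t)$ an isomorphism, so $\g(t)$ is surjective precisely by (2) and simultaneously $\t(t)=\Tr(\tfrac{1}{r}\id_{\mc A})=1$; this is a small but genuine repair of a step the paper leaves implicit. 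The $r=0$ stratum you set aside is harmless (and vacuous in the application, where $\mc Ext^{1}_{f}(\mc F,\mc G)\neq 0$ and $Y$ is connected), though note that on such a component Definition~\ref{defi-univext} cannot literally be satisfied, a degeneracy shared by the paper's argument.
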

\begin{proof}
By (3), the morphism $\theta$ in (\ref{eqt-relext}) is surjective. 
By (4) and the exact sequence (\ref{eq5-relext}), the morphism $\wt{\b}$ in (\ref{eqwtb-relext}) is surjective. 
Therefore, the map $\t$ in (\ref{eqt-relext}) is surjective and hence there exists an $f$-universal extension $\mc H$. 
By (1), (2) and Lemma~\ref{lem-locfr}, this $\mc H$ is locally free. 
\end{proof}

\begin{lem}\label{lem-nosurj}
Let $X$ be a proper variety over a field and $\mc F$ be a coherent sheaf on $X$. 
Let $\mc L$ be a invertible sheaf on $X$. 
Suppose that there exists a universal extension of $\mc F$ by $\mc L$: 
\begin{align}
\label{eq-genc}
0 \to \mc L \otimes \Ext^{1}(\mc F,\mc L)^{\vee} \to \mc H \to \mc F \to 0. 
\end{align}
If there are no surjections $\mc F \to \mc L$, then there are no surjections $\mc H \to \mc L$. 
\end{lem}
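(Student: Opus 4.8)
The plan is to argue by contradiction, exploiting the \emph{universality} of the extension (\ref{eq-genc}) rather than just the existence of some extension. Suppose there were a surjection $\pi \colon \mc H \to \mc L$. First I would restrict $\pi$ to the subsheaf $\mc L \otimes \Ext^{1}(\mc F,\mc L)^{\vee}$, obtaining a morphism $\mc L \otimes \Ext^{1}(\mc F,\mc L)^{\vee} \to \mc L$, equivalently an element $v \in \Ext^{1}(\mc F,\mc L)$ (using $\Hom(\mc L \otimes V, \mc L) = V^{\vee\vee} = \Ext^{1}(\mc F,\mc L)$ since $V = \Ext^{1}(\mc F,\mc L)^{\vee}$ is finite-dimensional and $X$ is proper over a field, so these are finite-dimensional vector spaces). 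The key observation is that the universality condition $\t(t)=1$ in Definition~\ref{defi-univext} says precisely that the class of (\ref{eq-genc}) in $\Ext^{1}(\mc F, \mc L \otimes \Ext^{1}(\mc F,\mc L)^{\vee}) = \Ext^{1}(\mc F,\mc L) \otimes \Ext^{1}(\mc F,\mc L)^{\vee}$ is the identity element under the canonical identification. Pushing this class forward along the map $\Ext^{1}(\mc F,\mc L)^{\vee} \to \C$ dual to $v$ (i.e.\ applying $\mathrm{id} \otimes v^{\vee}$, or contracting with $v$) produces exactly the extension class $v \in \Ext^{1}(\mc F,\mc L)$ itself.

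Next I would chase the diagram: if $\pi \colon \mc H \to \mc L$ exists, then pushing out (\ref{eq-genc}) along the component map $\mc L \otimes \Ext^{1}(\mc F,\mc L)^{\vee} \to \mc L$ corresponding to $v$ yields an extension $0 \to \mc L \to \mc H' \to \mc F \to 0$ whose class is $v$, and $\pi$ induces a splitting of this pushed-out sequence (the map $\mc H' \to \mc L$ induced by $\pi$ is a retraction of $\mc L \hookrightarrow \mc H'$, because $\pi$ restricted to $\mc L \otimes \Ext^{1}(\mc F,\mc L)^{\vee}$ was $v$ by construction). Hence $v = 0$ in $\Ext^{1}(\mc F,\mc L)$. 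But $v=0$ means the original morphism $\mc L \otimes \Ext^{1}(\mc F,\mc L)^{\vee} \to \mc L$ is the zero map, so $\pi$ factors through the quotient $\mc H \to \mc F$, giving a surjection $\mc F \to \mc L$, contradicting the hypothesis. (One also has to handle the degenerate case $\Ext^{1}(\mc F,\mc L) = 0$ separately, where $\mc H \simeq \mc F$ and the statement is trivial.)

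The step I expect to be the main obstacle is the \emph{bookkeeping of the canonical identifications}: making precise that "contracting the universal class along $v$" recovers $v$, and that the retraction $\pi$ produces genuinely gives a splitting of the $v$-pushout. This is where universality $\t(t)=1$ must be used in full strength — the analogous statement fails for a non-universal extension. I would make this rigorous by working with the map $\wt{\b}$ from (\ref{eqwtb-relext}) in the absolute case $Y = \Spec k$, where $\wt{\b} \colon \Ext^{1}(\mc F, \mc L \otimes \Ext^{1}(\mc F,\mc L)^{\vee}) \to \Ext^{1}(\mc F,\mc L) \otimes \Ext^{1}(\mc F,\mc L)^{\vee}$ is an isomorphism (for $Y$ a point there is no $H^1(Y,-)$ obstruction, so $\b'$ in (\ref{eq5-relext}) is an isomorphism), and the identity element is characterized by $\theta \circ \wt{\b}(t) = 1$. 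Then "applying a linear functional $v^{\vee}$ on the second tensor factor" is literally the map on $\Ext^1$ induced by the morphism $\mc L \otimes \Ext^{1}(\mc F,\mc L)^{\vee} \xrightarrow{v} \mc L$ of sheaves, by naturality of $\wt{\b}$; and naturality of the connecting-map description of extension classes finishes the identification. The rest is a routine diagram chase.
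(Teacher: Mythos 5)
Your proof is correct and follows essentially the same route as the paper's: both argue by contradiction, restrict the hypothetical surjection $\mc H \to \mc L$ to the subsheaf $\mc L \otimes \Ext^{1}(\mc F,\mc L)^{\vee}$ to get a map $b$ corresponding to an element $v \in \Ext^{1}(\mc F,\mc L)$, use that the universal class is the identity element of $\Ext^{1}(\mc F,\mc L)\otimes\Ext^{1}(\mc F,\mc L)^{\vee}$ to rule out $v\neq 0$, and invoke the snake lemma (equivalently, factoring through $\mc F$) in the case $b=0$. The only difference is presentational: the paper argues that for $b\neq 0$ the universal extension would be pushed forward from $\mc L\otimes\Ker b$ so the identity would factor through a proper subspace, whereas you push out along $b$ and observe that the retraction splits the resulting extension, forcing $v=0$ --- the same linear algebra in dual form.
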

\begin{proof}
To obtain a contradiction, assume that there are no surjections $\mc F \to \mc L$ and there is a surjection $a \colon \mc H \to \mc L$. 
We consider the following commutative diagram:
\[\xymatrix{
0 \ar[r] & \mc L \otimes \Ext^{1}(\mc F,\mc L)^{\vee} \ar[r] \ar[d]_{b}& \mc H \ar[r] \ar[d]_{a}& \mc F \ar[r] \ar[d]& 0 \\
0 \ar[r] & \mc L \ar@{=}[r] & \mc L \ar[r] & 0,
}\]
where $b$ is the composition of the morphisms $\mc L \otimes \Ext^{1}(\mc F,\mc L)^{\vee} \to \mc H$ and $a$. 
If $b=0$, then there exists a surjection $\mc F \to \mc L$ by the snake lemma, which is contradiction and hence $b \neq 0$. 
Since $X$ is a proper variety and $\mc L$ is invertible, $b$ is surjection and splits. 
Let $V$ be the linear subspace of $\Ext^{1}(\mc F,\mc L)^{\vee}$ such that $\Ker b=\mc L \otimes V$. 
Then we obtain an exact sequence 
\[0 \to \mc L \otimes V \to \Ker a \to \mc F \to 0\]
and the exact sequence (\ref{eq-genc}) is the push-forward of this exact sequence, which implies that the identity morphism $\id \colon \Ext^{1}(\mc F,\mc L) \to \Ext^{1}(\mc F,\mc L)$ factors through $V$, which is a contradiction. 
Hence we are done. 
\end{proof}

\subsection{Proof of Theorem~\ref{thm-univext}}

In order to prove Theorem~\ref{thm-univext}, 
we show the following lemma, which was essentially proved in \cite[(6.2.2)]{Stankova-Frenkel00}. 
The author gives the proof for reader's convenience. 

\begin{prop}[{\cite[(6.2.2)]{Stankova-Frenkel00}}]\label{prop-surj}
Let $Y$ and $Z$ be noetherian schemes and $g \colon Z \to Y$ be a non-isomorphic surjective finite flat morphism. 
Set $\mc E=\Cok(\mc O_{Y} \to g_{\ast}\mc O_{Z})$ and let $a \colon g_{\ast}\mc O_{Z} \to \mc E$ be the natural surjective homomorphism. 
Let $(g_{\ast}\mc O_{Z})^{\vee} \simeq g_{\ast}\omega_{g}$ be the isomorphism by the duality of the finite flat morphism $g$. 
Let $e \colon g^{\ast}g_{\ast}\omega_{g} \to \omega_{g}$ be the natural morphism.
Then the composition $c:=e \circ g^{\ast}a^{\vee} \colon g^{\ast}\mc E^{\vee} \to \omega_{g}$ is surjective: 
\end{prop}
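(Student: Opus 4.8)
The plan is to reduce the surjectivity of $c$ to a statement on the fibres of $g$ and then to a short linear-algebra computation over a field. First I would record the following tautological descriptions, valid because $g$ is finite flat (so $\mc B:=g_{\ast}\mc O_{Z}$ is a locally free $\mc O_{Y}$-algebra): applying $\mc Hom_{\mc O_{Y}}(-,\mc O_{Y})$ to $\mc O_{Y}\xrightarrow{1}\mc B\xrightarrow{a}\mc E\to 0$ exhibits $a^{\vee}$ as the inclusion $\mc E^{\vee}=\ker\bigl(\mc B^{\vee}\xrightarrow{\mathrm{ev}_{1}}\mc O_{Y}\bigr)\hookrightarrow\mc B^{\vee}$, where $\mathrm{ev}_{1}$ is surjective; under the standard identification $g_{\ast}\omega_{g}=\mc B^{\vee}$, the $\mc B$-module $\omega_{g}$ carries the action $(b.\varphi)(x)=\varphi(bx)$ and the counit $e\colon g^{\ast}g_{\ast}\omega_{g}\to\omega_{g}$ becomes the multiplication map $\mc B\otimes_{\mc O_{Y}}\mc B^{\vee}\to\mc B^{\vee}$. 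Hence $c$ is the $\mc B$-linear map $\mc B\otimes_{\mc O_{Y}}\mc E^{\vee}\to\mc B^{\vee}$ sending $b\otimes\varphi$ to $x\mapsto\varphi(bx)$.

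Next, since $g$ is affine, $g_{\ast}$ is exact and kills no nonzero sheaf, so $c$ is surjective as soon as $\Cok(g_{\ast}c)=0$; and because all the sheaves occurring are locally free over $\mc O_{Y}$ and their formation --- together with that of $a^{\vee}$, $e$ and $c$ --- commutes with base change, the fibre of $\Cok(g_{\ast}c)$ at a point $y\in Y$ is $\Cok(c_{y})$. By Nakayama it therefore suffices to show that each fibre map $c_{y}$ is surjective. Writing $k=\kappa(y)$, $A=H^{0}(Z_{y},\mc O_{Z_{y}})$ and $n=\dim_{k}A$, the map $c_{y}$ identifies with $A\otimes_{k}V\to A^{\vee}$, $b\otimes\varphi\mapsto(x\mapsto\varphi(bx))$, where $V=\{\varphi\in A^{\vee}\mid\varphi(1)=0\}$; here I will use $n\ge 2$, which holds since $g$ is not an isomorphism (a fibre of length one would force $g$ to be an isomorphism over a neighbourhood of that point).

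Finally, for the surjectivity of $c_{y}$ I would identify $(A^{\vee})^{\vee}$ with $A$ and show that the image of $c_{y}$ has trivial annihilator. If $a\in A$ satisfies $\varphi(ba)=0$ for all $b\in A$ and all $\varphi\in V$, then for each fixed $b$ the element $ba$ is annihilated by every functional vanishing at $1$, hence $ba\in k\cdot 1$; taking $b=1$ gives $a=\lambda\cdot 1$ for some $\lambda\in k$, and if $\lambda\ne 0$ then $\lambda b\in k\cdot 1$ for all $b$, i.e.\ $A=k$, contradicting $n\ge 2$. Thus $a=0$ and $c_{y}$ is surjective. The only delicate point is the bookkeeping in the first paragraph --- verifying the tautological descriptions of $\omega_{g}$, $e$, $a^{\vee}$ and their compatibility with base change; the fibre reduction and the final computation are then formal.
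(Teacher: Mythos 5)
Your proof is correct, and up to the duality bookkeeping it arrives at exactly the same concrete description of $c$ as the paper: the map $\mc B\otimes_{\mc O_{Y}}\mc E^{\vee}\to \mc B^{\vee}$, $b\otimes\varphi\mapsto (x\mapsto\varphi(bx))$, with $\mc E^{\vee}$ the functionals killing $1$ (the paper's $c_{B}$). Where you genuinely diverge is in how surjectivity of this map is proved. The paper stays over the base ring $A$: it picks a basis $1,b_{1},\dots,b_{n}$ of $B$ normalized by $\Tr(b_{i})=0$ and writes down an explicit preimage of $1^{\vee}$, namely $b_{1}^{\vee}\otimes(b_{1}-b_{1}^{\vee}(b_{1}^{2}))+\sum_{i\ge 2}b_{i}^{\vee}\otimes(-b_{1}^{\vee}(b_{1}b_{i}))$; since the $b_{i}^{\vee}$ are trivially in the image, the whole dual basis is hit. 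You instead reduce to fibres (local freeness, right-exactness of cokernel under base change, Nakayama) and then prove surjectivity over a field by showing the image has trivial annihilator in $A=(A^{\vee})^{\vee}$. Both work; yours trades the explicit formula for a basis-free duality argument at the cost of a routine base-change step. Your route has one concrete advantage: the paper's surrounding justification invokes the trace pairing giving a $B$-module isomorphism $B\simeq\Hom_{A}(B,A)$, and the claim that $\Hom_{A}(B,A)$ is generated by $1^{\vee}$ over $B$ — both of which fail for non-\'etale $g$ (try $k[\e]/(\e^{2})$ over $k$), so that the paper's proof is only saved by the explicit formula itself; your annihilator computation sidesteps this entirely. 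The one point to phrase carefully is the inference ``$g$ not an isomorphism $\Rightarrow n\ge 2$'': over a disconnected base the degree could be $1$ on some component, where the statement itself degenerates; this caveat is shared with the paper's proof and is vacuous in the application, where $Y$ is a connected curve.
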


\begin{proof}
Note that $\mc E$ is locally free. 
Let $\Spec A \simeq U \subset Y$ be an affine open subset and $V:=g^{-1}(U)=\Spec B$. 
Consider the corresponding injective homomorphism $A \to B$. If we set $E=B/A$, then $E \simeq \G(U,\mc E)$ and we may assume that $E$ is a free $A$-module by shrinking $U$. 
If we set $\deg f|_{Z}=n+1$, then there exists $b_{1},\ldots,b_{n} \in B$ such that $E=\bigoplus_{i=1}^{n} A \cdot b_{i}$ and $\Tr(b_{i})=0$ for any $i$, where $\Tr \colon B \to A$ denotes the trace map. 

Set $\omega_{B/A}:=\G((f|_{Z})^{-1}(U),\omega_{g})$. 
Then $\omega_{B/A} \simeq \!_{B}(\Hom_{A}(B,A))$ as $B$-modules by the duality of a finite flat morphism $\Spec B \to \Spec A$. 
Note that $\Hom_{A}(B,A)$ has the natural $B$-module structure that is defined by 
\[(b.f)(x):=f(bx) \text{ for } f \in \Hom_{A}(B,A),\ b \in B \text{ and } x \in B.\]
Then the map $B \ni b \mapsto (x \mapsto \Tr(b.x)) \in \Hom_{A}(B,A)$ is a $B$-module isomorphism. 
Based on this setting, we identify the morphism $c|_{V}$ with 
\[c_{B} \colon \Hom_{A}(E,A) \otimes_{A} B \ni (f \otimes b) \mapsto b.f \in \!_{B}(\Hom_{A}(B,A)).\]
It suffices to show the surjectivity of $c_{B}$. 
Let $1^{\vee},b_{1}^{\vee},\ldots,b_{n}^{\vee} \in \Hom_{A}(B,A)$ be the dual elements of $1,b_{1},\ldots,b_{n} \in B$ respectively. 
If $n=1$, then we have $c_{B}(b_{1}^{\vee} \otimes (b_{1}-b_{1}^{\vee}(b_{1}^{2})))=1^{\vee}$ and hence $c_{B}$ is surjective since $\!_{B}(\Hom_{A}(B,A))$ is generated by $1^{\vee}$ as a $B$-module. 
If $n \geq 2$, then it holds that
\[c_{B} \left( b_{1}^{\vee} \otimes (b_{1}-b_{1}^{\vee}(b_{1}^{2})) + \sum_{i=2}^{n} b_{i}^{\vee} \otimes (-b_{1}^{\vee}(b_{1}b_{i})) \right) = 1^{\vee}. \]
Therefore, $c_{B}$ is surjective for any $n \geq 1$. 
We complete the proof. 
\end{proof}

\begin{proof}[Proof of Theorem~\ref{thm-univext}]
Consider the exact sequence 
$0 \to \mc O_{Y} \to f_{\ast}\mc O_{Z} \overset{a}{\to} \mc E \to 0$. 
Then there exists an injection $\mc E \hra R^{1}f_{\ast}\mc I_{Z}$ by seeing the exact sequence $0 \to \mc I_{Z} \to \mc O_{X} \to \mc O_{Z} \to 0$. 
Since $Z$ is Cohen-Macaulay, $f|_{Z}$ is flat. 
We fix the isomorphism $(f_{\ast}\mc O_{Z})^{\vee} \simeq f_{\ast}\omega_{f|_{Z}}$ which comes from the duality of the finite flat morphism $f|_{Z}$. 
Then Lemma~\ref{prop-surj} gives that the composition 
$\ds \wt{c} \colon  f^{\ast}\mc E^{\vee} \mathop{\to}^{f^{\ast}a^{\vee}} f^{\ast}(f_{\ast}\mc O_{Z})^{\vee} \simeq f^{\ast}(f_{\ast}\omega_{f|_{Z}}) \mathop
{\to}^{e} \omega_{f|_{Z}}$ 
is surjective, where $e$ is the natural morphism. 
Since every fiber of $f$ and $f|_{Z}$ is Cohen-Macaulay, \cite[Theorem~(21)]{Kleiman80} gives the following commutative diagram:
\[\xymatrix@C=0.97em{
& 0 \ar[rd]& & & & \\
& & \mc E^{\vee} \ar[rd]& & & \\
0 \ar[r]&(R^{1}f_{\ast}\mc O_{X})^{\vee} \ar[r]& (R^{1}f_{\ast}\mc I_{Z})^{\vee} \ar[r] \ar[u] & (f_{\ast}\mc O_{Z})^{\vee} \ar[r] & (f_{\ast}\mc O_{X})^{\vee} \ar[r] & 0  \\
0 \ar[r] & \mc Ext^{1}_{f}(\mc O_{X},\omega_{f}) \ar[r] \ar[u]^{\simeq}& \mc Ext^{1}_{f}(\mc I_{Z},\omega_{f}) \ar[r] \ar[u]^{\simeq}& \mc Ext^{2}_{f}(\mc O_{Z},\omega_{f}) \ar[r] \ar[u]^{\simeq}& \mc Ext^{2}_{f}(\mc O_{X},\omega_{f}) \ar[u]^{\simeq}.  &
}\]
Noting that $\mc Ext^{i}_{f}(\mc O_{X},\omega_{f})=R^{i}f_{\ast}\omega_{f}$, we obtain the following exact sequence: 
\[0 \to R^{1}f_{\ast}\omega_{f} \to \mc Ext^{1}_{f}(\mc I_{Z},\omega_{f}) \mathop{\to}^{\vp} \mc E^{\vee} \to 0.\]
\begin{claim}
(1) -- (4) in Lemma~\ref{lem-univlocfr} hold for $\mc F=\mc I_{Z}$ and $\mc G=\omega_{f}$. 
\end{claim}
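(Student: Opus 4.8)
The goal is to verify that hypotheses (1)--(4) of Lemma~\ref{lem-univlocfr} hold for the data $f \colon X \to Y$, $\mc F = \mc I_Z$, $\mc G = \omega_f$, so that the locally free $f$-universal extension
\[0 \to \omega_f \otimes f^{\ast}(\mc Ext^1_f(\mc I_Z,\omega_f))^{\vee} \to \mc H \to \mc I_Z \to 0\]
exists; then a twist by $\omega_Y^{\vee} = \mc O_Y(-K_Y)$ together with the exact sequence $0 \to R^1f_{\ast}\omega_f \to \mc Ext^1_f(\mc I_Z,\omega_f) \to \mc E^{\vee} \to 0$ and Grothendieck--Serre duality $R^1 f_\ast \omega_f \simeq (f_\ast \mc O_X)^\vee = \mc O_Y$ (using $f_\ast \mc O_X = \mc O_Y$) will convert the $f$-universal extension into the sequence in Theorem~\ref{thm-univext}(1), while Lemma~\ref{lem-nosurj} applied fiberwise gives (2). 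So the remaining content is exactly the displayed Claim, and I sketch its proof hypothesis by hypothesis.

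First I would check condition (1): $X$ is regular (it is smooth by assumption), $\mc G = \omega_f = \omega_X \otimes f^{\ast}\omega_Y^{\vee}$ is invertible hence locally free, and $\hd((\mc I_Z)_x) \leq 1$ for all $x$ because $Z$ is a local complete intersection of codimension $2$ in the smooth variety $X$, so $\mc O_Z$ has homological dimension $2$ and its first syzygy $\mc I_Z$ has homological dimension $\leq 1$. Condition (3), local freeness of $\mc Ext^1_f(\mc I_Z,\omega_f)$, follows from the exact sequence $0 \to R^1f_\ast\omega_f \to \mc Ext^1_f(\mc I_Z,\omega_f) \to \mc E^\vee \to 0$ displayed just above the Claim: $\mc E = \Cok(\mc O_Y \to f_\ast \mc O_Z)$ is locally free (cokernel of a split injection of locally free sheaves, or just locally free since $Z$ is finite flat over $Y$), and $R^1 f_\ast \omega_f \simeq (f_\ast\mc O_X)^\vee \simeq \mc O_Y$ is locally free, so the middle term is locally free as an extension of locally free sheaves. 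Condition (4) asks $H^2(Y, f_\ast\mc Hom(\mc I_Z,\omega_f) \otimes \mc Ext^1_f(\mc I_Z,\omega_f)^\vee) = 0$. I would identify $f_\ast \mc Hom(\mc I_Z,\omega_f)$: since $Z$ has codimension $2$, $\mc Hom(\mc I_Z,\omega_f) = \mc Hom(\mc I_Z, \mc O_X)\otimes\omega_f = \mc O_X \otimes \omega_f = \omega_f$ (reflexivity of $\mc O_X$ and $\codim Z \geq 2$), so $f_\ast\mc Hom(\mc I_Z,\omega_f) = f_\ast\omega_f$; and $\mc Ext^1_f(\mc I_Z,\omega_f)^\vee \simeq (R^1 f_\ast \mc I_Z)(-K_Y)\cdot(\text{something})$ — more precisely dualizing $0 \to R^1f_\ast\omega_f \to \mc Ext^1_f(\mc I_Z,\omega_f) \to \mc E^\vee \to 0$ and chasing, one recognizes $\mc Ext^1_f(\mc I_Z,\omega_f)^\vee$ in terms of $R^1f_\ast\mc I_Z$; matching the hypothesis $H^2(Y, f_\ast\omega_f \otimes R^1f_\ast\mc I_Z) = 0$ of Theorem~\ref{thm-univext} then gives the vanishing. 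I would be careful here to track the twist by $\omega_Y$ correctly, possibly absorbing it since $H^2$ of any coherent sheaf on a curve vanishes when $\dim Y = 1$ — indeed in the application $Y = C$ is a curve, but Theorem~\ref{thm-univext} is stated for general $Y$, so the twist bookkeeping must be done honestly.

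The main obstacle is condition (2): surjectivity of $\e \circ f^{\ast}\a \colon f^{\ast}\mc Ext^1_f(\mc I_Z,\omega_f) \to \mc Ext^1(\mc I_Z,\omega_f)$. This is where Proposition~\ref{prop-surj} enters. The strategy is to compute $\mc Ext^1(\mc I_Z,\omega_f)$ locally: from $0 \to \mc I_Z \to \mc O_X \to \mc O_Z \to 0$ one gets $\mc Ext^1(\mc I_Z,\omega_f) \simeq \mc Ext^2(\mc O_Z,\omega_f) \simeq \omega_{f|_Z}$ (relative dualizing sheaf of the finite flat $f|_Z$, via local duality for the codimension-$2$ lci $Z \subset X$). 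Under this identification, the composite $\e\circ f^\ast\a$ should match — up to the identifications in the big commutative diagram just before the Claim — the morphism $\wt{c} \colon f^\ast\mc E^\vee \to \omega_{f|_Z}$ that Proposition~\ref{prop-surj} asserts is surjective, pulled through the inclusion $\mc E^\vee \hookrightarrow \mc Ext^1_f(\mc I_Z,\omega_f)$... actually one needs the surjection $\mc Ext^1_f(\mc I_Z,\omega_f) \twoheadrightarrow \mc E^\vee$ and then $\wt c$; composing, $f^\ast\mc Ext^1_f(\mc I_Z,\omega_f) \to f^\ast\mc E^\vee \xrightarrow{\wt c} \omega_{f|_Z} = \mc Ext^1(\mc I_Z,\omega_f)$ is surjective because $\wt c$ is (Proposition~\ref{prop-surj}) and the first map is surjective (pullback of a surjection). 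The technical point to nail down is the commutativity of the diagram identifying this composite with $\e\circ f^\ast\a$; this is a compatibility of the "local-to-global $\mc Ext$" edge map $\a$ with the duality isomorphisms of Kleiman, and it is exactly the content cited from \cite[Theorem~(21)]{Kleiman80}. Once the Claim is established, Lemma~\ref{lem-univlocfr} produces the locally free $\mc H$, and the rest is the twist-and-dualize bookkeeping plus Lemma~\ref{lem-nosurj}, which I would apply to each fiber $X_y$ with $\mc L = \mc O_{X_y}$ after noting the universal extension restricts to a universal extension on fibers (flatness of all the sheaves involved, cohomology-and-base-change for $\mc Ext^1_f$ in the locally free situation).
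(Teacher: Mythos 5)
Your proposal follows the paper's own argument for all four conditions: (1) and (4) reduce directly to the hypotheses of Theorem~\ref{thm-univext} exactly as you indicate; (3) comes from the exact sequence $0 \to R^{1}f_{\ast}\omega_{f} \to \mc Ext^{1}_{f}(\mc I_{Z},\omega_{f}) \to \mc E^{\vee} \to 0$ with locally free outer terms; and for (2) you identify the same factorization the paper uses, namely that $\e \circ f^{\ast}\a$ equals $\wt{c} \circ f^{\ast}\vp$ with $\wt{c}$ surjective by Proposition~\ref{prop-surj} and $f^{\ast}\vp$ surjective as the pullback of a surjection. So the route is the same.

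One slip should be corrected. You invoke relative duality as $R^{1}f_{\ast}\omega_{f} \simeq (f_{\ast}\mc O_{X})^{\vee} \simeq \mc O_{Y}$, both in your preamble and in your argument for (3). Since the fibers here have dimension $2$, the correct statement --- and the one encoded in the Kleiman diagram displayed just before the Claim --- is $R^{1}f_{\ast}\omega_{f} \simeq (R^{1}f_{\ast}\mc O_{X})^{\vee}$, whereas $(f_{\ast}\mc O_{X})^{\vee}$ computes $R^{2}f_{\ast}\omega_{f}$. This does not break condition (3), because local freeness of $R^{1}f_{\ast}\omega_{f}$ still follows from the hypothesis that $R^{1}f_{\ast}\mc O_{X}$ is locally free (this is precisely how the paper argues), but the identification with $\mc O_{Y}$ is false in general; it happens to be irrelevant in the paper's application to quadric fibrations only because $R^{1}q_{\ast}\mc O_{Q}=0$ there. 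Relatedly, your hesitation about an $\omega_{Y}$-twist in condition (4) is unnecessary: the Kleiman diagram gives $\mc Ext^{1}_{f}(\mc I_{Z},\omega_{f})^{\vee} \simeq R^{1}f_{\ast}\mc I_{Z}$ with no twist, so (4) is verbatim the hypothesis $H^{2}(Y,f_{\ast}\omega_{f} \otimes R^{1}f_{\ast}\mc I_{Z})=0$; the factor $-K_{Y}$ only appears afterwards, when the universal extension is tensored by $\omega_{X}^{-1}$ and $\omega_{f}=\omega_{X}\otimes f^{\ast}\omega_{Y}^{-1}$ is unwound.
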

\begin{proof}
(1) and (4) hold by the assumption. 
Since $R^{1}f_{\ast}\mc O_{X}$ is locally free and hence so are $R^{1}f_{\ast}\mc I_{Z}$ and $\mc Ext^{1}_{f}(\mc I_{Z},\omega_{f})$, (3) also holds. 
The item (2) holds since $\wt{c} \circ f^{\ast}\vp \colon f^{\ast}\mc Ext^{1}_{f}(\mc I_{Z},\omega_{f}) \to \mc Ext^{1}(\mc I_{Z},\omega_{f}) \simeq \omega_{f|_{Z}}$ coincides with $\e \circ f^{\ast}\a$ (See (\ref{eqea-relext})) and $\wt{c}$ and $f^{\ast}\vp$ are surjective. 
\end{proof}
Therefore, we obtain an $f$-universal extension $\mc H$ which is locally free:
\begin{align}\label{eq1-univext}
0 \to \omega_{f} \otimes f^{\ast}\mc Ext^{1}_{f}(\mc I_{Z},\omega_{f})^{\vee} \to \mc H \to \mc I_{Z} \to 0. 
\end{align}
\begin{claim}\label{claim-univext}
For any $y \in Y$, $\mc H|_{X_{y}}$ is a universal extension of $\mc I_{Z \cap X_{y}}$ by $\omega_{X_{y}}$. 
\end{claim}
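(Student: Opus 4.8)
\textbf{Plan for the proof of Claim~\ref{claim-univext}.}
The assertion is a base-change statement: having built the $f$-universal extension $\mc H$ in (\ref{eq1-univext}), I want to see that its restriction to a fibre $X_y$ realizes the absolute universal extension of $\mc I_{Z\cap X_y}$ by $\omega_{X_y}$. The plan is to restrict the exact sequence (\ref{eq1-univext}) to $X_y$ and identify the extension class. First I would note that $\omega_f|_{X_y}=\omega_{X_y}$ (since $f$ is flat with Cohen--Macaulay, indeed Gorenstein, fibres here $X,Y$ smooth and the relative dualizing sheaf commutes with base change), and that $f^{\ast}\mc Ext^1_f(\mc I_Z,\omega_f)^{\vee}|_{X_y}$ is the vector space $(\mc Ext^1_f(\mc I_Z,\omega_f)^{\vee})\otimes k(y)$, a constant sheaf on $X_y$. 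Next I would invoke cohomology-and-base-change for the relative Ext sheaf: since $\mc Ext^1_f(\mc I_Z,\omega_f)$ was shown to be locally free (it sits in the exact sequence $0\to R^1f_\ast\omega_f\to\mc Ext^1_f(\mc I_Z,\omega_f)\to\mc E^{\vee}\to 0$ with locally free outer terms), the natural base-change map $\mc Ext^1_f(\mc I_Z,\omega_f)\otimes k(y)\to \Ext^1_{X_y}(\mc I_{Z\cap X_y},\omega_{X_y})$ is an isomorphism; this is the Ext analogue of Grauert's theorem and follows e.g. from \cite{BPS80} or the standard base-change spectral sequence for $\mc Ext_f$. Hence the fibre $\mc H|_{X_y}$ is an extension of $\mc I_{Z\cap X_y}$ by $\omega_{X_y}\otimes \Ext^1_{X_y}(\mc I_{Z\cap X_y},\omega_{X_y})^{\vee}$ — it has the right shape.

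It remains to check that the extension class of $\mc H|_{X_y}$ is the \emph{universal} one, i.e. corresponds to $\id$ under $\Ext^1_{X_y}(\mc I_{Z\cap X_y},\omega_{X_y}\otimes \Ext^1(\mc I_{Z\cap X_y},\omega_{X_y})^{\vee})\cong\End(\Ext^1_{X_y}(\mc I_{Z\cap X_y},\omega_{X_y}))$. For this I would chase the universality condition $\t(t)=1$ of Definition~\ref{defi-univext} through restriction to $y$: the map $\wt\b$ and the trace map $\theta$ both commute with $-\otimes k(y)$ (compatibility of the edge maps $\a,\b,\wt\b$ in Remark~\ref{rem-relext} with base change, again using local freeness of $\mc Ext^1_f(\mc I_Z,\omega_f)$ and $f_\ast\mc Hom(\mc I_Z,\omega_f)$), so that the global identity $\t(t)=1\in H^0(Y,\mc O_Y)$ specializes to the analogous identity $\t_y(t|_{X_y})=1\in k(y)$ that characterizes the absolute universal extension on $X_y$. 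Concretely, under the isomorphism $\mc Ext^1_f(\mc I_Z,\omega_f)\otimes k(y)\xrightarrow{\sim}\Ext^1_{X_y}(\mc I_{Z\cap X_y},\omega_{X_y})$ the class $t|_{X_y}$ goes to the canonical element, and this is exactly what "universal extension of $\mc I_{Z\cap X_y}$ by $\omega_{X_y}$" means.

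The main obstacle I anticipate is the bookkeeping in the base-change compatibility: one must know that forming $\mc Ext^1_f$ commutes with restriction to the fibre (which requires both $R^1f_\ast$ and $f_\ast\mc Hom$ to behave well, hence the hypotheses that $R^1f_\ast\mc O_X$ — and therefore $R^1f_\ast\mc I_Z$ and $\mc Ext^1_f(\mc I_Z,\omega_f)$ — be locally free), and that the various edge homomorphisms $\a,\b,\theta$ in the diagram of Remark~\ref{rem-relext} are functorial under pullback along $\Spec k(y)\to Y$. Once those compatibilities are in place, the identity $\t(t)=1$ restricts to the fibrewise identity with no further work, so the proof is essentially a diagram chase plus one application of the Ext base-change theorem. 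I would also remark (for later use in the main argument) that by Lemma~\ref{lem-nosurj} applied fibrewise, the absence of surjections $\mc I_{Z\cap X_y}\to\omega_{X_y}$ — equivalently, of the relevant quotients — transfers to $\mc H|_{X_y}$, giving condition (2) of Theorem~\ref{thm-univext}; but the content of Claim~\ref{claim-univext} itself is precisely the base-change identification above.
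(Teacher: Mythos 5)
Your overall route is the same as the paper's: restrict the sequence (\ref{eq1-univext}) to $X_{y}$ (exactness is preserved by flatness of $f$ and $f|_{Z}$), identify the middle term via base change for the relative Ext sheaf, and observe that universality of the class specializes. So the strategy does not diverge. However, there is a genuine gap in your justification of the central step, namely the claim that the comparison map $\mc Ext^{1}_{f}(\mc I_{Z},\omega_{f})\otimes k(y)\to\Ext^{1}_{X_{y}}(\mc I_{Z\cap X_{y}},\omega_{X_{y}})$ is an isomorphism \emph{because} $\mc Ext^{1}_{f}(\mc I_{Z},\omega_{f})$ is locally free. That implication goes the wrong way: Grauert's theorem deduces local freeness \emph{and} base change from constancy of the fibrewise dimensions $\dim\Ext^{1}(\mc I_{Z\cap X_{y}},\omega_{X_{y}})$, whereas local freeness of the relative sheaf alone does not imply that the comparison map is even surjective. (Already for $R^{0}f_{\ast}$ of a line bundle $\mc M$ of relative degree $g-1$ on a family of smooth curves one has $f_{\ast}\mc M=0$, which is locally free, while $H^{0}(X_{y},\mc M_{y})$ jumps to $1$ at special fibres; the comparison map there is not surjective.) So the sentence ``this is the Ext analogue of Grauert's theorem'' does not close the argument.

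What actually makes the step work --- and what the paper uses --- is a fibrewise vanishing one degree up: by Serre duality on the surface $X_{y}$ one has $\Ext^{2}(\mc I_{Z\cap X_{y}},\omega_{X_{y}})\cong H^{0}(X_{y},\mc I_{Z\cap X_{y}})^{\vee}=0$, since a section of $\mc O_{X_{y}}$ vanishing on the nonempty scheme $Z\cap X_{y}$ is zero ($h^{0}(\mc O_{X_{y}})=1$). With the degree-$2$ comparison map trivially surjective, the cohomology-and-base-change theorem for relative Ext (\cite[Satz~3]{BPS80}) forces the degree-$1$ comparison map to be surjective, hence an isomorphism. Once you insert this vanishing, the rest of your argument --- in particular the specialization of $\t(t)=1$ to the fibrewise identity, which is the functoriality of the edge maps of Remark~\ref{rem-relext} under $\Spec k(y)\to Y$ --- goes through and agrees with the paper's proof.
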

\begin{proof}
Since $f$ and $f|_{Z}$ is flat, the restriction of the exact sequence (\ref{eq1-univext}) to $X_{y}$ is also exact for every $y \in Y$:
\[0 \to \omega_{X_{y}} \otimes (\mc Ext^{1}_{f}(\mc I_{Z},\omega_{f})
^{\vee}\otimes k(y)) \otimes \mc O \to \mc H|_{X_{y}} \to \mc I_{Z \cap X_{y}} \to 0.\]
Since $\Ext^{2}(\mc I_{Z \cap X_{y}},\omega_{X_{y}})=H^{0}(X_{y},\mc I_{Z \cap X_{y}})=0$ holds, the natural morphism $\mc Ext^{1}_{f}(\mc I_{Z},\omega_{f}) \otimes k(y) \to \Ext^{1}(\mc I_{Z \cap X_{y}},\omega_{X_{y}})$ is isomorphism by \cite[Satz~3]{BPS80}. 
Hence the extension on $X_{y}$ is also universal, which is the assertion.
\end{proof}

Tensoring the sequence (\ref{eq1-univext}) with $\omega_{X}^{-1}$ and identifying 
$\mc Ext^{1}_{f}(\mc I_{Z},\omega_{f})$ with $(R^{1}f_{\ast}\mc I_{Z})^{\vee}$, we have the exact sequence 
\[0 \to f^{\ast}((R^{1}f_{\ast}\mc I_{Z})(-K_{Y})) \to \mc F \to \mc I_{Z}(-K_{X}) \to 0.\]
Then $\mc F$ is an $f$-universal extension of $\mc I_{Z}(-K_{X})$ by $\mc O_{X}$. 
Claim~\ref{claim-univext} gives that $\mc F|_{X_{y}}$ is a universal extension of $\mc I_{Z \cap X_{y}}(-K_{X_{y}})$ by $\mc O_{X_{y}}$ for every $y$. 
Since there are no surjections from $\mc I_{Z \cap X_{y}}(-K_{X_{y}})$ to $\mc O_{X_{y}}$, the property (2) follows from Lemma~\ref{lem-nosurj}, which completes the proof. 
\end{proof}

\providecommand{\bysame}{\leavevmode\hbox to3em{\hrulefill}\thinspace}
\providecommand{\MR}{\relax\ifhmode\unskip\space\fi MR }
\providecommand{\MRhref}[2]{%
  \href{http://www.ams.org/mathscinet-getitem?mr=#1}{#2}
}
\providecommand{\href}[2]{#2}

\end{document}